\numberwithin{equation}{section}
\numberwithin{figure}{section}
\def\hhref#1{\href{http://arxiv.org/abs/#1}{arXiv:#1}} 
\newtheorem{theorem}{Theorem}[subsection]
\newtheorem{proposition}[theorem]{Proposition}
\newtheorem{lemma}[theorem]{Lemma}
\newtheorem{appthm}{Theorem}[section]
\newtheorem{applem}[appthm]{Lemma}
\newtheorem{appcor}[appthm]{Corollary}
\newtheorem{appprop}[appthm]{Proposition}
\newtheorem{lem}[theorem]{}
\newtheorem{theorem1}{Theorem}
\newtheorem*{thm*}{Theorem}
\theoremstyle{definition}
\newtheorem{definition}[theorem]{Definition}
\newtheorem*{definition*}{Definition}
\theoremstyle{remark}
\newtheorem{example}[theorem]{Example}
\newcommand{\blem}{\begin{lem} \rm}
\newcommand{\elem}{\end{lem}}
\newcommand\A{\mathcal{A}}
\newcommand\M{\mathcal{M}}
\newcommand\mC{\mathcal{C}}
\renewcommand\M{\mathcal{M}}
\newcommand{\bH}{\mathbb{H}}
\newcommand{\F}{\mathcal{F}}
\newcommand{\R}{\mathbb
{R}}
\newcommand{\RR}{\mathcal{R}}
\newcommand{\I}{\mathcal{I}}
\newcommand{\C}{\mathbb{C}}
\newcommand{\bB}{\mathcal{B}}
\newcommand{\sS}{\mathcal{S}}
\newcommand{\on}{\operatorname}
\newcommand{\dist}{\on{dist}}
\newcommand\bra[1]{ < \kern-.7ex {#1} \kern-.7ex >} 
\newcommand{\Fun}{\on{Fun}}
\newcommand{\Hom}{ \on{Hom}}
\renewcommand{\ker}{ \on{ker}}
\newcommand{\im}{ \on{im}}
\newcommand{\codim}{\on{codim}}
\newcommand{\ssm}{\kern-.5ex \smallsetminus \kern-.5ex}
\newcommand\dirac{/\kern-1.2ex\partial} 
\newcommand\qu{/\kern-.7ex/} 
\newcommand\lqu{\backslash \kern-.7ex \backslash} 
\newcommand\dr{r_+ \kern-.7ex - \kern-.7ex r_-}
\newcommand{\labell}\label
\newcommand{\lra}{\longrightarrow}
\newcommand{\ol}{\overline}
\newcommand\lam{\lambda}
\newcommand\eps{\epsilon}
\newcommand\cE{\mathcal{E}}
\newcommand\vol{\on{vol}}
\newcommand\ul{\underline}
\renewcommand\Im{\on{Im}}
\newcommand\rrho{r_\rho}
\newcommand\bdefn{\begin{definition}}
\newcommand\edefn{\end{definition}}
\newcommand\bea{\begin{eqnarray*}}
\newcommand\eea{\end{eqnarray*}}
\newcommand\bcv{\left[ \begin{array}{r} }
\newcommand\ecv{\end{array} \right] }
\newcommand\bma{\left[ \begin{array} }
\newcommand\ema{\end{array} \right]}
\newcommand\ben{\begin{enumerate}}
\newcommand\een{\end{enumerate}}
\newcommand\bex{\begin{example}}
\newcommand\bsj{\left\{ \begin{array}{rrr} }
\newcommand\esj{\end{array} \right\}}
\newcommand\eex{\end{example}}
\newcommand\Don{ {\on{Don}}}
\newcommand\Fuk{{\on{Fuk}}}
\newcommand\Id{{\on{Id}}}
\newcommand\sx{*\kern-.5ex_X}
\newcommand{\ainfty}{{$A_\infty$\ }}
\renewcommand{\M}{\mathcal{M}}
\newcommand\cI{\mathcal{I}}
\def\mathunderaccent#1{\let\theaccent#1\mathpalette\putaccentunder}
\def\putaccentunder#1#2{\oalign{$#1#2$\crcr\hidewidth \vbox
to.2ex{\hbox{$#1\theaccent{}$}\vss}\hidewidth}}
\newcommand{\cR}{\mathcal{R}}
\newcommand{\cG}{\mathcal{G}}
\newcommand{\B}{\mathcal{B}}
\newcommand\E{\mathcal{E}}
\newcommand\ulu{\underline{u}}
\newcommand\ulM{\underline{M}}
\newcommand{\cP}{\mathcal{P}}
\newcommand\ulxi{\underline{\xi}}
\newcommand\uleta{\underline{\eta}}
\newcommand\ulY{\underline{Y}}
\newcommand\ulK{\underline{K}}
\newcommand\ulJ{\underline{J}}
\newcommand\ulx{\underline{x}}
\newcommand\ulH{\underline{H}}
\newcommand\ulv{\underline{v}}
\newcommand{\dvol}{\on{dvol}}
\newcommand{\bS}{\mathcal{S}}
\newcommand{\txi}{\tilde{\xi}}
\newcommand{\delbar}{\overline{\partial}}
\newcommand{\half}{\frac{1}{2}}
\begin{document}

\title{Gluing pseudoholomorphic quilted disks.}
\author{Sikimeti Ma'u}
\maketitle

\begin{abstract}
We construct families of quilted surfaces parametrized by the Stasheff multiplihedra, and define moduli spaces of pseudoholomorphic quilted disks using the holomorphic quilts theory of Wehrheim and Woodward.  We prove a gluing theorem for isolated, regular pseudoholomorphic quilted disks. This analytical result is a fundamental ingredient for the construction of \ainfty functors associated to Lagrangian correspondences.
\end{abstract}


\section{Introduction}

\subsection{The setting} The Fukaya category of a symplectic manifold $(M,\omega)$ is an algebraic construct built out of certain Lagrangian submanifolds and their associated Floer chain groups. Roughly speaking the objects of $\Fuk(M)$ are Lagrangian submanifolds, while morphisms are elements of the Floer chain groups, $CF(L, L^\prime)$. The \ainfty structure of the Fukaya category comes from a whole sequence of higher compositions, 
\[
\mu^n : CF(L_{n-1}, L_n)\otimes \ldots \otimes CF(L_0, L_1) \to CF(L_0, L_d)
\]
for $n \geq 1$, which satisfy a sequence of quadratic relations called the \ainfty associativity relations.
The maps $\mu^n$ are defined geometrically by counts of pseudoholomorphic $(n+1)$-gons, and are natural extensions of the differential $\partial$ and Donaldson product in Lagrangian Floer homology, which count pseudoholomorphic bigons and triangles, respectively.  The main difference in the analytical theory when $n \geq 3$ is that the complex structure on the domain of the pseudoholomorphic maps is allowed to vary,  parametrized by the space of disks with $n+1$ distinct points on the boundary modulo complex isomorphism.  This parameter space is identified with $\M_{0,n+1}^\R$, a component of the real locus of the Deligne-Mumford space $\M_{0,n+1}$.  Its stable compactification realizes the Stasheff  associahedron $K_n$, and this is where the intrinsic \ainfty structure comes from. 

In this paper we consider a similar set-up that uses holomorphic {\em quilts}, introduced by Wehrheim and Woodward \cite{ww1, ww_1a}.  For $d \geq 1$, we construct families of quilts parametrized by moduli spaces of pointed {quilted disks}, and study moduli spaces of pseudoholomorphic quilted disks. A quilted disk with $d+1$ marked points on the boundary is a configuration $(D, C, z_0, \ldots, z_d)$ where $D$ is the unit disk in $\C$, $(z_0, \ldots, z_d)$ is a cyclically ordered tuple of distinct points in $\partial D$, and $C \subset D$ is an inner circle such that $C \cap \partial D = \{z_0\}$. The moduli space of $(d+1)$-pointed quilted disks, denoted $\RR^{d,0}$, consists of such tuples modulo complex automorphisms of $D$. These moduli spaces were studied in \cite{multiplihedra}\footnote{In \cite{multiplihedra} the moduli spaces $\RR^{d,0}$ were called $M_{d,1}$.}, and their Grothendieck-Knudsen type compactifications realize the {\em multiplihedra}, a family of polytopes that appeared alongside the associahedra in the work of Stasheff in the '60s \cite{stasheff}.  The multiplihedra are intrinsically connected to \ainfty maps between \ainfty spaces, just as the associahedra are intrinsically connected to \ainfty spaces. 
\begin{figure}[ht]
\center{\includegraphics[width=2.7in,height=2.3in]{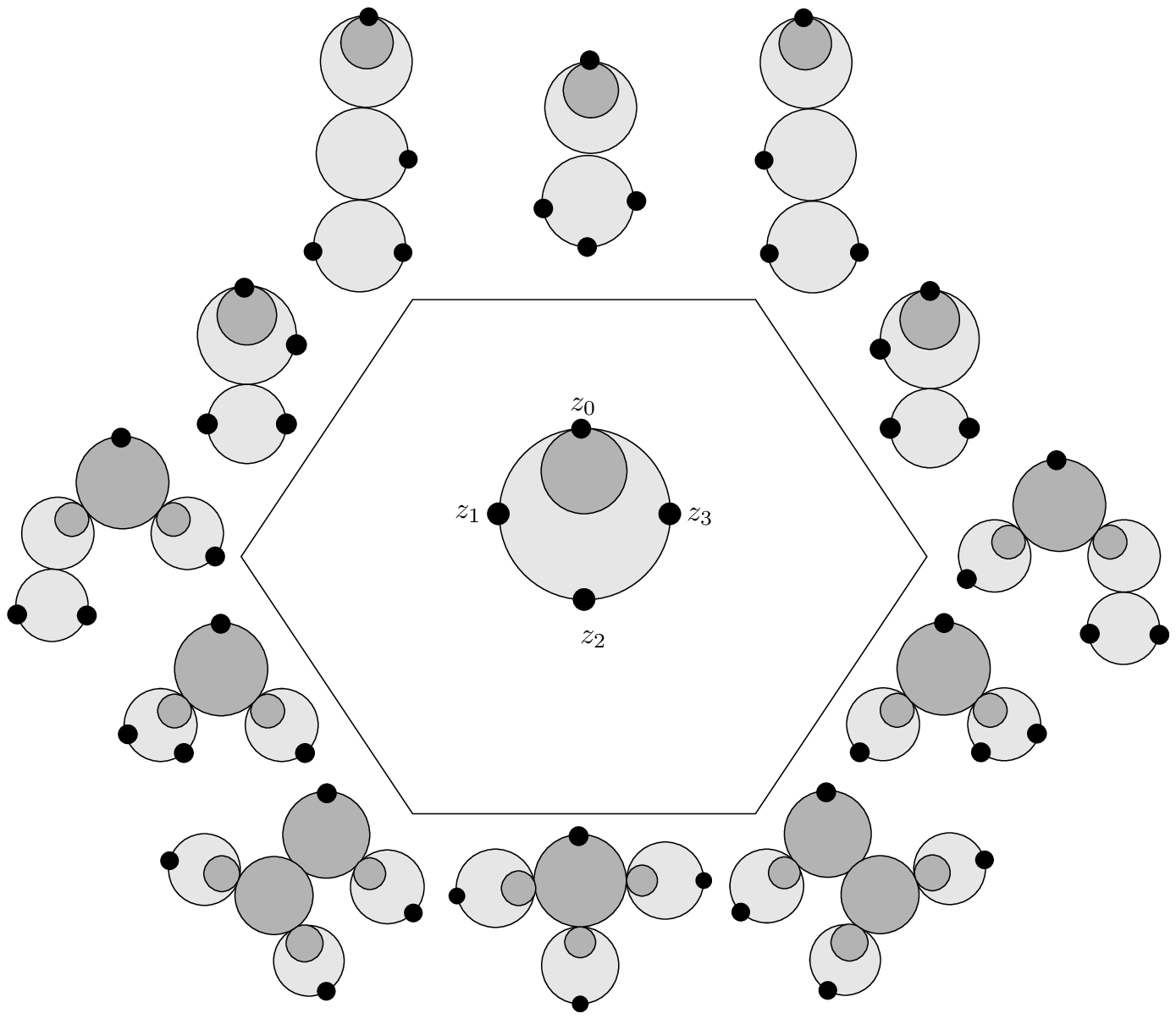}}
\caption{The third multiplihedron, $J_3 = \ol{\RR}^{3,0}$.}\label{mult_fig}
\end{figure}

\subsection{Functorial motivation} \label{motivation}  
A Lagrangian correspondence from $(M, \omega_M)$ to $(N, \omega_N)$ is a Lagrangian submanifold $L$ of the product symplectic manifold $M^-\times N:= (M\times N, (-\omega_M, \omega_N))$. Lagrangian correspondences are a notion of morphism between symplectic manifolds, so one would hope for topological invariants associated to symplectic manifolds to exhibit some sort of functoriality with respect to them.  Lagrangian correspondences aren't morphisms ``on the nose", since they can't always be composed\footnote{There is a geometric notion of composition of Lagrangian correspondences, following Weinstein, but the resulting object may well be singular.}. Following \cite{ww1}, a {\em generalized} Lagrangian correspondence from $M_A$ to $M_B$ is a sequence of Lagrangian correspondences, that starts with $M_A$ and ends in $M_B$ (passing through any number of intermediate symplectic manifolds in between).  A generalized Lagrangian correspondence is represented as a directed sequence
\[
\ul{L} = \{(M_A, \omega_A) \overset{L_{01}}{\longrightarrow} (M_1, \omega_1) \overset{L_{12}}{\longrightarrow} (M_2, \omega_2) \overset{L_{23}}{\longrightarrow} \ldots (M_r, \omega_r)\overset{L_{r, r+1}}{\longrightarrow} (M_B, \omega_B)\}.
\] 
{Generalized} Lagrangian correspondences can be composed by concatenation.  We denote concatenation by $\#$, i.e. $\ul{L}\# \ul{L}^\prime := M_A \overset{\ul{L}}{\longrightarrow} M_B \overset{\ul{L}^\prime}{\longrightarrow} M_C$.

Quilted Lagrangian Floer theory allows one to define Floer homology $HF(\ul{L}, \ul{L}^\prime)$ for pairs $\ul{L}, \ul{L}^\prime$ of admissible generalized Lagrangian correspondences from $M_A$ to $M_B$ \cite{ww1,ww_1b}.  Thus a pair  of symplectic manifolds $(M_A, \omega_A), (M_B, \omega_B)$ has an associated enlarged Donaldson-Fukaya category, $\Don^\#(M_A,M_B)$, whose objects are sequences of admissible Lagrangian correspondences from $M_A$ to $M_B$, and $\Hom(\ul{L}_0, \ul{L}_1) := HF(\ul{L}_0, \ul{L}_1)$.  When $M_A=\{pt\}$, i.e., the zero dimensional symplectic manifold, we say that a generalized Lagrangian correspondence from $\{pt\}$ to $M_B$ is a {\em generalized Lagrangian submanifold} of $M_B$.\footnote{An {actual} Lagrangian submanifold is a sequence with exactly one arrow,
$L = \{pt \overset{L}{\longrightarrow} (M_B, \omega_B)\}.$}
We write $\Don^\#(M_B) := \Don^\#(pt, M_B)$ for the corresponding enlarged Donaldson-Fukaya category. 

It was shown in \cite{ww1} that a generalized Lagrangian correspondence between $M_A$ and $M_B$ determines a functor
 \[
 \Phi(\ul{L}_{AB}): \Don^\#(M_A) \to \Don^\#(M_B),
 \] 
and furthermore that the assignment $\ul{L}_{AB} \to \Phi(\ul{L}_{AB})$ extends to a functor 
\[
\Phi: \Don^\#(M_A, M_B) \to \Fun(\Don^\#(M_A), \Don^\#(M_B)),
\]
where $\Fun(\Don^\#(M_A), \Don^\#(M_B))$ is the category of functors from $\Don^\#(M_A)$ to $\Don^\#(M_B)$.  The eventual goal is to extend both these functors to the chain level; that is, with the corresponding enlarged Fukaya categories. 

The enlarged Fukaya category $\Fuk^\#(M_A, M_B)$ is a generalization of the Fukaya category.  Its objects are admissible generalized Lagrangian correspondences from $M_A$ to $M_B$, and morphisms between objects are elements of Floer chain groups.  Higher compositions 
\[
\mu^n: CF(\ul{L}_{n-1}, \ul{L}_n)\otimes \ldots \otimes CF(\ul{L}_0, \ul{L}_1) \longrightarrow CF(\ul{L}_0, \ul{L}_n)
\]
are defined by counting isolated  pseudoholomorphic generalized $(n+1)$-gons.  The quilted domains behind the $\mu^n$ are obtained from the $(n+1)$-pointed disks of the usual Fukaya category by ``doubling then attaching strips".   Given an $(n+1)$-pointed disk, we first take an identical copy of it, then attach quilted strips of a fixed width joining corresponding boundary components.  The number of components in the quilted strips depends on the number of intermediate manifolds in the generalized Lagrangian correspondences being used (Figure \ref{quilted_gon}).  

\begin{figure}
\center{\includegraphics[height=1in]{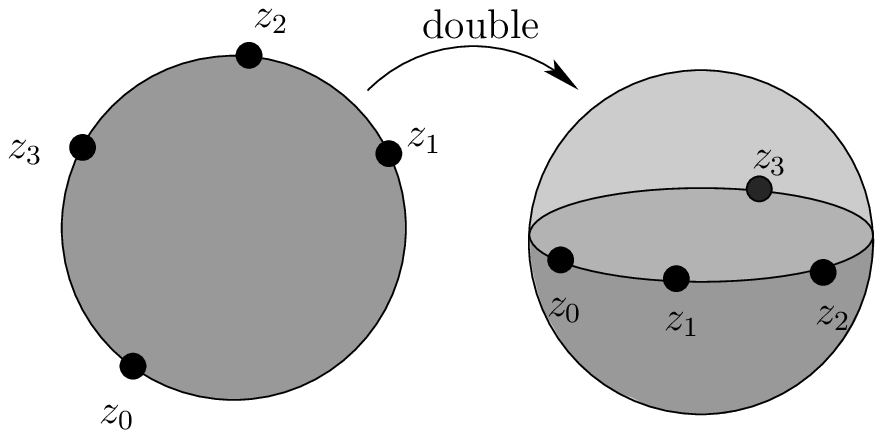} \includegraphics[height=1.5in]{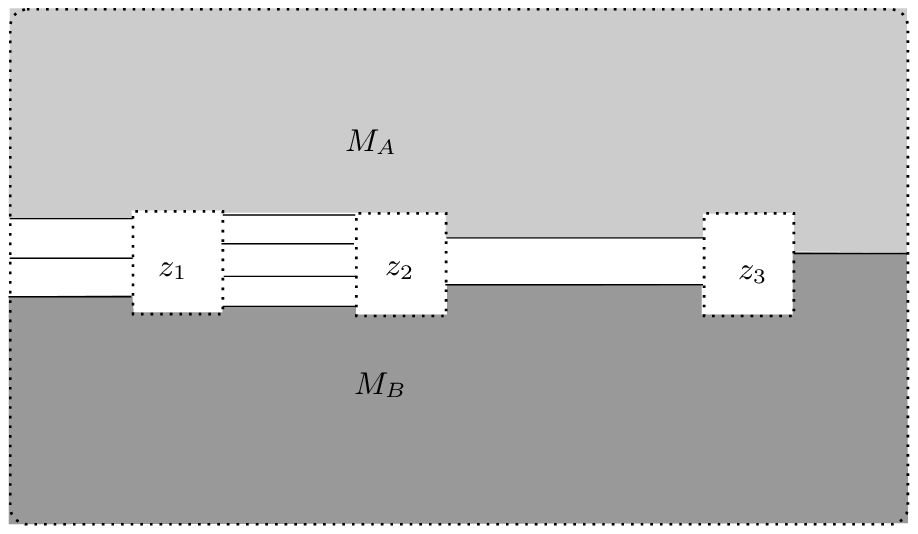}}
\caption{Schematic representation of a generalized $3+1$-gon.  The case drawn is for $\ul{L}_0$ having 3 arrows (i.e., 2 intermediate symplectic manifolds in the sequence), $\ul{L}_1$ 4 arrows, $\ul{L}_2$ two arrows and $\ul{L}_3$ one arrow.  The big (outer) dotted square corresponds to the marked point $z_0$ at $\infty$.}\label{quilted_gon}
\end{figure}

For a fixed collection of generalized Lagrangian correspondences, the complex structures on the components of the quilted $(n+1)$-gon depend only on the complex structure on the $(n+1)$-pointed disk.  Thus the quilted domains behind the maps $\mu^n$ are still parametrized by the moduli space $\M_{0,n}^\R$.  The gluing result of this paper (Theorem \ref{main_theorem}) applies with only cosmetic changes to isolated regular pseudoholomorphic quilted polygons, which is an analytical ingredient behind proving that the compositions $\mu^n$ in the enlarged Fukaya category satisfy the \ainfty associativity relations,  In particular, $\Fuk^\#(M_A, M_B)$ has the structure of a non-unital \ainfty category.  In the special case that $M_A = \{ pt\} $, we write $\Fuk^\#(M_B):= \Fuk^\#(pt,M_B)$.  


Given a Lagrangian correspondence $L_{AB}$ from $M_A$ to $M_B$, there is an associated non-unital \ainfty functor $\Phi(L_{AB}): \Fuk^\#(M_A) \to \Fuk^\#(M_B)$.  We give a very rough sketch of the construction here, referring to \cite{ainfty_functors} for details. 

Recall the rather combinatorial definition of a non-unital \ainfty functor \cite[Chapter I, (1b)]{seidel-book}:

\begin{definition*} A {\em non-unital $A_\infty$ functor} between non-unital \ainfty categories $\A$ and $\B$ consists of a map $\Phi: Ob\ \A \to Ob\ \B$ on objects, together with a sequence of maps $\Phi^d, d \geq 1$,
\[
\Phi^d: \Hom_\A(X_0, X_1) \otimes \ldots \otimes \Hom_\A(X_{d-1},X_d) \lra \Hom_\B(\Phi(X_0), \Phi(X_d))[1-d],  
\]
where the $\Phi^d$ fit together with the $\mu_\A^j, \mu_\B^k$ to satisfy the {\em $A_\infty$ functor relations}:
\begin{eqnarray}\label{ainfty_functor_relations}
\sum\limits_{i,j} (-1)^* \Phi^e(a_d, \ldots, a_{i+j+1}, \mu_\A^j(a_{j+i}, \ldots, a_{i+1}), a_i, \ldots, a_1) = \hspace{1in} \\
\sum\limits_{r, i_1+\ldots i_r=d} \mu_\B^r(\Phi^{i_r}(a_d, \ldots, a_{d-i_r}), \ldots, \Phi^{i_1}(a_{i_1}, \ldots, a_1)) \nonumber, 
\end{eqnarray}
where $* = |a_1| + \ldots + |a_i| - i$.
\end{definition*}

Here, $\A = \Fuk^\#(M_A)$ and $\B = \Fuk^\#(M_B)$.  Define $\Phi(L_{AB})$ as follows. On objects, 
\bea
\Phi(L_{AB}): Ob \ \Fuk^\#(M_A)  \longrightarrow  Ob \ \Fuk^\#(M_B), \ \ \ \ul{L}   \mapsto   \ul{L}\# L_{AB},
\eea
i.e., concatenate $L_{AB}$ to each generalized Lagrangian submanifold of $M_A$.  Now suppose that $d \geq 1$, and $\ul{L}_0, \ul{L}_1, \ldots, \ul{L}_d$ are generalized Lagrangian submanifolds of $M_A$.  Abbreviate $\ul{L}_{0,AB}:= \ul{L}_0\# L_{AB}$, likewise $\ul{L}_{d,AB}$.  Define the multilinear maps
\bea
\Phi(L_{AB})^d : CF(\ul{L}_{d-1}, \ul{L}_d)\otimes \ldots \otimes CF(\ul{L}_0, \ul{L}_d) & \longrightarrow &  CF(\ul{L}_0\# L_{AB}, \ul{L}_d \# L_{AB})\\
(\ul{p}_d,\ldots,\ul{p}_1) & \mapsto & \sum\limits_{\ul{q} \in \I(\ul{L}_{0,AB},\ul{L}_{d,AB})} \ N_{\ul{q}} \langle \ul{q}\rangle,
\eea
where the coefficient $N_{\ul{q}}$ is a signed count of a 0-dimensional moduli space of {\em pseudoholomorphic quilted disks}, which are pairs $(r,\ul{u})$ where $r \in \RR^{d,0}$ parametrizes a quilted domain $\sS_{r}$, and $\ul{u}$ is a pseudoholomorphic quilt map from $\sS_{r}$, with a seam condition given by $L_{AB}$ (Figure \ref{quilt_disk_mod}).

\begin{figure}[ht]
\center{\includegraphics[height=1.2in]{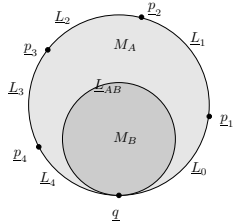}}
\caption{A quilted disk with boundary conditions.}
\label{quilt_disk_mod}
\end{figure}

The codimension one boundary strata of the multiplihedra come in two distinct types. The first type of facet is a pointed quilted disk joined to a pointed disk; call this a {\em facet of Type 1}.  The second type is a pointed disk with several pointed quilted disks; call this a {\em facet of Type 2}.  These facets are closely related to the \ainfty functor relations, which have four distinct kinds of terms, corresponding in this construction to: a) broken tuples whose domains correspond to facets of Type 1, b) broken tuples whose domains correspond to facets of Type 2, and c) either incoming or outgoing broken Floer trajectories (Figure \ref{multi_facets} ).  \begin{figure}[h]
\center{\includegraphics[height=1in]{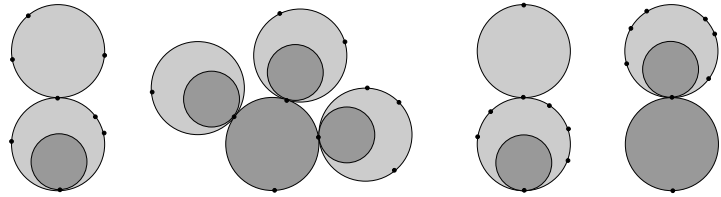}}
\caption{Four cases of gluing, from left to right: facet of Type 1,  facet of Type 2, incoming Floer trajectory, outgoing Floer trajectory.}\label{multi_facets}
\end{figure}

The purpose of this paper is to prove the following statement. 

\begin{theorem1}\label{main_theorem}
Let $(M_A, \omega_A)$ and $(M_B, \omega_B)$ be compact symplectic manifolds, and $L_{AB}$ a Lagrangian correspondence from $M_A$ to $M_B$. Suppose that $\ul{L}_0, \ldots, \ul{L}_d$ are generalized Lagrangian submanifolds of $M_A$, all of whose intermediate symplectic manifolds are  compact.  Write $\ul{L}_{0,AB}:= (\ul{L}_0, L_{AB})$, $\ul{L}_{d,AB}:= (\ul{L}_d, L_{AB})$ for the generalized Lagrangian submanifolds of $M_B$ obtained by concatenating $L_{AB}$ to $\ul{L}_0$ and $\ul{L}_d$.  Let $\ul{x}_0 \in \I(\ul{L}_{0,AB}, \ul{L}_{d,AB})$ and for $i = 1, \ldots, d$ let $\ul{x}_i \in \I(\ul{L}_{i-1},\ul{L}_i)$.  Given either:
\begin{enumerate} 
\item a regular pair  
\bea
(r_1, \ulu_1) & \in & \M_{d-e+1,0}(\ul{x}_0, \ul{x_1}\ldots, \ul{x}_{i-1}, \ul{y}, \ul{x}_{i + e+1}, \ldots, \ul{x}_d)^0\\
(r_2, \ulu_2) & \in & \M_{e}(\ul{y}, \ul{x}_i, \ul{x}_{i+1},\ldots, \ul{x}_{i+e})^0
\eea
where $2 \leq e \leq d$, $1 \leq i \leq d-e$, and $\ul{y} \in \I(\ul{L}_{i-1}, \ul{L}_{i+e})$;
\item or a regular $(k+1)$-tuple 
\bea
(r_0, \ulu_0) & \in & \M_{k}( \ul{x_0}, \ul{y}_1, \ldots, \ul{y}_k)^0\\
(r_1, \ulu_1)  & \in & \M_{d_1, 0}(\ul{y}_1, \ul{x}_1, \ldots, \ul{x}_{d_1})^0\\
(r_2, \ulu_2) & \in & \M_{d_2, 0}(\ul{y}_2, \ul{x}_{d_1 + 1}, \ldots, \ul{x}_{d_1 + d_2})^0\\
\ldots & & \\ 
(r_k, \ulu_k) & \in & \M_{d_k, 0}(\ul{y}_k, \ul{x}_{d_1 + \ldots + d_{(k-1)} +1}, \ldots, \ul{x}_{d_1+ \ldots + d_{k-1} +d_k})^0
\eea
where $d_1 + \ldots + d_k = d$, $d_i \geq 1$ for each $i$, and $\ul{y}_i \in \I(\ul{L}_ {d_1 + \ldots + d_{(i-1)}}, \ul{L}_{d_1 + \ldots + d_{i}})$ (interpreting $d_0$ as 0); 
\item or a regular pair 
\bea
(r,\ulu) & \in & \M_{d,0}(\ul{x}_0, \ldots, \ul{x}_{i-1}, \ul{y}, \ul{x}_{i+1}, \ldots, \ul{x}_d)^0\\
\ul{v} & \in & \widetilde{\M}_{1}(\ul{y}, \ul{x}_i)^0
\eea
 where $ 1 \leq i \leq d$, and $\ul{y} \in \I(\ul{L}_{i-1}, \ul{L}_i)$;
 \item or a regular pair 
 \bea
 \ul{v} & \in & \widetilde{\M}_{1}(\ul{x}_0, \ul{y})^0\\
(r, \ulu) & \in & \M_{d,0}(\ul{y}, \ul{x}_1, \ldots, \ul{x}_d)^0
\eea
where $\ul{y} \in \I(\ul{L}_{0, AB}, \ul{L}_{d, AB})$,
 \end{enumerate}
there is an associated continuous gluing map 
\[
g: (R_0, \infty) \to \M_{d,0}(\ul{x}_0, \ldots, \ul{x}_d)^1
\] 
defined for some $R_0 >>0$, such that $g(R)$ Gromov converges to the given pair/tuple as $R \to \infty$.  Moreover, the gluing map surjects onto sufficiently small Gromov neighborhoods of the given broken pairs/tuples. 
\end{theorem1}

\subsection{Organization}  Section \ref{quilt_disks} covers preliminaries on quilts, holomorphic quilt maps, and the multiplihedra.  Section \ref{families} constructs the families of quilted surfaces that we will use.  For analytic simplicity we require all our quilted surfaces to have striplike ends, and we emphasise that:
\begin{center}
\begin{tabular}{| c |}
\hline
\\
In this paper, all strips are the standard complex strip, \\
$Z \cong \{z \in \C | 0 \leq \Im z \leq 1\}$. \\
\\
\hline
\end{tabular}
\end{center}
Thus the analysis along the striplike ends reduces to the usual theory of pseudoholomorphic strips, since a quilted strip consisting of strips of equal width can be ``folded'' into an ordinary strip with boundary conditions in a big product manifold (Figure \ref{folding_strip}). 
\begin{figure}[h]
\includegraphics[height=4.3in]{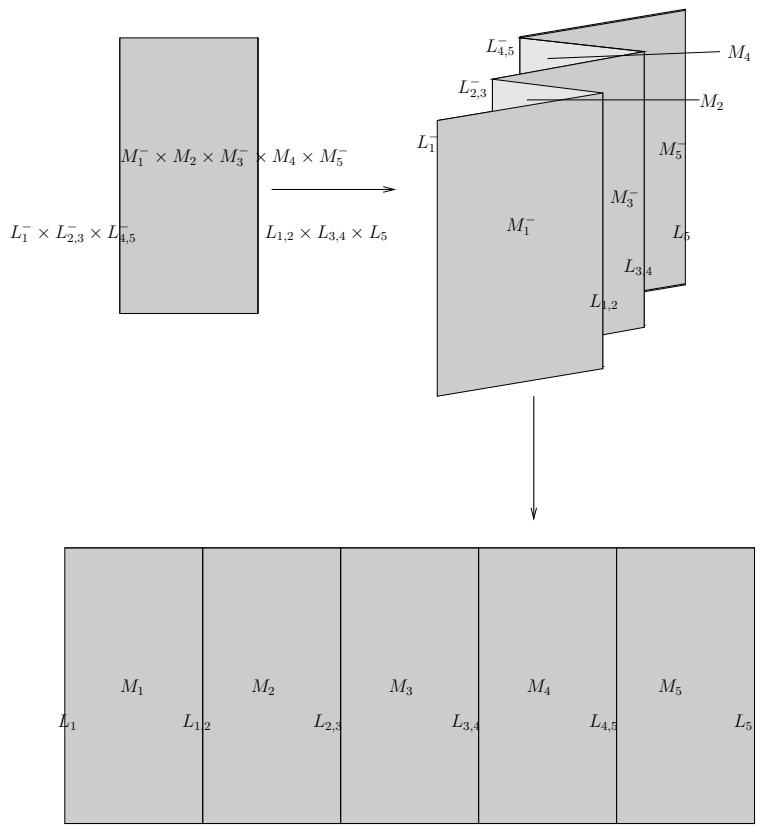}
\caption{Unfolding an ordinary strip into a quilted strip.}\label{folding_strip}
\end{figure}

A somewhat {\em ad hoc} aspect of the construction comes from the fact that if one simply takes the canonical bundle of quilted disks over the multiplihedron, there is no natural quilted striplike end over the marked point $z_0$, so we will manually construct the bundles $\sS^{d,0} \lra \RR^{d,0}$ of quilted surfaces parametrized by the multiplihedra, with quilted striplike ends built in.  

Section \ref{section_j-holo_quilts} defines the moduli spaces of pseudoholomorphic pointed quilted disks.  The moduli spaces incorporate parameters coming from the finite dimensional spaces $\RR^{d,0}$ parametrizing the domains.  The framework is based on \cite{seidel-book} Part II, sections 7 through 9, which defines pseudoholomorphic polygons for the Fukaya category.  We fix a universal choice of perturbation data for all compactified parameter spaces $\ol{\RR}^{d,0}$ at once, with recursive compatibility properties with respect to boundary strata, and study solutions of the inhomogeneous pseudoholomorphic map equations in an appropriate Banach bundle setting.  We define local trivializations in which the moduli space of holomorphic quilted disks is modeled on zeros of a Fredholm map $\F_{\sS, r, \ul{u}}$, and write out the linearized operator $D_{\sS, r, \ul{u}}$ explicitly.   

Section \ref{gluing_section} covers the estimates needed to prove Theorem \ref{main_theorem}.  The gluing map is an application of the Implicit Function Theorem, and the strategy of proof is standard, though the details are lengthy.  

Appendices A and B are included for completeness.   Appendix A establishes Sobolev embedding estimates that are relevant to the quadratic estimate (Section 5.5).  In order to obtain estimates that are independent of the gluing parameter, we need to establish Sobolev embedding constants that are uniform for all domains in the parametrized family;  that is, constants that do not vary with the parameters coming from the multiplihedra or associahedra.  The main  ingredient behind the uniform embedding constants is the fact that the non-compact striplike ends of all domains in the families satisfy the same ``cone condition".  Appendix B contains explicit convexity estimates for the solutions of Floer's inhomogeneous equation near non-degenerate intersections.   

\subsection*{Acknowledgements}  Most of the work in this paper was part of my Ph.D. thesis, carried out at Rutgers University under the supervision of Chris Woodward.  This paper owes a great deal to his ideas and advice.  I also thank Katrin Wehrheim for enormously helpful discussions on analysis and quilts.       

\section{Preliminaries}\label{quilt_disks}

\subsection{Quilts} We review the basic set-up of quilts, following \cite{ww_1a}.
\begin{definition}
A {\em quilted surface} $\underline{S}$ with strip-like ends  consists of the following data:
\begin{enumerate}
\item A collection $\underline{S} = (S_\alpha)_{\alpha \in \A}$, where each $S_\alpha$ is a pointed Riemann surface with markings $\zeta^\alpha_1, \ldots, \zeta^\alpha_{n_\alpha} \in \partial S_\alpha$, and for each marking $\zeta$ a {\em striplike end}, which is a holomorphic embedding $\epsilon_\zeta: \R_{\geq 0} \times [0,1] \to S$ such that $ \epsilon_\zeta(s,0), \epsilon_{\zeta}(s,1) \in \partial S$ for all $s\in \R_{\geq 0}$, and $\lim_{s\to \infty}\epsilon(s,t) \to \zeta$ uniformly in $t$.  Write $\mathcal{E}(S_\alpha)$ for the set of boundary components of $S_\alpha\setminus \{\zeta^\alpha_1, \ldots, \zeta^\alpha_{n_\alpha}\}$. In general the boundary components in $ \mathcal{E}(S_\alpha)$ are diffeomorphic to $\R$ or $S^1$, depending on whether they are incident to any marked points or not.
\item A collection $\mC$ of {\em seams}, whose elements are tuples 
\[
\mC \ni \sigma = \{(\alpha, I), (\alpha^\prime, I^\prime), \varphi_{(I, I^\prime)}\}
\]
where $\alpha, \alpha^\prime \in \A$, and $I \in \cE(S_\alpha), I^\prime \in \cE(S_{\alpha^\prime})$, $I \neq I^\prime$, and $\varphi_{(I,I^\prime)}: I \overset{\sim}{\longrightarrow} I^\prime$ is a real-analytic identification of the boundary components. If $I$ and $I^\prime$ are noncompact, each of them intersects a striplike end at both ends -- in this case we require that $\phi_{(I,I^\prime)}$ be {\em compatible} with the strip-like ends: that is, if $\epsilon_\zeta(s,0) \subset I$ then there is a marked point $\zeta^\prime$ with $\epsilon_{\zeta^\prime}(s,1) \subset I^\prime$ and $\phi_{(I,I^\prime)}(\epsilon_\zeta(s,0)) = \epsilon_{\zeta^\prime}(s,1)$. Similarly if $\zeta$ is a marked point on $\partial S_\alpha$ with $\epsilon_\zeta(s,1) \subset I$, then there is a marked point $\zeta^\prime \in \partial S_{\alpha^\prime}$ such that $\epsilon_{\zeta^\prime}(s,0)\subset I^\prime$ and $\phi_{(I,I^\prime)}(\epsilon_{\zeta}(s,1)) = \epsilon_{\zeta^\prime}(s,0)$. 

\item Boundary components that do not appear in any seam 
are called {\em true boundary components} of $\ul{S}$.
\end{enumerate}

\end{definition}


\subsection{Quilt maps} Label each surface $S_\alpha \in \ul{S}$ with a symplectic manifold $(M_\alpha, \omega_\alpha)$, each seam $\sigma = \{(\alpha, I), (\beta, I^\prime)\}$ with a Lagrangian correspondence $L_\sigma \subset M_\alpha^-\times M_\beta$, and each true boundary component $(\alpha, I)$ with a Lagrangian submanifold $L_{(\alpha, I)} \subset M_\alpha$.  Denote the labels collectively by $\ul{M} = (M_\alpha)_{\alpha \in \A}, \ul{L} = (L_\sigma)_{\sigma \in \mC}\cup (L_{(\alpha, I)})_{\alpha \in \A, I \in \cE(S_\alpha)}$.

\begin{definition}\label{def_quiltmap}
A {\em (smooth) quilt map $\ul{u} : \ul{S} \to \ul{M}$ with boundary conditions in $\ul{L}$} is a tuple 
$\{u_\alpha\}_{\alpha\in \A}$ of smooth maps $u_\alpha: S_\alpha \to M_\alpha$ satisfying:
\begin{enumerate}

\item for each true boundary component $(\alpha, I)$, $u_\alpha\big\lvert_{I} \subset L_{(\alpha, I)}$;
\item for each seam $\sigma = \{(\alpha, I), (\beta, I^\prime)\}$, 
$u_\alpha(\phi_I^{-1}(s,0)) \times u_\beta(\phi_{I^\prime}^{-1}(s,0)) \subset L_{\sigma}.$

\end{enumerate}
Let $C^\infty(\ul{S}, \ul{M};\ul{L})$ be the set of smooth quilt maps from $\ul{S}$ into target manifolds $\ul{M}$ with boundary conditions in $\ul{L}$.
\end{definition}

For each $\alpha \in \A$, let $J_\alpha: S_\alpha \to \mathcal{J}(M_\alpha, \omega_\alpha)$ be a choice of $\omega_\alpha$-compatible almost complex structure $J_\alpha(z)$ varying smoothly with $z \in S_\alpha$.  Along a striplike end $\epsilon_\zeta(s,t)$, $(s,t) \in [0,\infty) \times [0,1]$, we take $J_\alpha(\epsilon_\zeta(s,t)) =: J_\alpha(t)$, i.e., to be independent of $s$.  We write $j_\alpha$ for the complex structure on $S_\alpha$. 
 
 \begin{definition}
 A {\em pseudoholomorphic quilt map} $\ul{u}: \ul{S} \to \ul{M}$ with boundary in $\ul{L}$ is a smooth quilt map with boundary in $\ul{L}$ such that for each $\alpha$, $J_\alpha(z, u_\alpha(z))\circ du_\alpha(z) = du_\alpha \circ j_\alpha(z)$. 
 \end{definition}

\subsection{Multiplihedra} We describe two realizations of the multiplihedra following \cite{multiplihedra}.
\subsubsection*{Moduli of quilted disks} A pointed quilted disk is a tuple $(D, C, z_0, \ldots, z_d)$, where $D$ is the closed unit disk in $\C$, $(z_0, \ldots, z_d)$ is an ordered configuration of points in $\partial D$ (compatible with the orientation of $\partial D$), and $C \subset D$ is a circle that is tangent to $\partial D$ at the distinguished point $z_0$.  The group $PSL(\R)$ of complex automorphisms of $D$ acts freely and properly on the space of pointed quilted disks, and the moduli space of pointed quilted disks is the quotient by this action, which we denote by $\RR^{d,0}$. The space $\RR^{d,0}$ is compactified by {\em stable nodal quilted disks}.  The combinatorial type of a stable nodal quilted disk is a stable colored, rooted ribbon tree:
\begin{definition}
A {\em colored, rooted ribbon tree} $T=(E_\infty(T), E(T),
V(T), V_{col}(T))$ is a tree with vertices $V(T)$, a collection of semi-infinite edges $E_\infty(T) = \{e_0, e_1, \ldots, e_n\}$, and a collection of finite edges $E(T)$, together with a distinguished subset $V_{col}(T) \subset V(T)$
of {\em colored vertices}, such that
\begin{enumerate}
\item Each semi-infinite edge in $E_\infty(T)$ is incident to a single vertex in $V(T)$,
\item Each finite edge in $E(T)$ is incident to exactly two vertices in $V(T)$,
\item the ribbon structure at every vertex ensures that the planar structure of $T$ has  
the semi-infinite edges $e_0, \ldots, e_n$ arranged in counter-clockwise order; the edge $e_0$ is called the {\em root}, and $e_1, \ldots, e_n$ are called the {\em leaves};
\item each non-self-crossing path from a leaf $e_i$ to the root $e_0$ contains exactly one colored vertex.
\end{enumerate}
The colored ribbon tree is {\em stable} if every uncolored vertex has valency $\geq 3$, and each colored vertex has valency $\geq 2$.
\end{definition}

\begin{definition} 
A {\em nodal $(d+1)$-quilted disk} $S$ is a collection of quilted and
unquilted marked disks, identified at pairs of points on the boundary.
The combinatorial type of $S$ is a colored rooted ribbon tree ${T}$, 
where the colored vertices represent quilted disks, and the remaining
vertices represent unquilted disks.
A nodal quilted disk is {\em stable} if and only if
\begin{enumerate}
\item Each quilted disk component contains at least $2$ singular or
marked points;
\item Each unquilted disk component contains at least
$3$ singular or marked points.
\end{enumerate}
\end{definition} 

\begin{figure}[ht]
\includegraphics[height=2in]{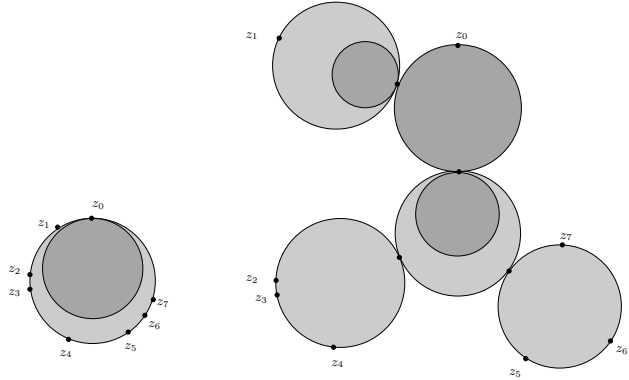}
\caption{A pointed quilted disk in $\RR^{7,0}$, and a nodal pointed quilted disk in the compactification $\ol{\RR}^{7,0}$.}\label{example}
\end{figure}
Let $\RR_T$ denote the stable, nodal quilted disks with combinatorial type $T$.  As a set, 
\[
\ol{\RR}^{d,0} := \bigcup\limits_{T}\RR_T,
\]
where $T$ ranges over all stable, $d$-leafed colored rooted ribbon trees.
\subsubsection*{Metric colored ribbon trees}
 There is a dual realization of the multiplihedra as metric trees, which is analogous to the realization of the associahedra as a space of metric ribbon trees.  A metric colored ribbon tree is a colored ribbon tree and a map $\lambda: E(T) \to [0,\infty]$ of {\em edge lengths} which satisfy the condition that the colored vertices are all the same distance from the root.  If $k$ is the number of colored vertices of $T$, this condition imposes $k-1$ relations on the edge lengths, and defines a cone $\cG_T \subset [0,\infty)^{|E(T)|}$ of dimension
$|E(T)|-k+1$. 

\begin{definition} A tuple of lengths is {\em admissible} if it is in the cone $\cG$.  
\end{definition}

\begin{example}
For the tree in Figure \ref{bicolor}, the edge lengths satisfy the relations
$$ \lambda_1 + \lambda_2 + \lambda_3 = \lambda_1 + \lambda_2 +
  \lambda_4 = \lambda_1 + \lambda_2 + \lambda_5 = \lambda_1 +
  \lambda_6 = \lambda_7.  $$
These are equivalent to the relations $\lambda_3 = \lambda_4 = \lambda_5, \lambda_3 + \lambda_2 = \lambda_6,$ and $\lambda_6 + \lambda_1 = \lambda_7$.
\begin{figure}[h]
\includegraphics[height=1.8in]{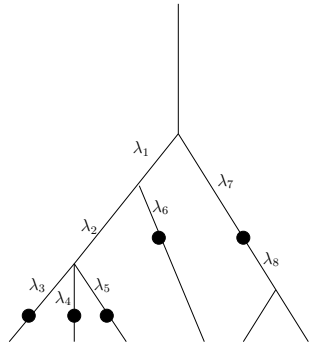}
\caption{A colored ribbon tree with interior edges labeled by gluing lengths $(\lambda_1, \ldots, \lambda_8)$.}
\label{bicolor}
\end{figure}
\end{example}
For each stable colored, rooted ribbon tree $T$, set
\bea
C_T = \{\lambda: E(T) \to [0,\infty]\big\lvert \lambda \in \cG_T\},\ \ \ \ 
\ol{C}^{d,0}  =  \left(\bigcup\limits_T C_T\right)/\sim
\eea
where the equivalence relation identifies an edge of length $0$ in $T$ with the tree obtained from $T$ by contracting that edge.  In \cite{multiplihedra} it is shown that $ \ol{C}^{d,0} \cong \ol{\RR}^{d,0} $.

\section{Families of quilts}\label{families}

\subsection{Pointed disks and associahedra}\label{quilts_assoc}
The construction of higher compositions in the Fukaya category is based on families of pointed disks.   We recall an explicit map in \cite{zero-loop} identifying the moduli space of metric ribbon trees with the moduli space of pointed disks. 

Let $d\geq 2$ and let $(T,\lambda) = (V(T), e_0, e_1,\ldots,e_d, E(T), \lambda: E(T) \to \R_+)$ be a metric ribbon tree: $V(T)$ are vertices of a tree $T$, $e_0, e_1,\ldots,e_d$ are semi-infinite (exterior) edges, and $E(T)$ are finite (interior) edges, and for each $e \in E(T)$, $\lambda(e) \geq 0$ is the length of $e$. Thicken each semi-infinite edge $e_i$ to a semi-infinite strip, $Z_i = [0,\infty) \times [0,1]$, and thicken each edge $e\in E(T)$ of length $\lambda(e)$ to a finite strip $Z_e = [0,\lambda(e)]\times [0,1]$. 
Consider the punctured strips $Z_i\setminus (0,\frac{1}{2}), i=0,\ldots,d$ and $Z_e \setminus \{  (0,\frac{1}{2}), (\lambda(e),\frac{1}{2})\}, e \in E(T)$.  Use the cyclic ribbon structure of incident edges at each vertex $v$ to define identifications of adjacent half-intervals at the ends of adjacent strips (see Figure \ref{glue}). The resulting Riemann surface with boundary has a hole for each vertex in the tree, but each hole can be conformally filled to produce a Riemann surface that is biholomorphic to a $d+1$-pointed disk, and the semi-infinite strips $Z_0, \ldots, Z_d$ determine holomorphic embeddings of striplike ends at the points.   

\begin{figure}[h]
\center{
\includegraphics[height=1in]{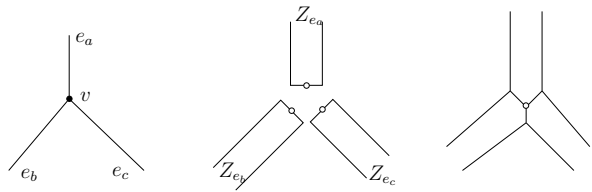}
}
\caption{Identifying strips at a trivalent vertex.}
\label{glue}
\end{figure}

\subsubsection{Attaching strips}\label{attaching_strips} 


A {\em generalized} $d+1$-pointed disk (Figure \ref{gen_pd}) is a quilted surface obtained by attaching strips to boundary components of a $d+1$-pointed disk. In the above construction, the boundary components of the Riemann surfaces are naturally identified with the boundary of an infinite strip $\R \times [0,1]$.  The identification is unique up to translation, and can be made translationally invariant for all surfaces in the family by requiring that a common boundary point (for instance, a fixed point on one of the striplike ends) always be identified with $0$. Thus one can fix a unique attaching map of a standard strip to that boundary component.  Proceeding inductively one can attach further strips to the boundary of an attached strip, without introducing any additional degrees of freedom.       
\begin{figure}[h]
\center{\includegraphics[height=2in]{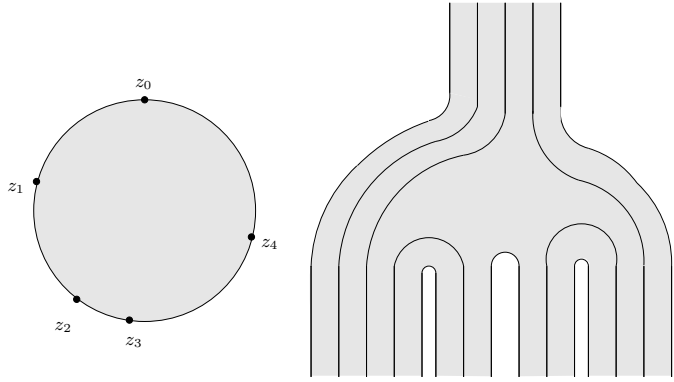}}
\caption{A quilted surface parametrized by a marked disk in $\RR^4$.}
\label{gen_pd}
\end{figure}

\subsection{Quilts parametrized by multiplihedra}\label{quilts_multi}
Let $d \geq 1$. We will construct a fiber bundle $\sS^{d, 0} \longrightarrow \RR^{d, 0}$ of quilted surfaces that are diffeomorphic to pointed quilted disks, but are equipped with strip-like ends consisting of quilted strips, such that all strips have width 1.  We will do it in two steps: first, constructing the quilted surfaces, and second, fixing the complex structures.

\subsubsection{Constructing the quilted surfaces}
 The construction is based on the fact that the multiplihedron is parametrized by metric trees with some additional metric data.  The quilted surfaces that we define will be pointed Riemann surfaces, as constructed in Section \ref{quilts_assoc}, which are parametrized by the underlying metric tree.  The extra metric data will be used to fix an embedded 1-submanifold.

For $d \geq 1$, we will define the bundle over the boundary of the multiplihedron $J_d$ in terms of lower strata, extend the bundle over a neighborhood of the boundary by gluing striplike ends, and finally fix a smooth interpolation of the bundle over the interior of the $d$-th multiplihedron.  

\noindent{\it Base step:} For $d=1$, let $Z = \R \times [0,1]$ be the standard strip of unit width.  Fix a smooth, connected embedded 1-manifold $L \subset Z$ such that 
\begin{itemize}
\item $L\cap Z_{\leq -1} = \R_{\leq 0} \times \{\frac{1}{3}, \frac{2}{3}\}$,

\item $L \cap Z_{\geq 0} = \varnothing$,

\item $L \cap \partial Z = \varnothing$.

\end{itemize}
Denote this quilted surface by $\sS^{1,0}$. As shorthand, for $x,y\in \R$ we let $\sS^{1,0}_{\leq x}, \sS^{1,0}_{\geq y}$ denote the pieces of $\sS^{1,0}$ corresponding to $Z_{\leq x}, Z_{\geq y}$ respectively. 

\begin{figure}[ht]
\center{\includegraphics[height=.7in]{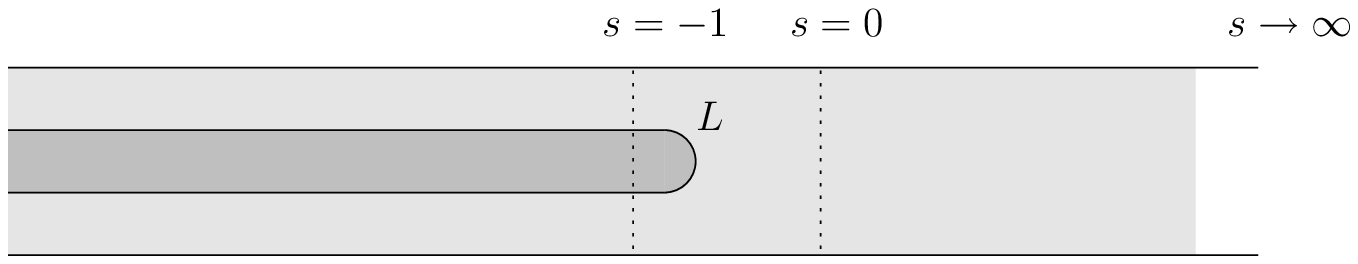}}
\caption{The basic piece $\sS^{1,0}$.}
\label{step1}
\end{figure}

\noindent{\it Inductive hypothesis:} Assume that all bundles $\sS^{e,0} \to \ol{\RR}^{e,0}$ have been constructed for $1 \leq e < d$. Assume furthermore that the bundles $\sS^{e,0} \to \RR^{e,0}$ are compatible with the bundles $\sS^e \to \RR^{e}$ constructed in the previous section, i.e., there is a commuting diagram \\
\begin{equation}\label{commute}
\begin{CD}
\sS^{e,0} @>>> \RR^{e,0}\\
         @VVV    @VVV\\
\sS^e @>>> \RR^e         
\end{CD}
\end{equation}
where the vertical maps are forgetful maps.  That is, $\RR^{e,0}\to \RR^e$ is the map from stable metric colored trees to stable metric trees that forgets the additional colored structure, and $\sS^{e,0} \to \sS^e$ is the map from a quilted pointed Riemann surface to an unquilted pointed Riemann surface, that forgets the embedded submanifold $L$.  

\noindent{\it Inductive step:} We extend the bundle $\ol{\sS}^d \lra \ol{\cR}^d$  over an open neighborhood of $\partial\ol{\RR}^d$, as follows.  The codimension one facets of $\RR^d$ are indexed by two types of colored tree (Figure \ref{facets}). Facets of Type 1 are denoted by $F_{(e,i)}$ where the label $(e,i)$ is such that $0\leq i \leq d$, $2\leq e \leq d-i$.  We denote the indexing tree by $T_{(e,i)}$.  Facets of Type 2 are denoted by $F_{s_1+s_2+\ldots+s_r}$, where the labels $s_1+ \ldots+ s_r$ satisfy $r\geq 2$, $1 \leq s_i \leq d-1$, $s_1 + s_2 + \ldots s_r =d$. We denote the indexing tree by $T_{s_1+\ldots+s_r}$. 
\begin{figure}[h]
\center{\includegraphics[height=1in]{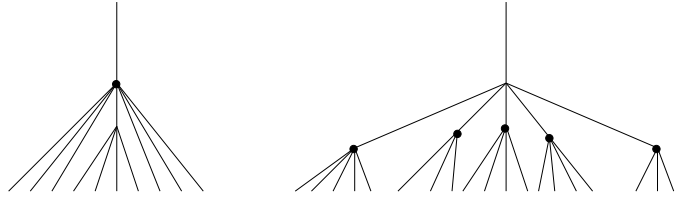}}
\caption{Combinatorial types of facets of the multiplihedron: Type 1 (left) and Type 2 (right).}
\label{facets}
\end{figure}

\noindent{\it Type 1:} Consider a boundary facet $F_{(e,i)} \subset \partial \ol{\RR}^{d,0}$; denote the single edge of $T_{(e,i)}$ by $a$.   Every point in $\ol{\RR}^{d,0}$ is represented by a metric colored  tree, $(T,\lambda)$, and such a metric colored tree is in $F_{(e,i)}$ if and only if 
\begin{enumerate}
\item there exists a morphism $p: T\to T_{(e,i)}$ of trees, 
\item  for the unique edge $p^{-1}(a) \in T$, $\lambda(p^{-1}(a)) = \infty$. 
\end{enumerate}

Cutting the edge $p^{-1}(a)$ produces a pair 
$(T_1,\lambda_{1}) \in \RR^{d-e+1,0}$, and $(T_2, \lambda_{2})\in \RR^{e}$, where $\lambda_i := \lambda\lvert_{T_i}$.  Note that the edge $a$ of $T$ has become the leaf $e_{i+1}$ of $T_1$ and the root $e_0$ of $T_2$. Let $\sS_{(T_1,\lambda_1)}$ and $\sS_{(T_2,\lambda_2)}$ be the corresponding surfaces, constructed by inductive hypothesis.  

Let $\nu := \lambda(p^{-1}(a)) \in [0,\infty]$ be a normal coordinate to the facet $F_{(e,i)}$, i.e., the length of the edge $p^{-1}(a)$. For $\nu \geq 1$, truncate the strips $Z_{e_i}$ and $Z_{e_0}$ at length $\nu/2$ and identify the two strips along the truncations; this replaces the semi-infinite strips $Z_{e_i}$ and $Z_{e_0}$ with a finite strip of length $\nu$. 

\noindent{\it Type 2:} Now consider a boundary facet $F_{s_1+\ldots+s_r}\subset \partial \ol{\RR}^{d,0}$; denote the $r$ edges of $T_{s_1+\ldots+s_r}$ by $a_1, \ldots, a_r$.  A metric colored tree $(T,\lambda)$ is in $F_{s_1+\ldots s_r}$ if and only if 
\begin{enumerate}
\item there exists a morphism $p: T\to T_{s_1+\ldots s_r}$ of trees, 
\item  for $i=1,\ldots,r$, $\lambda(p^{-1}(a_i)) = \infty$. 
\end{enumerate}
Cutting the edges $p^{-1}(a_i)$ in $T$ produces a tuple $(T_0,\lambda_0) \in \RR^{r}, (T_1,\lambda_1) \in \RR^{s_1,0}, \ldots, (T_r,\lambda_r)\in \RR^{s_r,0}$.  Note that each edge $a_i$ has become the leaf $e_i$ of $T_0$ and the root $e_0$ of $T_i$.  By inductive hypothesis the surfaces $\sS_{(T_0,\lambda_0)}, \sS_{(T_1,\lambda_1)},\ldots, \sS_{(T_r,\lambda_r)}$ have already been constructed. 
We fix a normal coordinate $\nu$ to the facet $F_{s_1+\ldots+s_r}$, taking $\nu:=\lambda(p^{-1}(a_1))$; the relations on the colored tree $T$ then determine the values of admissible lengths $\lambda(p^{-1}(a_2)), \ldots, \lambda(p^{-1}(a_r))$.  Truncate both the semi-infinite strip $Z_{e_i}$ of $T_0$ and the semi-infinite strip $Z_{e_0}$ of $T_i$ at length $\lambda(p^{-1}(a_i))/2$, and identify them along the truncations.

These constructions are only well-defined for large $\nu$, defining the bundle $\sS^{d,0} \to \RR^{d,0}$ over  an open neighborhood of $\partial \ol{\RR}^{d,0}$.  The bundle is compatible by construction with the forgetful maps, i.e., the diagram (\ref{commute}) commutes.   We extend the bundle over the remainder of the interior of $\RR^{d,0}$ by choosing a family of smooth isotopies.  All of the surfaces in the family have the same striplike ends, so we may assume that the isotopies are compactly supported.  Furthermore, we may also choose the isotopies to be compatible with the forgetful maps, that is, we may choose them such that (\ref{commute}) commutes.   This completes the inductive step. 

\subsubsection{Fixing the complex structures}  The final step is to fix an area form on each quilted surface in the bundle $\sS^{d,0}\to \RR^{d,0}$, and a compatible complex structure, using the fact that the space of area forms is convex, and the space of compatible complex structures contractible.  Given coordinates $(s,t)$ we say that the area form, and complex structure are {\em standard} if they are respectively $ds\wedge dt$, and $j(\partial_s)= \partial_t, j(\partial_t) = -\partial_s$.  We fix the area form and complex structure as follows.  First, on each striplike end, the area form and complex structure are chosen to be the standard ones on each strip.  In particular, on the quilted striplike end, the $t$ coordinate corresponds to a rescaling of  the corresponding coordinate of the underlying strip by a factor of 3.  Second, choose a tubular neighborhood for each boundary component, and a tubular neighborhood of the seam. For the tubular neighborhood of the seam, fix local coordinates $(s,t)$ so that along the striplike end they coincide with the local coordinates fixed there, and then take the standard area form and complex structure for these coordinates.  For each boundary component, one takes a tubular neighborhood of that component in the coordinates of the underlying surface, rescaling if necessary in the $t$ direction in order to get local coordinates which match the standard coordinates on the striplike ends, and then take the standard area form and complex structure with respect to those coordinates. Lastly, using convexity of area forms one can smootly extend the area form over the remainder of the quilt, and then smoothly extend the compatible complex structure over the remainder of the quilt.  To achieve consistency one builds up choices of area form and complex structure over all bundles $\sS^{d,0}\lra \RR^{d,0}$ inductively, using the gluing procedure to determine area form and complex structure for all quilts in the bundle over a neighborhood of $\partial \RR^{d,0}$, and then smoothly interpolating both over the interior of $\RR^{d,0}$.  

\subsubsection{Attaching strips.}
Each boundary component is by construction equipped with a tubular neighborhood on which the complex structure is the standard complex structure for a strip.  Strips of unit width can be real-analytically attached to the boundary components of the quilted surfaces in the families $\sS^{d,0}$; we call the generalized quilted surfaces obtained in this manner {\em generalized pointed quilted disks} (Figure \ref{gen_disk}).
\begin{figure}[h]
\center{\includegraphics[height=1in]{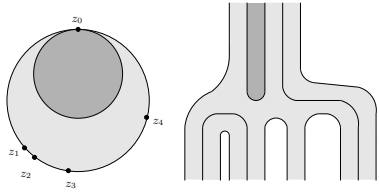}}
\caption{A generalized quilted disk, parametrized by a quilted marked disk.}\label{gen_disk}
\end{figure}

\begin{definition}
Borrowing more terminology from \cite{seidel-book}, we say that the {\em thin} part of a quilt $\sS_r$ consists of the striplike ends, together with the images of truncated striplike ends that were glued to form $\sS_r$.  The complement of the thin part will be called the {\em thick} part.  We call this the {\em thick/thin decomposition} of $\sS_r$.   
\end{definition}

\section{Pseudoholomorphic quilted disks}\label{section_j-holo_quilts}

The framework that we use is based on that of \cite{seidel-book}, Part II, sections 7 through 9.

\subsection{Floer and perturbation data} 

To each striplike end of $\ul{S}$, with Lagrangian boundary conditions given by $\ul{L}$, we assign a {\em Floer datum}, which is a regular pair 
\[
(\ul{H}, \ul{J}) = ( (H_k)_{k=1,\ldots,m}, (J_k)_{k=1,\ldots,m})
\] 
of a Hamiltonian perturbation and an almost complex structure of split type.   

\begin{definition}
Let $\ul{L}, \ul{L}^\prime$ be generalized Lagrangian submanifolds of $M$, and let $(\ul{H}, \ul{J})$ be a Floer datum for the pair.  A {\em generalized intersection} of a pair of generalized Lagrangians 
\bea
\ul{L} &=& \{pt\} \overset{L^{-n}}{\lra} M_{-n} \overset{L^{-n,-n+1}}{\lra} \ldots M_{-1}\overset{L^{-1,0}}{\lra} M_0=M,\\
\ul{L}^\prime &=& \{pt\} \overset{L^{m}}{\lra} M_{m}\overset{L^{m,m-1}}{\lra} \ldots M_1\overset{L^{1,0}}{\lra} M_0=M
\eea
given a Hamiltonian perturbation $\ul{H}$ is a tuple of paths
\[
\ul{x} = \{ (x_i)_{i=-n}^m \lvert \ x_i:[0,1]\to M_i, x_i^\prime(t) = X_{H^i_t}(x_i(t)), (x_{i-1}(1), x_{i}(0))\in L_{i-1,i} \}.
\]
We denote the set of generalized intersections by $\mathcal{I}(\ul{L}, \ul{L}^\prime)$. 
\end{definition}

Fix a collection of generalized Lagrangian submanifolds $\ul{L}_0, \ldots, \ul{L}_d$ of a symplectic manifold $M_A$.   Given a Lagrangian correspondence $L_{AB}$ between $M_A$ and $M_B$, write 
\bea
\ul{L}_{0, AB}:=\ul{L}_0 \# L_{AB} , \ \ \ul{L}_{d, AB}:= \ul{L}_d\# L_{AB}
 \eea 
for the concatenations, which are generalized Lagrangian submanifolds of $M_B$.  

Assign each pair $(\ul{L}_i, \ul{L}_{i+1})$ a Floer datum $(\ul{H}_i, \ul{J}_i)$, and assign  $(\ul{L}_{0,AB}, \ul{L}_{d,AB})$ a Floer datum  $(\ul{H}_{AB}, \ul{J}_{AB})$.  A {\em perturbation datum} is a pair $(\ul{K}, \ul{J})$, $\ul{K}$ in the perturbation datum is a smooth family of $1$-forms on the fibers $\ul{S}_r$ which take values in the space of Hamiltonian functions on $\ul{M}$.  Thus, a choice of $\ul{K}$ determines  a 1-form $\ul{Y}$ on each fiber $\ul{S}_r$, taking values in the space of Hamiltonian vector fields on $\ul{M}$.  A perturbation datum is {\em compatible} with the striplike ends and Floer data if it is equal on each striplike end to the Floer datum for the pair of Lagrangians labeling the end.   We fix a {\em universal choice of perturbation data} (i.e., for all bundles $\sS^{d,0}$ and $\sS^d$), which is {\em consistent} with the recursive constructions of the bundles, i.e., the following conditions hold:
 first, for each $d\geq 1$ (resp. $d\geq 2$) there is a subset $U \subset \RR^{0,d}$ (resp. $\RR^d$), where gluing parameters are sufficiently small, such that the perturbation data is given by the Floer data on the thin parts of the surfaces $\sS_r, r\in U$.  Second, if $(\tilde{K}, \tilde{J})$ is the perturbation datum on $U\subset \sS^{d,0}$ (resp. $\sS^d$) obtained by gluing perturbation data of the lower strata, then the perturbation datum in the universal family for $\sS^{d, 0}$ (resp. $\sS^{d}$) extends smoothly over $\partial\RR^{d,0}$ (resp. $\partial\RR^d$) and agrees with $(\tilde{K}, \tilde{J})$ over the boundary $\partial\RR^{d,0}$ (resp. $\partial\RR^d$). 

Fix a quilted surface $\ul{S}_{r_0}, r_0 \in \RR^{d,0}$, which is labeled by the Lagrangians $\ul{L}_0, \ldots, \ul{L}_d$ and the Lagrangian correspondence $L_{AB}$. In a neighborhood $U$ of $r_0$ all of the quilted surfaces are diffeormorphic, giving rise to a family of diffeomorphisms parametrized by points in $U$,
\bea
\Psi: U \times \sS_{r_0} \mapsto \sS\big\lvert_U, \ \ \ \ 
\Psi(r, \cdot) : \sS_{r_0} \overset{\cong}{\lra} \sS_r
\eea
which identify strip-like ends, i.e., $\Psi(r, \ul{\eps}_\zeta(r_0, s,t)) = \ul{\eps}_\zeta(r, s, t)$. 
Each quilted surface $\ul{S}_r$ is equipped with complex structures $\ul{j}_{\sS_r}$ which can be pulled back by $\Psi_r$ to give a family of complex structures on $\ul{S}_{r_0}$ parametrized by $r$, $
\ul{j}(r):= \Psi_r^*(\ul{j}_{\sS_r})$.

The inhomogeneous pseudo-holomorphic map equation for $(r,\ul{u}) \in \B$ is 
\begin{equation}\label{pseudo-holo}
\left\{\begin{array}{ll}d\ul{u}(z) + \ul{J}(r,\ul{u},z)\circ d\ul{u}(z) \circ \ul{j}(r) = \ul{Y}(r,\ul{u},z) + \ul{J}(r,\ul{u},z)\circ \ul{Y}(r,\ul{u},z) \circ \ul{j}(r)\\
\ul{u}(C) \subset L_C \ \mbox{for all seams/boundary components}\ C \ \mbox{with label}\ L_C.
\end{array}\right.
\end{equation}
The compatibility of the perturbation data with the Floer data along the striplike ends means that the above equation reduces to Floer's equation along the striplike ends.  In particular, solutions with finite energy converge exponentially along each striplike end to a generalized intersection of the pair of  generalized Lagrangians labeling that striplike end.   

A tuple
$(\underline{y}_0, \ul{x}_1, \ldots, \ul{x}_d) \in \cI(\ul{L}_{0,AB}, \ul{L}_{d,AB})\times \cI(\ul{L}_0, \ul{L}_1) \times \ldots \times \cI(\ul{L}_{d-1}, \ul{L}_d)$
of generalized intersections determines a fiber bundle $\B_{\sS\big\lvert_U} \lra U \subset \RR^{d,0}$, 
whose fiber over $r$ is the space of smooth quilt maps from $\sS_r$ to $\ul{M}$ with boundary in $\ul{L}$ and which converge along striplike ends to the given tuple of intersection points. There is another fiber bundle 
\[
\E_{\sS\big\lvert_U} \lra \B_{\sS\big\lvert_U},
\] 
whose fiber $\E_{(r, \ul{u})} = \Omega^{0, 1,r}(\ul{\sS}_r, \ul{u}^*T\ul{M})$ of $(0,1)$ forms on $\sS_r$ taking values in the pullback bundle $\ul{u}^*TM$, and the $(0,1)$ part is with respect to $\ul{J}(r,\ul{u})$ and $\ul{j}(r)$. 

Since equation (\ref{pseudo-holo}) can be written as $(d\ul{u} - \ul{Y})^{0,1} = 0$, we will also abbreviate it as $(\ol{\partial} - \ul{\nu})(r, \ul{u})  = 0$, where $\ol{\partial}(r,\ul{u}):= (d\ul{u})^{0,1}$ and $\ul{\nu}(r,\ul{u}):= (\ul{Y}(r, \ul{u}))^{0,1}$, with the $0,1$ taken with respect to $\ul{J}(r,\ul{u},z)$ and $\ul{j}(r)$.

Thus $\ol{\partial} - \ul{\nu}: \B \to \E$ defines a section whose intersection with the zero-section $\B_0$ is the set of solutions of (\ref{pseudo-holo}), whose striplike ends converge to the prescribed intersections.  The {\em energy} of such a solution $(r,\ulu)$ is the quantity 
\bea
E(r,\ulu) & = &\half \int_{\sS_r} |d\ulu - \ulY|^2 \ \dvol_{\sS_r} = \half\sum\limits_{\alpha\in \mathcal{A}} \int_{\sS_{r, \alpha}} |du_\alpha - Y_\alpha|^2 \dvol_{\sS_{r, \alpha}}
\eea
where the norms $|\cdot|$ on $TM_\alpha$ come from the induced metric $g_{J_\alpha(r)}$, and the area form $\dvol_{\sS_{r, \alpha}}$ comes from the construction of the bundles in Section \ref{families}.

\begin{definition} The {\em moduli space of holomorphic quilted disks} $\M_{d,1}(\underline{y}_0, \ul{x}_1, \ldots, \ul{x}_d)$ consists of all (finite energy) solutions $(r,\ul{u})$ to (\ref{pseudo-holo}) which converge along the strip-like ends labeled $\zeta_0, \ldots, \zeta_d$ to the respective intersections
$(\underline{y}_0, \ul{x}_1, \ldots, \ul{x}_d)$.  
For a collection $(\ul{x}_0, \ul{x}_1, \ldots, \ul{x}_d) \in \cI(\ul{L}_{0}, \ul{L}_{d})\times \cI(\ul{L}_0, \ul{L}_1) \times \ldots \times \cI(\ul{L}_{d-1}, \ul{L}_d)$, the {\em moduli space of holomorphic disks} $\M_{d}(\ul{x}_0, \ul{x}_1, \ldots, \ul{x}_d)$ consists of all (finite energy) solutions of the analogous equations for the surfaces $\sS^d \to \RR^d$, with generalized Lagrangian boundary conditions $\ul{L}_0, \ldots, \ul{L}_d$.  Similarly, given $\ul{x}, \ul{y} \in \I(\ul{L}, \ul{L}^\prime)$, the {\em moduli space of Floer trajectories} is $\widetilde{\M}(\ul{x},\ul{y}) := \M(\ul{x},\ul{y})/\R$, where $\M(\ul{x}, \ul{y})$ consists of all (finite energy) solutions to Floer's inhomogeneous equation converging to $\ul{x}$ and $\ul{y}$.  
\end{definition}

\subsection{Sobolev spaces}\label{triv}

Consider a quilt consisting of surfaces $\ul{S} = (S_\alpha)_{\alpha \in \mathcal{A}}$ and set $\mathcal{C}$ of seams and boundary components, equipped with labelings of the surfaces by target symplectic manifolds $\ul{M} = (M_\alpha)_{\alpha\in \mathcal{A}}$ and labelings of the seams and boundary components by Lagrangian boundary conditions $\ul{L} = (L_C)_{C\in \mathcal{C}}$.   Let $\ul{u}: \ul{S} \to \ul{M}$ be a smooth quilt map with boundary in $\ul{L}$, and let $\ul{J} = (J_\alpha(z), z \in S_\alpha)_{\alpha \in \mathcal{A}}$ be a smooth family of compatible almost complex structures varying over the surfaces.  Given an area form on each patch of $\ul{S}$, define Sobolev norms on sections of the pull-backs of the tangent bundles by
\bea
W^{1,p}(\ul{S}, \ul{u}^* T\ul{M}) & := &\bigoplus\limits_{\alpha \in \mathcal{A}} W^{1,p}(S_\alpha, u_\alpha^*TM_\alpha),\\
L^p(\ul{S}, \Lambda^{0,1}\otimes_{\ul{J}}\ul{u}^*T\ul{M}) & := & \bigoplus\limits_{\alpha \in \mathcal{A}} L^p(S_\alpha, \Lambda^{0,1}\otimes_{J_\alpha} u_\alpha^* TM_\alpha)
\end{eqnarray*}
where the norms on $TM_\alpha$ come from the induced metrics $g_{J_\alpha(z)}(\cdot, \cdot) := \omega(\cdot, J_\alpha(z) \cdot)$.
\subsection{Metric connections and local trivializations}  
We recall the following lemma of Frauenfelder on the existence of metrics with useful properties with respect to a given almost-complex structure and a given Lagrangian submanifold.  
\begin{lemma}[Lemma 4.3.3 in \cite{mcd-sal}]\label{frauen}
Let $(M,\omega)$  be a symplectic manifold equipped with an almost complex structure $J$, and let $L\subset M$ be a Lagrangian submanifold.  There exists a Riemannian metric $g = \langle \cdot, \cdot \rangle$ on $M$ such that 
\begin{enumerate}
\item[(i)] $\langle J(p)v, J(p)w\rangle = \langle v, w\rangle$ for $p\in M$ and $v, w \in T_pM$,

\item[(ii)] $J(p)T_pL$ is the orthogonal complement of $T_pL$ for every $p\in L$,

\item[(iii)] $L$ is totally geodesic with respect to $g$.

\end{enumerate}
\end{lemma}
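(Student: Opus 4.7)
The plan is first to observe that properties (i) and (ii) come essentially for free from any $\omega$-compatible metric, so the real task is to arrange (iii). Indeed, for $J$ an $\omega$-compatible almost complex structure, the canonical metric $g_J(v,w) := \omega(v, Jw)$ satisfies $g_J(Jv, Jw) = \omega(Jv, -w) = \omega(w, Jv) = g_J(v,w)$, which is (i). For (ii), if $v, w \in T_pL$ then $g_J(v, Jw) = \omega(v, J^2 w) = -\omega(v,w) = 0$ since $L$ is Lagrangian, so $JT_pL \subset (T_pL)^\perp$; dimension count then forces equality. Thus I only need to modify the metric near $L$ to make $L$ totally geodesic, while preserving $\omega$-compatibility.

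Next I would work in a Weinstein tubular neighborhood: the Lagrangian neighborhood theorem gives a symplectomorphism from an open neighborhood $U \supset L$ in $M$ to a neighborhood of the zero section in $T^*L$. Fix an auxiliary Riemannian metric $h$ on $L$. The Levi-Civita connection of $h$ induces a horizontal-vertical splitting $T(T^*L) = H \oplus V$, and the Sasaki-type metric $\tilde g$ (using $h$ horizontally and the $h$-dual metric vertically) makes the zero section totally geodesic. It is also compatible with the canonical almost complex structure $J_h$ on $T^*L$ that swaps $H$ and $V$ via the $h$-duality.

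The main obstacle is that the almost complex structure transported from $M$ along the Weinstein isomorphism need not agree with $J_h$ away from $L$; it only agrees in the sense that both send $TL$ to the vertical subspace. To handle this I would build the metric directly via parallel transport. Choose an arbitrary $\omega$-compatible metric $g_0$ on $M$; it gives property (ii) identifying the normal bundle $\nu L$ with $JTL$. Parametrize $U$ by the normal exponential map $\Phi: \nu L \to M$ of some auxiliary Riemannian metric on $M$ that makes $L$ totally geodesic (for instance the Sasaki metric pulled back from $T^*L$). Then pull back $J$ to $\nu L$ and define the new metric $g$ on $U$ by $g := \omega(\cdot, J \cdot)$; because $\Phi$ is the exponential of a totally geodesic metric, the zero section is totally geodesic for $g$, at least to first order along $L$, which is all that is needed for (iii). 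The essential point is that (iii) depends only on the $1$-jet of $g$ along $L$, which is governed by the Christoffel symbols, whereas (i) and (ii) are pointwise open conditions, so a careful local model achieves both simultaneously.

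Finally, I would extend the metric globally. Outside a smaller tubular neighborhood $U' \Subset U$, choose any $\omega$-compatible metric $g_\infty$, and on $U \setminus U'$ interpolate via a partition of unity between the locally constructed $g$ and $g_\infty$. Since the space of $\omega$-compatible metrics is an open convex cone, the interpolated metric is still $\omega$-compatible, preserving (i) and (ii). Property (iii) is unaffected because the metric is unchanged on $U'$. The hardest step is the construction on $U$: ensuring that the Christoffel symbols of $g$ in transverse directions vanish along $L$ while maintaining pointwise $\omega$-compatibility. This is where the Weinstein reduction to $T^*L$ pays off, because the linear structure on the cotangent bundle provides a canonical totally geodesic reference metric, and all $\omega$-compatible deformations remain admissible.
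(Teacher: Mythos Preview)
The paper does not prove this lemma; it is quoted verbatim from McDuff--Salamon (attributed there to Frauenfelder) and used as a black box. So there is no ``paper's proof'' to compare against, and I will simply assess your argument on its own terms.

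Your outline has the right shape --- get (i) and (ii) from compatibility, then work in a Weinstein neighbourhood to arrange (iii), then patch --- but two steps do not go through as written.

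First, the local construction is circular. You parametrize a neighbourhood by a normal exponential map $\Phi$ of some auxiliary metric, then define $g := \omega(\cdot, J\cdot)$ and assert that ``because $\Phi$ is the exponential of a totally geodesic metric, the zero section is totally geodesic for $g$.'' But $g = \omega(\cdot, J\cdot)$ is just $g_J$; it does not depend on $\Phi$ at all. Whether $L$ is totally geodesic for $g_J$ is an intrinsic question about $J$ and $\omega$, and in general the answer is no. The choice of chart cannot change that. What one actually has to do is build a \emph{different} $J$-invariant metric near $L$ --- not $g_J$ --- whose $1$-jet along $L$ is arranged so that the second fundamental form vanishes. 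The Weinstein model helps because the fibrewise reflection $\sigma(q,p)=(q,-p)$ is an isometry of a suitably constructed metric, forcing its fixed set $L$ to be totally geodesic; but one must then check $J$-invariance separately, and this is where the work lies.

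Second, the patching step fails for the same underlying reason. You write ``the space of $\omega$-compatible metrics is an open convex cone,'' but for a \emph{fixed} $J$ there is exactly one $\omega$-compatible metric, namely $g_J$. What is genuinely convex is the cone of $J$-\emph{invariant} (Hermitian) metrics --- those satisfying (i) --- and that is the space in which the interpolation must take place. This matters because once you drop $\omega$-compatibility, condition (ii) is no longer automatic: a $J$-invariant metric $g$ has $g(v,Jw)=-g(Jv,w)$, but that does not force $g(v,Jw)=0$ for $v,w\in T_pL$. So you must also verify that your local model satisfies (ii) along $L$, and that the convex interpolation does not spoil it (it will not, since (ii) is only required on $L$ and the metric is unchanged there).
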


Let $\nabla$ be the Levi-Civita connection of $g$.  The Hermitian connection $\widetilde{\nabla}$ associated to an almost complex structure $J$ is 
\begin{equation}\label{complex_connection}
\widetilde{\nabla}_v X := \nabla_v X - \frac{1}{2} J( \nabla_v J) X.
\end{equation}

\begin{lemma}\label{tot_geod} Under the assumptions of Lemma \ref{frauen}, $L$ is totally geodesic with respect to $\widetilde{\nabla}$, the Hermitian connection (\ref{complex_connection}).  
\end{lemma}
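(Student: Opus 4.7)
The plan is to show that for any $v\in T_pL$ and any vector field $X$ tangent to $L$ along a path in $L$, the covariant derivative $\widetilde{\nabla}_v X$ stays in $T_pL$; this is the definition of being totally geodesic. Using the formula $\widetilde{\nabla}_v X = \nabla_v X - \tfrac{1}{2}J(\nabla_v J)X$, the first term is already in $T_pL$ by property (iii) of Lemma \ref{frauen}. So the entire content reduces to proving that $J(\nabla_v J)X \in T_pL$, or equivalently, using property (ii), that $(\nabla_v J) X \in (T_pL)^\perp = J(T_pL)$.

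The key observation is the following. Since $X$ is tangent to $L$, property (ii) says $JX$ is a \emph{normal} vector field along $L$. Now invoke the fact that for a totally geodesic submanifold, the Weingarten map vanishes, so that for any normal field $N$ and tangent vector $v$, $\nabla_v N$ is again normal. Applying this with $N = JX$ gives $\nabla_v(JX) \in (T_pL)^\perp$. On the other hand, $\nabla_v X \in T_pL$ by property (iii) again, so $J\nabla_v X \in J(T_pL) = (T_pL)^\perp$ by property (ii). Subtracting, I get
\[
(\nabla_v J)X \;=\; \nabla_v(JX) - J\nabla_v X \;\in\; (T_pL)^\perp.
\]
Applying $J$ and using property (ii) one more time, $J(\nabla_v J)X \in T_pL$, and so $\widetilde{\nabla}_v X \in T_pL$ as required.

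The step I expect to be the main (though still routine) point is invoking the vanishing Weingarten map carefully: one should check that the standard identity $(\nabla_v N)^T = -A_N v$ applies in this setting and that ``totally geodesic'' in property (iii) implies $A_N \equiv 0$ for every normal $N$. Everything else is just an unwinding of definitions together with the compatibility $\langle Jv, Jw\rangle = \langle v, w\rangle$ from property (i), which is what guarantees that $J$ exchanges $T_pL$ and $(T_pL)^\perp$ orthogonally. No further integrability or Nijenhuis-tensor considerations are needed, because $\nabla_v J$ is treated purely as a pointwise tensor.
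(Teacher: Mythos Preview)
Your proof is correct and is essentially the same argument as the paper's, just packaged differently. The paper reduces to showing $g((\nabla_X J)Y, Z)=0$ for all $X,Y,Z\in T_pL$ and computes this directly via metric compatibility of $\nabla$: the three terms $Xg(JY,Z)$, $g(J\nabla_XY,Z)$, $g(JY,\nabla_XZ)$ all vanish by properties (ii) and (iii). Your decomposition $(\nabla_vJ)X=\nabla_v(JX)-J\nabla_vX$ together with the vanishing Weingarten map is exactly the same computation in geometric language, since ``$A_N\equiv 0$ for $L$ totally geodesic'' unpacks to precisely those vanishing inner products.
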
 
\begin{proof}
Write $P: TM\big\lvert_L \to TL$ for orthogonal projection of $TM$ onto $TL$ defined pointwise on $L$.   
It suffices to check that for every $p \in L$, and every $X, Y \in T_p L$, $\widetilde{\nabla}_X Y \in T_p L$.  
By definition,
\bea
\widetilde{\nabla}_X Y & = & \nabla_X Y - \frac{1}{2} J (\nabla_X J) Y. 
\eea
where $\nabla$ is the Levi-Civita connection of $g$.  By assumption, $L$ is totally geodesic with respect to $\nabla$, therefore $\nabla_X Y \in T_p L$ for all $p \in L$ and all $X, Y \in T_p L$.   So it is enough to show that $J(\nabla_X J)Y \in T_p L$.  Since the orthogonal complement of $T_p L$ is $JT_pL$, it reduces to showing that for all $X, Y, Z \in T_pL$,  $J(\nabla_X J)Y \perp JZ$.  
\bea
g(J(\nabla_X J)Y, JZ) & = & g((\nabla_X J)Y, Z)
 =  d (g(JY,Z))(X) - g(J \nabla_XY, Z) - g(JY, \nabla_XZ)
 =  0.
\eea
\end{proof}

Now let $\sS \lra \RR$ be a family  of quilted surfaces of type either $\sS^d \lra \RR^d$ or $\sS^{d,0} \lra \RR^{d,0}$, as constructed in Section \ref{families}.   We fix a basepoint $r_0 \in \RR$, and a neighborhood $\mathcal{U} \subset \RR$ of $r_0$.  Write $\ul{S}:=\sS_{r_0}$ for the quilted surface over $r_0$.  All the quilted surfaces over $U$ are diffeomorphic, and moreover the diffeomorphisms can be chosen to be the identity on the striplike ends, so the bundle $\sS\big\lvert_{\mathcal{U}} \to U$ can be viewed as a single quilted surface $\ul{S}$ with a family of complex structures parametrized by $r \in U$, such that the complex structures $\ul{j}(r)$ are all standard on the striplike ends.    
 
 Each patch $S_\alpha$ of the quilt $\ul{S}$ is labeled by a target symplectic manifold $(M_\alpha, \omega_\alpha)$ and an $\omega_\alpha$-compatible almost complex structure $J_\alpha = J_\alpha(r_0)$, which determine an induced metric $g_{J_\alpha(r_0)}$.  We collectively write $\ul{g}$ for these metrics, and write $\ul{\exp}$ for the exponential maps of their associated Hermitian connections (\ref{complex_connection}).  Note that once a base point $r_0$ is fixed, this fixes a metric on each $M_\alpha$ for each $z \in S_\alpha$.

The Lagrangian boundary conditions are not necessarily totally geodesic with respect to the exponential maps in $\ul{\exp}$.  We therefore introduce a collection of auxiliary metrics $\ul{g}^Q$, parametrized by the quilt $\ul{S}$, to satisfy the prescribed boundary conditions at seams and boundary components (c.f. \cite[Remark 2.2]{ww_1a}).  
 
 For each boundary component or seam $C$ labeled by a Lagrangian $L_C \subset M_\alpha^-\times M_\beta$,  fix a metric $g_C$ on $M_\alpha^-\times M_\beta$ that satisfies properties (i), (ii) and (iii) of Lemma \ref{frauen} with respect to the almost complex structure $(-J_\alpha)\times J_\beta$ and the Lagrangian $L_C$.  A $\delta$-tubular neighborhood of $C$ in the quilt is a copy of $\R \times [-\delta,\delta]$, where the side where $t <0$ is labeled by $M_\alpha$, the side where $t >0$ is labeled by $M_\beta$, and the seam $t=0$ is labeled by $L_C$.   We fold this to $\R \times [0,\delta]$ labeled by the product manifold $M_\alpha^-\times M_\beta$ and the Lagrangian boundary condition $L_C$.  Over the interval $[0,\delta]$ fix a smooth interpolation (of metrics on $M_\alpha^-\times M_\beta$) between the metric $g_C$ and the induced metric $g_{J_\alpha}\times g_{J_\beta}$.   Note that both $g_C$ and $g_\alpha\times g_\beta$ are $(-J_\alpha)\times J_\beta$-invariant, so we can choose the interpolating metrics to be $(-J_\alpha)\times J_\beta$-invariant too.  On the complements of the tubular neighborhoods, we take $\ul{g}^Q$ to be the induced metric $\ul{g}$.
 
 We write $\ul{g}^Q$ for the resulting collection of metrics parametrized by the quilt $\ul{S}$.  We emphasise that on each tubular neighborhood of a {\em seam}, we view $\ul{g}$ as a metric on the associated {\em product} manifold, parametrized by the {\em folded} tubular neighborhood.  Let $\ul{\exp}^Q$ denote the collection of exponential maps of the associated Hermitian connections (\ref{complex_connection}).  
We will use $\ul{\exp}^Q$ to define {\em quadratic corrections} to $\ul{\exp}$.  
  
Define a function $Q: \ul{S}\times \Omega^0(\ul{S}, \ul{u}^*T\ul{M}) \to \Omega^0(\ul{S}, \ul{u}^*T\ul{M})$ as follows.  Let $\ul{\xi}  \in \Omega^0(\ul{S}, \ul{u}^*T\ul{M})$ be a section of the pull-back bundle over the quilt $\ul{S}$.  In a tubular neighborhood of a true boundary component of the patch $S_\alpha$, with local coordinates $z = s + it \in \R\times i[0,\delta)$, define $Q((z,\ul{\xi})(z)$ by the condition that $\exp_{u_\alpha(z)}(\xi(z) + Q(z,\ul{\xi})(z)) = \exp^Q_{u_\alpha(z)}(\xi(z))$.  In a tubular neighborhood of a seam between patches $S_l$ and $S_r$, suppose we have local coordinates $z=s+it \in \R \times i[-\delta, \delta]$, where $S_l$ corresponds to $t <0$ and $S_r$ corresponds to $t>0$.  For $z=s+it \in S_l$ we define $Q(z,\ul{\xi})(z)$ and $Q(\ol{z}, \ul{\xi})(\ol{z})$ (where $\ol{z} = s-it \in S_r$) by the condition that
\[
(\exp_{u_l(z)}(\xi_l(z) +Q(z,\ul{\xi})(z)),\exp_{u_r(\ol{z})}(\xi_r(\ol{z}) + Q(\ol{z}, \ul{\xi})(\ol{z})) = \exp^Q_{(u_l(z),u_r(\ol{z}))}(\xi_l(z),\xi_r(\ol{z}))
\]
 as points on the product manifold $M_l^-\times M_r$.   For a point $z \in \ul{S}$ in the complement of the tubular neighborhoods, $Q(z, \ul{\xi}) = 0$ for all $\ul{\xi} \in \Omega^0(\ul{S}, \ulu^*T\ul{M})$.

\begin{lemma}
$Q(z,\ul{\xi})$ defined above has the following properties.
\begin{enumerate}
\item For all $z \in \ul{S}$, $Q(z, 0) = 0$.

\item For all $z \in \ul{S}$, and $\ul{\xi} \in \Omega^0(\ul{S},\ulu^*T\ulM)$,  
$\partial_\lambda Q(z, \lambda \ul{\xi})\lvert_{\lambda=0} = dQ(z,0)(0,\ul{\xi})  = 0$.

\end{enumerate}
\end{lemma}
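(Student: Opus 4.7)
Both assertions follow directly from the defining relation
\[
\exp_{u_\alpha(z)}\bigl(\xi_\alpha(z) + Q(z,\ul{\xi})(z)\bigr) = \exp^Q_{u_\alpha(z)}\bigl(\xi_\alpha(z)\bigr)
\]
(and its two-sided analogue across a seam), combined with the two fundamental properties shared by every exponential map of an affine connection: $\exp_p(0) = p$ and $d(\exp_p)_0 = \mathrm{Id}_{T_pM}$. So the plan is simply to substitute $\ul{\xi} = 0$ into the defining relation for (1), and to differentiate it in $\lambda$ at $\lambda = 0$ for (2).

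For (1), setting $\ul{\xi} = 0$ in a boundary tubular neighborhood gives
\[
\exp_{u_\alpha(z)}\bigl(Q(z,0)(z)\bigr) = \exp^Q_{u_\alpha(z)}(0) = u_\alpha(z) = \exp_{u_\alpha(z)}(0).
\]
Since $\exp_{u_\alpha(z)}$ is a local diffeomorphism on a neighborhood of $0 \in T_{u_\alpha(z)}M_\alpha$, injectivity forces $Q(z,0)(z) = 0$. In a seam tubular neighborhood the identical argument applied on the product manifold $M_l^- \times M_r$ with the exponential map of $g_C$ gives $(Q(z,0)(z), Q(\ol{z},0)(\ol{z})) = 0$. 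On the complement of the tubular neighborhoods $Q \equiv 0$ by construction, so (1) holds on all of $\ul{S}$.

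For (2), replace $\ul{\xi}$ by $\lambda \ul{\xi}$ in the defining relation and differentiate at $\lambda = 0$. On a boundary neighborhood the chain rule yields
\[
d(\exp_{u_\alpha(z)})_{Q(z,0)(z)}\bigl(\ul{\xi}(z) + \partial_\lambda Q(z,\lambda\ul{\xi})\bigl|_{\lambda=0}(z)\bigr) = d(\exp^Q_{u_\alpha(z)})_{0}\bigl(\ul{\xi}(z)\bigr).
\]
By (1) the left-hand differential is evaluated at $0$, and both $d(\exp_{u_\alpha(z)})_0$ and $d(\exp^Q_{u_\alpha(z)})_0$ equal the identity on $T_{u_\alpha(z)}M_\alpha$. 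Cancelling $\ul{\xi}(z)$ from both sides gives $\partial_\lambda Q(z,\lambda\ul{\xi})|_{\lambda=0} = 0$. The seam case proceeds identically, applying the argument to the product $M_l^- \times M_r$ and cancelling $(\xi_l(z), \xi_r(\ol{z}))$; on the complement of the tubular neighborhoods the conclusion is trivial.

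There is really no obstacle in the proof itself: the lemma is the formal expression of the fact that $Q$ is a \emph{quadratic} correction, vanishing to first order in $\ul{\xi}$. The substantive work was already done in setting up $\ul{g}^Q$ so that the Hermitian connections of the interpolating metrics agree with those of $\ul{g}$ to first order at $0$, thereby ensuring $Q$ is well-defined as a smooth function and that its linearization at $\ul{\xi}=0$ vanishes. The payoff of this lemma will be felt in Section 5.5, where it underlies the quadratic estimate needed for the Implicit Function Theorem step in the gluing construction.
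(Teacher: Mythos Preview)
Your proof is correct and follows exactly the same route as the paper's: substitute $\ul{\xi}=0$ into the defining relation for part (a), and differentiate the relation at $\lambda=0$ using $d(\exp_p)_0 = \mathrm{Id}$ for part (b). Your version is simply more explicit about the local-diffeomorphism step and the three regions (boundary neighborhood, seam neighborhood, complement), whereas the paper compresses everything into two lines.
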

\begin{proof}
(a) follows from $\ulu = \ul{\exp}_{\ulu}^Q(0) = \ul{\exp}_{\ulu}(Q(z,0))$.  (b) follows from differentiating $\ul{\exp}_{\ulu}^Q(\lambda \ul{\xi}) = \ul{\exp}_{\ulu}(\lambda \ul{\xi} + Q(z, \lambda \ul{\xi}))$ with respect to $\lambda$ at ${\lambda=0}$, which gives $\ul{\xi} = \ul{\xi} + \partial_\lambda Q(z, \lambda \ul{\xi})\lvert_{\lambda=0}$.
\end{proof}
The following lemma will be relevant for explicit computations of linearized operators.
\begin{lemma}
Let $(M, \omega)$ be a symplectic manifold, $p\in M$ and $\xi \in T_pM$. Fix a connection $\nabla$ on $TM$, with exponential map $\exp$.  Suppose that $Q: T_pM \to T_pM$ satisfies $Q(0) = 0$ and $DQ(0)=0$. For $\lambda \in [0,\delta)$, and $t \in [0,1]$, let
\bea
\gamma(\lambda, t) & = & \exp_p(\lambda t \xi + Q(\lambda t \xi))\\
\theta(\lambda, t) & = & \exp_p(t (\lambda \xi + Q(\lambda \xi))).
\eea
(Note that these paths coincide at $t=0,1$.) Let $\Phi_p(\lambda \xi)$ denote parallel transport along the curve $t\mapsto \gamma(\lambda,t)$, and let $\Psi(\lambda \xi)$ denote parallel transport along the curve $t\mapsto \theta(\lambda, t)$.  Then, for every $\eta \in T_p M$, we have that
\begin{equation}
\frac{d}{d\lambda}\Big\lvert_{\lambda=0} \Phi_p(\lambda \xi)^{-1} \Psi_p(\lambda \xi) \eta = 0.
\end{equation}
Thus, in particular, for any vector field $f$ along the curve $\lambda \mapsto \exp(\lambda\xi + Q(\lambda \xi))$,   
\begin{equation}
\frac{d}{d\lambda}\Big\lvert_{\lambda = 0} \Phi_p(\lambda\xi)^{-1} f = \nabla_\lambda f \Big \lvert_{\lambda = 0}.
\end{equation}
\end{lemma}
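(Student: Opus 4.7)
The plan is to deduce both assertions from the basic principle that parallel transport along two families of paths agreeing to first order in their parameter must itself agree to first order. Writing $c(\mu) := \exp_p(\mu \xi + Q(\mu \xi))$ for the common endpoint curve (so $c(\lambda) = \gamma(\lambda, 1) = \theta(\lambda, 1)$), the first observation is that
\[
\gamma(\lambda, t) = \exp_p(\lambda t\, \xi + Q(\lambda t\, \xi)) = c(\lambda t).
\]
Thus $\gamma(\lambda,\cdot) \colon [0,1] \to M$ is simply a reparametrization of $c|_{[0,\lambda]}$, and by reparametrization invariance of parallel transport, $\Phi_p(\lambda\xi)$ coincides with parallel transport $R(\lambda)$ along $c$ from $c(0) = p$ to $c(\lambda)$. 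The second (``in particular'') claim of the lemma then follows immediately from the classical identity
\[
\nabla_\lambda f \big|_{\lambda = 0} = \frac{d}{d\lambda}\bigg|_{\lambda = 0} R(\lambda)^{-1} f(\lambda),
\]
valid for any vector field $f$ along $\lambda \mapsto c(\lambda)$.

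For the main assertion, I would compare the two families $\gamma$ and $\theta$ directly. Since $Q(0) = 0$ and $DQ(0) = 0$, Taylor expansion gives $Q(\lambda v) = O(\lambda^2)$ uniformly for $v$ in a bounded set. Working in a local chart around $p$ (with $p$ at the origin), this yields
\[
\gamma(\lambda, t) = \lambda t\,\xi + O(\lambda^2), \qquad \theta(\lambda, t) = \lambda t\,\xi + O(\lambda^2)
\]
uniformly in $t \in [0,1]$, and $\dot\gamma(\lambda, t), \dot\theta(\lambda, t) = \lambda\xi + O(\lambda^2)$. So the two families of paths agree to first order in $\lambda$ as $C^1$-curves.

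In the same chart, $\Phi_p(\lambda\xi)$ and $\Psi_p(\lambda\xi)$ are matrix solutions of the linear ODE $\dot P = -\Gamma(c(t))(\dot c(t))\, P$ with $P(0) = I$, along $\gamma(\lambda,\cdot)$ and $\theta(\lambda,\cdot)$ respectively. The driving matrices $A(\lambda, t) := \Gamma(\gamma)(\dot\gamma)$ and $B(\lambda, t) := \Gamma(\theta)(\dot\theta)$ are $O(\lambda)$ (since both base points and velocities are $O(\lambda)$) and differ by $O(\lambda^2)$ (since base points and velocities agree to first order). Variation of parameters applied to the difference $\Delta := \Phi_p(\lambda\xi) - \Psi_p(\lambda\xi)$, which satisfies $\dot\Delta + A\,\Delta = (B - A)\,\Psi_p(\lambda\xi)$ with $\Delta(0) = 0$, then yields $\Delta = O(\lambda^2)$ uniformly in the chart. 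Since $\Phi_p(0) = I$ is invertible, this gives
\[
\Phi_p(\lambda\xi)^{-1}\Psi_p(\lambda\xi) = I + O(\lambda^2),
\]
and the derivative at $\lambda = 0$ vanishes. Because $\Phi_p(\lambda\xi)^{-1}\Psi_p(\lambda\xi)$ is an intrinsic family of endomorphisms of $T_pM$, this conclusion is independent of the chart.

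The main technical step is the linear ODE estimate in the previous paragraph; the rest of the argument is essentially formal bookkeeping. The only care needed is to ensure the $O(\lambda^2)$ bound holds uniformly in $t \in [0,1]$, which follows from smoothness of the Christoffel symbols together with compactness of $[0,1]$.
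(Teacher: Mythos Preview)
Your proof is correct and follows essentially the same strategy as the paper: work in local coordinates, observe that the two families of curves agree to first order in $\lambda$ (since $Q(0)=0$ and $DQ(0)=0$), and conclude that the parallel transports along them also agree to first order. The paper carries this out by writing parallel transport in integral form via the Christoffel symbols, differentiating the difference at $\lambda=0$, and checking the integrand vanishes term by term; you package the same computation as an ODE perturbation estimate via variation of parameters, which is equivalent. Your reparametrization observation $\gamma(\lambda,t) = c(\lambda t)$, however, is a genuine simplification over the paper for the corollary: it identifies $\Phi_p(\lambda\xi)$ directly with parallel transport along $c|_{[0,\lambda]}$, so the identity $\frac{d}{d\lambda}\big|_{\lambda=0}\Phi_p(\lambda\xi)^{-1}f = \nabla_\lambda f\big|_{\lambda=0}$ becomes the standard definition of the covariant derivative rather than a consequence of the main assertion.
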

\begin{proof}
Let $p_\lambda = \gamma(\lambda,1) = \theta(\lambda,1)$ be the common end point of the two paths for fixed $\lambda$.  It suffices to show that the difference
\[
\Psi_p(\lambda \xi)\eta - \Phi_p(\lambda \xi) \eta \in T_{p_\lambda}M
\]  
is only quadratic in $\lambda$.  For sufficiently small $\lambda$ both curves must be contained in a local coordinate chart and we can calculate the difference directly.  Let $x_1, \ldots, x_n$ be local coordinates for a neighborhood of $p$ in $M$.  Write $\eta_\lambda(\gamma(\lambda, t))$ for the parallel transports of $\eta \in T$ along $\gamma(\lambda, t)$, and write $\widetilde{\eta}_\lambda(\theta(\lambda,t))$ for the parallel transports of $\eta$ along $\theta(\lambda,t)$.  In terms of the local coordinate components, for $j = 1, \ldots, n$ we have from the parallel transport equations and the fundamental theorem of calculus that
\bea
\eta_\lambda^j(p_\lambda) & = & \eta^j_\lambda(p) - \int\limits_0^1 \Gamma_{ih}^j(\gamma_\lambda(t)) \eta^i_\lambda(\gamma_\lambda(t))\frac{d\gamma^h_\lambda}{dt} \ dt\\
\widetilde{\eta}_\lambda^j(p_\lambda) & = & \widetilde{\eta}^j_\lambda(p) - \int\limits_0^1 \Gamma_{ih}^j(\theta_\lambda(t))\widetilde{\eta}^i_\lambda(\gamma_\lambda(t))\frac{d\theta^h_\lambda}{dt} \ dt
\eea 
where $\Gamma_{ih}^j$ are the Christoffel symbols for $\nabla$ in these coordinates.  Since $\eta_\lambda(p) = \eta = \widetilde{\eta}_\lambda(p)$, we have that in each coordinate, $j = 1, \ldots, n$, 
\[
\eta_\lambda^j(p_\lambda) - \widetilde{\eta}_\lambda^j(p_\lambda) = -\int\limits_0^1  (\Gamma_{ih}^j(\gamma_\lambda(t)) \eta^i_\lambda(\gamma_\lambda(t))\frac{d\gamma^h_\lambda}{dt} - \Gamma_{ih}^j(\theta_\lambda(t))\widetilde{\eta}^i_\lambda(\gamma_\lambda(t))\frac{d\theta^h_\lambda}{dt}) \ dt.
\]
We now compute the derivative of this quantity with respect to $\lambda$ at $\lambda =0$. 
\bea
\frac{d}{d\lambda} \Big\lvert_{\lambda=0}(\eta_\lambda^j(p_\lambda) - \widetilde{\eta}_\lambda^j(p_\lambda)) & = &  -\frac{d}{d\lambda} \Big\lvert_{\lambda=0} \int\limits_0^1  (\Gamma_{ih}^j(\gamma_\lambda(t)) \eta^i_\lambda(\gamma_\lambda(t))\frac{d\gamma^h_\lambda}{dt} - \Gamma_{ih}^j(\theta_\lambda(t))\widetilde{\eta}^i_\lambda(\gamma_\lambda(t))\frac{d\theta^h_\lambda}{dt} \ dt\\
& = & -\int\limits_0^1 \frac{\partial}{\partial\lambda}\left( \Gamma_{ih}^j(\gamma_\lambda(t)) \eta^i_\lambda(\gamma_\lambda(t))\frac{d\gamma^h_\lambda}{dt} - \Gamma_{ih}^j(\theta_\lambda(t))\widetilde{\eta}^i_\lambda(\gamma_\lambda(t))\frac{d\theta^h_\lambda}{dt}\right) \Big\lvert_{\lambda = 0} \ dt.
\eea
We now show that the integrand is zero.  When $\lambda = 0$, $\gamma(0, t) = \theta(0,t) = p$ and therefore $\frac{d\theta^h_\lambda}{dt}\lvert_{\lambda = 0 } = \frac{d\gamma^h_\lambda}{dt}\lvert_{\lambda=0} = 0$.  Hence, the integrand above reduces to 
\bea
\Gamma^j_{ih}(p)\eta^i \frac{\partial}{\partial\lambda} \left(\frac{d\gamma^h_\lambda}{dt} -  \frac{d\theta^h_\lambda}{dt}   \right)\Big \lvert_{\lambda = 0}
\eea
and now it is enough to show that $\frac{\partial}{\partial\lambda} \left(\frac{d\gamma^h_\lambda}{dt} -  \frac{d\theta^h_\lambda}{dt}   \right)\Big \lvert_{\lambda = 0} = 0$.  Using the equality of mixed partials for the smooth function $\gamma^h(\lambda, t) - \theta^h(\lambda, t)$ we can write
\bea
\frac{\partial}{\partial\lambda} \left(\frac{\partial}{\partial t} (\gamma^h(\lambda, t) -  \theta^h(\lambda, t))\right) \lvert_{\lambda = 0}   & = &  \frac{\partial}{\partial t} \left(\frac{\partial}{\partial \lambda} (\gamma^h(\lambda, t) - \theta^h(\lambda, t))\lvert_{\lambda = 0}\right). 
\eea
Now using the definitions of $\gamma(\lambda, t)$ and $\theta(\lambda, t)$ we see that $\partial_\lambda \gamma(\lambda, t) \lvert_{\lambda = 0} =   t\xi + DQ(0)(t\xi) = t\xi$, while $\partial_\lambda \theta(\lambda, t) = t\xi + tDQ(0)(\xi) = t\xi$, so the quantities above must be zero.

\end{proof}

\subsection{Local trivializations}
We now define local trivializations of the bundles $\B\lvert_U \to U$ and $\cE \to \B\lvert_U$.  A small neighborhood of $(r_0, \ul{u}_0)$ in $\B\big\lvert_\mathcal{U}$ consists of pairs $(r, \ulu)$ where $r\in U$ parametrizes the complex structure $j(r)$ on $\ul{S}$, and $\ulu: \ul{S} \to \ulM$ is a map satisfying the prescribed Lagrangian boundary conditions and limits along the striplike ends.  For $\ulu$ sufficiently close to $\ulu_0$, there is a unique $\ulxi \in \Omega^0(\ul{S}, \ulu_0^*T\ul{M})$ such that $\exp_{\ulu_0}^Q(\ulxi) = \ulu$.  Write $\Phi_{\ulu}^Q(\ul{\xi}): T_{\ulu} \ulM \to T_{\exp^Q_{\ulu}(\ul{\xi})}\ulM$ for parallel transport along curves $\ul{\exp}^Q_{\ulu}(\tau\ul{\xi} )=\ul{\exp}(\tau\ul{\xi}+ Q(\tau\ul{\xi}))$, $\tau \in [0,1]$, with respect to the Hermitian connection $\widetilde{\nabla}$.  We emphasize that these curves are {\em not} geodesics for the connection $\widetilde{\nabla}$, since they are defined with the corrected exponential map $\ul{\exp}^Q$ and not with $\exp$.  

Define a projection map 
\bea
[ \ ]^{0,1}_{(r)}  =   \Pi_{r}: \Omega^1(\ul{S}, \ulu^*T\ulM) & \to & \Omega^{0,1}_{r}(\ulu{S}, \ulu^*T\ulM), \ \ \ 
\psi  \mapsto  \frac{1}{2}(\psi + J(r)\circ \psi \circ j(r))
\eea
i.e., projection onto the $(0,1)$ part with respect to $J(r)$ and $j(r)$.  Note that if $\psi \in \Omega^{0,1}_r(\ul{S}, \ulu^*T\ulM)$, then $\Pi_r(\psi) =\psi$.  Also, if $r^\prime$ is close to $r$, then $\Pi_r$ determines an isomorphism
\[
\Pi_r : \Omega^{0,1}_{r^\prime}(\ul{S}, \ulu^*T\ulM) \to \Omega^{0,1}_{r}(\ul{S}, \ulu^*T\ulM).
\]
The fibers of $\cE$ over a small neighborhood of $(r_0, \ulu_0)$ can be identified using the isomorphisms  
\bea
\Phi_{\sS, r_0,\ul{u}_0}(\rho, \ul{\xi})^{-1} :  \Omega^{0,1}_{\exp_{r_0}(\rho)}(\ul{S}, \ul{\exp}^Q_{\ul{u}_0}(\ul{\xi})^*T\ul{M})  \to  \Omega^{0,1}_{r_0}(\ul{S}, \ul{u}_0^*T\ul{M}) \\
\beta  \mapsto  \Phi^Q_{\ul{u}_0}(\ul{\xi})^{-1} \frac{1}{2}\left( \beta + J(r_0,u,z) \circ  \beta \circ j(r_0) \right).
\eea
 Fix a metric on the compact finite dimensional space $\ol{\RR}$ once and for all, and an  exponential map $\exp_{r_0} : T_{r_0} U \to U$.  Use the exponential map on $\RR$ and the maps $\ul{\exp}^Q$ on $\ul{M}$ to identify $\B\lvert_{U} \cong T_{r_0} \RR \times \Omega^0(\ul{S}, \ul{u}_0^*T\ulM)$.   

The vertical part of the section $\ol{\partial} - \ul{\nu}: \B \to \cE$ at $(r_0, \ulu_0)$ is given by the non-linear map
\bea
\F_{\sS, r_0,\ulu_0}: T_{r_0} \RR \times \Omega^0(\ul{S}, \ul{u}_0^*T\ulM) & \to & \Omega^{0,1}_{r_0}(\ul{S}, \ul{u}_0^*T\ulM)\\
(\rho,\ul{\xi}) & \mapsto & \Phi_{\sS,r_0,\ul{u}_0}(\rho, \ul{\xi})^{-1}(\ol{\partial} - \ul{\nu})(\exp_{r_0}\rho, \ul{\exp}^Q_{\ul{u}_0}\ul{\xi}),
\eea
which extends to a non-linear function between Banach completions
\[
\F_{\sS, r_0, \ulu_0} : T_{r_0} \RR \times W^{1,p}(\ul{S}, \ul{u}_0^*T\ulM) \to L^p(\ul{S}, \Lambda^{0,1}\otimes_{J(r), j(r)} \ulu_0^*T\ul{M}).
\]

\subsection{The linearized operator}\label{lin_op}

We give explicit computations of the linearized operators.  For convenience we introduce the notation 
\bea
\widetilde{\F}_{\sS, r,\ul{u}}{(\rho,\ul{\xi})}& := & \Phi_{\sS,r,\ul{u}}(\rho, \ul{\xi})^{-1}\ol{\partial}(\exp_r\rho, \exp_{\ul{u}}\ul{\xi})\\
{\cP}_{\sS, r,\ul{u}}{(\rho,\ul{\xi})}& := & \Phi_{\sS,r,\ul{u}}(\rho, \ul{\xi})^{-1}\ul{\nu}(\exp_r\rho, \exp_{\ul{u}}\ul{\xi}),
\eea
($\cP$ for ``perturbation term"). The corresponding linearized operators are given by 
\[
  \underset{=:D_{\sS,r,\ul{u}}}{\underbrace{d\F_{\sS, r,u}(0,0)}} = \underset{=:\widetilde{D}_{\sS,r,\ul{u}}}{\underbrace{d\widetilde{\F}_{\sS, r,\ulu}(0,0)}} - \underset{=:P_{\sS,r,\ul{u}}}{\underbrace{d\cP_{\sS, r,\ulu}(0,0)}}. 
  \]
Abbreviating $r_\lambda := \exp_{r}(\lambda\rho), \ulu_{\lambda}:= \ul{\exp}^Q_{\ulu}(\lambda \ul{\xi})$, we can write
  \bea
  \widetilde{D}_{\sS,r,\ul{u}}(\rho,\ul{\xi}) & = 
 & \frac{d}{d\lambda}\Big\lvert_{\lambda=0} \Phi^Q_{\ulu}(\lambda\ul{\xi})^{-1}\frac{1}{2}\left( \ol{\partial}(r_\lambda, \ulu_\lambda) + J(r, \ulu_\lambda) \circ \delbar(r_\lambda, \ulu_\lambda) \circ j(r)\right)\\
  & = & \frac{1}{2} \left(\frac{d}{d\lambda}\Big\lvert_{\lambda=0}\Phi^Q_{\ulu}(\lambda\ul{\xi})^{-1}\ol{\partial}(r_\lambda, \ulu_\lambda) + J(r, \ulu) \circ \frac{d}{d\lambda}\Big\lvert_{\lambda=0}\Phi^Q_{\ulu}(\lambda\ul{\xi})^{-1}\ol{\partial}(r_\lambda, \ulu_\lambda)\circ j(r)\right)\\
  & = & \left[ \frac{d}{d\lambda}\Big\lvert_{\lambda=0}\Phi^Q_{\ulu}(\lambda\ul{\xi})^{-1}\ol{\partial}(r_\lambda, \ulu_\lambda)  \right]^{0,1}
  \eea
  where $\left[ \ \right]^{0,1}$ denotes projection onto the $(0,1)$ part with respect to $J(r, \ulu)$ and $j(r)$.  By direct calculation we get 
  \bea
   \frac{d}{d\lambda}\Big\lvert_{\lambda=0}\Phi^Q_{\ulu}(\lambda\ul{\xi})^{-1}\ol{\partial}(r_\lambda, \ulu_\lambda)  & = &  \frac{d}{d\lambda}\Big\lvert_{\lambda=0}\Phi^Q_{\ulu}(\lambda\ul{\xi})^{-1} \frac{1}{2}\left( d\ulu_\lambda + J(r_\lambda, \ulu_\lambda)\circ d\ulu_\lambda \circ j(r_\lambda)\right)\\
   & = & \frac{1}{2}\left( \widetilde{\nabla}_\lambda d\ulu_\lambda +  \partial_\lambda J(r_\lambda, \ulu)\circ d\ulu \circ j(r) \right.\\
   & & \left. + J(r,u)\circ \widetilde{\nabla}_\lambda d\ulu_\lambda \circ j(r)  + J(r,\ulu)\circ d\ulu\circ \partial_\lambda j(r_\lambda)\right)\\
   & = & \left[  \widetilde{\nabla}_\lambda d\ulu_\lambda\right]^{0,1} + \frac{1}{2}\partial_\rho J \circ d\ulu \circ j(r) + \frac{1}{2}J(r,\ulu)\circ d\ulu \circ \partial_\rho j.
  \eea
Thus the linearized operator $\widetilde{D}_{\sS, r, \ulu}$ splits into the two parts
\begin{eqnarray}
\widetilde{D}_{\sS, r, \ulu} (\rho, \ul{\xi}) =  \underset{=: \widetilde{D}_{\ulu}^{(r)}(\ul{\xi})}{\underbrace{\left[  \widetilde{\nabla}_\lambda d\ulu_\lambda\right]^{0,1}} }+ \underset{=:\widetilde{D}_r^{(\ulu)}(\rho)}{\underbrace{\left[  \frac{1}{2}\partial_\rho J \circ d\ulu \circ j(r) + \frac{1}{2}J(r,\ulu)\circ d\ulu \circ \partial_\rho j \right]^{0,1}}},\label{lin_delbar1}
\end{eqnarray}
and identical arguments as in \cite[Proposition 3.1.1]{mcd-sal} give the explicit local formula 
\begin{equation}\label{lin_delbar}
\widetilde{D}_{\ulu}^{(r)} (\ul{\xi}) = [\nabla \ul{\xi}]^{0,1} - \frac{1}{2}J(r, \ulu)(\nabla_{\ul{\xi}}J)(r,\ulu)\partial_{\ul{J}(r)}(\ulu),
\end{equation}
in terms of the Levi-Civita connection $\nabla$ of the metrics $\ul{g}_{\ul{J}(r)}$.  Similarly, 
\begin{eqnarray}
P_{\sS, r, \ulu}(\rho, \ul{\xi}) & = & \left[ \frac{d}{d\lambda}\Big\lvert_{\lambda = 0} \Phi_u^Q(\lambda\ulxi)^{-1} \frac{1}{2}\left(  Y(r_\lambda, \ulu_\lambda) + J(r_\lambda, \ulu_\lambda)\circ  Y(r_\lambda, \ulu_\lambda) \circ j(r_\lambda)\right) \right]^{0,1}\nonumber\\
& = & \underset{=:P_{\ulu}^{(r)}(\ul{\xi})}{\underbrace{\left[  \widetilde{\nabla}_{\lambda} Y(r, \ulu_\lambda) \right]^{0,1}}}  + \underset{=: P_r^{(\ulu)}(\rho)}{\underbrace{\left[ \partial_\rho Y  \right]^{0,1} + \left[   \partial_\rho J \circ Y \circ j + J \circ Y\circ \partial_\rho j \right]^{0,1}}}\label{lin_pert1}
\end{eqnarray}
and $P_{\ulu}^{(r)}$ is explicitly given locally by
\begin{equation}\label{lin_pert}
P_{\ulu}^{(r)}(\ul{\xi}) = \left[ \nabla_{\ul{\xi}}Y \right]^{0,1} - \frac{1}{2}\left[ J\circ \nabla_{\ul{\xi}}J\circ Y\right]^{0,1}.
\end{equation}

\subsection{Gromov convergence}

\begin{definition} 
Consider a sequence $\{(r_n, \ul{u}_n)\}_{n=1}^\infty \subset \M_{d,0}(\ul{x}_0, \ldots, \ul{x}_d)$. 

\begin{enumerate}
\item For $2 \leq e \leq d-1$, we say that the sequence {\em Gromov converges to the broken pair} 
\bea
(r_1, \ulu_1) & \in & \M_{d-e+1, 0}(\ul{x}_0, \ul{x}_1, \ldots, \ul{x}_i, \ul{y}, \ul{x}_{i+e +1}, \ldots, \ul{x}_d)\\
(r_2, \ulu_2) & \in & \M_e(\ul{y}, \ul{x}_{i+1}, \ldots, \ul{x}_{i+e})
\eea
if 
\begin{itemize}
\item $r_n \lra r_1 \#_0 r_2$ in the topology of $\RR^{d,0}$ near the boundary point $r_1 \#_0 r_2 \in \partial \RR^{d,0}$,
\item $E(\ulu_n) \lra E(\ulu_1) + E(\ulu_2)$,
\item $\ulu_n$ converges uniformly on compact subsets of $\sS_{r_1}$ to $\ulu_1$, and converges uniformly on compact subsets of $\sS_{r_2}$ to $\ulu_2$.  
\end{itemize}
\item For $ 1 \leq s_1, \ldots, s_k \leq d-1$ such that $s_1 + \ldots + s_k = d$, we say the sequence {\em Gromov converges to the broken tuple} 
\bea
(r_0, \ulu_0) & \in & \M_{k}(\ul{x}_0, \ul{y}_1, \ldots, \ul{y}_k)\\
(r_1, \ulu_1) & \in & \M_{s_1, 0}(\ul{y}_1, \ul{x}_1, \ldots, \ul{x}_{s_1})\\
(r_2, \ulu_2) & \in & \M_{s_2, 0}(\ul{y}_2, \ul{x}_{s_1 + 1}, \ldots, \ul{x}_{s_1 + s_2})\\
\ldots & & \\
(r_k, \ulu_k) & \in & \M_{s_k, 0}(\ul{y_k}, \ul{x}_{d-s_k + 1}, \ldots, \ul{x}_{d}) 
\eea 
if
\begin{itemize}

\item $r_n \lra r_0 \#_0 (r_1, \ldots, r_k) \in \partial \RR^{d,0}$ in the topology of $\RR^{d,0}$ near the boundary,

\item $E(u_n) \lra E(\ulu_0) +E(\ulu_1) + \ldots + E(\ulu_k)$,

\item $\ulu_n$ converges uniformly on compact subsets of $\sS_{r_j}$ to $\ulu_j$, for $j = 0, \ldots, k$.

\end{itemize}

\item For $i \in \{1, \ldots, d\}$, we say that the sequence {\em Gromov converges to the broken pair}
\bea
(r, \ulu) & \in & \M_{d,0}(\ul{x}_0, \ul{x}_1, \ldots, \ul{x}_{i-1}, \ul{y}, \ul{x}_{i+1}, \ldots, \ul{x}_d)\\
\ul{v} & \in & \widetilde{\M}_1(\ul{y}, \ul{x}_i)
\eea
if 
\begin{itemize}
\item $r_n \to r$ in $\RR^{d,0}$, where $r$ is in the interior of $\RR^{d,0}$,

\item $E(\ulu_n) \lra E(\ulu) + E(\ul{v})$,

\item $\ulu_n$ converges uniformly on compact subsets of $\sS_{r}$ to $\ulu$, and there is a sequence $\tau_n \in \R$ (shift parameters) such that if $(s,t)$ denote coordinates on the strip $\R \times [0,1]$, and $\epsilon_i : \R_{\geq 0} \times [0,1] \to \sS_{r}$ is the $i$-th striplike end of $\sS_{r}$, then the sequence of shifted  maps $\ulu_n( \epsilon_i(s+\tau_n, t))$ converges uniformly on compact subsets of $\R \times [0,1]$ to a fixed parametrization of the Floer trajectory $\ul{v}$. 
\end{itemize}

\item We say that the sequence {\em Gromov converges to the broken pair}
\bea
(r,u) & \in & \M_{d,0}(\ul{y}, \ul{x}_1, \ldots, \ul{x}_d)\\
\ul{v} & \in & \widetilde{\M}_1(\ul{x}_0, \ul{y})
\eea
if 
\begin{itemize}
\item $r_n \to r$ in $\RR^{d,0}$, where $r$ is in the interior of $\RR^{d,0}$,

\item $E(\ulu_n) \lra E(\ulu) + E(\ul{v})$,

\item $\ulu_n$ converges uniformly on compact subsets of $\sS_{r}$ to $\ulu$, and there is a sequence $\tau_n \in \R$ (shift parameters) such that if $(s,t)$ denote coordinates on the strip $\R \times [0,1]$, and $\epsilon_0 : \R_{\geq 0} \times [0,1] \to \sS_{r}$ is the $0$-th striplike end of $\sS_{r}$, then the sequence of shifted  maps $\ulu_n( \epsilon_0(s+\tau_n, t))$ converges uniformly on compact subsets of $\R \times [0,1]$ to a fixed parametrization of the Floer trajectory $\ul{v}$. 
\end{itemize}

\end{enumerate}
\end{definition}

\subsection{Gromov neighborhoods}\label{gnbds}

We now define what we call {\em Gromov neighborhoods} of a broken quilt of Type 1, 2 or 3.  For $\epsilon > 0$, we will define a subset $U_\epsilon \subset \B^{d,0}$.  Under these definitions, a sequence $(r_\nu, \ulu_\nu) \in \M_{d,0}(\ul{x}_0,\ldots, \ul{x}_d)$ will Gromov converge to a broken quilt if, and only if, given $\epsilon > 0$ there is a $\nu_0$ such that $(r_\nu, \ulu_\nu) \in U_{\epsilon}$ for all $\nu \geq \nu_0$.  

\subsection*{Type 1}  

Let $(r_1, \ulu_1)$ and $(r_2, \ulu_2)$ be a broken pair, 
\bea
(r_1, \ulu_1) & \in & \M_{d-e+1,0}(\ul{x}_0, \ul{x_1}\ldots, \ul{x}_{i-1}, \ul{y}, \ul{x}_{i + e+1}, \ldots, \ul{x}_d)^0\\
(r_2, \ulu_2) & \in & \M_{e}(\ul{y}, \ul{x}_i, \ul{x}_{i+1},\ldots, \ul{x}_{i+e})^0.
\eea
A small neighborhood of the point $r_1\#_0 r_2 \in \partial \RR^{d,0}$ is of the form 
\[
U \cong U_1\times U_2 \times [0, \epsilon)
\] 
where $U_1 \subset \RR^{d-e+1, 0}$ is a neighborhood of $r_1$, and $U_2 \subset \RR^e$ is a neighborhood of $r_2$, and the interval $[0,\epsilon)$ represents the gluing parameter.  Recall that a gluing parameter $\delta$ corresponds to a gluing length $R(\delta) = - \log(\delta)$.   

Fix a metric on $U_1 \subset \RR^{d-e+1,0}$ and a metric on $U_2 \subset \RR^{e}$, and define a metric topology on $U \cong U_1\times U_2 \times [0, \epsilon)$ by 
\[
\dist_U(r_1 \#_\delta r_2, r_1^\prime \#_{\delta^\prime} r_2^\prime) := \sup \{ \dist_{U_1}(r_1, r_1^\prime), \dist_{U_2}(r_2, r_2^\prime), |\delta - \delta^\prime| \}. 
\]  
By the construction of the surface bundles $\sS \lra \RR$, we can suppose that the neighborhood $U$ is sufficiently small that the corresponding neighborhoods $U_1 \subset \RR^{d-e+1,0}$ and $U_2 \subset \RR^{e}$ are also small enough that all surfaces in the bundles over them are diffeomorphic to each other by diffeomorphisms preserving the striplike ends.  Write $\sS_{r_1\#_\delta r_2} = \sS_{r_1}^\delta \cup \sS_{r_2}^\delta / \sim$, where $\sS_{r_i}^\delta$ represents the truncation of $\sS_{r_i}$ along the prescribed striplike end at $s =R(\delta)$, and $\sim$ is the identification of the two truncated surfaces along the cuts.

\begin{definition} Let $\epsilon > 0$ be given.  Define a {\em Gromov neighborhood} $U_\epsilon \subset \bB^{d, 0}$ of the pair $(r_1, \ulu_1), (r_2, \ulu_2)$ as follows:  $(r, \ulu) \in U_\epsilon$ if 
\begin{itemize}

\item $r = \widetilde{r_1} \#_{\delta} \widetilde{r_2} \in U$ with $\dist_U(  \widetilde{r_1} \#_{\delta} \widetilde{r_2}  , r_1\#_0 r_2) < \epsilon$,

\item $|E(\ulu_1) + E(\ulu_2) - E(\ulu)| < \epsilon$,

\item $\dist_{\ul{M}}(\ulu(z), \ulu_1(z)) < \epsilon$ for all $z \in \sS_{r_1}^\delta$, 

\item $\dist_{\ul{M}}(\ulu(z), \ulu_2(z)) < \epsilon$ for all $z \in \sS_{r_2}^\delta$. 
\end{itemize}
The metrics on the target manifolds $\ul{M}$ are those induced by their symplectic forms $\ul{\omega}$ and the choice of compatible almost complex structures $\ul{J}=\ul{J}(z)$.
\end{definition}

\subsection*{Type 2}

Let $(r_0, \ulu_0), \ldots, (r_k, \ulu_k)$ be a broken tuple of the form
\bea
(r_0, \ulu_0) & \in & \M_{k}(\ul{x}_0, \ul{y}_1, \ldots, \ul{y}_k)\\
(r_1, \ulu_1) & \in & \M_{s_1, 0}(\ul{y}_1, \ul{x}_1, \ldots, \ul{x}_{s_1})\\
(r_2, \ulu_2) & \in & \M_{s_2, 0}(\ul{y}_2, \ul{x}_{s_1 + 1}, \ldots, \ul{x}_{s_1 + s_2})\\
\ldots & & \\
(r_k, \ulu_k) & \in & \M_{s_k, 0}(\ul{y_k}, \ul{x}_{d-s_k + 1}, \ldots, \ul{x}_{d}). 
\eea 
A small neighborhood of the point $r_1\#_0 r_2 \in \partial \RR^{d,0}$ is of the form 
\[
U \cong U_0\times U_1 \times \ldots U_k \times [0, \epsilon)
\] 
where $U_0 \subset \RR^{k}$ is a neighborhood of $r_0$, $U_i \subset \RR^{s_i, 0}$ is a neighborhood of $r_i$ for $i = 1, \ldots, k$, and the interval $[0,\epsilon)$ represents the gluing parameter. Fixing a metric on $U_0, \ldots, U_i$ determines a metric topology on $U \cong U_1\times U_2 \times [0, \epsilon)$ by 
\[
\dist_U(r_0 \#_\delta (r_1, \ldots, r_k),  r_0^\prime \#_{\delta^\prime} (r_1^\prime, \ldots, r_k^\prime)) := \sup \{ \dist_{U_0}(r_0, r_0^\prime),  \ldots, \dist_{U_k}(r_k, r_k^\prime), |\delta - \delta^\prime| \}. 
\]
Taking the neighborhood $U$ to be sufficiently small we can assume that all surfaces parametrized by $U_0, \ldots, U_k$ are diffeomorphic via diffeomorphisms that are constant on the striplike ends.   Write $\sS_{r_0} \#_\delta (r_1, \ldots, r_k) = \sS_{r_0}^\delta \cup \sS_{r_1}^\delta \cup \ldots \cup \sS_{r_k}^\delta/\sim$, where each $\sS_{r_i}^\delta$ is the truncation of the surface $\sS_{r_i}$ along the prescribed striplike end at $s = R(\delta) = -\log(\delta)$, and $\sim$ is the identifications of the surfaces along the truncated ends.  

\begin{definition} Let $\epsilon > 0$ be given.  Define a {\em Gromov neighborhood} $U_\epsilon \subset \bB^{d, 0}$ of the tuple $(r_0, \ulu_0), \ldots, (r_k, \ulu_k)$ as follows:  $(r, \ulu) \in U_\epsilon$ if 
\begin{itemize}

\item $r = \widetilde{r_0} \#_{\delta} \{\widetilde{r_1}, \ldots, \widetilde{r_k}\} \in U$ with $\dist_U( r  , r_0\#_0 \{r_1, \ldots, r_k\}) < \epsilon$,

\item $|E(\ulu_0) + E(\ulu_1) + \ldots + E(\ulu_k) - E(\ulu)| < \epsilon$,

\item $\dist_{\ul{M}}(\ulu(z), \ulu_i(z)) < \epsilon$ for all $z \in \sS_{r_i}^\delta$, $i = 0, \ldots, k$.

\end{itemize}

\end{definition}

\subsection*{Type 3}

Let $(r_0, \ulu_0) \in \M_{d,0}(\ul{x}_0, \ldots, \ul{x}_{i-1}, \ul{y}, \ul{x}_{i+1}, \ldots, \ul{x}_d)$ be a pseudoholomorphic quilted disk, and let $\ul{v} \in \widetilde{\M}_1(\ul{y},\ul{x}_i)$ be a quilted Floer trajectory.  We fix a parametrization of the Floer trajectory $\ul{v}: \R\times [0,1] \to \ul{M}$.   As in previous sections we write $\epsilon_i: [0,\infty) \times [0,1] \to \sS_{r}$ for the $i$-th striplike end, and  $Z_i$ for the image of this striplike end in $\sS_r$.  

\begin{definition}
Let $\epsilon > 0$ be given, and define $R(\epsilon) = -\log (\epsilon)$.  Define a {\em Gromov neighborhood} $U_\epsilon \subset \bB^{d,0}$ of the pair $(r_0, \ulu_0), \ul{v}$ as follows:  $(r,\ulu) \in U_\epsilon$ if
\begin{itemize}

\item $\dist_{\RR^{d,0}}(r, r_0) < \epsilon$,

\item $|E(\ulu_0) + E(\ul{v}) - E(\ulu)| < \epsilon$,

\item for $z \in \sS_{r_0}^{R(\epsilon)}$, $\dist_{\ul{M}}(\ulu(z), \ulu_0(z)) < \epsilon$,

\item there exists some $\tau \geq 2 R(\epsilon)$ such that $\dist_{\ul{M}}(\ulu(\epsilon_i(s+\tau, t), \ul{v}(s,t)) < \epsilon$ for $(s,t) \in [-R(\epsilon), R(\epsilon)]\times [0,1]$. 

\end{itemize}

\end{definition}

\section{Gluing}\label{gluing_section}

 \begin{definition} A pseudoholomorphic quilt $(r,\ulu)$ is {\em regular} if the linearized operator $D_{\sS,r,\ulu}$ is surjective.  Similarly we say that a generalized Floer trajectory $\ul{v}$ is {\em regular} if the associated linearized operator $D_{\ul{v}}$ is surjective.  
\end{definition}
The strategy of proof for gluing is standard:

\begin{itemize}

\item[1:] Define a {\em pre-glued curve}, $(r_R, \ulu_R)$, for gluing lengths $R> >0$.  

\item[2:] Compute that $\|(\ol{\partial} - \ul{\nu})(r_R, \ulu_R)\|_{0,p} \leq \varepsilon(R)$ where $\varepsilon(R) \to 0$ as $R\to \infty$.  

\item[3:] Show that $D_{\sS, r_R, \ulu_R}$ is surjective, and construct a right inverse $Q_R$ by first constructing an approximate right inverse, $T_R$, with the same image. 

\item[4:] Show there is a uniform bound $\|Q_R\| \leq C$ for sufficiently large $R$.

\item[5:] Show that for each $R$, the function $\F_{\sS, r_R, \ulu_R}$ satisfies a quadratic estimate 
\begin{equation}\label{quad_est}
\| d\F_{\sS,r_R,\ulu_R}(\rho, \ulxi) - D_{\sS,r_R,\ulu_R}\| \leq c (|\rho| + \|\ulxi\|_{W^{1,p}})
\end{equation}
with the constant $c$ independent of $R$.
\end{itemize}

These steps are the content of Sections \ref{pregluing_section} through \ref{quad_est_section} respectively. In Section \ref{gluing_map} these ingredients are used to define, with the help of an implicit function theorem, a gluing map associated to each regular tuple, and we show that the image of the gluing map is contained in the one-dimensional component of the relevant moduli space of quilted disks, $\M_{d,0}(\ul{x}_0, \ldots, \ul{x}_d)^1$.  In Section \ref{surjectivity_section} we show that the gluing map is surjective, in the sense that if a pseudoholomorphic quilted disk $(r, \ulu) \in \M_{d,0}(\ul{x}_0, \ldots, \ul{x}_d)^1$ is in a sufficiently small Gromov neighborhood of the broken tuple, then it is in the image of the gluing map for that tuple.  

\subsection{Pregluing}\label{pregluing_section}

There are three types of pre-gluing to consider - two types arise from the two types of facet of codimension one in the boundary of $\ol{\RR}^{d,0}$ (corresponding to whether the inner circle has bubbled through or not), and the other type arises from a Floer trajectory breaking off. 
\begin{definition}Given a {\em gluing parameter} $\delta > 0$,  define $R(\delta) := - \log(\delta)$ to be the {\em gluing length} corresponding to $\delta$.  So $R(\delta) \to \infty$ as $\delta \to 0$. 
\end{definition}

\subsection*{Type 1} Assume that we have a regular pair 
\bea
(r_1, \ulu_1) & \in & \M_{{d_1},0}(\ul{x}_0, \ldots, \ul{x}_{i-1}, \ul{y}, \ul{x}_{i+1}, \ldots, \ \ldots \ul{x}_{d_1})^0, \ \mbox{where}\ i \in \{1, \ldots, d\},\\
(r_2, \ulu_2) & \in & \M_{d_2}(\ul{y}, \ul{z}_1, \ldots, \ul{z}_{d_2})^0.
\eea 
The surface parametrized by $r_1$ is a quilted disk, and the surface parametrized by $r_2$ is a marked, unquilted disk.  A marked point labeled $\zeta^-$ on $r_1$ is identified with a marked point $\zeta^+$ on $r_2$, identifying the pair $r_1 \# r_2$ with a nodal quilted disk.  Along the strip-like ends labelled by $\zeta^\pm$, $\ulu_1$ and $\ulu_2$ converge exponentially to $\ul{y}$.

\begin{figure}[ht]
\center{\includegraphics[height=3in]{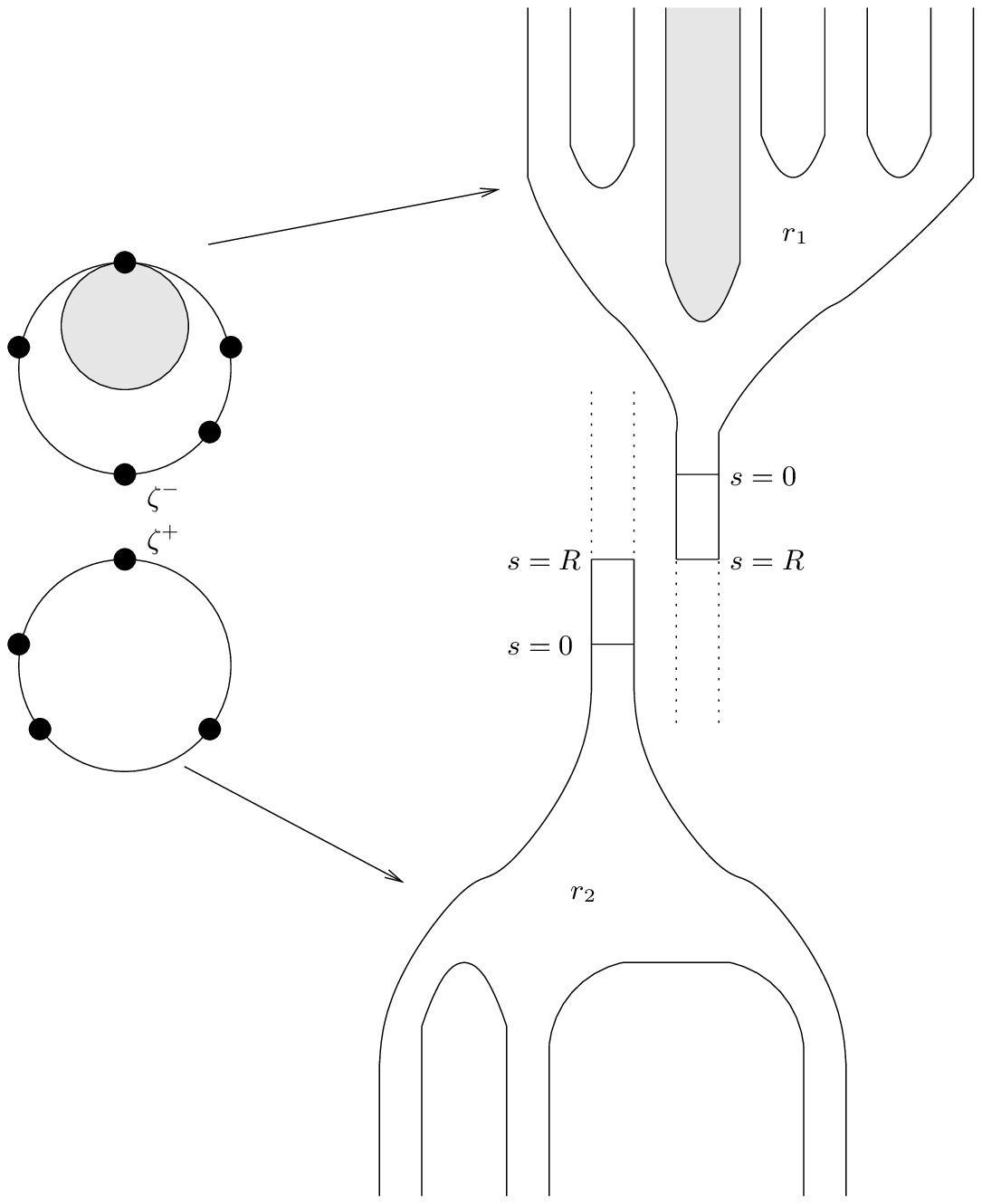} \includegraphics[height=3in]{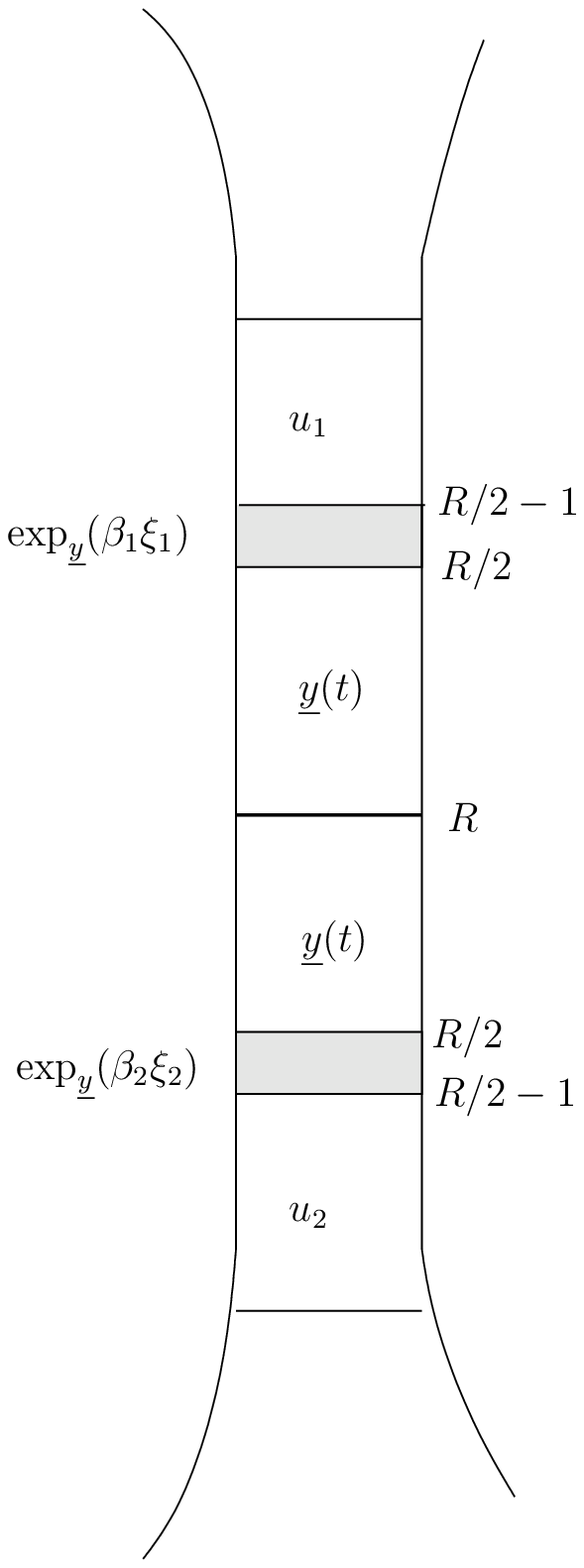}}
\caption{Case 1 of pregluing (left), and gluing $\ulu_1$ and $\ulu_2$ along the neck(right).}\label{gluing_neck}\label{case1}
\end{figure}

\subsubsection*{The quilted surface $r_R:=r_1 \#_R r_2$:} Truncate the surface $\sS_{r_1}$ along the striplike end labeled by $\zeta^-$ at $s=R$, and truncate the surface $\sS_{r_2}$ along the striplike end labeled by $\zeta^+$ at $s=R$, then identify the two truncated surfaces along $s=R$. Explicitly, one identifies $\epsilon_{\zeta^+}(R,t) \sim \epsilon_{\zeta^-}(R,1-t)$; see Figure \ref{case1}.

Let $z = (s,t)$ denote the variables on the striplike end of $\sS_{r_1}$, where $s \in [0,\infty)$ and $t \in [0,1]$.  For $s >>0$,  $\ulu_1(s, t)$ is exponentially close to $\ul{y}(t) \in \I(\ul{L}, \ul{L}^\prime)$, and we 
define $\ul{\xi}_1(s,t) \in T_{\ul{y}(t)} \ulM$ by
\[
\ul{\exp}^Q_{\ul{y}(t)}(\ulxi_1(s,t)) = \ulu_1(s,t)
\]
where $\ul{\exp}^Q$ is the tuple of $t$-parametrized quadratically corrected exponential maps for the pair $\ul{L}, \ul{L}^\prime$, such that $\ul{L}$ is totally geodesic when $t=0$, and $\ul{L}^\prime$ is totally geodesic when $t=1$.  Similarly, we can define $\ul{\xi}_2(s,t) \in T_{\ul{y}(t)}\ulM$, for very large $s$, by
\[
\ul{\exp}^Q_{\ul{y}(t)}(\ul{\xi}_2(s,t)) = \ulu_2(s,t).
\]
Let $\beta: \R \to [0,1]$ be a smooth cut-off function such that $\beta(s) = 1, s \leq -1 $, and $\beta(s) = 0, s \geq 0$.  Define a pair of intermediate approximate pseudoholomorphic quilted surfaces $(r_i, \ulu_i^R)$ for $i = 1,2$, by
\[
\ulu_i^R(z) = \left\{\begin{array}{ll}
                 \ulu_1(z), & z \in \sS_{r_i}\setminus \epsilon_{\zeta^\pm}(s \geq  R/2)\\
                 \ul{\exp}^Q_{\ul{x}}(\beta(s-R/2) \ul{\xi}_i(s,t)), &  R/2 -1 \leq s \leq R/2 \\
                  \ul{y}(t), & s\geq R/2.
                 \end{array}\right.
\]

The pre-glued map $\ulu_1 \#_R \ulu_2 : \sS_{r_1\#_R r_2} \to \ul{M}$ is (Figure \ref{gluing_neck})
\[
\ulu_1\#_R \ulu_2 (z) = \left\{ \begin{array}{ll}
                 \ulu_1^R(z), & z \in \sS_{r_1} \setminus \epsilon_{\zeta^-}((R, \infty)\times [0,1])\\
                 \ulu_2^R(z), & z \in \sS_{r_2} \setminus \epsilon_{\zeta^+}((R, \infty)\times [0,1]).
\end{array}\right.
\]

\subsection*{Type 2} Consider a collection 
\bea
(r_0, \ulu_0) & \in & \M_{k}(\ul{x}_0, \ul{y}_1, \ldots, \ul{y}_{k})^0\\
(r_1, \ulu_1) & \in & \M_{d_1, 0}(\ul{y}_1, \ul{z}_{1}, \ul{z}_2, \ldots, \ul{z}_{d_1})^0\\
(r_2, \ulu_2) & \in &  \M_{d_2, 0}(\ul{y}_2, \ul{z}_{d_1+1}, \ul{z}_{d_1+2}, \ldots, \ul{z}_{d_1+d_2})^0\\
\ldots& & \\
(r_k, \ulu_k) & \in &  \M_{d_k, 0}(\ul{y}_{k}, \ul{z}_{d_1+d_2 + \ldots + d_{k-1}+1}, \ul{z}_{d_1+d_2 + \ldots + d_{k-1}+2}, \ldots,  \ul{z}_{d_1 + d_2 + \ldots d_{k-1}+d_k})^0
\eea
of regular pseudoholomorphic quilts. Let $\zeta^{(0)}, \zeta^{(1)}, \ldots, \zeta^{(k)}$  label the striplike ends of $\sS_{r_0}$ where the quilt map $\ul{u}_0$ converges to $\ul{x}_0,\ul{y}_1, \ldots, \ul{y}_k$ respectively.  For $i = 1, \ldots, k$, each quilt $\sS_{r_i}$ has a distinguished strip-like end $\eta_i^{(0)}$ on which $\ul{u}_i: \sS_{r_i} \to \ul{M}$ converges to $\ul{y}_i$. 

\subsubsection*{The quilted surface $r_R:=r_0 \#_R \{r_1, \ldots, r_k\}$:} Strictly speaking a gluing parameter $\delta >0$ and $R = -\log(\delta)$ gives rise to a tuple of admissible gluing lengths, $(R + a_1, \ldots, R+a_k)$, where the $a_1, \ldots, a_k$ are determined by the coordinates of $r_0, r_1, \ldots, r_k$ in their respective moduli spaces.  However, for sufficiently large $R$ we can shift the position of ``$s=0$" on the striplike ends so as to take the gluing lengths along all the striplike ends to be $R$.  Truncate $\sS_{r_0}$ along each of the striplike ends labeled $\zeta^{(1)}, \ldots, \zeta^{(d)}$ at $s = R$, and truncate the surfaces $\sS_{r_1}, \ldots, \sS_{r_k}$ along their respective striplike ends labeled $\eta_1^{(0)}, \ldots, \eta_k^{(0)}$ at $s = R$,  then identify truncations labeled by pairs $\zeta^{(i)}, \eta_i^{(0)}$ along $s = R$ (Figure \ref{pregluing2}).
\begin{figure}[h]
\center{\includegraphics[height=4in]{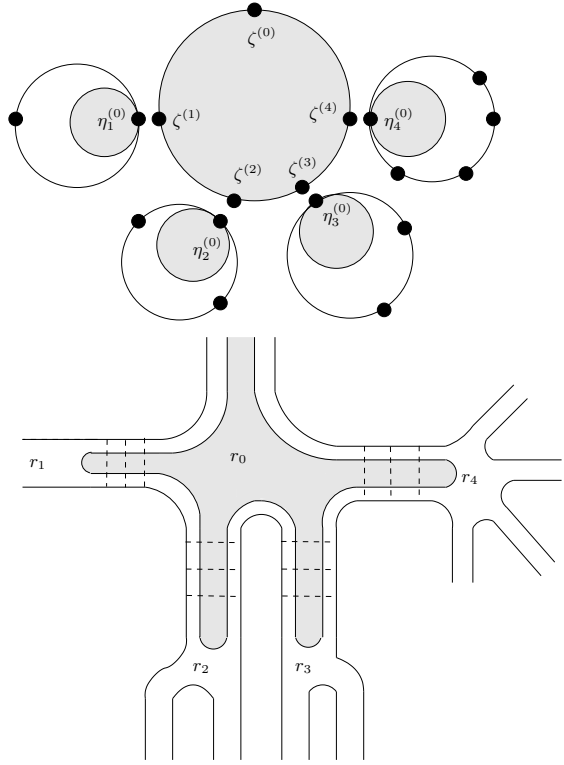}}
\caption{Pregluing in Case 2.}\label{pregluing2}
\end{figure}

\subsubsection*{The approximate pseudoholomorphic map} 
Let $z = (s,t)$ be coordinates along the striplike ends.  For $s >> 0$, define $\ul{\xi}_{i}(s,t) \in T_{\ul{y}_i(t)}\ul{M}$ by
\[
\ul{\exp}^Q_{\ul{y}_i(t)}(\ulxi_i(s,t)) = \ulu_0(\epsilon_{\zeta^{(i)}}(s,t)).
\]
Similarly, for $i = 1, \ldots, k$ and $s>>0$ define $\ulxi_i^{(0)}(s,t) \in T_{\ul{y}_i(t)}\ul{M}$ by 
\[
\ul{\exp}^Q_{\ul{y}_i(t)}(\ul{\xi}_i^{(0)}(s,t)) = \ulu_i(\epsilon_{\eta_i^{(0)}}(s,t)).
\]
We introduce the intermediate approximate pseudoholomorphic quilted surfaces $(r_0, \ul{u}_0^R), (r_1, \ul{u}_1^R), \ldots, (r_k, \ul{u}_k^R)$ defined as follows. The map $\ul{u}_0^R: \sS_{r_0} \to \ul{M}$ is defined piecewise by
\[
\ulu_0^R(z) = \left\{\begin{array}{ll}
                 \ulu_0(z), & z \in \sS_{r_0}\setminus \bigcup\limits_{i=1}^k \epsilon_{\zeta^{(i)}}(s \geq  R/2-1)\\
                 \ul{\exp}^Q_{\ul{y}_i(t)}(\beta(s-R/2) \ul{\xi}_i(s,t)), & z = \epsilon_{\zeta^{(i)}}(s,t), s \in [R/2 -1, R/2] \\
                  \ul{y}_i(t), & z = \epsilon_{\zeta^{(i)}}(s,t), s \geq R/2.
                 \end{array}\right.
\]
For $i = 1, \ldots, k$ the intermediate maps $\ul{u}_i^R: \sS_{r_i} \to \ul{M}$ are defined by
\[
\ulu_i^R(z)= \left\{\begin{array}{ll}
           \ulu_i(z), & z \in \sS_{r_i}\setminus \epsilon_{\eta_i^{(0)}}(s \geq  R/2-1)\\
           \ul{ \exp}^Q_{\ul{y}_i(t)}(\beta(s-R/2) \ul{\xi}_i(s,t)), & z = \epsilon_{\eta_i^{(0)}}(s,t), s \in [R/2 -1, R/2]\\
\ul{y}_i(t), & z = \epsilon_{\eta_i^{(0)}}(s,t), s \geq R/2.
\end{array}\right.
\]
Then the pre-glued quilt map is
\[
\ulu_0 \#_R \{\ulu_1, \ldots, \ulu_k\}(z) = \left\{ \begin{array}{ll}
\ulu_0^R(z), & z \in \sS_{r_0}\setminus \bigcup\limits_{i=1}^k \epsilon_{\zeta^{(i)}}(s \geq  R)\\
\ulu_i^R(z), & z \in \sS_{r_i}\setminus \epsilon_{\eta_i^{(0)}}(s \geq R).
\end{array}\right.
\]

\subsection*{Type 3} (A Floer trajectory breaks off.) Consider a pair 
\bea
(r_1, \ulu_1)  \in  \M_{d, 0}(\ul{x}_0, \ul{x}_1, \ldots, \ul{x}_d)^0, \ \ \ 
\ul{v}  \in   \widetilde{\M}(\ul{x}_i, \ul{y})^0
\eea  
of a regular pseudoholomorphic quilted disk and a regular Floer trajectory. Assume without loss of generality that $\lim_{s\to-\infty} \ul{v}(s,t) =  \ul{x}_i(t), \ \ \ \lim_{s\to \infty} \ul{v}(s,t) = \ul{y}(t)$, 
and let $\zeta$ label the striplike end of $\sS_{r_1}$ for which $\lim_{s\to\infty} \ul{u}_1(\epsilon_{\zeta}(s,t)) = \ul{x}_i(t)$. The Floer trajectory $\ul{v}$ is defined only up to an $\R$ translation, so fix a parametrization.

\subsubsection*{The quilted surface} In this case, $r_R = r_1$. 

\subsubsection*{The approximate pseudoholomorphic map} For $s >> 0$ we know that $\ul{u}_1(\epsilon_\zeta(s,t))$ and $v(-s, t)$ are exponentially close to $\ul{x}_i(t)$.  For such $s$, define $\ul{\xi}(s,t), \ul{\eta}(-s,t) \in T_{\ul{x}_i(t)}\ul{M}$ by the conditions that
\[
\ul{\exp}^Q_{\ul{x}_i(t)}(\ul{\xi}(s,t))  =  \ul{u}_1(\epsilon_{\zeta}(s,t)),  \ \ \ \ 
\ul{\exp}^Q_{\ul{x}_i(t)}(\ul{\eta}(-s,t))  =  \ul{v}(-s,t).
\] 
Define an approximate pseudoholomorphic quilt and an approximate trajectory by
\bea
\ulu_1^R(z) & = & \left\{\begin{array}{ll}
           \ulu_1(z), & z \in \sS_{r_1}\setminus \epsilon_{\zeta}(s \geq  R/2-1)\\
            \ul{\exp}^Q_{\ul{x}_i(t)}(\beta(s-R/2) \ul{\xi}(s,t)), & z = \epsilon_{\zeta}(s,t), s \in [R/2 -1, R/2]\\
\ul{x}_i(t), & z = \epsilon_{\zeta}(s,t), s \geq R/2.
\end{array}\right.\\
\ul{v}^R(s,t) & = &  \left \{ \begin{array}{ll}
                   \ul{v}(s-2R,t), & s \geq 3R/2 + 1\\
                    \ul{\exp}^Q_{\ul{x}_i(t)}(\beta(-s + 3R/2 )\ul{\eta}(s - 2R,t)), & s \in [3R/2,3R/2 + 1]\\
                    \ul{x}_i(t), & s \leq 3R/2. 
\end{array}\right.
\eea
Then the preglued quilt map is
\[
\ul{u}_R(z) = \left\{\begin{array}{ll}
              \ul{u}_1^R(z), & z \in \sS_{r_1}\setminus \epsilon_{\zeta}(s \geq R)\\
              \ul{v}^R(s,t), & z \in \epsilon_{\zeta}(s\geq R).
\end{array}\right.
\]

\subsection{Estimates for the preglued quilts.}

\begin{proposition}
For sufficiently large $R_0 \geq 0$, there is a monotone decreasing function $\epsilon: [R_0, \infty) \to [0,\infty)$ such that 
\[
\| (\ol{\partial} - \ul{\nu})(r_R, \ulu_R)\|_{0,p} \leq \epsilon(R)
\]
and $\epsilon(R) \to 0$ as $R \to \infty$. 
\end{proposition}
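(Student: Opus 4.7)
The plan is to decompose the glued domain $\sS_{r_R}$ into a \emph{thick part}, on which $\ulu_R$ coincides unchanged with one of the original maps $\ulu_i$, and a \emph{neck}, consisting of the intervals $s\in[R/2-1,R/2]$ on each glued striplike end, where the interpolation via the cutoff $\beta$ occurs. On the thick part the equation is automatically satisfied, i.e.\ $(\ol{\partial}-\ul{\nu})(r_R,\ulu_R)=0$: the underlying $\ulu_i$ are pseudoholomorphic; the complex structure on $\sS_{r_R}$ agrees with $\ul{j}(r_i)$ there; and the consistency of the universal choice of perturbation datum in Section~\ref{section_j-holo_quilts} ensures that $(\ul{K},\ul{J})$ on $\sS_{r_R}$ restricts to the data used for $\ulu_i$ away from the neck, and to the Floer datum on the truncated quilted strips. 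Hence the entire error is supported on the necks.

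On a single neck, write $\ulu_R=\ul{\exp}^Q_{\ul{y}(t)}\bigl(\beta(s-R/2)\,\ul{\xi}_i(s,t)\bigr)$ for $(s,t)\in[R/2-1,R/2]\times[0,1]$, where $\ul{y}$ is the relevant generalized intersection and $\ul{\xi}_i$ records the deviation of $\ulu_i$ from $\ul{y}$ along the striplike end. Non-degeneracy of $\ul{y}$ together with the standardness of the Floer datum on the striplike end allows one to invoke the exponential decay estimates of Appendix~B to obtain constants $C,\delta>0$, independent of $R$, with
\[
|\ul{\xi}_i(s,t)|+|\nabla \ul{\xi}_i(s,t)|\leq Ce^{-\delta s},\qquad s\geq 0.
\]
Since $\ul{\xi}_i\equiv 0$ corresponds to the path $\ul{y}(t)$, which is by construction a solution of Floer's equation on any striplike end, a Taylor expansion of $(\ol{\partial}-\ul{\nu})(r_R,\ul{\exp}^Q_{\ul{y}(t)}(\beta\ul{\xi}_i))$ in $\beta\ul{\xi}_i$ around $\ul{\xi}_i=0$ vanishes to zeroth order, and the property $Q(0)=dQ(0)=0$ ensures that the correction from using $\ul{\exp}^Q$ in place of $\ul{\exp}$ contributes only quadratically in $\ul{\xi}_i$. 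This produces a pointwise estimate
\[
\bigl|(\ol{\partial}-\ul{\nu})(r_R,\ulu_R)(s,t)\bigr|\leq C'\bigl(|\ul{\xi}_i(s,t)|+|\nabla \ul{\xi}_i(s,t)|\bigr)\leq C''e^{-\delta R/2}
\]
on the neck, having absorbed the bounded contributions of $\beta$ and $\beta'$ into $C'$. The $L^p$ norm over a neck, whose $(s,t)$-area is uniformly bounded in $R$, is then of order $e^{-\delta R/2}$.

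The same argument applies in all three pregluing types: in Type~2 one sums the neck bounds over the $k$ glued striplike ends, and in Type~3 the estimate is carried out at both halves of the neck using the analogous decay estimates for $\ul{\xi}$ (on the quilt side) and $\ul{\eta}$ (on the Floer side). Since in each case there are only finitely many necks, summing over them yields an upper bound of the form $\epsilon(R)\leq C_0 e^{-\delta R/2}$, which is manifestly monotone decreasing and tends to $0$ as $R\to\infty$.

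The principal technical obstacle is securing the constants $C$ and $\delta$ uniformly. Uniformity in the finite-dimensional gluing parameters follows from compactness of the relevant parameter spaces $\ol{\RR}$ together with the striplike-end-compatibility of the universal perturbation datum, which forces the Hamiltonian on each end to be a fixed $H_t$ independent of $r$; non-degeneracy of the finitely many generalized intersections $\ul{y}$, $\ul{y}_i$, $\ul{x}_i$ appearing in the statement then provides a uniform spectral gap and hence a uniform exponential rate $\delta$, while the $C^1$-bounds of Appendix~B give the uniform prefactor $C$. A secondary but essential verification is that the nonlinear and $Q$-dependent corrections to the $\ul{\exp}$-linearization do not spoil the linear-in-$\ul{\xi}_i$ pointwise estimate; this is precisely where the identities $Q(0)=0$ and $dQ(0)=0$ established earlier are invoked.
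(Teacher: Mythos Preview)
Your neck estimate is essentially the same as the paper's, and for Type~3 your argument is correct. However, for Types~1 and~2 there is a genuine gap in your treatment of the thick part. You assert that $(\ol{\partial}-\ul{\nu})(r_R,\ulu_R)=0$ there because ``the consistency of the universal choice of perturbation datum \ldots\ ensures that $(\ul{K},\ul{J})$ on $\sS_{r_R}$ restricts to the data used for $\ulu_i$ away from the neck.'' This over-reads the consistency condition. As formulated in Section~\ref{section_j-holo_quilts}, consistency guarantees two things: the perturbation datum equals the Floer datum on the \emph{thin} part of $\sS_r$ for $r$ near $\partial\ol{\RR}^{d,0}$, and the universal datum \emph{agrees with the glued datum only over the boundary} $\partial\RR^{d,0}$, extending smoothly to the interior. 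For finite $R$ the point $r_R$ lies in the interior, so on the compact thick pieces $S_i\subset\sS_{r_i}$ the data $(\ul{J}(r_R),\ul{K}(r_R))$ need not equal $(\ul{J}_i,\ul{K}_i)$; they only converge uniformly to them as $R\to\infty$.

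Consequently $(\ol{\partial}-\ul{\nu})(r_R,\ulu_R)\big|_{S_i}$ is not identically zero but is bounded pointwise by some $\epsilon_1(R)\to 0$, contributing $\epsilon_1(R)\,\vol(S_i)^{1/p}$ to the $L^p$ norm. This additional term does not endanger the conclusion, and the fix is immediate once you observe the convergence; but as written your decomposition misidentifies the support of the error. The paper's proof handles this explicitly by separating the striplike ends (error zero), the compact thick parts $S_i$ (error $\epsilon_1(R)$ from the mismatch of perturbation data), and the neck (error $\epsilon_2(R)$ from the cutoff), then summing.
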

\begin{proof}
We estimate it for each of the three types of pregluing separately. \\
\subsection*{Type 1.}
Let $(\widetilde{\ul{J}}, \widetilde{\ul{K}})$ denote the {\em approximate} perturbation datum on the preglued quilt $\sS_{r_R}$ that is inherited from the perturbation data $(\ul{J}_1, \ul{K}_1)$ and $(\ul{J}_2, \ul{K}_2)$ for $\sS_{r_1}$ and $\sS_{r_2}$ respectively via the pregluing procedure.  Let $(\ul{J}(r_R), \ul{K}(r_R))$ be the perturbation datum on $\sS_{r_R}$ that comes from the universal choice of perturbation data over the family of quilted surfaces parametrized by the multiplihedron. In general $(\widetilde{\ul{J}}, \widetilde{\ul{K}})$ and $(\ul{J}(r_R), \ul{K}(r_R))$ are not the same, but the assumption of consistency implies that for large values of $R$,  the data agree on the ``thin'' part of $\sS_{r_R}$, while on the complement of the thin part, which consists of two compact components coming from $\sS_{r_1}$ and $\sS_{r_2}$, $(\ul{J}(r_R), \ul{K}(r_R))$ converges uniformly to $(\ul{J}_1, \ul{K}_1)$ or $(\ulJ_2,\ulK_2)$ respectively as $R\to \infty$.

Given that
\bea
 \ol{\partial}(r_R, \ulu_R) - \ul{\nu}(r_R, \ulu_R) & = & \frac{1}{2}(d\ulu_R + \ulJ(r_R) \circ d\ulu_R \circ j(r_R) ) \\
 & & - \frac{1}{2}(\ulY(r_R) + \ulJ(r_R)\circ Y(r_R) \circ j(r_R))
\eea
we will estimate this on different parts of the preglued surface $\sS_{r_R}$. 

Consider a striplike end $Z \subset \sS_{r_R}$.  It corresponds to a striplike end on either $\sS_{r_1}$ or $\sS_{r_2}$, that was {\em not} truncated in the pregluing step; let us denote this striplike end by $Z$ too, where $Z \subset \sS_{r_i}$ for $i = 1$ or $2$.  Since $j(r_R, z), \ulJ(r_R, \ulu(z), z)$ and $\ulY(r_R, z)$ are independent of $R$ and are the same as the corresponding data on $\sS_{r_1}$, 
\bea
(\ol{\partial} - \nu)(r_R, \ulu_R)\Big\lvert_{Z}  =   (\ol{\partial} - \ul{\nu})(r_1, \ulu_1)\Big\lvert_Z = 0
\eea
since we assumed that $(\ol{\partial} - \ul{\nu})(r_1, \ulu_1) = 0$.  This argument proves that $(\ol{\partial} - \ul{\nu})(r_R, \ulu_R)$ is zero on {\em all} of the striplike ends of the glued surface $\sS_{r_R}$.  

Next, for $i = 1, 2$ let $S_i$ denote the complement of the striplike ends on $\sS_{r_i}$, and let $S_i$ also denote its image in $\sS_{r_R}$ after pregluing.  Note that $S_i$ is compact, and that $\ulu_R\Big\lvert_{S_i} = \ulu_i\Big\lvert_{S_i}$. Hence,
\bea
(\ol{\partial} - \ul{\nu})(r_R, \ulu_R)\Big\lvert_{S_i} =  (\ol{\partial} - \ul{\nu})(r_R, \ulu_i)\Big\lvert_{S_i}
& = & \frac{1}{2}[d \ulu_i + \ulJ(r_R, \ulu_i) d \ulu_i j(r_R)] \Big\lvert_{S_i}\\
& & \ \ - \frac{1}{2}[\ulY(r_R, \ulu_i) + \ulJ(r_R, \ulu_i) \ulY(r_R,\ulu_i) j(r_R)]\Big\lvert_{S_i}.
\eea
Since we know that $(\ol{\partial} - \ul{\nu})(r_i, \ulu_i) = 0$, and that $j(r_R), J(r_R, \ulu_i)$ and $\ulY(r_R, \ulu_i)$ converge uniformly to $j(r_i), \ulJ(r_i, \ulu_i)$ and $\ulY(r_i, \ulu_i)$ on $S_i$, it follows that for sufficiently large $R$ there is a monotone decreasing function $\epsilon_1(R) \to 0$ as $R\to \infty$ such that the uniform pointwise estimate $|(\ol{\partial} - \ul{\nu})(r_R, \ulu_R)(z)| \leq \epsilon_1(R)$ holds for all $z \in S_i$.

Finally, consider the neck of $\sS_{r_R}$ along which the pregluing took place.  Let $Z_1$ and $Z_2$ denote the striplike ends of $\sS_{r_1}$ and $\sS_{r_2}$ that were truncated along $s=R$; and by slight abuse of notation let $Z_1$ and $Z_2$ also denote the images of the truncations after pregluing.  By symmetry it is enough to consider what happens on $Z_1$. There are three regions to consider,
\bea
 0 \leq s \leq R/2 - 1, \ \ \  R/2 -1 \leq s \leq R/2, \ \ \ R/2 \leq s \leq R.
\eea 
\noindent On the region $0 \leq s \leq R/2 - 1$, $ (\ol{\partial} - \ul{\nu})(r_R, \ulu_R)  = (\ol{\partial} - \ul{\nu})(r_1, \ulu_1)=0$.   \\
\noindent On the region $R/2 \leq s \leq R$, we have that $\ulu_R(s,t) = \ulx_0(t)$, and so
\bea
d\ulx_0(t) + \ulJ(t, \ulx_0(t)d\ulx_0(t) j - X_{\ulH_t} - \ulJ(t,\ulx_0(t))X_{\ulH_t} j & = & 0
\eea 
because $\partial_s \ulx_0(t) = 0$ and $\partial_t \ulx_0(t) = X_{\ulH_t}$ together imply that $d\ulx_0(t)-X_{\ulH_t} = 0$. \\
\noindent On the region $R/2 - 1\leq s \leq R/2$, since $\ulu_1(s,t)$ converges exponentially to $\ul{y}(t)$ as $s \to \infty$, we know that $|\ul{\xi}(s,t)|$ becomes exponentially small in $s$. Therefore, $|\beta(s)\ul{\xi}_1(s,t)|$ is also exponentially small in $s$. Now, $d \ul{y}(t) - X_{\ulH_t}(\ul{y}(t)) = 0$, so
\bea
|d\ul{\exp}^Q_{\ul{y}(t)}(\beta(s) \ulxi_1(s,t)) - X_{\ulH_t}(\exp_{\ul{y}(t)}(\beta(s) \ulxi_1(s,t))|  \leq  |d\exp^Q_{\ul{y}(t)}(\beta(s)\ulxi_1(s,t)) - d\ul{y}(t)| \\
 \ + |X_{\ulH_t}(\ul{y}(t) - X_{\ulH_t}(\ul{\exp}^Q_{\ul{y}(t)}(\beta(s)\ulxi_1(s,t)))|
\eea
and since $\ul{\exp}^Q_{\ul{y}(t)}(\beta(s)\ulxi_1(s,t))$ is exponentially close to $\ul{y}(t)$, there is a monotone decreasing function $\epsilon_2(R) \to 0$ as $R \to \infty$ such that 
\bea
|d\ul{\exp}^Q_{\ul{y}(t)}(\beta(s) \ul{\xi}_1(s,t)) - X_{\ulH_t}(\ul{\exp}^Q_{\ul{y}(t)}(\beta(s) \ul{\xi}_1(s,t))| \leq \epsilon_2(R)
\eea
uniformly in $t$ for all $s \geq R/2 - 1$.

Using all these estimates, we have
\bea
& & \left( \int\limits_{\sS_{r_R}} |(\ol{\partial}-\ul{\nu})(r_R, \ulu_R)|^p \  \dvol_{\sS_{r_R}}  \right)^{1/p}
 \leq  \left( \int\limits_{S_1} |(\ol{\partial}-\ul{\nu})(r_R, \ulu_R)|^p \  \dvol_{S_1}  \right)^{1/p} \\
& & + \left( \int\limits_{S_2} |(\ol{\partial}-\ul{\nu})(r_R, \ulu_R)|^p \  \dvol_{S_2}  \right)^{1/p} 
 + \left( \int\limits_{0}^1 \int\limits_{R/2-1}^{R/2} |(\ol{\partial}-\ul{\nu})(r_R, \ulu_R)|^p \  ds\ dt \right)^{1/p}\\
& &  \ \ \ \ \ \ \ \ \ \ \ \ \ \leq  \epsilon_1(R) \vol(S_1)^{1/p} + \epsilon_1(R) \vol(S_2)^{1/p} + \epsilon_2(R).
\eea
Here $\vol_{S_i}$ is the volume of the compact subset $S_i \subset \sS_{r_i}$ with respect to a fixed volume form on $\sS_{r_i}$.

\subsection*{Type 2} The estimate for Type 2 of the pregluing construction is very similar to that of Type 1.  Repeating the same arguments on the corresponding parts of $\sS_{r_R}$ leads to an estimate
\bea
\|\F_{\sS, r_R, \ulu_R}(0,0)\|_{0,p,R}   \leq  \epsilon_1(R)\sum\limits_{i=0}^k \vol(S_i)^{1/p} + k \epsilon_2(R).
\eea

\subsection*{Type 3} This type of pregluing is slightly different from the previous two types. Let $Z$ denote the striplike end of $\sS_{r_1}$ on which the pregluing takes place. The perturbation data is fixed, so on the complement of $Z$, $
(\ol{\partial} - \ul{\nu})(r_1, \ulu_R)  = (\ol{\partial} - \ul{\nu})(r_1, \ulu_1) = 0.$  Thus $(\ol{\partial} - \ul{\nu})(r_1, \ulu_R)$ is only supported on $Z$. 

Write $z = (s,t)$.  For $s \in [0, R/2-1], (\ol{\partial} - \ul{\nu})(r_1, \ulu_R) = (\ol{\partial} - \ul{\nu})(r_1, \ulu_1) = 0$.  For
$s \in [R/2, 3R/2]$, $(\ol{\partial} - \ul{\nu})(r_1, \ulu_R)  =  (\partial_t \ul{y}(t) - X_{\ulH_t}(\ul{y}(t))^{0,1}
=  0$ since $\ul{y}(t)$ is the Hamiltonian flow of $\ul{H}_t$. For $s \in [3R/2 + 1, \infty)$,
$(\ol{\partial} - \ul{\nu})(r_1, \ulu_R) = \ol{\partial}\ul{v}(s-2R,t) - X_{\ulH_t}(\ul{v}(s-2R,t) = 0$,
as $\ul{v}$ is a Floer trajectory.

Therefore $(\ol{\partial} - \ul{\nu})(r_1, \ulu_R)$ is only supported on the region where $s \in [R/2-1, R/2]$ or $s \in [3R/2, 3R/2+1]$. On the first part, for sufficiently large $R$, $|\ulxi(s,t)|$ is exponentially small in $R$ when $s \geq R/2 - 1$, and so for the same reasons as in the previous two calculations we can find a monotone decreasing $\epsilon_2(R) \to 0$ as $R \to \infty$ such that 
\[
|d\ul{\exp}^Q_{\ul{y}(t)}(\beta(s-R/2) \ulxi(s,t)) - X_{\ulH_t}(\ul{\exp}^Q_{\ul{y}(t)}(\beta(s-R/2) \ulxi(s,t))| \leq \epsilon_2(R)
\]
uniformly in $t$, whenever $s \in [R/2-1, R/2]$.  Similarly, for sufficiently large $R$, $|\ul{\eta}(s-2R,t)|$ is exponentially small and we can find an $\epsilon_2(R)$ as above such that 
\[
|d\ul{\exp}^Q_{\ul{y}(t)}(\beta(-s+3R/2)\ul{\eta}(s-2R,t)) - X_{H_t}(\ul{\exp}^Q_{\ul{y}(t)}(\beta(-s+3R/2)\ul{\eta}(s-2R,t)| \leq \epsilon_3(R))
\]
uniformly in $t$, whenever $s \in [3R/2, 3R/2 + 1]$. Putting the estimates together gives
\bea
\|\F_{\sS, r_1, \ulu_R}(0,0)\|_{0,p,R} & \leq&  \left( \int\limits_{0}^1 \int\limits_{R/2-1}^{R/2}  |(\ol{\partial} - \ul{\nu})(\ulu_R)|^p \ ds\ dt \right)^{1/p}
 + \left( \int\limits_{0}^1 \int\limits_{3R/2}^{3R/2+1}  |(\ol{\partial} - \ul{\nu})(\ulu_R)|^p \ ds\ dt \right)^{1/p}\\
&\leq &  \epsilon_2(R) + \epsilon_3(R).
\eea

\end{proof}

\subsection{Constructing a right inverse}

We construct an approximate right inverse for the linearized operator of the preglued surface and curve, then show that it is sufficiently close to being a right inverse that an actual right inverse can be obtained from it via a convergent power series.  We treat the three types of pregluing separately.

\subsection*{Type 1}
Assume that $(r_1, \ulu_1)$ and $(r_2, \ulu_2)$ are regular and isolated, for the perturbation data $(\ulJ_1, \ulK_1)$ and $(\ulJ_2, \ulK_2)$ respectively.  We will show that $D_R:= D_{\bS, r_1 \#_R r_2, \ulu_1 \#_R \ulu_2}$, defined with perturbation datum $(\ulJ(r_R), \ulK(r_R))$, is also a surjective Fredholm operator, with a right inverse $Q_R$ that is uniformly bounded for sufficiently large $R$. 

Recall the intermediate quilt maps $(r_1, \ulu_1^R), (r_2, \ulu_2^R)$.  For large $R$, the functions $\ulu_i^R, i = 1, 2$ are $W^{1,p}$-small perturbations of $\ulu_i$.  The perturbation data $(\ulJ(r_R), \ulK(r_R))$ are just the Floer data along the neck of $\sS_{r_R}$.  Let $(\ulJ_i(r_R), \ulK_i(r_R))$ denote the perturbation datum on $\sS_{r_i}$ given by the restriction of $(\ulJ(r_R), \ulK(r_R))$ to the truncation of $\sS_{r_i}$, extended trivially over the rest of the striplike end of $\sS_{r_i}$ by the Floer data.   The assumption of consistency implies that for large $R$, the data $(\ulJ_i(r_R), \ulK_i(r_R))$ is a compact perturbation of $(\ulJ_i, \ulK_i)$.   

The property of being a surjective Fredholm operator is stable under $W^{1,p}$-small and compact perturbations, hence for sufficiently large $R$ the pairs $(r_i, \ulu_i^R)$ are regular with respect to $(\ulJ_i(r_R), \ulK_i(r_R))$.  Let $Q_{i, R}$ denote a right inverse for the linearized operator $D_{\sS, r_i, \ulu_i^R}$.  We observe that $Q_{i,R}$ is actually a left {\em and} right inverse, since by assumption $\ker D_{\sS, r_i, \ulu_i^R}$ has dimension 0, so $D_{\sS, r_i, \ulu_i^R}$ is an isomorphism.   

Returning to the linearized operator
\[
D_R: T_{r_R}\RR^{d_1+d_2, 0} \times W^{1,p}(\sS_{r_R}, \ulu_R^* T\ulM) \to L^p(\sS_{r_R}, \Lambda^{0,1}\otimes_{J}\ulu_R^*T\ulM)
\]
we construct an approximate inverse 
\[
T_R: L^p(\sS_{r_R}, \Lambda^{0,1}\otimes_{J}\ulu_R^*T\ulM) \to  T_{r_R}\RR^{d_1+d_2, 0} \times W^{1,p}(\sS_{r_R}, \ulu_R^* T\ulM).
\] 

Let $\ul{\eta} \in L^p(\sS_{r_R}, \Lambda^{0,1}\otimes_{J}\ulu_R^*T\ulM)$. Set 
\bea
\ul{\eta}_1(z)  = \left\{\begin{array}{cc} \ul{\eta}(z), & \ z \in r_1^R,    \\
            0, & \ z \in r_1\setminus r_1^R
            \end{array}, \right.\ \ 
\ul{\eta}_2(z)= \left\{.\begin{array}{cc} \ul{\eta}(z), &\ z \in r_2^R,    \\
           0, &\ z \in r_2\setminus r_2^R
           \end{array}\right.
\eea
The abrupt cut-off does not matter because the norms involving $\ul{\eta}_1$ and $\ul{\eta}_2$ are just  $L^p$ norms, so 
\bea
\ul{\eta}_1 \in  L^p(\sS_{r_1}, \Lambda^{0,1}\otimes_{J}(\ulu_1^R)^*T\ulM),\ \ \ 
\ul{\eta}_2  \in  L^p(\sS_{r_2}, \Lambda^{0,1}\otimes_{J}(\ulu_2^R)^*T\ulM).
\eea
Using $Q_{1,R}$ and $Q_{2,R}$ we get 
\bea
Q_{1,R} \ul{\eta}_1 =:  (\tau_1, \ulxi_1) & \in & T_{r_1}\RR^{d_1+1, 0} \times W^{1,p}(\sS_{r_1}, (\ulu_1^R)^* T\ulM)\\ 
Q_{2,R} \ul{\eta}_2 =:  (\tau_2, \ulxi_2) & \in & T_{r_2}\RR^{d_2+1} \times W^{1,p}(\sS_{r_2}, (\ulu_2^R)^* T\ulM).
\eea 
Our final step is to glue these into a single element of $T_{r_R}\RR^{d_1+d_2, 0} \times W^{1,p}(\sS_{r_R}, \ulu_R^*T\ulM)$.  For large $R$, local gluing charts near the boundary of $\RR^{d_1+d_2,1}$ give an isomorphism 
\[
T_{r_R}\RR^{d_1+d_2,0} \cong T_{r_1}\RR^{d_1+1, 0} \oplus T_{r_2}\RR^{d_2+1}\oplus T_{R}\R
\]
where the last component represents the gluing parameter.  Using this isomorphism we get a well-defined element
\bea
\tau_1 \#_R \tau_2 := (\tau_1, \tau_2, 0) \in T_{r_R}\RR^{d_1+d_2,0}.
\eea
Now fix a smooth cut-off function $\beta: \R_{\geq 0} \to [0,1]$ such that 
$\beta(s) = 1$ for $s \geq 1$ and $\beta(s) = 0$ for $s \leq 1/2$, and $0 \leq \dot{\beta} \leq 2$.  Let $\beta_R(s):= \beta(s/R)$.  
\[
\ulxi_1 \#_R\ \ulxi_2(z) := \left\{ \begin{array}{ll}
                     \ulxi_1(z) & \mbox{if} \ z \in \sS_{r_1}\setminus Z_1\\
                     \ulxi_1(s,t) + \beta_R(s) \ulxi_2(2R - s, 1-t) & \mbox{if} \ z \in Z_1, s \in [0,R]\\
                     \beta_R(s) \ulxi_1(2R-s,1-t) + \ulxi_2(s,t)& \mbox{if} \ z \in Z_2, s \in [0,R] \\
                     \ulxi_2(z) & \mbox{if} \ z \in S_{r_2}\setminus Z_2.  
\end{array}\right.
\]
To simplify notation, define $\beta_{1,R}: \sS_{r_R} \to \R$ by
\[
\beta_{1,R}(z):=\left\{ \begin{array}{ll}
                     1 & \mbox{if}\ z \in \sS_{r_1}\setminus Z_1,\\
                     1 & \mbox{if}\ z \in  Z_1, s \in [0, R],\\
                     \beta_R(s), & \mbox{if} \ z \in Z_2, s \in [0,R],\\
                     0 & \mbox{if} \ z \in \sS_{r_2}\setminus Z_2
                     \end{array}\right.
\]
and make a corresponding definition of $\beta_{2,R}$.  With this notation, we can write $\ulxi_1 \#_R \ulxi_2 = \beta_{1,R} \ulxi_1 + \beta_{2,R} \ulxi_2$.  We define
\[
T_R \ul{\eta} := (\tau_1 \#_R \tau_2, \ulxi_1 \#_R \ulxi_2).
\]   
Now we need to check that for all $\ul{\eta} \in (\E_{\sS_{r_R}})_{\ulu}$, for sufficiently large $R$,
\[
\|D_{\bS, r_R, \ulu_R}T_R \ul{\eta} - \ul{\eta}\|_{0,p,R} \leq \frac{1}{2}\| \ul{\eta}\|_{0,p,R}.
\]
Now,
\bea
D_{\sS, r_R, \ulu_R} T_R \ul{\eta} - \ul{\eta} & = & D_{\sS, r_R, \ulu_R}(\tau_1\#_R\tau_2, \ul{\xi}_1 \#_R \ul{\xi}_2) - \ul{\eta}\\
             & = & D^{(\ulu_R)}_{r_R} \tau_1\#\tau_2 + D^{r_R}_{(\ulu_R)} \ulxi_1\#_R \ulxi_2 -\ul{\eta}.
\eea
By construction $D^{(\ulu_R)}_{r_R}$ and $D^{(\ulu_1^R)}_{r_1}$ agree on the support of $\tau_1$, which is $\sS_{r_1}\setminus Z_1$.  Similarly $D^{(\ulu_2^R)}_{r_2}$ agrees with $D^{(\ulu_R)}_{r_R}$ on the support of $\tau_2$ which is $\sS_{r_2}\setminus Z_2$.  Hence,
\[
D^{(\ulu_R)}_{r_R} \tau_1\#_R \tau_2 = D^{(\ulu_1^R)}_{r_1} \tau_1 + D^{(\ulu_2^R)}_{r_2}\tau_2.
\]
Since $D^{(r_R)}_{\ulu_R}$ and $D^{(r_1)}_{\ulu_1^R}$ agree on the support of $\beta_{1,R} \ulxi_1$, we may write
\bea
D^{(r_R)}_{\ulu_R}  \beta_{1,R} \ulxi_1 & = & D^{(r_1)}_{\ulu_1^R} \beta_{1,R} \ulxi_1\\
                       & = & (\partial_s \beta_{1,R}) \ulxi_1 + \beta_{1,R} D_{\ulu_1^R}^{(r_1)} \ulxi_1,\\
D^{(r_R)}_{\ulu_R}\beta_{2,R} \ulxi_2 & = &  D^{(r_2)}_{\ulu_2^R} (\beta_{2,R}) \ulxi_2\\ 
& = & (\partial_s \beta_{2,R}) \ulxi_2 + \beta_{2,R} D_{\ulu_2^R}^{r_2} \ulxi_2.
\eea
By construction, $D^{(\ulu_i)}_{r_i}\tau_i + D^{(r_i)}_{\ulu_i^R} \ulxi_i = D^{(r_i)}_{\ulu_i^R} Q_{i, R} \ul{\eta}_i = \ul{\eta}_i$, where $Q_{i,R}$ is the inverse of $D_{\sS, r_i, \ulu_i^R}$.  So, 
\bea
D_{\bS, r_R, \ulu_R} T_R \ul{\eta} - \ul{\eta} & = & D^{(\ulu_R)}_{r_R} \tau_1\#_R \tau_2 + D^{(r_R)}_{\ulu_R} \ulxi_1\#_R\ulxi_2 -\ul{\eta}\\
                            & = & D^{(\ulu_1^R)}_{r_1} \tau_1 + D^{(\ulu_2^R)}_{r_2}\tau_2 + D^{(r_R)}_{\ulu_R}(\beta_{1,R} \ulxi_1 +  \beta_{2,R}\ulxi_2) - \ul{\eta}\\
                            & = & D^{(\ulu_1^R)}_{r_1} \tau_1 + D^{(\ulu_2^R)}_{r_2}\tau_2 + D^{(r_1)}_{\ulu_1^R} \beta_{1,R} \ulxi_1 + D^{(r_2)}_{\ulu_2^R} \beta_{2,R} \ulxi_2 - \ul{\eta}\\
                            & = & D^{(\ulu_1^R)}_{r_1} \tau_1 + D^{(\ulu_2^R)}_{r_2}\tau_2 + (\partial_s \beta_{1,R}) \ulxi_1 + \beta_{1,R} D_{\ulu_1^R}^{(r_1)} \ulxi_1 + (\partial_s \beta_{2,R}) \ulxi_1 \\
                            & & \ \ \ \ \ \ \ \ \ \ \ \ \ \ \ \ \ \ \ \ \ \ \ \ \ \ \ \ \ \ \ \ \ \ \ \ \ \ \ \ \ \ \ + \beta_{2,R} D_{\ulu_2^R}^{(r_2)} \ulxi_2 - \ul{\eta}\\
                            & = & \beta_{1,R}( D^{(\ulu_1)}_{r_1} \tau_1 + D^{r_1}_{ \ulu_1^R} \ulxi_1) + \beta_{2,R}( D^{(\ulu_2)}_{r_2}\tau_2 + D^{(r_2)}_{ \ulu_2^R} \ulxi_2) \\ 
                            & & \ \ \ \ \ \ \ \ \ \ \ \ \ \ \ \ \ \ \ \ \ \ \ \ \ \ \  \ + (\partial_s \beta_{1,R}) \ulxi_1 + (\partial_s \beta_{2,R}) \ulxi_2 - \ul{\eta}\\
                           & = & \beta_{1,R} \ul{\eta}_1 + \beta_{2,R} \ul{\eta}_2 +    (\partial_s \beta_{1,R}) \ulxi_1 + (\partial_s \beta_{2,R}) \ulxi_2    - \ul{\eta}\\
                           & = & \ul{\eta}_1 + \ul{\eta}_2 - \ul{\eta} + (\partial_s \beta_{1,R}) \ulxi_1 + (\partial_s \beta_{2,R}) \ulxi_2\\
                          & = & (\partial_s \beta_{1,R}) \ulxi_1 + (\partial_s \beta_{2,R}) \ulxi_2
                          \eea
as the support of $\ul{\eta}_i$ is precisely where $\beta_{i,R} = 1$, and $\ul{\eta}_1 + \ul{\eta}_2 = \ul{\eta}$.  We can find a $c > 0$ and an $R_0 \geq 0$ such that operator norms $\|Q_{i,R}\| \leq c$ for all $R \geq R_0$.  Therefore
\[
\|\ulxi_i\|_{1,p,R} \leq \|Q_{i,R} \ul{\eta}_i\|_R \leq  c \|\ul{\eta}_i\|_{0,p,R} 
\]
so we can estimate
\bea
\| D_{\bS, \ulu_R, r_R} T_R \ul{\eta} - \ul{\eta}\|_{0,p,R} & = &   \|(\partial_s \beta_{1,R}) \ulxi_1 + (\partial_s \beta_{2,R}) \ulxi_2 \|_{0,p,R} \\
                & \leq & 2/R (\|\ulxi_1\|_{0,p,R} +  \|\ulxi_2\|_{0,p,R})\\
                & \leq & \frac{2c}{R} \|\ul{\eta}\|_{0,p,R}
\eea
and $2c/R \leq 1/2$ for sufficiently large $R$.

\subsection*{Type 2}
Assume that $(r_0, \ulu_0), (r_1, \ulu_1) , \ldots, (r_k, \ulu_k)$ are all regular and isolated for the perturbation data prescribed on their strata.  For a gluing length $R$, form the preglued surface and map, abbreviating
\[
(r_R, \ulu_R):= (r_0 \#_R(r_1, \ldots, r_k), \ulu_0 \#_R (\ulu_1, \ldots, \ulu_k)). 
\]
For $i=0,\ldots,k$, denote by $(\ulJ_i(r_R), \ulK_i(r_R))$ the perturbation datum on $\sS_{r_i}$ given by the restriction of $(\ulJ(r_R), \ulK(r_R))$ to the truncation of $\sS_{r_i}$, extended trivially over the rest of the striplike end of $\sS_{r_i}$ by the given Floer data.   The assumption of consistency implies that for large $R$, the data $(\ulJ_i(r_R), \ulK_i(r_R))$ is a compact perturbation of $(\ulJ_i, \ulK_i)$.  Also, by construction the intermediate maps $\ulu_i^R$ are $W^{1,p}$ small perturbations of $\ulu_i$.  The properties of being Fredholm and surjective are stable under $W^{1,p}$-small perturbations, hence for sufficiently large $R$, $(r_0, \ulu_0^R), \ldots, (r_k, \ulu_k^R)$ are regular with respect to $(\ulJ_i(r_R), \ulK_i(r_R) )$.  The linearized operators $D_{\sS, r_i, \ulu_i^R}$ are surjective with zero dimensional kernel hence are isomorphisms, so let $Q_{i,R}$ be the inverse of $D_{\sS, r_i, \ulu_i^R}$.  For convenience, for each $i$ we will denote by $\sS_{r_i}^R$ the truncation of $\sS_{r_i}$ that appears in the pre-glued surface $\sS_{r_R}$, denote by $Z_1, \ldots, Z_k$ the striplike ends of $\sS_{r_1}, \ldots, \sS_{r_k}$ that are truncated in the pre-gluing process, and denote by $Z_1^\prime, \ldots, Z_k^\prime$ the corresponding striplike ends of the surface $\sS_{r_0}$.  

We construct an approximate right inverse $T_R$ for the linearized operator $D_R$ as follows.  Let $\ul{\eta} \in L^p(\sS_{r_R}, \Lambda^{0,1}\otimes_{J}\ulu_R^*T\ulM)$. Set
\bea
\ul{\eta}_0(z)  =  \left\{ \begin{array}{rl}
                                \ul{\eta}(z), & z \in \sS_{r_0}^R\,    \\
                                0, & \mbox{else}
                          \end{array}\right., \ \  \ldots, \ \   
\ul{\eta}_k(z) =  \left\{ \begin{array}{rl} 
                \ul{\eta}(z), &  z \in \sS_{r_k}^R,    \\
               0, & \mbox{else}.
                \end{array}\right.
\eea
with $\ul{\eta}_i \in  L^p(\sS_{r_i}, \Lambda^{0,1}\otimes_{J}(\ulu_i^R)^*T\ulM)$ for $i = 0, \ldots, k$. 
Define
\bea (\tau_0, \ulxi_0)& := & Q_{0,R}(\ul{\eta}_0) \in T_{r_0}\RR^{k+1} \times W^{1,p}(\sS_{r_0}, (\ulu_0^R)^* T\ulM)  \\
   & \ldots &\\
 (\tau_k, \ulxi_k) & := & Q_{k,R}(\ul{\eta}_k) \in T_{r_k}\RR^{d_k+1, 0} \times W^{1,p}(\sS_{r_k}, (\ulu_k^R)^* T\ulM).
\eea
The final step is to glue these $k+1$ things together to get a single element of $T_{r_R}\RR^{d_1+d_2, 0} \times W^{1,p}(\sS_{r_R}, \ulu_R^*T\ulM)$.  For large $R$, $r_R$ is near the boundary of $\RR^{d_1 + \ldots + d_k, 0}$, where local charts identify
\[
T_{r_R}\RR^{d_1 + \ldots + d_k, 1} \cong T_{r_0}\RR^{k +1} \oplus T_{r_1}\RR^{d_1 + 1, 0} \oplus \ldots \oplus T_{r_k}\RR^{d_k + 1, 0} \oplus T_R \R,
\]
the last component coming from the gluing parameter. With this identification set
\[
\tau_0 \#_R (\tau_1, \ldots, \tau_k):= (\tau_0, \tau_1, \ldots, \tau_k, 0).
\]
Fix a smooth cutoff function $\beta: \R_{\geq 0} \to [0,1]$ such that 
$\beta(s) = 1$ for $s \geq 1$ and $\beta(s) = 0$ for $s \leq 1/2$, and $0 \leq \dot{\beta} \leq 2$.  Let $\beta_R(s):= \beta(s/R)$.  Define $\beta_{0,R}: \sS_{r_R} \to [0,1]$ by the condition that
\bea
\beta_{0,R} (z) & = & \left\{ \begin{array}{ll}
                     1, & z \in \sS_{r_0}^R,\\
                     \beta_R(s), & z \in Z_i, s \in [0,R], i = 1, \ldots, k,\\
                     0, & \mbox{else},                   
\end{array}\right.
\eea
and for each $i \in \{1, \ldots, k\}$, let
\bea
\beta_{i,R}(z) & = & \left\{\begin{array}{ll}
                     1, & z \in \sS_{r_i}^R,\\
                     \beta_R(s), & z \in Z_i^{\prime}\subset \sS_{r_0}, s \in [0,R],\\
                     0, & \mbox{else},
\end{array}\right.
\eea
then set $ \ulxi_0 \#_R \ (\ulxi_1, \ldots, \ulxi_k) := \sum_{i=0}^{k} \beta_{i,R} \ulxi_{i}.$
We define the approximate inverse by
$
T_R \ul{\eta} := (\tau_0\#_R(\tau_1, \ldots, \tau_k), \ulxi_0 \#_R (\ulxi_1, \ldots, \ulxi_k).
$ 
The construction leads to the estimate
\bea
D_{\bS, {r_R}, \ulu_R} T_R \eta - \eta & = & D^{(\ulu_R)}_{r_R} \tau_0\#(\tau_1, \ldots, \tau_k) + D^{(r_R)}_{\ulu_R} \ulxi_0 \# (\ulxi_1, \ldots, \ulxi_k) -\ul{\eta} \\
& = &  \sum\limits_{i = 0}^k D^{(\ulu_i^R)}_{r_i} \tau_i + \sum\limits_{i=0}^k D^{(r_i)}_{\ulu_i^R} \beta_{i,R} \ulxi_i - \ul{\eta}\\
& = &  \sum\limits_{i = 0}^k D^{(\ulu_i^R)}_{r_i} \tau_i + \sum\limits_{i=0}^k  \beta_{i,R}D^{(r_i)}_{\ulu_i^R}\ulxi_i +      \sum\limits_{i=0}^k (\partial_s \beta_{i,R})\ulxi_i - \ul{\eta}\\
& = & \sum\limits_{i=0}^k \beta_{i,R} ( D^{(\ulu_i^R)}_{r_i} \tau_i  + D^{(r_i)}_{\ulu_i^R}\ulxi_i ) + \sum\limits_{i=0}^k (\partial_s \beta_{i,R})\ulxi_i - \ul{\eta}\\
& = & \sum\limits_{i=0}^k \beta_{i,R} \ul{\eta}_i - \ul{\eta} + \sum\limits_{i=0}^k (\partial_s \beta_{i,R})\ulxi_i\\
& = & \sum\limits_{i=0}^k (\partial_s \beta_{i,R})\ulxi_i.
\eea
Since $(\tau_i, \ulxi_i) = Q_{i,R} \ul{\eta}_i$ there is a $c > 0$ such that
\[
\| \ulxi_i \|_{0,p,R} \leq \|Q_{i,R} \ul{\eta}_i \|_{1,p,R} \leq c \|\ul{\eta}_i\|_{0,p,R},
\]
for all $i$. Combine everything into a total estimate
\bea
\| D_{\bS, \ulu, r} T \ul{\eta} - \ul{\eta}\|_{0,p,R}  \leq  \sum\limits_{i=0}^k \| \dot{\beta}/R\ \ul{\xi}_i \|_{0,p,R}
                 \leq  \| \partial_s \beta_{i,R} \|_{\infty} \sum\limits_{i=0}^k\|\ulxi_i\|_{0,p,R}
                & \leq & \frac{2}{R} \sum\limits_{i=0}^k c \|\ul{\eta}_i\|_{0,p,R}\\
                & \leq & \frac{2c}{R} \|\ul{\eta}\|_{0,p,R}
\eea
and for sufficiently large $R$, $2c/R \leq 1/2$.

\subsection*{Type 3} Assume that $(r_1, \ulu_1)$ is a regular isolated pseudoholomorphic quilted disk, and $\ul{v}$ is a regular isolated Floer trajectory, for their respective perturbation data.  For large $R$, the intermediate maps $(r_1, \ulu_1^R)$ and $\ul{v}^R$ are $W^{1,p}$-small perturbations of $(r_1, \ulu_1) $ and $\ulv$ respectively, and since being Fredholm and surjective are properties that are stable under small perturbations, it follows that $(r_1, \ulu_1^R)$ and $v^R$ are also regular for sufficiently large $R$.  Let $Q_{1,R}$ be the inverse of $D_{\sS, r_1, \ulu_1^R}$ (which is invertible), and let $Q_{\ul{v}^R}$ be a right inverse for  $D_{\ul{v}^R}$ respectively, whose image is the $L^2$ orthogonal complement of the kernel of $D_{\ulv^R}$.  Let $\Sigma$ denote the infinite strip $\R \times [0,1]$, and label by $Z$ the striplike end of $\sS_{r_1}$ on which the pregluing takes place.  
Construct an approximate right inverse 
\[
T_R: L^p(\sS_{r_1}, \Lambda^{0,1}\otimes_{J}\ulu_R^*T\ulM) \to T_{r_1}\RR^{d + 1, 0} \times W^{1,p}(\sS_{r_1}, \ulu_R^* T\ulM)
\] 
of $D_{\sS, r_1, \ulu_R}$ as follows.   Let $\ul{\eta} \in  L^p(\sS_{r_1}, \Lambda^{0,1}\otimes_{J}\ulu_R^*T\ul{M})$. Set
\bea
\ul{\eta}_1(z) & = & \ul{\eta}(z), \ z \in \sS_{r_1}^R, \\
& & 0, \mbox{else},
\eea
and $\ul{\eta}_v = 1 - \ul{\eta}_1$.  Then $\ul{\eta}_1 \in L^p((\sS_{r_1}, \Lambda^{0,1}\otimes_{J}(\ulu_1^R)^*T\ulM)$, and $\ul{\eta}_v \in L^p(\Sigma, \Lambda^{0,1}\otimes_J(\ulv^R)^*T\ulM)$, and we set
\bea
Q_{1,R} \ul{\eta}_1 & := & (\tau_1, \ulxi_1) \in T_{r_1}\RR^{d + 1, 0}\times W^{1,p}(\sS_{r_1}, (\ulu_1^R)^* T\ulM)\\
Q_{\ulv^R} \ul{\eta}_v  & := & \ulxi_2 \in W^{1,p}(\Sigma, (\ulv^R)^* T\ulM).
\eea
We need to glue $\ulxi_1$ and $\ulxi_2$ together to get an element of 
$W^{1,p}(\sS_{r_1}, (\ulu_1 \#_R \ulv)^*T\ul{M})$.  Fix a smooth cut-off function $\beta: \R_{\geq 0} \to [0,1]$ such that $\beta(s) = 0$ for $s \leq 1/2 $ and $\beta(s) = 1$ for $s \geq 1$, and $0 \leq \dot{\beta} \leq 2$.  Let $\beta_R(s):= \beta(s/R)$.  Define $\beta_{1,R}: \sS_{r_1}\to [0,1]$ by
\bea
\beta_{1,R}(z) & = & 1, z \in \sS_{r_1}\setminus Z,\\
                           &  & 1 - \beta_R(s - R/2), z = (s,t) \in Z, 
\eea
and 
\bea
\beta_{2,R}(z) & = & 0, z \in \sS_{r_1}\setminus Z,\\
                          & & \beta_R(s), z = (s,t) \in Z.
                          \eea
Putting $\ulxi_1 \#_R \ulxi_2 := \beta_{1,R} \ulxi_1 + \beta_{2,R} \ulxi_2$
 we define $T_R \ul{\eta} := (\tau_1, \ulxi_1 \#_R \ulxi_2)$. 
Now,
\bea
D_{\sS, r_1, \ulu_R} T_R \ul{\eta} - \ul{\eta} & = & D_{\sS, r_1, \ulu_R}(\tau_1, \ulxi_1 \#_R \ulxi_2) - \ul{\eta}\\
& = & D_{r_1}^{(\ulu_1^R)}\tau_1 + D_{\ulu_1^R}^{(r_1)}  \beta_{1,R} \ulxi_1 + D_{\ulv^R}  \beta_{2,R} \ulxi_2 - \ul{\eta}\\
& = & D_{r_1}^{(\ulu_1^R)}\tau_1 + (\partial_s  \beta_{1,R}) \ulxi_1 + \beta_{1,R} D_{\ulu_1^R}^{(r_1)} \ulxi_1 +  (\partial_s  \beta_{2,R}) \ulxi_2 + \beta_{2,R} D_{\ulv^R}\ulxi_2 - \ul{\eta}\\
& = & \beta_{1,R} (D_{r_1}^{(\ulu_1^R)}\tau_1 + D_{\ulu_1^R}^{(r_1)} \ulxi_1) + \beta_{2,R} D_{\ulv^R}\ulxi_2 - \ul{\eta} + (\partial_s  \beta_{1,R}) \ulxi_1  +(\partial_s  \beta_{2,R}) \ulxi_2 \\
& = & (\partial_s  \beta_{1,R}) \ulxi_1  +(\partial_s  \beta_{2,R}) \ulxi_2,
\eea
and choosing a $c > 0$, $R_0 \geq 0$ such that the operator norms $\|Q_{1,R} \| \leq c, \|Q_{\ulv^R}\|\leq c$ for all $R \geq R_0$, we have an estimate
\bea
\| D_{\sS, r_1, \ulu_R} T_R \ul{\eta} - \ul{\eta} \|_{0,p,R} & \leq & \frac{2}{R} (\| \ulxi_1   \|_{0,p,R} +  \| \ulxi_2   \|_{0,p,R})\\
& \leq &  \frac{2}{R} (c \|\ul{\eta}_1\|_{0,p,R} + c\|\ul{\eta}_2\|_{0,p,R}) =  \frac{2c}{R} \| \ul{\eta}\|_{0,p,R}
\eea
so $2c/R \leq 1/2$ for sufficiently large $R$.  We obtain an actual right inverse $Q_R = T_R(D_RT_R)^{-1}$ using a power series
\bea
(D_RT_R)^{-1}  =  (\Id + (D_RT_R - \Id))^{-1}  =  \sum\limits_{k=0}^\infty (-1)^k(D_RT_R - \Id)^k,
\eea 
which is convergent for large $R$ because of the estimate $\|D_RT_R -\Id\| \leq 1/2$.  

\subsection{Uniform bound on the right inverse}
By construction, the image of $Q_R = T_R(D_RT_R)^{-1}$ is the same as the image of $T_R$.  The operator norm of $(D_RT_R)^{-1}$ can be uniformly estimated, by the results of the previous section. Thus it remains to find a uniform bound for $\|T_R\|$ in order to get a uniform bound $\|Q_R\|$.  

\subsection*{Type 1} $T_R$ is a composition of operations.  The initial cut-offs define a map 
\[
\triangle_1\times \triangle_2: L^p(\sS_{r_R}, \ulu_R^*T\ulM) \to L^p(\sS_{r_1}, (\ulu_1^R)^*T\ulM) \times L^p(\sS_{r_2}, (\ulu_2^R)^*T\ulM)
\]
where the norm on the product $ L^p(\sS_{r_1}, (\ulu_1^R)^*T\ulM) \times L^p(\sS_{r_2}, (\ulu_2^R)^*T\ulM)$ is the sum of the norms.  So by construction, the operator norm $\|\triangle_1\times \triangle_2\| = 1$.  The next step is the map $Q_{1,R} \times Q_{2, R} $ whose domain is the product $ L^p(\sS_{r_1}, (\ulu_1^R)^*T\ulM) \times L^p(\sS_{r_2}, (\ulu_2^R)^*T\ulM)$ and range is the product $  T_{r_1}\RR \times W^{1,p}(\sS_{r_1}, (\ulu_1^R)^*T\ulM) \times T_{r_2}\RR \times W^{1,p}(\sS_{r_2}, (\ulu_2^R)^*T\ulM)$.  The operator norm is estimated $\|Q_{1,R} \times Q_{2,R}\| \leq \|Q_{1,R}\| + \|Q_{2,R}\| \leq 2c$ where $c$ is a uniform bound on the operator norms of $Q_{1,R}, Q_{2,R}$ for large $R$. (Such a uniform bound $c$ exists because for large $R$, $Q_{1,R}$ and $Q_{2,R}$ converge  to $Q_1$ and $Q_2$, the respective  inverses of $D_{\sS, r_1, \ulu_1}$ and $D_{\sS, r_2, \ulu_2}$.) The final step in the construction of $T_R$ uses an operator
\[
\beta_1 \times \beta_2: W^{1,p}(\sS_{r_1}, (\ulu_2^R)^*T\ulM) \times W^{1,p}(\sS_{r_2}, (\ulu_2^R)^*T\ulM) \to W^{1,p}(\sS_{r_R}, (\ulu_R)^*T\ulM)
\]
 defined using the cut-off functions $\beta_{1,R}$ and $\beta_{2,R}$, which by construction satisfies 
 \[
  \| ( \beta_1 \times \beta_2)(\ulxi_1, \ulxi_2)\|_{1,p,R} \leq \|\ulxi_1\|_{1,p,R}  + \|\ulxi_2\|_{1,p,R}
  \] 
   where the right hand side is the norm of $(\ulxi_1, \ulxi_2)$ on the product $ W^{1,p}(\sS_{r_1}, (\ulu_2^R)^*T\ulM) \times W^{1,p}(\sS_{r_2}, (\ulu_2^R)^*T\ulM)$.  Hence $\|\beta_1 \times \beta_2\| = 1$.  Putting everything together we get that $\|T_R\| \leq 2c$. 
 
 \subsection*{Type 2} By almost identical arguments as above we can conclude that $\|T_R\| \leq (k+ 1) c$, where $c$ is a uniform bound for large $R$ on the operator norms $\|Q_{i,R}\|$, for $i = 0, 1, \ldots, k$.  

\subsection*{Type 3} By almost identical arguments as above we can conclude that $\|T_R\| \leq 2c$, where $c$ is a uniform bound for large $R$ on the operator norms $\| Q_{1,R}\|$ and $\|Q_{v,R}\|$.

\subsection{Quadratic estimate}  \label{quad_est_section}
The goal of this section is to establish a quadratic estimate, 
\[
\|d \F_{\sS, r, \ulu} (\rho, \ulxi) - D_{\sS, r, \ulu}\| \leq c (\|\ulxi\|_{W^{1,p}} + |\rho|)
\]
where the norm on the left is the operator norm.   The $W^{1,p}$ norm on the right will depend on a choice of volume form on $S$, and in the proof of the gluing theorem it will be a different volume form for different values of the gluing parameter $R$.  Note, however, that the definition of the operator $\F_{\sS, r, \ulu}$ doesn't depend in any way on the volume form on $S$.  The crucial thing to ensure is that the constant $c$ in the estimate can be chosen to be independent of the gluing parameter $R$. 

We assume that we are working in a local trivialization of the fiber bundle $\sS \to \RR$, in a neighborhood $U \subset \RR$ of $r$.  We will write $\ul{S}:=\sS_r$.  Recall that in this neighborhood of $r$, all the fibers $\sS_{r^\prime}$ are diffeomorphic to $\ul{S}$, and so the varying almost complex structures can be pulled back to $\ul{S}$, as can the perturbation data.   

 \begin{definition}
For a fixed volume form on $\ul{S}$, let
\[
c_p(\dvol_{\ul{S}}) : = \sup\limits_{0\neq f \in C^\infty(\ul{S})\cap W^{1,p}(\ul{S}) } \frac{\|f\|_{L^\infty}}{\|f\|_{W^{1,p}}}.
\]
\end{definition}
\begin{lemma} For each $d \geq 1$, there is a constant $c_0$ such that $c_p \leq c_0$ for all
the quilted surfaces $\sS_r$ in the bundle $\sS^d \to \RR$.
\end{lemma}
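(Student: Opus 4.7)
The plan is to exploit the thick/thin decomposition of each $\sS_r$ introduced just above this lemma. On the thin part, the geometry is that of flat unit-width strips, and the fact that every such piece satisfies the same cone condition (independent of its length) will yield a uniform Sobolev embedding constant depending only on $p$. On the thick part, compactness of $\ol{\RR}^{d,0}$ together with the continuity of the family of quilted surfaces gives a separate uniform bound. Combining these will complete the proof.

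First I would observe that for every $r$, the thin part of $\sS_r$ is a disjoint union of finitely many strip pieces, each isometric, in the area form and complex structure fixed in Section \ref{families}, to a subset of the standard flat strip $\R \times [0,1]$ of unit width. Because the strips all have width $1$ and carry the standard flat metric, they satisfy the cone condition with parameters depending only on the width, independently of how long the strips happen to be. The Sobolev embedding theorem under a uniform cone condition (the content of Appendix A) then yields a constant $C_{\on{thin}} = C_{\on{thin}}(p)$, depending only on $p$, such that
\[
\|f\|_{L^\infty(\sS_r^{\on{thin}})} \leq C_{\on{thin}}\, \|f\|_{W^{1,p}(\sS_r^{\on{thin}})}
\]
uniformly in $r$.

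Next I would handle the thick part. For each $r$ the thick part is compact, and the key point is that the family of thick parts varies continuously over the compact base $\ol{\RR}^{d,0}$. Over the interior, the Sobolev constant depends continuously on $r$; near a codimension-one boundary stratum the inductive construction of Section \ref{families} shows that the thick part degenerates into a disjoint union of the thick parts of the lower-dimensional strata joined by transition regions of uniformly bounded geometry. Proceeding by induction on $d$, with $\sS^{1,0}$ as the base case, and using both compactness of $\ol{\RR}^{d,0}$ and the explicit form of the gluing neighborhoods near $\partial \ol{\RR}^{d,0}$, I would obtain a uniform bound $C_{\on{thick}} = C_{\on{thick}}(d,p)$ for the $W^{1,p} \hookrightarrow L^\infty$ embedding on every thick part.

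Finally, combining the two estimates — for instance by fixing a cut-off function subordinate to the thick/thin decomposition whose gradient is bounded independently of $r$, which is possible because the collar separating thick from thin has uniformly bounded geometry by construction — produces a constant $c_0 = c_0(d,p)$ with $c_p(\dvol_{\sS_r}) \leq c_0$ for every $r$. The main obstacle in this argument is the uniform control of the thick part as $r \to \partial \ol{\RR}^{d,0}$, which is what forces the induction on $d$ and a careful description of how the bundle $\sS^{d,0}$ extends over boundary strata; the thin-part contribution, by contrast, is handled once and for all by the cone condition, and it is precisely this feature that keeps the constant independent of the arbitrarily large strip lengths appearing near the boundary.
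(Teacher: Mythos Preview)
Your proposal is correct and follows essentially the same approach as the paper: invoke the uniform cone condition on the thin (strip) pieces and use compactness for the thick pieces, exactly as set up in Appendix~A. The paper's proof is a one-line reference to Appendix~A and the uniform boundedness over the family; your write-up simply makes explicit the induction on $d$ and the behavior near $\partial\ol{\RR}^{d,0}$ that the paper leaves implicit.
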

\begin{proof} If one fixes volume forms for each patch in the quilts in a given family $\sS^{d}$ or $\sS^{d,0}$, then such a constant exists for each patch by the embedding statements of Appendix A, which can be uniformly bounded over a given family $\sS^{d,0}$ or $\sS^d$. 
\end{proof}

\begin{proposition}[c.f. Proposition 3.5.3 in \cite{mcd-sal}]
Let $p > 2$, $r\in \RR$ and let $\ul{S}:= \sS_r$ be its quilted surface with strip-like ends.   Let $\ul{S}^{thick (thin)}$ denote the thick (resp. thin) part of a thick-thin decomposition.  Then, for every constant $c_0>0$, there exists a constant $c>0$ such that the following holds for every volume form $\dvol_{\ul{S}}$ on $\ul{S}$ such that $c_p(\dvol_S) \leq c_0$ and $\mathrm{vol}(S^{thick}) \leq c_0^p$.  If $\ulu \in W^{1,p}(S,\ulM)$, $\ulxi \in W^{1,p}(S, \ulu^*T\ulM)$, $r \in \RR$, and $\rho \in T_r\RR$ satisfy
\begin{equation}
\|d\ulu\|_{L^p}\leq c_0, \ \|\ulxi\|_{L^\infty} \leq c_0, \ |\rho| \leq c_0,
\end{equation}
then
\begin{equation}
\|d\F_{\sS, r, \ulu}(\rho, \ulxi) - D_{\sS, r, \ulu}\| \leq c (\|\ulxi\|_{W^{1,p}} + |\rho|).\label{est}
\end{equation}
Here $\| \cdot \|$ denotes the operator norm on the space of bounded linear operators from $T_r \RR \times W^{1,p}(S, u^*TM)$ to $L^{p}(S, \Lambda^{0,1}\otimes_{J} u^*TM)$.  
\end{proposition}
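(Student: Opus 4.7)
The plan is to follow the strategy of \cite[Proposition 3.5.3]{mcd-sal}, adapted to handle three complications beyond the classical case: the quilted domain with seams, the Hamiltonian perturbation term $\cP$, and the dependence of all data on the parameter $r \in \RR$.  By the splittings (\ref{lin_delbar1}) and (\ref{lin_pert1}), together with the decomposition $\F_{\sS, r, \ul{u}} = \widetilde{\F}_{\sS, r, \ul{u}} - \cP_{\sS, r, \ul{u}}$, the triangle inequality reduces (\ref{est}) to four analogous summands, namely the $\ul{\xi}$- and $\rho$-direction parts of $d\widetilde{\F}$ and of $d\cP$.  In each case I would express the difference as a Taylor remainder
\[
d\F_{\sS, r, \ul{u}}(\rho, \ul{\xi}) - D_{\sS, r, \ul{u}} \;=\; \int_0^1 \frac{d}{d\tau}\, d\F_{\sS, r, \ul{u}}(\tau \rho, \tau \ul{\xi}) \, d\tau,
\]
so that the operator norm on the left is controlled by an $L^p$-estimate of the bilinear second differential $d^2 \F_{\sS, r, \ul{u}}(\tau\rho, \tau\ul{\xi})((\rho, \ul{\xi}), (\rho', \ul{\xi}'))$ applied to an arbitrary $(\rho', \ul{\xi}')$, uniformly in $\tau \in [0,1]$.

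Differentiating the explicit formulas (\ref{lin_delbar})--(\ref{lin_pert}) once more, the integrand satisfies a pointwise bound of the schematic form $C_1(|\rho|+|\ul{\xi}|)(|\rho'|+|\ul{\xi}'|+|\nabla\ul{\xi}'|)(1+|d\ul{u}|+|\ul{Y}|)$, with $C_1$ depending only on $C^2$-norms of $\ul{J}(r,\cdot,z)$, $\ul{Y}(r,\cdot,z)$, $\ul{j}(r,z)$, the quadratic correction $Q(z,\cdot)$ and the parallel transport $\Phi^Q(\cdot)$ over the compact region swept out by the data.  Compactness of the base $\ol{\RR}$, the standing bound $\|\ul{\xi}\|_{L^\infty}\leq c_0$ (which confines $\ul{u}$ to a compact set), and the smoothness of the universal perturbation data chosen in Section \ref{section_j-holo_quilts} produce such a $C_1$.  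Integrating over $\ul{S}$ and applying H\"older with the splitting $L^\infty \cdot L^p$, the factor $|d\ul{u}|$ is absorbed by the hypothesis $\|d\ul{u}\|_{L^p}\leq c_0$, while the $L^\infty$-norms of $\ul{\xi}$ and $\ul{\xi}'$ are controlled by the Sobolev embedding $W^{1,p} \hookrightarrow L^\infty$ with embedding constant $c_p(\dvol_{\ul{S}}) \leq c_0$.  The $\rho$-direction terms require extra care: they involve $\partial_\rho \ul{J}$, $\partial_\rho \ul{Y}$, $\partial_\rho \ul{j}$, all of which vanish on $\ul{S}^{\mathrm{thin}}$ by the consistency of the perturbation data with the $r$-independent Floer data on the strip-like ends.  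They are therefore supported on $\ul{S}^{\mathrm{thick}}$, and the hypothesis $\vol(\ul{S}^{\mathrm{thick}})\leq c_0^p$ converts their uniform $C^0$ bound into the required $L^p$ bound.

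The main obstacle is precisely this uniformity of $c$ across the family $\{\sS_r\}$: in the application of Section \ref{gluing_section} the quilted surface $\sS_{r_R}$ has strip-like portions of unbounded length as $R \to \infty$, and a naive estimate would pick up factors proportional to that length.  The two hypotheses on $\dvol_{\ul{S}}$ are designed exactly to preclude this.  The bound on $c_p(\dvol_{\ul{S}})$ will be supplied in the gluing application by the uniform cone condition of Appendix A, and the bound on $\vol(\ul{S}^{\mathrm{thick}})$ restricts the dangerous $r$-derivative terms to a bounded-volume region.  Together, these ensure that the quadratic estimate is strong enough to close the Newton iteration that produces the gluing map.
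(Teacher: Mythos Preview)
Your strategy is sound and lands at the same place as the paper, but the organization differs and one claim needs sharpening.

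\textbf{Organization.} Rather than writing $d\F(\rho,\ulxi)-D$ as a Taylor integral of $d^2\F$, the paper differentiates the defining identity
\[
\Phi_{\sS,r,u}(\rho+\lambda\tilde\rho,\xi+\lambda\tilde\xi)\,\F_{\sS,r,u}(\rho+\lambda\tilde\rho,\xi+\lambda\tilde\xi)=(\ol\partial-\ul\nu)(\exp_r(\rho+\lambda\tilde\rho),\exp^Q_u(\xi+\lambda\tilde\xi))
\]
at $\lambda=0$ to obtain $d\F(\rho,\xi)(\tilde\rho,\tilde\xi)$ explicitly, then subtracts $D(\tilde\rho,\tilde\xi)$ and groups the difference into three blocks $A,B,C$ (curvature of the trivialization, curvature in the $\rho$-direction, and the comparison $\Phi^{-1}D_{r_\rho,u_\xi}E-D_{r,u}$). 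Each block is bounded pointwise using uniform constants for $E_x(\xi)$, $\Psi_x(\xi)$, $\nabla J$, $\partial_\rho J$, $\partial_\rho j$, etc., and then in $L^p$ exactly via the H\"older and Sobolev-embedding steps you describe. Your integral-remainder route is equivalent in content (it produces the same bilinear pieces after one more differentiation), but the paper's direct expansion makes the dependence of each term on $|\rho|$, $|\xi|$, $|du|$, and $|Y|$ visible without having to justify a schematic $d^2\F$ bound.

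\textbf{The thin-part claim.} Your assertion that $\partial_\rho\ul J$, $\partial_\rho\ul Y$, $\partial_\rho\ul j$ \emph{all} vanish on $\ul S^{\mathrm{thin}}$ is not quite right: on the images of glued strip-like ends (the necks), $j=j(r)$ \emph{does} vary with $r$. The paper handles this carefully: on those necks $j(r)$ is $r$-independent in the $t$-direction, so the offending bilinear form vanishes on $\cP(\rho,\xi)(\partial_t)$; in the $\partial_s$-direction the form need not vanish, but $Y(r,z)(\partial_s)=0$ there because the Hamiltonian perturbation is $s$-independent on thin parts, so $\cP(\rho,\xi)(\partial_s)=0$. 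The net effect is indeed that the dangerous $\rho$-terms are supported on $\ul S^{\mathrm{thick}}$, but the mechanism is this cancellation rather than the blanket vanishing you state. Fix this and your argument goes through.
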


\begin{proof}

For each component manifold $M$ of $\ul{M}$, a point $x\in M$ and a vector $\xi\in T_x M$ determine a linear map
\bea
E_x(\xi) : T_x M  \to  T_{\exp_x\xi}M, \ \ \ 
\widetilde{\xi}  \mapsto  \frac{d}{d\lambda}\Big\lvert_{\lambda=0} \exp_x(\xi + \lambda \widetilde{\xi})
\eea
and a bilinear map
\bea
\Psi_x(\xi): T_x M\times T_x M  \to  T_{\exp_x\xi}M, \ \ \ \ 
(\widetilde{\xi}, \widetilde{\eta})  \mapsto \widetilde{\nabla}_\lam \Phi_x(\xi + \lambda \tilde{\xi})\eta \Big\lvert_{\lambda = 0} 
\eea
Similarly, for each pair of manifolds $(M_1, M_2)$ in $\ul{M}$ labeling the patches on either side of a seam (we include true boundary components by considering them as a seam with the other side labeled by $\{pt\}$), a point $x := (x_1, x_2) \in M_1^-\times {M_w} =: M$ 
together with a vector $\xi:=(\xi_1, \xi_2) \in T_{x}M$ determine a linear map
\bea
E^Q_{x}(\xi): T_xM   \to  T_{\exp^Q_x(\xi)}M, \ \ \ \ 
                                 \tilde{\xi} \mapsto \frac{d}{d\lambda}\Big\lvert_{\lambda=0} \exp^Q_x(\xi + \lambda \tilde{\xi})
\eea
and a bilinear map
\bea
\Psi^Q_x(\xi): T_x M\times T_x M  \to  T_{\exp_x\xi}M, \ \ \ \ 
(\widetilde{\xi}, \widetilde{\eta})  \mapsto  \widetilde{\nabla}_\lam  \Phi^Q_x(\xi + \lambda \tilde{\xi})\eta \Big\lvert_{\lambda = 0}
\eea
We use the notation $\Phi_r(\rho)$ to denote the inverse of the projection $[ \ ]^{0,1}_{(r)}$.  That is, $\Phi_r(\rho)^{-1}(\psi) = [\psi]^{0,1}_{(r)} = \Pi_r \psi = \frac{1}{2}\left( \psi + J(r)\circ \psi \circ j(r)\right)$.  We therefore have, for each $r \in \RR$, and $\rho \in T_r \RR$, a linear map
\bea
E_r(\rho) : T_{r}\RR  \to     T_{r_\rho}\RR , \ \ \ \ 
                          \tilde{\rho}  \mapsto   \frac{d}{d\lambda}\Big\lvert_{\lambda=0} \exp_r(\rho + \lambda \tilde{\rho})
\eea
and a family of bilinear maps (parametrized by the quilted surface) given by
\bea
\Psi_{r,z}(\rho): T_r\RR \times \Lambda^{0,1}_{(r)}(T^*_z S) \otimes_{J}  T_{x} M & \to & \Lambda^{0,1}_{(r_\rho)}(T^*_z S) \otimes_{J}  T_{x} M \\
       (\tilde{\rho}, \eta\otimes \xi) & \mapsto & \frac{d}{d\lambda} \Big\lvert_{\lambda =0} \Pi_{r_\rho} \circ \Phi_r(\rho + \lambda \tilde{\rho})(\eta \otimes \xi).
\eea 

All the manifolds are compact, and by construction the almost complex structures $J(r,z)$ (which determine the local metrics and exponential maps) vary only over a compact subset of the bundle $\sS$, and the quadratic corrections $Q$ near each seam/boundary component only vary over a compact interval.  Therefore, there is a constant $c_1$ such that for all $|\rho| \leq c_0$ and $|\xi| \leq c_0$, 
\bea
& & |E_x(\xi)\tilde{\xi}|  \leq  c_1 |\tilde{\xi}|, \ \ 
|\Psi_x(\xi)(\tilde{\xi},\eta)|   \leq  c_1 |\xi| |\tilde{\xi}| |\eta|, \ \ 
|E_x^Q(\xi)\tilde{\xi}|  \leq  c_1 |\tilde{\xi}|, \ \ 
|\Psi^Q_x(\xi)(\tilde{\xi}, \eta)  \leq  c_1 ||\xi| |\tilde{\xi}| |\eta|, \\
& & |E_r(\rho)\tilde{\rho}| \leq  c_1 |\tilde{\rho}|, \ \ 
|\Psi_{r,z}(\rho; \tilde{\rho}, \eta \otimes \xi)| \leq  c_1 |\rho| |\tilde{\rho}| |\eta\otimes \xi|
\eea
for all norms determined by the metrics $g_{\ul{J}(r,z)}$. The inequalities for the bilinear forms are possible because both are 0 when $\xi = 0$ and $\rho = 0$.  

From here on in the calculations, we will simplify notation and write $u$ instead of $\ulu$,  and $\xi$ instead of $\ulxi$; but it is important to remember that they are defined on quilts. Starting from the identity 
\begin{equation*}
\Phi_{\sS,r,u}(\rho + \lambda\tilde{\rho},\xi + \lambda\tilde{\xi}) \F_{\sS, r,u}(\rho + \lambda\tilde{\rho},\xi + \lambda\tilde{\xi}) = \ol{\partial}_J(\exp_r(\rho + \lambda \tilde{\rho}), \exp_u(\xi + \lambda \tilde{\rho}))
\end{equation*}
we covariantly differentiate both sides respect to $\lam$ at $\lam =0$ within our trivialization.  For typesetting reasons, abbreviate $\F:= \F_{\sS,r,u}, u_\xi:= \exp_u(\xi), r_\rho:= \exp_r(\rho)$. 
\begin{equation}
\widetilde{\nabla}_\lambda \Big\lvert_{\lam=0} \Pi_{r_\rho} \Phi_{\sS,r,u}(\rho + \lambda\tilde{\rho},{\xi} + \lambda\tilde{{\xi}}) \F(\rho + \lambda\tilde{\rho},{\xi} + \lambda\tilde{{\xi}}) = D_{\sS,r_\rho,{u}_{{\xi}}}(E_r(\rho)\tilde{\rho}, E_{u}({\xi})\tilde{{\xi}})\label{covariant}.
\end{equation}
 Expanding the left hand side of (\ref{covariant}) yields 
\begin{eqnarray*} 
\left[ \Psi_r(\rho; \tilde{\rho}, \Phi_{u}({\xi}) \F(\rho,{\xi}) \right]^{0,1}_{r_\rho} 
+   [\Psi_{u}({\xi}; {\widetilde{\xi}}, \Phi_r(\rho)\F(\rho,{\xi}))]^{0,1}_{r_\rho}  + \Phi_{\sS, r, u}(\rho, {\xi}) d\F(\rho, {\xi})(\tilde{\rho}, \tilde{{\xi}})].
\end{eqnarray*}
Therefore we have
\begin{eqnarray*}
d\F_{\sS,r,u}(\rho,\xi)(\tilde{\rho},\tilde{\xi})  -  D_{\sS,r,u}(\tilde{\rho},\tilde{\xi})  & & - \  \underset{A}{\underbrace{\Phi_{\sS,r,u}(\rho,\xi)^{-1} \left[\Psi_u(\xi; \tilde{\xi},\Phi_r(\rho)\F(\rho,\xi))   \right]^{0,1}_{r_\rho} }} \\
&  = &  - \ \underset{B}{\underbrace{\Phi_{\sS,r,u}(\rho,\xi)^{-1} \left[ \Psi_r(\rho; \tilde{\rho},\Phi_u(\xi)\F(\rho,\xi))   \right]^{0,1}_{r_\rho}}} \\
& & + \ \underset{C}{\underbrace{\Phi_{\sS, r, u}(\rho, \xi)^{-1}D_{\sS,r_\rho,u_\xi}(E_r(\rho)\tilde{\rho}, E_x(\xi)\tilde{\xi}) - D_{\sS,r,u}(\tilde{\rho},\tilde{\xi})}}.
\end{eqnarray*}
We estimate $A$, $B$ and $C$ in turn.

\noindent {\it Estimating A.}  By definition $\Phi_{\sS, r, u}(\rho,\xi)^{-1} = \Pi_r \circ \Phi_u(\xi)^{-1}$.  By construction, the projection $\Pi_r$ satisfies $|\Pi_r \psi| \leq |\psi|$ when using the induced norm at $r$, and $\Phi_u(\xi)$ is an isometry.  Under the assumption that $|\rho|\leq c_0$, we can find a constant $c_2$ (which depends on $c_0$) such that the operator norms $\|\Phi_r(\rho)\| \leq c_2$ and $\|\Pi_{r_\rho}\| \leq c_2$.  So we get a pointwise estimate 
\bea
|A| & \leq & |  \Psi_u(\xi ; \tilde{\xi}, \Phi_r(\rho)\F(\rho,\xi))| \leq  c_1c_2 |\xi| |\tilde{\xi}| |\Phi_r(\rho)\F(\rho,\xi)|.
\eea
 Moreover
\bea
\F(\rho,\xi)    & = &  \Phi_{\sS, r, u}(\rho, \xi)^{-1} [d u_\xi - Y(r_\rho, u_\xi)) ]^{0,1}_{r_\rho},\\
\implies |\F(\rho,\xi)| & \leq &c_2 |d u_\xi - Y(r_\rho, u_\xi)| \leq c_2|du_\xi| + c_2|Y(r_\rho,u_\xi)|.
\eea
We can find a constant $c_3$, which depends on the fixed choice of $c_0$, and the fixed choice of almost-complex structures $J(r,z)$ in the perturbation datum, and the fixed choice of quadratic corrections $Q$ for the seams (but given fixed choices, this constant can be made independent of the parameters $(r,z)$), such that
\[
|d\exp^Q_u\xi| \leq c_3( |du| + |\nabla \xi| + |\xi|)
\]
whenever $\|du\|_{L^p} \leq c_0$ and $\|\xi\|_{L^\infty}\leq c_0$.   There is a further constant, call it $c_3$ as well, that gives a uniform bound $|Y(r, z, x)| \leq c_3$ for all $(r,z)\in \sS$, $x \in M$, and all metrics $| \cdot |$ on $TM$ induced by $J(r,z)$.  Taking $L^p$ norms of these estimates, we get
\bea
|A| & \leq & c_1c_2^2c_3|\xi||\txi|(|du| + |\nabla \xi| + |\xi| + 1)\\
\implies \| A \|_{p} & \leq & c_1 c_2^2 c_3 \left(\|\xi\|_{\infty}\|\txi\|_{\infty}\|du\|_{p} + \|\xi\|_\infty \|\txi\|_\infty \|\nabla\xi\|_p  + \|\xi\|_\infty\|\txi\|_\infty \|\xi\|_p+ \|\xi\|_\infty\|\txi\|_p\right)\\
& \leq & c_1 c_2^2 c_3 \left( c_0^3\|\xi\|_{1,p}\|\txi\|_{1,p} + c_0^2\|\xi\|_{1,p}\|\txi\|_{1,p} +  c_0^2 \|\txi\|_{1,p}\|\xi\|_{1,p} + c_0\|\xi\|_{1,p}\|\txi\|_{1,p}\right)\\
& \leq & c_A (\|\xi\|_{1,p} + |\rho|)(\|\txi\|_{1,p} + |\tilde{\rho}|),
\eea
where $c_A = c_A(c_0, c_1, c_2, c_3)$.
 
 \noindent {\it Estimating B.}  First we have
 \bea
 |B| \leq |\Psi_r(\rho; \tilde{\rho},\Phi_u(\xi)\F(\rho,\xi))| \leq  |\Psi_r(\rho; \tilde{\rho},\Phi_u(\xi)\widetilde{\F}(\rho,\xi))| + |\Psi_r(\rho; \tilde{\rho},\Phi_u(\xi)\cP(\rho,\xi))|
 \eea
 where the notation $\F = \widetilde{\F} + \cP$  (from section \ref{lin_op}) just separates the $\delbar$ piece of the inhomogeneous pseudoholomorphic equation from the perturbation piece.  
 For the first term,
 \bea
 |\Psi_r(\rho; \tilde{\rho},\Phi_u(\xi)\widetilde{\F}(\rho,\xi))| & \leq & c_1 |\rho| |\tilde{\rho}|du_\xi| 
 \leq  c_1c_3|\rho| |\tilde{\rho}|(|du| + |\nabla \xi| + |\xi|)\\
 \implies  \|\Psi_r(\rho; \tilde{\rho},\Phi_u(\xi)\widetilde{\F}(\rho,\xi))\|_p & \leq & c_1c_2 |\rho||\tilde{\rho}|(\|du\|_p + \|\nabla{\xi}\|_p + \|\xi\|_p)\\
 & \leq & a_B (\|\xi\|_{1,p} + |\rho|)(\|\txi\|_{1,p} + |\tilde{\rho}|),
 \eea
 where $a_B=a_B(c_0,c_1,c_3)$.  For the other part of $B$, observe that $\Psi_r(\rho; \tilde{\rho},\Phi_u(\xi)\cP(\rho,\xi)) = 0$ on striplike ends and the images of glued striplike ends, i.e., the {\em thin} part of the quilt.  To see that it vanishes on the striplike ends, note that $j$ and $J$ are independent of $r\in \RR$, so $\Phi_r(\rho)$ is the identity.  On the regions of $S$ in the images of the glued striplike ends, i.e. the {\em thin} parts, $J$ is independent of $r\in \RR$, but $j= j(r)$ does change with $r$.  However, it is independent of $r$ in the $t$ direction, so the bilinear form vanishes on $\cP(\rho,\xi)(\partial_t)$ here.  The bilinear form does not automatically vanish in the $\partial_s$ direction, however, the form $\cP$ itself vanishes precisely in the $s$ direction because the Hamiltonian perturbations are always independent of $s$ on these parts, so $Y(r,z)(\partial_s) = 0$ here.  Thus,  $\Psi_r(\rho; \tilde{\rho},\Phi_u(\xi)\cP(\rho,\xi))$ is only supported on the {\em thick} part of $S$.  With this in mind, we have that 
 \bea
 \|\Psi_r(\rho; \tilde{\rho},\Phi_u(\xi)\cP(\rho,\xi))\|_p  & \leq & c_1|\rho||\tilde{\rho}|\|Y(r,z)\|_{L^p(S^{thick})} \\
& \leq & c_1 c_3 |\rho||\tilde{\rho}| (\vol(S^{thick}))^{1/p}\\
& \leq & c_0 c_1 c_3  |\rho||\tilde{\rho}|\\
& \leq &b_B(\|\xi\|_{1,p} + |\rho|)(\|\txi\|_{1,p} + |\tilde{\rho}|),
 \eea
 where $b_B=b_B(c_0,c_1,c_3)$, and so taking $c_B = a_B + b_B$ we have that 
 \[
 \|B\|_p \leq c_B (\|\xi\|_{1,p} + |\rho|)(\|\txi\|_{1,p} + |\tilde{\rho}|).
 \]  
 
{\noindent \it Estimating C.}  Writing $D_{\sS, r_\rho, u_\xi} = D_{u_\xi}^{(r_\rho)} + D_{r_\rho}^{(u_\xi)}$ as in section \ref{lin_op},  we have
\bea
\Phi_ {\sS,r,u}(\rho, \xi)^{-1} D_{\sS, r_\rho, u_\xi} (E_{r,u}(\rho, \xi)(\tilde{\rho},\tilde{\xi})) - D_{\sS,r,u}(\tilde{\rho},\tilde{\xi}) \\
=  \underset{C(a)}{\underbrace{[\Phi_ {\sS,r,u}(\rho, \xi)^{-1} D_{u_\xi}^{(r_\rho)} E_u(\xi)(\tilde{\xi}) - D_u^{(r)}(\tilde{\xi})]}}\\
  +  \underset{C(b)}{\underbrace{[\Phi_ {\sS,r,u}(\rho, \xi)^{-1}D_{r_\rho}^{(u_\xi)} E_r(\rho)(\tilde{\rho}) - D_{r}^{(u)}(\tilde{\rho})]}}.
\eea
\noindent {\it Estimating $C(a)$.}  By definition $\Phi_{\sS, r, u}(\rho,\xi)^{-1} = \Phi_u(\xi)^{-1}\circ \Pi_r$, and $\Phi_u(\xi)$ is an isometry, so
\bea
\Phi_u(\xi) C(a) & = & \Pi_r \circ D_{u_\xi}^{r_\rho} (E_u(\xi) (\tilde{\xi})) - \Phi_u(\xi) D_u^{(r)}(\tilde{\xi})
 =  \left[ D_{u_\xi}^{r_\rho} (E_u(\xi) (\tilde{\xi})) - \Phi_u(\xi) D_u^{(r)}(\tilde{\xi})\right]^{0,1}_{(r)}\\
\implies |C(a)| & \leq & |D_{u_\xi}^{(r_\rho)}(E_u(\xi) (\tilde{\xi}) ) - \Phi_u(\xi) D_u^{(r)}(\tilde{\xi})|\\
& \leq & \underset{C(a)(i)}{\underbrace{|\widetilde{D}_{u_\xi}^{(r_\rho)}(E_u(\xi) (\tilde{\xi}) ) - \Phi_u(\xi) \widetilde{D}_u^{(r)}(\tilde{\xi})|}} + \underset{C(a)(ii)}{\underbrace{|P_{u_\xi}^{(r_\rho)}(E_u(\xi) (\tilde{\xi}) ) - \Phi_u(\xi) P_u^{(r)}(\tilde{\xi})|}} .
\eea
\noindent {\it Estimating $C(a)(i)$.}  By formula (\ref{lin_delbar}) for $\widetilde{D}_{u}^{(r)}$ we have
\begin{eqnarray}
&&|\widetilde{D}_{u_\xi}^{(r_\rho)}(E_u(\xi) (\tilde{\xi}) ) - \Phi_u(\xi) \widetilde{D}_u^{(r)}(\tilde{\xi})|  \leq  |[ \nabla (E_u(\xi)(\txi)) ]^{0,1}_{r_\rho} - \Phi_u(\xi)[\nabla \txi  ]^{0,1}_{r}|\nonumber \\
 &&\ + \  \half |J(r_\rho)(\nabla_{E_u(\xi) (\tilde{\xi})}J)(r_\rho,\ulu_\xi)\partial_{J(r_\rho)}u_\xi - \Phi_u(\xi)J(r)(\nabla_{\txi}J)(r,u) \partial_{J(r)}u|.\label{eqnone}
\end{eqnarray}
 For $|\rho| \leq c_0$, we can find a constant $c_4$ such that at every $(r,z) \in \sS$, the operator norm $\|\Pi_{\rrho} - \Pi_r\| \leq c_4|\rho|$ uniformly for all $(r,z) \in \sS$ and $x\in M$. Hence the first term in (\ref{eqnone}) can be estimated by 
\bea
|[ \nabla (E_u(\xi)(\txi)) ]^{0,1}_{r_\rho} - \Phi_u(\xi)[\nabla \txi  ]^{0,1}_{r}| & \leq & \ |\nabla (E_u(\xi)(\txi)) - \Phi_u(\xi)\nabla \txi| + |(\Pi_{\rrho} - \Pi_r)\circ \Phi_u(\xi)\nabla \txi|\\
& \leq & |\nabla (E_u(\xi)(\txi))-E_u(\xi)\nabla\txi| + |(E_u(\xi) - \Phi_u(\xi))\nabla\txi| + c_4|\rho| |\nabla \txi|\\
& \leq & | (\nabla E_u(\xi)) \txi| + |(E_u(\xi) - \Phi_u(\xi))\nabla\txi| + c_4|\rho| |\nabla \txi|.
\eea
Since $E_x(0) - \Phi_x(0)=0$, there is a uniform constant, again call it $c_4$,  such that $\|E_x(\xi) - \Phi_x(\xi)\| \leq c_4$ for all $|\xi|\leq c_0$.  There is also a uniform constant, again call it $c_4$, such that pointwise, the operator norm $\|\nabla E_u(\xi)\| \leq c_4|\xi|(|du|+|\nabla\xi|)$ whenever $|\xi(z,u(z))|\leq c_0$. Hence,
\bea
\|[ \nabla (E_u(\xi)(\txi)) ]^{0,1}_{r_\rho} - \Phi_u(\xi)[\nabla \txi  ]^{0,1}_{r}\|_p & \leq & c_4\left(\|\xi\|_\infty(\|du\|_p+\|\nabla \xi\|_p )\|\txi\|_\infty + \|\xi\|_\infty\|\nabla\txi\|_p + |\rho| \|\nabla \txi\|_p\right)\\
&\leq & c_4\left( c_0^3 \|\xi\|_{1,p}\|\txi\|_{1,p} + c_0^2 \|\nabla\xi\|_p\|\|\txi\|_{1,p} + c_0\|\xi\|_{1,p}\|\nabla\txi\|_p + |\rho| \|\nabla \txi\|_p   \right)\\
& \leq & C(\|\xi\|_{1,p} + |\rho|)(\|\txi\|_{1,p} + |\tilde{\rho}|).
\eea
where $C=C(c_0,c_4)$ is a uniform constant.  Up to the factor of $\half$, the second term in (\ref{eqnone}) can be expanded to
\bea
&&J(r_\rho)(\nabla_{E_u(\xi) (\tilde{\xi})}J)(r_\rho,\ulu_\xi)\partial_{J(r_\rho)}u_\xi - \Phi_u(\xi)J(r)(\nabla_{\txi}J)(r,u)\partial_{J(r)}u \\
&& \ \ \ \ = \ \ (J(r_\rho)-J(r))(\nabla_{E_u(\xi) (\tilde{\xi})}J)(r_\rho,\ulu_\xi)\partial_{J(r_\rho)}u_\xi \\
&&\ \ \ \  \ \ + \ J(r)((\nabla_{E_u(\xi)(\txi)}J)(r_\rho,u_\xi) - (\nabla_{E_u(\xi)(\txi)}J)(r,u_\xi))\partial_{J(r_\rho)}u_\xi\\
&&\ \ \ \  \ \ + \  J(r)(\nabla_{E_u(\xi)(\txi)}J)(r,u_\xi)) (\partial_{J(r_\rho)}u_\xi - \partial_{J(r)}u_\xi)\\
&&\ \ \ \ \ \  + \  J(r)(\nabla_{E_u(\xi)(\txi)}J)(r,u_\xi)[\partial_{J(r)}u_\xi - \Phi_u(\xi)\partial_{J(r)} u] \\
&&\ \ \ \ \ \  + \  J(r)  [(\nabla_{E_u(\xi)(\txi)}J)(r,u_\xi) -  \Phi_u(\xi)(\nabla_{\txi}J)(r,u)] \Phi_u(\xi) \partial_{J(r)}u
\eea
There is constant $c_5$ so that $\|\nabla_\eta J\| \leq c_5|\eta|$ for all $(r,z)\in \sS$, $x \in M, \eta\in T_xM$.  Furthermore we can choose $c_5$ so that for all $(r,z)\in \sS$, $\rho\in T_r\RR$ satisfying $|\rho|\leq c_0$, $x\in M$ and $\xi \in T_xM$ such that $|\xi| \leq c_0$, we have
\bea
\|J(r_\rho) - J(r)\| \leq c_5 |\rho|, \ 
\|J(r_\rho)\| \leq c_5, \ |j(r_\rho)| \leq c_5, \ |(\nabla_{\xi}J)(r_\rho,x) - (\nabla_{\xi}J)(r,x)\| \leq c_5 |{\xi}| |\rho|, \\
| \partial_{J(r)}u_\xi - \Phi_u(\xi)\partial_{J(r)} u | \leq c_5|\xi|, \ |(\nabla_{E_u(\xi)(\txi)}J)(r,u_\xi) -  \Phi_u(\xi)(\nabla_{\txi}J)(r,u)| \leq c_5|\xi||\txi|, 
\eea
and for all $\psi \in T_z^*S\otimes T_xM$, we have $| [\psi]^{1,0}_{r_\rho} - [\psi]^{1,0}_r| \leq c_5|\rho||\psi|$, where $[ \ ]^{1,0}_r$ denotes projection onto the complex linear part with respect to $J(r)$ and $j(r)$. Thus the above expression can be estimated by
\bea
c_1c_3 c_5^4|\rho | |\tilde{\xi}| (|du| +|\nabla \xi|) + c_1c_3c_5^3|\txi||\rho| (|du| +|\nabla \xi|)  + c_1c_3 c_5^2|\txi||\rho| (|du| +|\nabla \xi|)  \\
+ c_1 c_5^2|\tilde{\xi}| |\xi| + c_5 |\xi||\txi| |du|,
\eea
and it follows readily that there is a constant $C=C(c_0, c_1, c_3, c_5)$ such that the $L^p$ norm of this sum is estimated by $C(|\rho| + \|\xi\|_{1,p})(|\tilde{\rho}| + \|\txi\|_{1,p})$.  Combining this with the estimate for the first term in (\ref{eqnone}) we get that there is a constant $C = C(c_0,c_1,c_3,c_4,c_5)$ such that 
\begin{equation}
\|C(a)(i)\|_p \leq C (|\rho| + \|\xi\|_{1,p})(|\tilde{\rho}| + \|\txi\|_{1,p}).
\end{equation}

\noindent {\it Estimating $C(a)(ii)$.} By formula (\ref{lin_pert}) for $P_{u}^r$, we have
\bea
C(a)(ii)  &=&  [\nabla_{E_u(\xi)(\txi)}Y(r_\rho,u_\xi)]^{0,1}_{r_\rho} - \Phi_u(\xi)[\nabla_{\txi}Y(r,u)]^{0,1}_r\\
& & - \half[J (r_\rho, u_\xi) \circ (\nabla_{E_u(\xi)(\txi)}J)(r_\rho,u_\xi) \circ Y(r_\rho,u_\xi)]^{0,1}_{r_\rho} \\
& & + \half \Phi_u(\xi)[J(r,u)\circ (\nabla_{\txi}J)(r,u) \circ Y(r,u)]^{0,1}_r\\
& = & (\Pi_{r_\rho} - \Pi_r)  \nabla_{E_u(\xi)(\txi)}Y(r_\rho,u_\xi)
 + [ \nabla_{E_u(\xi)(\txi)}Y - \Phi_u(\xi)\nabla_{\txi} Y]^{0,1}_{r}\\
 & & - \half \left( \Pi_{r_\rho} - \Pi_r\right)\circ (J \circ (\nabla_{E_u(\xi)(\txi)}J) \circ Y)(r_\rho,u_\xi)  \\
 & & - \half [  \left(J \circ \nabla_{E_u(\xi)(\txi)}J \circ Y \right)(r_\rho,u_\xi)-  \left(J \circ \nabla_{E_u(\xi)(\txi)}J \circ Y\right)(r,u_\xi) ]^{0,1}_r\\
 & & -\half [ \left(J  \circ \nabla_{E_u(\xi)(\txi)}J \circ Y\right)(r,u_\xi) - \Phi_u(\xi) \left(J \circ \nabla_{\txi}J\circ Y\right)(r,u)]^{0,1}_r
\eea
There is a constant $c_6$ such that for all $(r,z)\in \sS$, $\rho\in T_r\RR$ satisfying $|\rho|\leq c_0$, $x\in M$, $\xi \in T_xM$ such that $|\xi| \leq c_0$, and for all $\eta \in T_x M$, we have
\bea
 |\nabla_\eta Y |  \leq  c_6 |\eta|,  \\
 | \left(J\circ \nabla_\eta J\circ Y\right)(r_\rho,x) - \left(J\circ \nabla_\eta J\circ Y\right)(r,x)|  \leq   c_6 |\rho| |\eta|, \\
 |\nabla_{E_u(\xi)(\eta)} Y - \Phi_u(\xi) \nabla_\eta Y|  \leq   c_6 |\xi| |\eta|,\\
 |\left(\nabla_{E_x(\xi)(\eta)}J \circ Y\right)(r,\exp^Q_x(\xi)) - \Phi_x(\xi) \circ \left(\nabla_{\eta}J)\circ Y\right)(r,x)|  \leq  c_6 |\xi||\eta|.
\eea
Thus we get a pointwise estimate 
\bea
|C(a)(ii)| & \leq & c_1 c_4 c_6 |\rho||\txi| + c_6|\xi||\txi| + \half c_1 c_3c_4 c_5^2 |\rho| |\txi|  + c_0 c_6 |\rho||\txi| + c_6 |\xi||\txi|\\
\implies \|C(a)(ii)\|_p & \leq & C (|\rho| + \|\xi\|_{1,p})(|\tilde{\rho}| + \|\txi\|_{1,p}) 
\eea
where $C = C(c_0, c_1, c_3, c_4,c_5,c_6)$.  This completes the quadratic estimate for $C(a)$.  

{\noindent \it Estimating C(b).}  As in Section \ref{lin_op}, we write $D_r^{(u)}\rho = \widetilde{D}_r^{(u)}\rho + P_r^{(u)}\rho$, \bea
C(b) & = & \underset{C(b)(i)}{\underbrace{\Phi_u(\xi)^{-1} \Pi_r \widetilde{D}_{r_\rho}^{(u_\xi)}E_r(\rho)(\tilde{\rho}) - \widetilde{D}_r^{(u)}\tilde{\rho}}} - \underset{C(b)(ii)}{\underbrace{\Phi_u(\xi)^{-1} \Pi_r P_{r_\rho}^{(u_\xi)}E_r(\rho)(\tilde{\rho})  + P_r^{(u)} \tilde{\rho}}}.
\eea
{\noindent \it Estimating C(b)(i).} Using the explicit formula (\ref{lin_delbar1}), we get
\begin{eqnarray*}
2\Phi_u(\xi) C(b)(i)& = &  \underset{\alpha}{\underbrace{\Pi_r \circ \Pi_{r_\rho} (\partial_{E_r(\rho)(\tilde{\rho})}J\circ du_\xi \circ j)(r_\rho,u_\xi)  - \Pi_r \Phi_u(\xi)\circ (\partial_{\tilde{\rho}} J\circ du\circ j)(r,u) }}\\
& & +\underset{\beta}{\underbrace{ \Pi_r \Pi_{r_\rho}( J\circ du_\xi \circ \partial_{E_r(\rho)(\tilde{\rho})} j)(r_\rho,u_\xi)  - \Pi_r \Phi_u(\xi)(J\circ du \circ \partial_{\tilde{\rho}} j)(r,u).}}
\end{eqnarray*}
We can find a constant $c_7$ such that for all $(r,z) \in \sS$, all $\rho \in T_r\RR$ such that $|\rho|\leq c_0$, all $\theta\in T_r\RR$, and all $x \in M$ and $\xi\in T_xM$ with $|\xi|\leq c_0$, we have $
\|\partial_\theta J\| \leq c_7 |\theta|, \ \|\partial_\theta j\| \leq c_7 |\theta|, \|\partial_\theta Y\|\leq c_7 |\theta|$, $\|\partial_{E_r(\rho)(\theta)}J(r_\rho,x) - \partial_{\theta}J(r,x)\|\leq c_7|\rho||\theta|$, $|\Phi_x(\xi)(\partial_\theta J)(r,x)  - (\partial_\theta J)(r,\exp_x(\xi)) \Phi_x(\xi) |\leq c_7|\theta||\xi|$, $|du_\xi - \Phi_u(\xi) du | \leq c_7 |\xi|$, and $\|j(r_\rho) - j(r)\| \leq c_7|\rho|$.  Expanding $\alpha$ we get
\bea
\alpha & = & \Pi_r (\Pi_{r_\rho} - \Pi_r) (\partial_{E_r(\rho)(\tilde{\rho})}J\circ du_\xi \circ j)(r_\rho,u_\xi)  \\
& & + \Pi_r \left( \partial_{E_r(\rho)(\tilde{\rho})}J(r_\rho, u_\xi) - \partial_{\tilde{\rho}}J(r,u_\xi)\right) \circ du_\xi \circ j(r_\rho) 
\\ 
& &+\Pi_r (\partial_{\tilde{\rho}}J)(r,u_\xi) \circ du_\xi \circ (j(r_\rho)  - j(r)) \\
& & + \Pi_r(\partial_{\tilde{\rho}}J)(r,u_\xi)\circ  \left( du_\xi -  \Phi_u(\xi) du \right)\circ j(r)\\
& & + \Pi_r \left( \partial_{\tilde{\rho}}J(r,u_\xi)\circ \Phi_u(\xi) - \Phi_u(\xi) \partial_{\tilde{\rho}}J(r,u)\right) du\circ j(r)\\
\implies
|\alpha| & \leq & c_1 c_3 c_4 c_5 c_7 |\rho| |\tilde{\rho}| (|du_\xi|) +  c_3c_5c_7|\rho||\tilde{\rho}|(|du_\xi|) \\
& & + c_3 c_7^2|\tilde{\rho}|(|du_\xi|)|\rho| + 2 c_7 |\tilde{\rho}||\xi| \\
\implies \|\alpha\|_p & \leq & C (|\rho| + \|\xi\|_{1,p})(|\tilde{\rho}| + \|\txi\|_{1,p}) 
\eea
where $C = C(c_0,\ldots, c_7)$.  Now expanding $\beta$ we get
\bea
\beta & = & \Pi_r ( \Pi_{r_\rho} - \Pi_r)  J(r_\rho, u_\xi) du_\xi \partial_{E_r(\rho)(\tilde{\rho})} j(r_\rho) \\
& & + \Pi_r ( J(r_\rho,u_\xi) - J(r, u_\xi) ) du_\xi \circ \partial_{E_r(\rho)(\tilde{\rho})} j)(r_\rho)\\
& & + \Pi_r J(r,u_\xi)du_\xi (\partial_{E_r(\rho)(\tilde{\rho})} j(r_\rho) - \partial_{\tilde{\rho}}j)(r)\\
& & + \Pi_r J(r,u_\xi)(du_\xi - \Phi_u(\xi) du)\partial_{\tilde{\rho}}j(r)\\
\implies |\beta| & \leq & c_1c_3c_4 c_5c_7 |\rho| (|du_\xi|)|\tilde{\rho}| + c_1 c_7^2|\rho|(|du_\xi|)|\tilde{\rho}|  + c_3c_7(|du_\xi|)|\rho||\tilde{\rho}| + c_7^2|\xi||\tilde{\rho}|\\
\implies \|\beta\|_p & \leq &C(|\rho| + \|\xi\|_{1,p})(|\tilde{\rho}|+\|\tilde{\xi}\|_{1,p})
\eea
where $C=C(c_0, \ldots, c_7)$. 
{\noindent \it Estimating C(b)(ii).} By the explicit formula (\ref{lin_pert1}), we can write
\bea
\Phi_u(\xi) C(b)(ii) & = & \underset{a}{\underbrace{\Pi_r \Pi_{r_\rho} ( \partial_{E_r(\rho)(\tilde{\rho})}Y)(r_\rho, u_\xi) - \Phi_u(\xi) \Pi_r (\partial_{\tilde{\rho}}Y)(r,u)}}\\
& & + \underset{b}{\underbrace{\Pi_r \Pi_{r_\rho} (\partial_{E_r(\rho)(\tilde{\rho})}J Y j)(r_\rho, u_\xi) -  \Phi_u(\xi) \Pi_r (\partial_{\tilde{\rho}}J Y j)(r,u)}} \\
& & + \underset{c}{\underbrace{\Pi_r \Pi_{r_\rho} (J Y \partial_{E_r(\rho)(\tilde{\rho})}j)(r_\rho, u_\xi) -  \Phi_u(\xi) \Pi_r  (J Y \partial_\rho)(r,u)}}.
\eea
There is a constant $c_8$ such that for all $(r,z) \in \sS$, all $\rho \in T_r\RR$ such that $|\rho|\leq c_0$, all $\theta\in T_r\RR$, and all $x \in M$ and $\xi\in T_xM$ with $|\xi|\leq c_0$, we have $|\partial_{E_r(\rho)(\theta)}Y(r_\rho,x) - \partial_{\theta}(r,x)| \leq c_8|\rho||\theta|, |\partial_{\theta}Y(r, \exp_x \xi) - \Phi_x(\xi)\partial_{\theta}Y(r, x)| \leq c_7 |\xi||\theta|$, $|\partial_\theta Y(r,x)|\leq c_8|\theta|$.  Note also that $Y$ is independent of $r$ on the {\it thin} parts of the surfaces in $\sS$ (i.e., striplike ends and images of striplike ends under gluing), so in particular, $\partial_\theta Y$ is only supported on the thick part of each quilt $\sS_r$.  Therefore, expanding $a$ we have
\bea
a & = & \Pi_r (\Pi_{r_\rho}-\Pi_r)\partial_{E_r(\rho)(\tilde{\rho})}Y(r_\rho, u_\xi)\\
& & + \Pi_r \left( \partial_{E_r(\rho)(\tilde{\rho})}Y(r_\rho, u_\xi) - \partial_{\tilde{\rho}}Y(r, u_\xi)\right)\\
& & + \Pi_r \left( \partial_{\tilde{\rho}}Y(r, u_\xi) - \Phi_u(\xi)\partial_{\tilde{\rho}}Y(r, u)\right)\\
\implies \|a\|_p & \leq & c_1c_4c_8|\rho||\tilde{\rho}(\vol(S^{thick}))^{1/p} + c_8|\rho||\tilde{\rho}|(\vol(S^{thick}))^{1/p} + c_8 |\tilde{\rho}|\|\xi\|_p \\
 & \leq & C(|\rho| + \|\xi\|_{1,p})(|\tilde{\rho}|+\|\tilde{\xi}\|_{1,p})
\eea
where $C=C(c_0,\ldots,c_8)$.  The estimates for $b$ and $c$ are very similar, so we omit them.  Now combining all estimates proves the desired estimate (\ref{est}).  
\end{proof}

\subsection{The gluing map} \label{gluing_map}


To define the gluing map, we use an infinite dimensional implicit function theorem, quoted here from \cite[Appendix A]{mcd-sal}.
\begin{theorem}[Theorem A.3.4 in \cite{mcd-sal}]
Let $X$ and $Y$ be Banach spaces, $U\subset X$ an open subset of $X$, and $f: U \to Y$ a continuously differentiable map.  Suppose that for $x_0 \in U$, $D:= df(x_0): X \to Y$ is surjective, and has a bounded right inverse $Q: Y \to X$, and that $\delta > 0, C>0$ are constants such that
$\|Q\| \leq C$, $B_\delta(x_0) \subset U$, and 
\[
\|x - x_0\| \leq \delta \implies \|df(x) - D\| \leq \frac{1}{2C}.
\] 
 Suppose that $x_1 \in X$ satisfies $\|x_1 - x_0\| \leq \frac{\delta}{8}$, and $\|f(x_1)\| \leq \frac{\delta}{4C}$.  Then there exists a unique $x \in X$ such that 
 \[
 f(x) = 0, \ \ x - x_1 \in \mathrm{im} Q, \ \ \mbox{and} \ \ \|x - x_0\| \leq \delta.
 \]
 Moreover, $\|x - x_1\| \leq 2C\|f(x_1)\|$.
\end{theorem}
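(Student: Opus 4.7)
The plan is to carry out the five-step strategy laid out at the beginning of Section \ref{gluing_section} and then feed the output into an infinite-dimensional implicit function theorem. For each of the four types of broken configuration I would first define a pre-glued pair $(r_R,\ulu_R)$ using the explicit ``cut off, exponentiate to the limit, and reglue along strip-like ends of length $R$'' construction of Section \ref{pregluing_section}. The key point in organizing the families $\sS^{d,0} \to \RR^{d,0}$ was that on the thin part of each quilt the data are standardized, so the preglued quilt can genuinely be interpreted as a fiber $\sS_{r_R}$ of the universal family for some $r_R$ near the appropriate boundary stratum of $\ol\RR^{d,0}$ (in Types 3 and 4, $r_R$ is simply $r$ itself).

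Next I would estimate $\|(\ol\partial - \ul\nu)(r_R,\ulu_R)\|_{0,p} \leq \varepsilon(R)$ with $\varepsilon(R) \to 0$. On the striplike ends and on the compact ``old'' parts of $\sS_{r_i}$ this follows from consistency of the universal perturbation data together with the exponential convergence of $\ulu_i$ to the limiting Hamiltonian chord; on the gluing neck, where a cutoff is applied, the error is controlled by the exponential decay of the correction vector fields $\ul\xi_i(s,t)$. In parallel I would construct an approximate right inverse $T_R$ for $D_{\sS,r_R,\ulu_R}$ by cutting each $\eta \in L^p$ into pieces supported on the truncated subsurfaces, inverting using the right inverses $Q_{i,R}$ of the regular linearized operators $D_{\sS,r_i,\ulu_i^R}$, and gluing back using cutoff functions $\beta_{i,R}$. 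The telescoping calculation shows $\|D_R T_R - \Id\| \leq 2c/R$, so $Q_R := T_R (D_R T_R)^{-1}$ exists via a Neumann series for large $R$; moreover $\|Q_R\|$ is uniformly bounded because $\|T_R\|$ is bounded by a fixed multiple of $\max_i\|Q_{i,R}\|$ and these converge to the inverses of $D_{\sS,r_i,\ulu_i}$.

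The decisive step will be the quadratic estimate
\[
\|d\F_{\sS,r_R,\ulu_R}(\rho,\ulxi) - D_{\sS,r_R,\ulu_R}\| \leq c(|\rho| + \|\ulxi\|_{W^{1,p}}),
\]
with constant $c$ independent of $R$. This is what Appendix A is pitched at: a uniform Sobolev embedding constant $c_p(\dvol_{\sS_r}) \leq c_0$ across the whole family is what allows one to pass from pointwise bilinear bounds on $\Psi_x(\xi)$, $\Psi^Q_x(\xi)$, and $\Psi_{r,z}(\rho)$ to a uniform $L^p$ estimate. The main obstacle will be handling the perturbation term $\cP$ on the thick parts, since that contribution does not vanish; controlling it requires the bound $\vol(S^{\mathrm{thick}}) \leq c_0^p$, which likewise holds uniformly by construction of the families $\sS^{d,0}$ and $\sS^d$.

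Once these four ingredients are in hand, the implicit function theorem (Theorem A.3.4 from \cite{mcd-sal}) applied at $x_0 = x_1 = (0,0)$ in the local trivialization produces, for each $R \geq R_0$, a unique $(\rho(R), \ulxi(R))$ with $\F_{\sS, r_R, \ulu_R}(\rho(R), \ulxi(R)) = 0$, $(\rho(R),\ulxi(R)) \in \mathrm{im}\,Q_R$, and $\|(\rho(R),\ulxi(R))\| = O(\varepsilon(R))$. Setting $g(R) := (\exp_{r_R} \rho(R),\, \ul\exp^Q_{\ulu_R}\ulxi(R))$ gives the gluing map into the zero locus of $\ol\partial - \ul\nu$; that its image lies in the one-dimensional stratum $\M_{d,0}(\ul{x}_0,\ldots,\ul{x}_d)^1$ follows from the index computation for the glued linearized operator, which picks up one extra dimension from the gluing parameter. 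Continuity of $g$ is inherited from continuity of the implicit function theorem output in $R$, and Gromov convergence $g(R) \to$ (broken configuration) as $R \to \infty$ is read off directly from $\|(\rho(R),\ulxi(R))\| \to 0$ together with the explicit form of the pregluing, which matches the definitions in Section \ref{gnbds}. Finally, for surjectivity onto a small Gromov neighborhood, the plan is standard: given $(r,\ulu)$ in a sufficiently small $U_\eps$, identify from the Gromov conditions a gluing length $R(\eps)$ and show that $(r,\ulu)$ can be written in the preglued trivialization as $(\rho,\ulxi)$ lying in $\mathrm{im}\,Q_R$ (using the splitting $W^{1,p} = \mathrm{im}\,Q_R \oplus \ker D_R$ together with the fact that the kernel is one-dimensional and accounted for by variation of $R$); uniqueness in the implicit function theorem then forces $(r,\ulu) = g(R)$. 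The hardest part will genuinely be the $R$-uniform quadratic estimate: each individual term in the expansion of $d\F - D$ requires isolating where complex structures, cutoffs, quadratic corrections $Q$, and perturbation 1-forms $Y$ differ between $(r,u)$ and $(r_\rho,u_\xi)$, and the bookkeeping must be controlled independently of the shrinking thick part on a neck of length $R$.
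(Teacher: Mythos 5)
You have proven the wrong statement. The theorem you were asked to prove is Theorem A.3.4 of McDuff--Salamon, a purely functional-analytic result about Banach spaces: given a $C^1$ map $f: U \to Y$ whose linearization at $x_0$ has a bounded right inverse $Q$, and a Lipschitz bound on $df$ near $x_0$, one produces a unique zero of $f$ in $x_1 + \mathrm{im}\,Q$ near $x_0$, with a quantitative estimate. The paper cites this result from \cite{mcd-sal} and does not prove it; it is the abstract contraction-mapping input to the gluing argument. Your proposal instead sketches the proof of the paper's \emph{main} gluing theorem (Theorem~1 / Theorem~\ref{main_theorem}): pregluing, $\|(\ol\partial-\ul\nu)(r_R,\ulu_R)\| \to 0$, construction of the approximate and actual right inverse, the $R$-uniform quadratic estimate, and surjectivity onto Gromov neighborhoods. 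That is the theorem which \emph{applies} Theorem~A.3.4, not Theorem~A.3.4 itself, and none of its content bears on the statement you were given.

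A correct proof of the stated result is a few-line Picard iteration. Set $\Phi: Y \to Y$, $\Phi(\eta) := \eta - f(x_1 + Q\eta)$. Since $DQ = \mathrm{Id}_Y$, one has, for $\eta_1, \eta_2$ with $\|x_1 + Q\eta_i - x_0\| \leq \delta$,
\begin{align*}
\Phi(\eta_1) - \Phi(\eta_2) &= DQ(\eta_1 - \eta_2) - \int_0^1 df\bigl(x_1 + Q\eta_2 + tQ(\eta_1-\eta_2)\bigr)\,Q(\eta_1-\eta_2)\,dt \\
&= \int_0^1 \bigl(D - df(\cdots)\bigr)\,Q(\eta_1-\eta_2)\,dt,
\end{align*}
so $\|\Phi(\eta_1)-\Phi(\eta_2)\| \leq \tfrac{1}{2C}\cdot C\,\|\eta_1-\eta_2\| = \tfrac12\|\eta_1-\eta_2\|$. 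One then checks, using $\|f(x_1)\| \leq \delta/4C$ and $\|x_1-x_0\| \leq \delta/8$, that $\Phi$ maps the closed ball $\{\|\eta\| \leq 2\|f(x_1)\|\}$ into itself and that $x = x_1 + Q\eta$ stays in $B_\delta(x_0)$; the Banach fixed point theorem then yields the unique $\eta$ with $\Phi(\eta)=\eta$, i.e.\ $f(x_1+Q\eta)=0$, and $\|x-x_1\| = \|Q\eta\| \leq C\|\eta\| \leq 2C\|f(x_1)\|$. Uniqueness in $B_\delta(x_0)\cap(x_1+\mathrm{im}\,Q)$ follows from the contraction property. This is the argument you should supply; the gluing machinery you describe is downstream of it.
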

We apply it to our situation as follows. For each $R$, take a local trivialization of a small neighborhood of the preglued curve $(r_R, \ulu_R)$. Identify
\bea
&X  :=  T_{r_R}\RR \times W^{1,p}(\sS_{r_R}, \ulu_R^*TM), \ \ 
Y  :=   L^p(\sS_{r_R}, \Lambda^{0,1}\otimes_{J_R} \ulu_R^*T\ulM),&\\
&f  :=  \F_{\sS,r_R,\ulu_R}, \ \ 
x_0  :=  (0,0).&
\eea
For $\delta \leq 1/(2Cc)$,  the estimate (\ref{quad_est}) says that whenever
$ (|\rho| + \|\ulxi\|_{1,p}) \leq \delta $ then
\[
\| d\F_{\sS,r_R,\ulu_R}(\rho, \ulxi) - D_{\sS,r_R,\ulu_R}\| \leq c (|\rho| + \|\ulxi\|_{W^{1,p}}) \leq c  \delta \leq 1/(2C)
\]
For sufficiently large $R$ the constants $C$ and $c$ are independent of $R$, thus $\delta$ can also be chosen to be independent of $R$.  So now let $x_1 := (0, 0)$.  Then
\bea
\|\F_{\sS,r_R, \ulu_R}(0,0)\|_{0,p} & = & \| (\ol{\partial} - \ul{\nu})(r_{R}, \ulu_{R})      \|_{0,p}   
\leq c\  \epsilon(R) \leq   \frac{\delta}{4C}
\eea
for sufficiently large $R$.  Thus by the Implicit Function Theorem there exists a unique $(\rho_R, \ulxi_R) \in T_{r_R}\RR \times W^{1,p}( \sS_{r_R}, \ulu_R^*T\ulM)$ such that 
\bea
 \F_{\sS, r_R, \ulu_R}(\rho_R,\ulxi_R) = 0, \ \ (\rho_R ,\ulxi_R) \in \im Q_R, \ \  |\rho_R| + \|\ulxi_R\|_{W^{1,p}} \leq \delta.
\eea 
Now $\F_{\sS, r_R, \ulu_R} (\rho_R, \ulxi_R) = 0$ if and only if $(\exp_{r_R} \rho_R, \exp_{\ulu_R}\ulxi_R) \in \M_{d,1}(\ul{x}_0, \ldots, \ul{x}_d)$.  Thus we can define a {\em gluing map}
\begin{eqnarray}
g: [R_0, \infty) &\to& \M_{d,0}(\ul{x}_0, \ldots, \ul{x}_d), \ \ 
R  \mapsto  (\exp_{r_R} \rho_R, \exp_{\ulu_R} \ulxi_R) \nonumber
\end{eqnarray}
The implicit function theorem also implies
\[
|\rho_R| + \|\ulxi_R\|_{W^{1,p}} \leq 2 C \|\F_{\sS,r_R, \ulu_R}(0,0)\|_{0,p} \leq  2C\epsilon(R) \to 0
\] 
as $R\to \infty$.  In particular, the glued curves $g(R)$ converge to the preglued curves $(r_R, \ulu_R)$, hence as $R \to \infty$ they Gromov converge to the same limiting broken tuple.     

For a gluing length $R >> 0$, write $(r_R, \ulu_R)$ for the preglued curve, and $(\widetilde{r}_R, \widetilde{\ulu}_R)$ for the corresponding glued curve.  

In a local trivialization about the preglued curve $(r_R, \ulu_R)$, the implicit function theorem implies that the moduli space of pseudoholomorphic quilts in a neighborhood of $(r_R, \ulu_R)$ is modeled on a complement of $\im \ Q_R$.  In particular, to show that the image of the gluing map is contained in the 1-dimensional component of the moduli space of pseudoholomorphic quilted disks, it's enough to check that $\im \ Q_R$ has codimension 1.   Since $\ker D_R$ is a complement to $\im Q_R$, an equivalent statement is that $ \dim \ \ker \ D_R = 1$.  By construction, the right inverse $Q_R$ has the same image as the approximate right inverse $T_R$.    
\begin{proposition}
The images of the gluing maps are contained in the one-dimensional component of the moduli space,
$\M_{d,0}(\ul{x}_0, \ldots, \ul{x}_d)^1$.  
\end{proposition}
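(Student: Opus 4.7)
The goal is to show that for every $R \geq R_0$, the glued quilt $g(R) = (\widetilde{r}_R, \widetilde{\ulu}_R)$ lies in the one-dimensional stratum of $\M_{d,0}(\ul{x}_0, \ldots, \ul{x}_d)$. Since the right inverse $Q_R$ just constructed makes $D_R := D_{\sS, \widetilde{r}_R, \widetilde{\ulu}_R}$ surjective, the implicit function theorem identifies a neighborhood of $g(R)$ in the moduli space with a neighborhood of $0 \in \ker D_R$, and the proposition reduces to proving $\dim \ker D_R = 1$.

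Being Fredholm and surjective, $D_R$ has $\dim \ker D_R = \ind D_R$. The Fredholm index depends only on the asymptotic data and the homotopy class, so it can be computed at the broken limit via index additivity: the indices of the operators on the pieces add, and each independent gluing parameter contributes $+1$. Applied to the three types, with the hypothesis that each piece is regular and isolated in its respective stratum, one obtains $\ind D_R = 1$ in each case. For Types 1 and 2 the pieces' linearized operators all have index $0$ and gluing at the matched asymptote(s) contributes a single $+1$; in Type 2 only one gluing parameter arises because the admissible lengths along different matched ends are all determined by a single $\delta$. For Type 3 the quilted-disk piece has index $0$, the unreduced Floer operator $D_{\ul v}$ has index $1$ (its kernel being the $\R$-translation direction), and no separate gluing contribution is needed since the translation freedom of $\ul v$ itself plays the role of the gluing parameter.

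An equivalent direct computation parallels the construction of $Q_R$. From $\|D_R T_R - I\| \leq 1/2$ the approximate inverse $T_R$ is injective, and $\im Q_R = \im T_R$ is a closed complement of $\ker D_R$, so $\dim \ker D_R = \codim \im T_R$. In the local gluing chart
\[
T_{\widetilde{r}_R}\RR \cong T_{r_1}\RR \oplus \cdots \oplus T_{r_k}\RR \oplus T_R\R
\]
(and the obvious analogues for Types 1 and 3), the finite-dimensional component of $\im T_R$ lies entirely in the first summands by construction of $T_R$, so the gluing direction $T_R\R$ projects nontrivially into the quotient $X/\im T_R$, giving the lower bound $\dim \ker D_R \geq 1$. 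The matching upper bound is obtained by cutting off any $(\rho, \ulxi) \in \ker D_R$ with the functions $\beta_{i,R}$ used in the pregluing and right-inverse constructions: each cut-off piece is an approximate kernel element of a piece's linearized operator with error of size $O(1/R)$ coming from $\|\partial_s \beta_{i,R}\|_\infty \leq 2/R$, and because those operators are isomorphisms (Types 1 and 2) or have the known one-dimensional $\R$-translation kernel (Type 3), the cut-off pieces are pinned uniformly in $R$ to bounded multiples of their kernels, whose images under gluing span at most the gluing direction.

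The main obstacle is making the upper-bound cutoff argument quantitative: the required estimates must be uniform in $R$, which relies on the uniform Sobolev embedding constants of Appendix A and on uniform operator-norm bounds for the pieces' inverses for large $R$, the same ingredients already exploited to construct $Q_R$ and to derive the quadratic estimate of Section \ref{quad_est_section}. No new analytic ingredients beyond these are needed.
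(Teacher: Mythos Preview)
Your index-theoretic argument is correct and gives a genuinely different, more conceptual route than the paper. Once $D_R$ is surjective, $\dim\ker D_R=\operatorname{ind} D_R$, and index additivity under gluing (together with the extra $+1$ from the single gluing parameter in Types~1 and~2, or from the $\R$-translation direction of $\ul v$ in Type~3) yields $\operatorname{ind} D_R=1$ directly. One small point: you write $D_R=D_{\sS,\widetilde r_R,\widetilde\ulu_R}$ at the \emph{glued} curve, whereas $Q_R$ was built as a right inverse for the operator at the \emph{preglued} curve; this is harmless since the quadratic estimate makes the two operators close enough that surjectivity and the index transfer, but it is worth saying.

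The paper avoids invoking index additivity and instead argues by hand. For Types~1 and~2 it proves the sharper statement $\im T_R=\{(\rho,\ulxi)\mid \rho_g=0\}$: given any $(\rho,\ulxi)$ with $\rho_g=0$, it splits $(\rho,\ulxi)$ by a partition of unity supported on the pieces and the overlap necks, and shows each summand equals $T_R D_R$ of itself, using that the piece operators $D_{\sS,r_i,\ulu_i^R}$ are isomorphisms (so their right inverses $Q_{i,R}$ are two-sided). This is cleaner than your cutoff-with-$O(1/R)$-error sketch, because no approximation or uniform estimate is needed for the upper bound. For Type~3 the paper produces explicit injective linear maps $\ker D_{\ul v}\to\ker D_R$ and $\ker D_R\to\ker D_{\ulu_1^R}\oplus\ker D_{\ul v^R}$, the first built from the pregluing differential composed with the kernel projection $1-Q_RD_R$, the second from cutoffs composed with the piece kernel projections; both injectivities are proved by the same $O(1/R)$ estimates you allude to.

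What each approach buys: yours is shorter and makes clear that the result is a formal consequence of regularity plus index theory, at the cost of importing the gluing index formula. The paper's approach is self-contained and, in Types~1 and~2, yields an explicit identification of $\im Q_R$ that is reused in the surjectivity argument of Section~\ref{surjectivity_section} (where one needs to know that $\rho_g=0$ implies $(\rho,\ulxi)\in\im Q_R$).
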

The proposition will follow from:
\begin{lemma}\label{lemma_glued_image}
For each type of gluing construction, with approximate right inverse $T_R$, 
$\codim \  \im \ T_R = \dim \ \ker \ D_R = 1$.  
\end{lemma}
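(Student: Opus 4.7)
The plan is to reduce the statement to a Fredholm-index calculation. Since $\|D_R T_R - \Id\| \leq 1/2$ for $R \gg 0$, the operator $Q_R := T_R(D_R T_R)^{-1}$ is a genuine right inverse of $D_R$, with $\im Q_R = \im T_R$. For any right inverse of a surjective bounded operator one has a topological direct-sum splitting of the domain, $X = \im Q_R \oplus \ker D_R$, and hence
\[
\codim \im T_R \;=\; \codim \im Q_R \;=\; \dim \ker D_R.
\]
Combined with the surjectivity of $D_R$ (already established by the construction of $Q_R$), this gives $\dim \ker D_R = \ind D_R$, and the lemma reduces to $\ind D_R = 1$ in each of the three gluing types.

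To compute $\ind D_R$ I would invoke the standard linear gluing additivity for Fredholm indices: for $R$ sufficiently large, the index of the glued operator equals the sum of the indices of its pieces plus the dimension of the new gluing-parameter direction introduced in $T_{r_R}\RR$. In Type~1, the local gluing chart near the boundary stratum identifies $T_{r_R}\RR^{d,0}$ with the direct sum of the tangent spaces of the two factors together with a one-dimensional gluing-parameter direction $T_R\R$, and hence
\[
\ind D_R \;=\; \ind D_{\sS,r_1,\ulu_1} + \ind D_{\sS,r_2,\ulu_2} + 1 \;=\; 0 + 0 + 1 \;=\; 1,
\]
since both broken pieces are regular and lie in the zero-dimensional components of their moduli spaces. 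Type~2 is entirely analogous, with $k+1$ pieces each of index $0$ and a single extra direction from the common gluing parameter, giving $\ind D_R = 0 + \cdots + 0 + 1 = 1$. For Type~3 the glued domain is literally $\sS_{r_1}$, so no tree gluing parameter appears; however the Floer trajectory contributes $\ind D_{\ulv} = 1$, because $\ulv$ is regular and isolated in $\widetilde{\M}_1 = \M_1/\R$, the extra $+1$ being the $\R$-translation direction. Hence $\ind D_R = 0 + 1 = 1$ in this case also.

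The main technical content is the linear gluing additivity for the Fredholm index, which is classical in Floer-theoretic gluing (see, e.g., Schwarz's thesis or Chapter~10 of McDuff--Salamon). The uniform Sobolev constants from Appendix~A and the gluing estimates of the preceding subsections ensure that the required bounds are $R$-independent, and thereby that additivity applies as soon as $R$ is sufficiently large. Once additivity is in place, the dimension count above is immediate; I expect verifying additivity in the quilted setting to be the principal obstacle, but it follows from essentially the same excision/spectral-flow arguments as in the unquilted case, now applied patch by patch.
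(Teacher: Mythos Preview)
Your argument is correct and takes a genuinely different route from the paper. You reduce the statement to $\ind D_R = 1$ via the splitting $X = \im Q_R \oplus \ker D_R$ and then appeal to linear gluing additivity for Fredholm indices; this is conceptually clean and, once additivity is granted in the quilted setting (which, as you note, follows patch by patch since the strip-like ends fold up into ordinary strips), the dimension count is immediate.

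The paper proceeds quite differently. For Types~1 and~2 it does not compute an index at all: instead it proves the \emph{explicit} description $\im T_R = \{(\rho,\ulxi)\mid \rho_g = 0\}$ by decomposing an arbitrary $(\rho,\ulxi)$ via a partition of unity, and checking that each piece except the gluing-parameter direction lies in $\im T_R$ (using that the individual $D_{\sS,r_i,\ulu_i^R}$ are isomorphisms, so their right inverses are also left inverses). For Type~3 the paper constructs injective linear maps $\ker D_{\ulv}\to\ker D_R$ and $\ker D_R \to \ker D_{\ulu_1^R}\oplus\ker D_{\ulv^R}$ and compares dimensions directly. The payoff of the paper's approach is that the explicit description of $\im T_R$ for Types~1 and~2 is used again in the surjectivity arguments of Section~\ref{surjectivity_section}: having chosen the gluing length so that $\rho_g=0$, one concludes from $\im T_R=\{\rho_g=0\}$ that the nearby holomorphic quilt actually lies in $\im Q_R$, hence equals the glued curve by the uniqueness clause of the implicit function theorem. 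Your index-theoretic proof establishes the lemma as stated, but does not by itself furnish this characterization; if you adopt your approach you would need a separate argument at that later step (or at least the observation that the gluing-parameter direction is transverse to $\im T_R$, which is essentially what the paper proves).
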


\begin{proof}
We prove Lemma \ref{lemma_glued_image} for each type of gluing construction in turn.  Since the constructions behave somewhat differently, the line of proof will be as follows:  for Types 1 and 2, we will prove that $\codim \ \im \ T_R =1$ by finding an explicit description of $\im \ T_R$ and a one-dimensional complement.   For Type 3, we will prove the existence of an isomorphism $\ker \ D_{\ul{v}} \cong \ker \ D_{\sS, r_1, \ulu_R}$; then, since  $\dim \ \ker \ D_{\ul{v}} = 1$, it will follow that $\dim \ \ker \ D_{\sS, r_1, \ulu_R} = 1$ too.  

\subsection*{Type 1}  Suppose that $(\rho, \ulxi) \in T_{r_R}\RR^{d,0} \times W^{1,p}(\sS_{r_R}, (\ulu_R)^*T\ulM)$.  Let $\beta_1 + \beta_{1,2} + \beta_2 = 1$ be a smooth partition of unity on $\sS_{r_R}$ such that the support of $\beta_1$ is on the part of $\sS_{r_R}$ that comes from the truncation of $\sS_{r_1}$ on the neck at $s = R$, the support of $\beta_2$ is on the part of $\sS_{r_R}$ that comes from the truncation of $\sS_{r_2}$ on the neck at $s=R$, and the support of $\beta_{1,2}$ is on the subset of the neck of $\sS_{r_R}$ corresponding to $ R/2 \leq s \leq R$ in $\sS_{r_1}$ as well as the corresponding piece from $\sS_{r_2}$.  For $\ulxi \in W^{1,p}(\sS_{r_R}, (\ulu_R)^*T\ulM)$, it is clear that 
\bea
\beta_1 \ulxi & \in & W^{1,p}(\sS_{r_1}, (\ulu_1^R)^*T\ulM),\\
\beta_2 \ulxi & \in & W^{1,p}(\sS_{r_2}, (\ulu_2^R)^*T\ulM),\\  
\beta_{1,2} \ulxi & \in & W^{1,p}(\sS_{r_1}, (\ulu_1^R)^*T\ulM) \cap W^{1,p}(\sS_{r_2}, (\ulu_2^R)^*T\ulM).
\eea
For $\rho \in T_{r_R}\RR^{d,0}$, we use local charts near the boundary to write $\rho = \rho_g + \rho_1 + \rho_2$, where $\rho_1 \in T_{r_1}\RR, \rho_2 \in T_{r_2}\RR$ and $\rho_g \in \R$ represents the component in the direction of the gluing parameter.   Thus  
\[
(\rho, \ulxi) =  (\rho_g, 0) + (\rho_1, \beta_1\ulxi) + (0, \beta_{1,2} \ulxi) + (\rho_2, \beta_2 \ulxi) 
\]
and the result follows if we can show that the final three terms on the right hand side are all in $\im T_R$. 
The cases of $(\rho_1, \beta_1\ulxi)$ and $(\rho_2, \beta_2 \ulxi)$ have identical proofs, so we prove it for the first case.

Consider $D_{\sS, r_R, \ulu_R}(\rho_1, \beta_1\ulxi) \in L^p(\sS_{r_R}, (\ulu_R)^*T\ulM)$.    It is supported on the image of the truncation $\sS_{r_1}\setminus \{s > R\}$ and so it can be identified with image of $D_{\sS, r_1, \ulu_1^R}(\rho_1, \beta_1 \ulxi)$.   The operators $D_{\sS, r, \ulu}$ preserve basepoints on the surface $\sS_r$, so the support of $\ulxi \in W^{1,p}(\sS_r,\ulu^*T\ulM)$ on $\sS_r$ is the same as the support of $D_{\sS,r,\ulu}(\rho, \ulxi) \in L^p(\sS_r, \ulu^*T\ulM)$.  Applying this fact we can conclude that the support of $D_{\sS, r_R, \ulu_R}(\rho_1, \beta_1\ulxi)$ is precisely the support of the cut-off function used in the construction of $T_R$, and therefore 
\bea
T_R D_{\sS, r_R, \ulu_R}(\rho_1, \beta_1 \ulxi) & = & Q_1 D_{\sS,r_1,\ulu_1^R}(\rho_1, \beta_1 \ulxi) =  (\rho_1, \beta_1 \ulxi)
\eea  
where the last equality follows from the standing assumptions that $D_{\sS,r_1,\ulu_1^R}$ is surjective and has trivial kernel, so that it is an isomorphism; as such its right-inverse $Q_1$ is a left-inverse too.  Hence, $(\rho_1, \beta_1\ulxi) \in \im T_R$.

For the other piece, consider $D_{\sS, r_R, \ulu_R} (0, \beta_{1,2} \ulxi)$.  The support of $\beta_{1,2}\ulxi$ is on the part of the neck where all three of the operators $D_{\sS, r_R, \ulu_R}, D_{\sS, r_1, \ulu_1^R}$ and $D_{\sS_, r_2, \ulu_2^R}$ coincide, i.e.,
\bea
D_{\sS,r_R, \ulu_R}(0, \beta_{1,2}\ulxi)  & = & D_{\sS, r_1, \ulu_1^R}(0, \beta_{1,2}\ulxi)
  =  D_{\sS,r_2, \ulu_2^R}(0, \beta_{1,2}\ulxi).
\eea
By assumption both $D_{\sS, r_1, \ulu_1^R}$ and $D_{\sS_, r_2, \ulu_2^R}$ are isomorphisms (being surjective with trivial kernel).  Writing $\ul{\eta} =  D_{\sS, r_R, \ulu_R} (0, \beta_{1,2} \ulxi)$, recall that in the construction of $T_R$, one uses a cut-off function to write $\ul{\eta} = \ul{\eta}_1 + \ul{\eta}_2$, where the supports of $\ul{\eta}_1$ and $\ul{\eta}_2$ intersect only at the truncation line $s=R$, and then the right inverse $Q_1$ is used on $\ul{\eta}_1$ and the right inverse $Q_2$ is used on $\ul{\eta}_2$.   We know that $Q_1 \ul{\eta} = (0, \ulxi)$ and $Q_2 \ul{\eta} = (0, \ulxi)$.  The operator $T_R$ is defined by $T_R \ul{\eta} = Q_1 \ul{\eta}_1 + Q_2 \ul{\eta}_2$.  So it is enough to show that $Q_1 \ul{\eta}_1 = Q_2 \ul{\eta}_1$.  So suppose for the sake of contradiction that $Q_1 \ul{\eta}_1 - Q_2 \ul{\eta}_1 \neq 0$.  If so, applying $D_{\sS, r_1, \ulu_1^R}$, we could write
\bea
D_{\sS, r_1, \ulu_1^R} ( Q_1 \uleta_1 - Q_2 \uleta_1) & = & D_{\sS, r_1, \ulu_1^R} Q_1 \uleta_1 - D_{\sS, r_1, u_1^R}Q_2 \uleta_1\\
& = & D_{\sS, r_1, \ulu_1^R} Q_1 \uleta_1 - D_{\sS,r_2, \ulu_2^R} Q_2 \uleta_1\\
& = & \uleta_1 - \uleta_1=  0,
\eea  
contradicting the assumption that $\ker D_{\sS, r_1, \ulu_1^R} = 0$.  Hence, $T_R \uleta = (0, \beta_{1,2} \ulxi)$.  In summary, we have:
\begin{enumerate}
\item If $(\rho, \ulxi) \in T_{r_R}\RR^{d,0} \times W^{1,p}(\sS_{r_R}, (\ulu_R)^*T\ulM)$ is such that $\rho = (0, \rho_1, \rho_2) \in \R \times T_{r_1}\RR^{d-e+1,0} \times T_{r_2}\RR^{e} \cong T_{r_R}\RR^{d,1}$, then $(\rho, \ulxi) \in \im \ T_R$,
\item If $(\rho, 0) \in T_{r_R}\RR^{d,0} \times W^{1,p}(\sS_{r_R}, (\ulu_R)^*T\ulM)$ is such that $\rho = (\rho_g, 0, 0) \in \R \times T_{r_1}\RR^{d-e+1,0} \times T_{r_2}\RR^{e} \cong T_{r_R}\RR^{d,0}$, then $(\rho, 0) \notin \im \ T_R$.
\end{enumerate} 
Since these elements span $T_{r_R}\RR^{d,0} \times W^{1,p}(\sS_{r_R}, (\ulu_R)^*T\ulM)$, it follows that 
\begin{equation} \label{image_TR1}
\im \ T_R = \{(\rho, \ulxi) \big\lvert \rho_g = 0\}.
\end{equation}

\subsection*{Type 2}  

Suppose that $(\rho, \ulxi) \in T_{r_R}\RR^{d,0} \times W^{1,p}(\sS_{r_R}, (\ulu_R)^*T\ulM)$.  

Let $\beta_0 + \beta_{1} + \ldots + \beta_k + \beta_{0, 1} + \ldots + \beta_{0,k} = 1$ be a smooth partition of unity on $\sS_{r_R}$ such that for $i = 0, \ldots, k$, the support of $\beta_i$ is on the part of $\sS_{r_R}$ that comes from the truncation of $\sS_{r_i}$ on the neck at $s = R$, and the support of $\beta_{0,i}$ is on the subset of the neck of $\sS_{r_R}$ corresponding to $ R/2 \leq s \leq R$ in $\sS_{r_0}$ as well as the corresponding piece from $\sS_{r_i}$.  Then for $\ulxi \in W^{1,p}(\sS_{r_R}, (\ulu_R)^*T\ulM)$, it is clear that 
\bea
\beta_0 \ulxi & \in & W^{1,p}(\sS_{r_0}, (\ulu_0^R)^*T\ulM),\\
\beta_1 \ulxi & \in & W^{1,p}(\sS_{r_1}, (\ulu_1^R)^*T\ulM),\\
& \ldots & \\
\beta_k \ulxi & \in & W^{1,p}(\sS_{r_k}, (\ulu_k^R)^*T\ulM),\\  
\beta_{0,1} \ulxi & \in & W^{1,p}(\sS_{r_0}, (\ulu_0^R)^*T\ulM) \cap W^{1,p}(\sS_{r_1}, (\ulu_1^R)^*T\ulM),\\
& \ldots &\\
\beta_{0,k}\ulxi & \in &W^{1,p}(\sS_{r_0}, (\ulu_0^R)^*T\ulM) \cap W^{1,p}(\sS_{r_k}, (\ulu_k^R)^*T\ulM).
\eea
For $\rho \in T_{r_R}\RR^{d,0}$, we use local charts near the boundary to write $\rho = \rho_0 + \rho_1 + \ldots + \rho_k + \rho_{g}$, where for $i = 0, \ldots, k$, $\rho_i \in T_{r_i}\RR, $ and $\rho_g \in \R$ represents the component of the tangent vector in the direction of the gluing parameter.   Then as in the previous case we can write 
\[
(\rho, \ulxi)  =   (\rho_g, 0) + (\rho_0, \beta_0\ulxi)  + (\rho_1, \beta_1\ulxi) + \ldots + (\rho_k, \beta_k \ulxi) + (0, \beta_{0,1} \ulxi) + \ldots (0, \beta_{0,k} \ulxi) 
\]
and the result follows if all terms except the first on the right hand side of the above expression are in $\im T_R$.  The same argument as used for Type 1 proves that all terms on the right except for $(\rho_g, 0)$ are in the image of $T_R$, and the result follows.  In summary:
\begin{enumerate}
\item If $(\rho, \ulxi) \in T_{r_R}\RR^{d,0} \times W^{1,p}(\sS_{r_R}, (\ulu_R)^*T\ulM)$ is such that $\rho_g = 0$, then $(\rho, \ulxi) \in \im \ T_R$,
\item If $(\rho, 0) \in T_{r_R}\RR^{d,0} \times W^{1,p}(\sS_{r_R}, (\ulu_R)^*T\ulM)$ is such that $\rho = \rho_g$, then $(\rho, 0) \notin \im \ T_R$.
\end{enumerate} 
Since these elements span $T_{r_R}\RR^{d,0} \times W^{1,p}(\sS_{r_R}, (\ulu_R)^*T\ulM)$, it follows that 
\begin{equation} \label{image_TR2}
\im \ T_R = \{(\rho, \ulxi) \big\lvert \rho_g = 0\}.
\end{equation}

\subsection*{Type 3} In this case we have glued $(r_1, \ulu_1) \in \M_{d,0}(\ul{x}_0, \ldots, \ul{y}, \ldots, \ul{x}_d)^0$ to a Floer trajectory $\ulv \in \widetilde{M}(\ul{y},\ul{x_i})^0$.  Since the complement of $\im \ T_R$ is $\ker \ D_R$, our goal is to show that the vector space $ \ker \ D_R$ as the same dimension as $\ker \ D_{v} $, which by assumption is $1$.  From this it would follow that the image of the gluing map lies in a one dimensional component  of the moduli space.  

To show that the finite dimensional vector spaces $\ker \ D_{v} $ and $ \ker \ D_R$ have the same dimension, it is enough to produce a pair of injective linear maps, $\Phi: \ker \ D_{v} \lra \ker \ D_R$ and $\Psi: \ker \ D_{R} \lra \ker \ D_v$.  Write $f^R$ for the pregluing map.

\noindent {\bf Claim 1:} $\Phi := (1 - Q_R D_R) df^R:  \ker \ D_{v} \lra \ker \ D_R$ is injective. 

By hypothesis $\ker D_v$ is one-dimensional, so we know that an explicit basis is $\{\partial_s \ulv\} $.  It suffices therefore to show that for sufficiently large $R$, there is a constant $c > 0$ such that 
\begin{equation}\label{est}
\|\partial_s \ulv\|_{1,p} \leq c \|(1 - Q_R D_R) df^R (\partial_s \ulv)\|_{1,p}.
\end{equation}
We prove the estimate by proving two separate inequalities
\begin{eqnarray}
\| df^R(\partial_s \ulv)\|_{1,p,R} & \geq & c_1 \| \partial_s \ulv\|_{1,p} \label{est1}\\
\|D_R df^R(\partial_s \ulv)\|_{0,p,R} & \leq & \epsilon(R) \|\partial_s \ulv\|_{1,p}\label{est2}
\end{eqnarray}
where $c_1 > 0$ and $\epsilon(R) \to 0 $ as $R \to \infty$.  Together these imply (\ref{est}), since the uniform bound on the right inverse 
\[
\|Q_R \ulxi\|_{1,p}  \leq  C \|\ulxi\|_{0,p}
\]
holds for $R$ sufficiently large, and so 
\begin{eqnarray}
\|(1 - Q_R D_R) df^R (\partial_s \ulv)\|_{1,p} & \geq & \|df^R(\partial_s \ulv)\|_{1,p} - \|Q_R D_R df^R (\partial_s \ulv)\|_{1,p}\nonumber\\
& \geq & c_1 \|\partial_s \ulv\|_{1,p} - C \|D_R df^R (\partial_s \ulv)\|_{0,p}\nonumber\\
& \geq & c_1 \|\partial_s \ulv\|_{1,p} - C \epsilon(R) \|\partial_s \ulv\|_{1,p}\nonumber\\
& \geq & c^{-1} \|\partial_s \ulv\|_{1,p}
\end{eqnarray}
for some $c > 0$ for sufficiently small $\epsilon(R)$. 

To prove (\ref{est1}), we write $\ulv_\lambda(s,t):= \ulv(s+\lambda,t)$. With this notation, $\partial_\lambda \ulv_\lambda\big\lvert_{\lambda = 0} = \partial_s \ulv$.   Thus,
\bea
df^R(\partial_s \ulv)(s,t) & = & \frac{d}{d\lambda}\Big\lvert_{\lambda = 0} \ulv_\lambda \#_R \ulu_1,
\eea
which by construction is supported only on the region $s \geq 3R/2$ on the striplike end. The pre-gluing map on this region is 
\[
f^R(\ulv_\lambda, \ulu_1) = \ulv_\lambda \#_R \ulu_1(s,t) = \left\{\begin{array}{ll}
\ul{\exp}^Q_{\ul{y}(t)}(\beta(-s + \frac{3R}{2} )\uleta_\lambda(s - 2R,t)), & s \in [\frac{3R}{2},\frac{3R}{2} + 1]\\
                    \ulv_\lambda(s-2R,t), & s \geq \frac{3R}{2} + 1. 
\end{array}
\right.
\]
and we need to take the derivative with respect to $\lambda$.  On the region $s \geq 3R/2 +1$ we have that 
\bea
df^R(\partial_s \ulv)(s,t) & = & \frac{d}{d\lambda}\Big\lvert_{\lambda = 0} \ulv_\lambda(s-2R,t) =  (\partial_s \ulv)(s-2R,t).
\eea
Note that
\bea
\|df^R(\partial_s \ulv)(s,t)\|_{1,p} &\geq& \|df^R(\partial_s \ulv)(s,t)\|_{1,p; [3R/2+1,\infty)}\\
& = &\|(\partial_s \ulv)(s-2R,t)\|_{1,p; [3R/2+1,\infty)}\\
& = & \| \partial_s \ulv\|_{1,p; [-R/2 + 1, \infty)}.
\eea
By exponential convergence of $\partial_s \ulv$, there is a $c_1 > 0$ such that for all sufficiently large $R$, 
\[
\| \partial_s \ulv\|_{1,p; [-R/2 + 1, \infty)} \geq c_1 \|\partial_s \ulv\|_{1,p}
\]
which proves (\ref{est1}).

To prove (\ref{est2}), observe first that by construction, $D_R df^R(\partial_s \ulv)$ is supported only on the interval $s \in [3R/2,3R/2+1]$.  It follows that the $L^p$ norm of $D_R df^R(\partial_s \ulv)$ is controlled by the $W^{1,p}$ norm of $df^R(\partial_s \ulv)$ on that interval.  Let $\beta_R(s): =  \beta(-s+3R/2)$ and $\uleta_R(s,t):= \uleta(s-2R,t)$.  For $s \in [3R/2,3R/2 + 1]$,
\bea
df^R(\partial_s \ulv)(s,t) & = & \frac{d}{d\lambda}\Big\lvert_{\lambda = 0} \ul{\exp}^Q_{\ul{y}(t)}(\beta_R \uleta_\lambda(s-2R,t))\\
& = & \frac{d}{d\lambda}\Big\lvert_{\lambda = 0} \ul{\exp}^Q_{\ul{y}(t)}(\beta_R(s)\uleta_R(s + \lambda,t))\\
& = & d\ul{\exp}^Q_{\ul{y}(t)} (\beta_R \uleta_R) \frac{d}{d\lambda}\Big\lvert_{\lambda = 0}(\beta_R \uleta_R(s + \lambda,t))\\
& = & \beta_R d\ul{\exp}^Q_{\ul{y}(t)} (\beta_R\uleta_R) (\partial_s \uleta)(s - 2R,t).
\eea
The identity
\bea
\ul{\exp}^Q_{\ul{y}(t)} \uleta(s-2R+\lambda, t) & = & \ulv(s-2R+\lambda),
\eea  
implies that
\bea 
d \ul{\exp}^Q_{\ul{y}(t)}(\uleta(s-2R,t)) (\partial_s \uleta)(s-2R,t) & = & \partial_s \ulv(s-2R). 
\eea
The linear operator $d \ul{\exp}^Q_{\ul{y}}(\uleta): T_{\ul{y}}\ulM \to T_{\ul{\exp}^Q_y\uleta}\ulM$ is the identity for $\uleta = 0$, so is invertible for small $\uleta$.  For sufficiently large $R$, the exponential convergence of trajectories means that $\uleta(s-2R, t)$ is uniformly small for $s \in [3R/2,3R/2 + 1]$.  Therefore on this interval, we can write
\bea
df^R(\partial_s \ulv)(s,t) & = & \beta_R(s) d\ul{\exp}^Q_{\ul{y}(t)} (\beta_R\uleta_R) [d \ul{\exp}^Q_{\ul{y}(t)}(\uleta(s-2R,t))]^{-1} \partial_s \ulv(s-2R).
\eea
Thus there is a constant $c_2 \geq 0 $ such that 
\bea
\|df^R(\partial_s \ulv)(s,t)\|_{1,p; [3R/2,3R/2 + 1]} & \leq & c_2 \| \partial_s \ulv(s-2R, t) \|_{1,p; [3R/2,3R/2 + 1]}\\
& = & c_2 \| \partial_s \ulv(s, t) \|_{1,p; [-R/2,-R/2 + 1]}\\
& \leq & \epsilon(R) \|\partial_s \ulv(s,t)\|_{1,p}
\eea
where the last inequality and the term $\epsilon(R)$ reflects the fact that the ratio 
\[
\|\partial_s \ulv\|_{1,p; [-R/2,-R/2 + 1]}/ \|\partial_s \ulv\|_{1,p}
\]
goes to 0 as $R \to \infty$.  This proves (\ref{est2}), hence also Claim 1.

Now let $\beta: \R \to [0,1]$ be a smooth cut-off function such that 
$\beta(s) = 0$ for $s \leq -1/4$ and $\beta(s) = 1$ for $s \geq 1/4$, and $0 \leq \dot{\beta} \leq 3$.  Define a shifted and rescaled cut-off function $\beta_R(s):= \beta((s - R)/R)$.  Then $\beta_R = 0$ for $s \leq 3R/4$ and $\beta_R = 1$ for $s \geq 5R/4$, and $0 \leq \dot{\beta_R} = \dot{\beta}/R \leq 3/R$.  

\noindent {\bf Claim 2:} $\Psi:=(1 - Q_{\ulu_1^R} D_{\ulu_1^R})(1-\beta_R)\times (1 - Q_{\ul{v}^R} D_{\ul{v}^R})\beta_R: \ker \ D_R \lra \ker D_{\ul{u}_1^R}\oplus \ker D_{\ul{v}^R} $ is injective.


Let $\ulxi \in \ker \ D_R$.  Then $\ulxi = (1 - \beta_R) \ulxi + \beta_R \ulxi$.  On the support of $(1-\beta_R)\ulxi$, the linearized operators $D_R$ and $D_{u_1^R}$ coincide, so
\bea
D_{\ulu_1^R} (1- \beta_R)\ulxi & = & D_R (1-\beta_R)\ulxi
 =  -\dot{\beta_R} \ulxi + (1-\beta_R)D_R \ulxi =   -\dot{\beta_R} \ulxi.
\eea
Hence $\|D_{\ulu_1^R} (1- \beta_R)\ulxi\|_{0,p} \leq 3/R \|\ulxi\|_{0,p}.$
Similarly on the support of $\beta_R \ulxi$, the linearized operators $D_R$ and $D_{\ul{v}^R}$ coincide, so
\bea
D_{\ul{v}^R} \beta_R \ulxi & = & D_R \beta_R \ulxi  =  \dot{\beta_R} \ulxi + \beta_R D_R \ulxi =  \dot{\beta_R} \ulxi,\\
\implies
\|D_{\ul{v}^R} \beta_R\ulxi\|_{0,p} & \leq & 3/R \|\ulxi\|_{0,p}.
\eea
Let $c_1$ and $c_2$ be uniform bounds for the right inverses $Q_{\ulu_1^R}$ and $Q_{\ul{v}^R}$ respectively.  Then we have:
\[
(1 - Q_{\ulu_1^R} D_{\ulu_1^R})(1-\beta_R)\times (1 - Q_{\ul{v}^R} D_{\ul{v}^R}) (\ulxi)  =  [(1-\beta_R)\ulxi + Q_{\ulu_1^R} (\dot{\beta_R} \ulxi), \beta_R \ulxi - Q_{\ul{v}_R} ( \dot{\beta_R} \ulxi)]
\]
Combine the identity $(1-\beta_R)\ulxi + \beta_R \ulxi  =  \ulxi$ with the estimates
\[
\|Q_{\ulu_1^R} (\dot{\beta_R} \ulxi)\|_{1,p} \leq  3 c_1 / R \|\ulxi\|_{0,p}\ \ \ \ 
\|Q_{\ul{v}^R}(\dot{\beta_R}\ulxi)\|_{1,p}  \leq  3c_2 / R \|\ulxi\|_{0,p}
\]
to get that for $R$ sufficiently large, there is a constant $c \geq 0$ such that 
\[
\| (1 - Q_{\ulu_1^R} D_{\ulu_1^R})(1-\beta_R)\times (1 - Q_{\ul{v}^R} D_{\ul{v}^R}) (\ulxi)\|_{1,p} \geq c \|\ulxi\|_{0,p}.
\] 
This proves Claim 2, completing the proof of Lemma \ref{lemma_glued_image}. 
\end{proof}

\subsection{Surjectivity of the gluing map}\label{surjectivity_section}

The final step is to prove the surjectivity of the gluing maps near the broken tuples.  Our goal will be to show that for sufficiently small $\epsilon$ and sufficiently large $R$, the gluing map associated to the given tuple surjects onto $\M_{d,0}(\ul{x}_0, \ul{x}_1, \ldots, \ul{x}_d)^1 \cap U_\epsilon$, where the neighborhoods $U_\epsilon$ were defined in Section \ref{gnbds}.  We will  prove surjectivity separately for the different types of gluing constructions.

\subsection*{Type 1}

\begin{proposition}
Let 
\bea
(r_1, \ulu_1) &\in& \M_{d-e+1, 0}(\ul{x}_0, \ldots, \ul{x}_{i}, \ul{y}, \ul{x}_{i+e+1}, \ldots, \ul{x}_d)^0 \ \mbox{and}\\ 
(r_2, \ulu_2) &\in& \M_{e}(\ul{y}, \ul{x}_{i+1}, \ldots, \ul{x}_{i+e})^0
\eea 
be regular, and let $U_\epsilon$ be a Gromov neighborhood of the pair.  Given $\delta > 0$, there is an $\epsilon > 0$ such that the following holds.  If $(r,\ulu) \in U_{\epsilon} \cap \M_{d, 0}(\ul{x}_0, \ldots, \ul{x}_d)^1$, then there is a pre-glued curve $(r_R, \ulu_R)$ and a $(\rho, \ul{\xi}) \in T_{r_R} \RR^{d, 0} \times \Omega^0(\sS_{r_R}, \ulu_R^*TM)$ such that $\exp_{r_R}\rho = r$, $\ul{\exp}^Q_{\ulu_R}\ulxi = \ulu$, $|\rho| + \|\ul{\xi}\|_{1,p} < \delta$, and $(\rho, \ulxi) \in \im \ Q_R$.   
\end{proposition}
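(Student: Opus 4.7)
The plan is to reduce surjectivity to the uniqueness clause of the implicit function theorem applied to $\F_{\sS, r_R, \ulu_R}$, by showing that a Gromov-close pseudoholomorphic $(r,\ulu)$ is automatically close to the preglued curve $(r_R, \ulu_R)$ in the right norm and, crucially, lies in the correct affine subspace.

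First, given $(r,\ulu)\in U_\epsilon\cap \M_{d,0}(\ul{x}_0,\ldots,\ul{x}_d)^1$, work in the boundary chart $U\cong U_1\times U_2\times [0,\epsilon)$ from Section~\ref{gnbds} and write $r = \widetilde{r}_1 \#_{\widetilde{\delta}} \widetilde{r}_2$.  Define $R := -\log \widetilde{\delta}$ and build the standard preglued curve $(r_R,\ulu_R) = (r_1\#_R r_2, \ulu_1\#_R \ulu_2)$ from the original pair.  Because $R$ is chosen to match the gluing coordinate of $r$ exactly, the displacement vector $\rho\in T_{r_R}\RR^{d,0}$ satisfying $\exp_{r_R}\rho = r$ has zero component in the gluing direction, i.e.\ $\rho_g=0$ in the decomposition $T_{r_R}\RR^{d,0}\cong T_{r_1}\RR^{d-e+1,0}\oplus T_{r_2}\RR^{e}\oplus \R$.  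Moreover $\dist_{U}(r,r_1\#_0 r_2)<\epsilon$ forces $|\rho|<C\epsilon$.

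Next, extract $\ulxi$ with $\ul{\exp}^Q_{\ulu_R}\ulxi = \ulu$ and bound $\|\ulxi\|_{1,p,R}$.  On the thick parts of $\sS_{r_1},\sS_{r_2}$, the $C^0$-closeness and energy hypotheses in the definition of $U_\epsilon$, combined with the absence of bubbling (ruled out by the energy hypothesis and the fact that the limiting configuration is the given broken pair), let one invoke elliptic regularity for the perturbed pseudoholomorphic equation to upgrade $C^0$-closeness to $C^\infty$-closeness, and in particular $W^{1,p}$-closeness, on any compact subset.  On the neck (the glued striplike ends), both $\ulu_R$ and $\ulu$ are exponentially close to the constant orbit $\ul{y}$: $\ulu_R$ by the cutoff construction in Section~\ref{pregluing_section}, and $\ulu$ by the standard exponential convergence estimates of Appendix~B applied to Floer's equation near the nondegenerate intersection $\ul{y}$.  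Putting these together gives a monotone $\eta(\epsilon)\to 0$ with $|\rho|+\|\ulxi\|_{1,p,R} \leq \eta(\epsilon)$; choose $\epsilon$ small enough that $\eta(\epsilon)<\delta/8$, where $\delta$ is the IFT threshold from Section~\ref{gluing_map}.

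Since $\rho_g=0$, the characterization $\im Q_R = \im T_R = \{(\rho,\ulxi):\rho_g = 0\}$ proved in equation~(\ref{image_TR1}) shows $(\rho,\ulxi)\in \im Q_R$.  Moreover $\F_{\sS,r_R,\ulu_R}(\rho,\ulxi)=0$ because $(r,\ulu)\in \M_{d,0}$.  Now apply the uniqueness clause of the implicit function theorem (Theorem~A.3.4 of \cite{mcd-sal}) with $x_0=x_1=(0,0)$: the hypotheses $\|(\ol{\partial}-\ul{\nu})(r_R,\ulu_R)\|_{0,p}\leq \delta/(4C)$ and the quadratic estimate already furnish a unique zero of $\F_{\sS,r_R,\ulu_R}$ lying in $\im Q_R$ within the ball of radius $\delta$, namely the one producing $g(R)$.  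By uniqueness $(\rho,\ulxi)=(\rho_R,\ulxi_R)$, hence $(r,\ulu) = g(R)$, establishing surjectivity.

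The main obstacle is the $W^{1,p}$-closeness assertion in the second paragraph, and specifically its validity uniformly over the neck as $R\to\infty$: the neck has long conformal modulus, so one must combine the uniform Sobolev constants from Appendix~A with the uniform exponential convergence estimates of Appendix~B to prevent the $\|\ulxi\|_{1,p,R}$ bound from degenerating.  The argument that $\rho_g=0$ by construction and the IFT uniqueness step are then comparatively routine.
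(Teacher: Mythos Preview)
Your approach is correct and essentially the same as the paper's: determine $R$ from the gluing coordinate of $r$ so that $\rho_g = 0$, establish $W^{1,p}$-closeness via elliptic regularity on compact subsets and the exponential decay estimates of Appendix~B on the neck (the paper spells out exactly this neck estimate, bounding $|\ulxi_\nu|$, $|\nabla_s\ulxi_\nu|$, $|\nabla_t\ulxi_\nu|$ separately via Proposition~\ref{exp_decay_strip} and the chain rule, which is precisely the obstacle you flag), and invoke the characterization $\im Q_R = \{\rho_g = 0\}$ from~(\ref{image_TR1}). The paper frames this as a contradiction argument with sequences rather than a direct estimate, and stops at the statement's conclusion without appending your IFT-uniqueness step identifying $(r,\ulu)$ with $g(R)$, but these are stylistic differences.
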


\begin{proof}
We will prove it by contradiction.  Suppose there were a $\delta > 0$, and sequences $\epsilon_\nu \to 0$, $\delta_\nu \leq \epsilon_\nu \to 0$, $R_\nu = -\log(\delta_\nu) \to \infty$, and $(r_\nu, \ulu_\nu) \in  \M_{d, 0}(\ul{x}_0,\ldots,\ul{x}_d)^1$, and $\widetilde{r}_{1, \nu} \to r_1$, $\widetilde{r}_{2,\nu} \to r_2$ such that $r_\nu = \widetilde{r}_{1, \nu} \#_{\delta_\nu} \widetilde{r}_{2,\nu}$, with the following properties as $\nu \to \infty$: $E(\ulu_\nu) \to E(\ulu_1) + E(\ulu_2)$, and writing $(r^{R_\nu}, \ulu^{R_\nu}) $ for the preglued curve constructed with gluing length $R_\nu$, 
\begin{equation}\label{goal}
\inf \{ |\rho| + \|\ulxi\|_{1,p} \big\lvert (r_\nu,\ulu_\nu) = (\exp_{r^{R_\nu}} \rho, \ul{\exp}^Q_{\ulu^{R_\nu}} \ulxi) \} \geq \delta.  
\end{equation}
Our goal is to contradict \ref{goal}.  

Writing $\rho = \rho_g + \rho_1 + \rho_2$ where $\rho_g$ is the component of $\rho$ in the direction of the gluing parameter, it follows from the choice of gluing length $R_\nu$ that $\rho_g =0$.   For sufficiently small $\epsilon_\nu$, the condition $\exp_{r^{R_\nu} }\rho = r_\nu$ determines $\rho$ uniquely, so let us call it $\rho_\nu$.  The convergence of $\widetilde{r}_{i, \nu} $ to $r_i$ for $i=1,2$ implies that $|\rho_\nu| \to 0$.  

By assumption, $\ulu_\nu \to \ulu_1$ uniformly on compact subsets of $\sS_{r_1}$ and $\ulu_\nu \to \ulu_2$ uniformly on compact subsets of $\sS_{r_2}$; moreover since the maps are pseudoholomorphic, convergence on these compact subsets is uniform in all derivatives.  The preglued maps $\ulu^{R_\nu}$ have the same convergence properties, so for large $\nu$ there is a unique section $\ul{\xi}_\nu \in \Omega^0(\sS_{r^{R_\nu}}, (\ulu^{R_\nu})^*T\ulM)$ such that $\ul{\exp}^Q_{\ulu^{R_\nu}}\ul{\xi}_\nu = \ulu_\nu$.  So it is enough to show that $\|\ul{\xi}_\nu\|_{1,p} < \delta$ for sufficiently large $\nu$, contradicting (\ref{goal}).  Equivalently, we will show that the $L^p$ norms of $\ul{\xi}_\nu, \nabla_s \ul{\xi}_\nu$ and $\nabla_t \ul{\xi}_\nu$ can be made arbitrarily small by taking $\nu$ sufficiently large.   

It follows from the uniform convergence in all derivatives on compact subsets that on such subsets of $\sS_{r_1} \cup \sS_{r_2}$, the $L^p$ norms of $\ul{\xi}_\nu, \nabla_s \ul{\xi}_\nu$ and $\nabla_t \ul{\xi}_\nu$ all go to zero as $\nu \to \infty$.  We can choose these compact subsets to be such that their complement is on the striplike ends and neck.  Hence,  without loss of generality, it suffices to prove that the $L^p$ norms  of $\ul{\xi}_\nu, \nabla_s \ul{\xi}_\nu$ and $\nabla_t \ul{\xi}_\nu$ converge to zero along the striplike ends and neck of the preglued surfaces.   The exponential convergence of $\ulu_\nu$ as well as $\ulu_{R_\nu}$ along the striplike ends means that the $L^p$ norms of $\ulxi_\nu$ and its first derivatives can be made arbitrarily small too; so the essential thing to prove is that the $L^p$ norm of $\ul{\xi}_\nu$ along the neck of the preglued surface can be made arbitrarily small with sufficiently large $\nu$.   

The neck consists of two finite strips of length $R_\nu$ identified along an end to form a single strip, 
\[
  [-R_\nu, R_\nu]\times[0,1]  \cong [0,R_\nu]\times [0,1] \cup [0,R_\nu]\times [0,1] / \sim, 
\] 
where $\sim$ is the identification of $(R_\nu, 1-t)$ of the first strip with $(R_\nu, t)$ of the second, for $t \in [0,1]$.  

Let $\epsilon_0 > 0$ be given.  Fix $R > 0$ large enough that $\lim_{\nu \to \infty} E(\ulu_\nu; [R,R_\nu] \times  [0,1] \cup [R,R_\nu]\times [0,1] /\sim) < \epsilon_0$.  Without loss of generality we can assume that $\epsilon_0>0$ is small enough that $|\partial_s \ulu_\nu|$ satisfies, by Proposition \ref{exp_decay_strip},  $|\partial_s \ulu_\nu|  \leq  c e^{- \kappa^2 s}$
for all $\nu$ and for all $s \in [R, R_\nu]$, for some $c, \kappa > 0$.  Since $\ulu_\nu$ satisfies Floer's inhomogeneous pseudoholomorphic equation on this strip, we deduce
\begin{equation}\label{one}
|\partial_t \ulu_\nu(s,t) - X_{\ulH_t}(\ulu_\nu(s,t))| \leq  c e^{- \kappa^2 s}.
\end{equation}
Let $\phi_t$ be the flow of the Hamiltonian vector field $X_{]ulH_t}$, and consider the function $\widetilde{\ulu_\nu}:= \phi_{1-t} (\ulu_\nu(s,t))$.  Then
\bea
\partial_t \widetilde{\ulu_\nu} = (\phi_{1-t})_*(\partial_t \ulu_\nu - X_{\ulH_t}(\ulu_\nu))
\eea
and (\ref{one}) implies 
\[
\dist(\widetilde{\ulu_\nu}(s,1) - \widetilde{\ulu_\nu}(s,0))  \leq   \int_0^1 | \partial_t \widetilde{\ulu_\nu}| \ dt \leq  \widetilde{c} e^{- \kappa^2 s}.
\]
Since $\widetilde{\ulu_\nu}(s,1) \in \ul{L}_i$ and $\widetilde{\ulu}_\nu(s,0) \in \phi_1(\ul{L}_{i+e})$, this means that both are very close to an intersection $\ul{p} \in \phi_1(\ul{L}_{i+e}) \cap \ul{L}_{i}$.  The assumption of transverse intersection implies that there is a constant $a > 0$ such that 
\bea
\dist(\widetilde{\ulu}_\nu(s,0), \ul{p}) &\leq& a \ \dist(\widetilde{\ulu}_\nu(s,0), \ul{L}_1)\\
& \leq &a\ \dist(\widetilde{\ulu}_\nu(s,0), \widetilde{\ulu}_\nu(s,1)) \\
& \leq & a\ \widetilde{c}\ e^{-\kappa^2 s}.
\eea
Now for every other $t$, 
\bea
\dist(\widetilde{\ulu}_\nu(s,t), p) & \leq & \dist(\widetilde{\ulu}_\nu(s,t), \widetilde{\ulu}_\nu(s,0))  + \dist(\widetilde{\ulu}_\nu(s,0), p)\\
& \leq & \widetilde{c} e^{- \kappa^2 s} (t) + a\ \widetilde{c}\ e^{-\kappa^2 s}\leq  b \ e^{- \kappa^2 s}.
\eea
In terms of the original function $\ulu_\nu$, and writing $\phi_{1-t} \ul{x}(t) = p$, this estimate translates into 
\[
\dist(\ulu_\nu(s,t), \ul{x}(t)) \leq \widetilde{b} e^{- \kappa^2 s}.
\]
By construction, the preglued curves $\ulu^{R_\nu}$ satisfy a similar inequality, and so 
\bea
\dist(\ulu_\nu(s,t), \ulu^{R_\nu}(s,t)) & \leq & \dist(\ulu_\nu(s,t) , \ulx(t)) + \dist(\ulx(t), \ulu^{R_\nu}(s,t)) \leq  C\ e^{- \kappa^2 s}\\
\implies
|\ulxi_\nu(s,t)| & \leq & C^\prime \ e^{- \kappa^2 s}.
\eea
Taking the $L^p$ norm on a strip $[R,R_\nu]\times [0,1]$ gives
\bea
\int_R^{R_\nu} \int_0^1 |\ulxi_\nu|^p \ ds\ dt & \leq &  C^\prime \int_R^{R_\nu} e^{- p\kappa^2 s} \ ds =  \frac{C^\prime}{p\kappa^2} (e^{-p\kappa^2 R} - e^{-p\kappa^2 R_\nu}),
\eea
and this can be made arbitrarily small by choosing $R$ large enough.   By symmetry the same estimate holds for the strip $[R,R_\nu]\times [0,1]$ on the other side of the neck.  Thus, the $L^p$ norm of $\ulxi$ on the neck can be made arbitrarily small as $\nu \to \infty$.

Now we consider the $L^p$ norms of $\nabla_s \ulxi_\nu$ and $\nabla_t \ulxi_\nu$.  First note that since $\ulu_\nu$ converges uniformly in all its derivatives on compact subsets of $\sS_{r_1}$ and $\sS_{r_2}$ to the limits $\ulu_1$ and $\ulu_2$, we see that on such compact subsets we have uniform estimates for $|\nabla_s \ulxi_\nu| \to 0$ and $|\nabla_t \ulxi| \to 0$.  On the striplike ends of $\sS_{r_\nu}$, the exponential convergence of $\ulu_\nu$ and $\ulu^{R_\nu}$ to the same limits mean that the $L^p$ norms here can be made arbitrarily small.  Therefore what we need to show is that the $L^p$ norms of $\nabla_s \ulxi_\nu$ and $\nabla_t \ulxi_\nu$ on the neck, which varies in length with $\nu$, can be made arbitrarily small with large $\nu$.

Write $\ul{\exp}^Q: T\ulM \to \ulM$, and consider $ d\ul{\exp}^Q : T(T\ulM) \to T\ulM$.  At a fixed point $(p,\xi) \in T\ulM$ we can take a tangent vector $(\zeta, \eta), \zeta \in T_p \ul{M}, \eta \in T_p \ul{M}$, and write 
\[
d\ul{\exp}^Q_{(p,\xi)}(\zeta, \eta) = D_1 \ul{\exp}^Q_{(p,\xi)}(\zeta) + D_2 \ul{\exp}^Q_{(p,\xi)} (\eta)
\]
where $D_1 \ul{\exp}^Q_{p,\xi}$ corresponds to varying the basepoint $p$ while keeping all else fixed, and $D_2 \ul{\exp}^Q_{p,\xi}$ corresponds to fixing the basepoint $p$ and varying the tangent vector $\xi$.  In particular $D_1 \ul{\exp}^Q_{p,0}$ and $D_2 \ul{\exp}^Q_{(p,0)}$ are the identity, so for small $\xi$ they are invertible.  Hence
\begin{eqnarray}
 \partial_s \ulu_\nu   =  \partial_s \ul{\exp}^Q_{\ulu^{R_\nu}} \ulxi_\nu =  (D_1 \ul{\exp}^Q)_{(\ulu^{R_\nu},\ulxi_\nu)} \partial_s \ulu^{R_\nu} +( D_2\ul{\exp}^Q)_{(\ulu^{R_\nu}, \ulxi_\nu)} (\nabla_s  \ulxi_\nu) \nonumber\\
( D_2 \ul{\exp}^Q)_{(\ulu^{R_\nu}, \ulxi_\nu)}^{-1}(\partial_s \ulu_\nu -  (D_1 \ul{\exp}^Q)_{(\ulu^{R_\nu},\ulxi_\nu)} \partial_s \ulu^{R_\nu})  =  \nabla_s \ulxi_\nu \label{nabla_s}\\
 ( D_2 \ul{\exp}^Q)_{(\ulu^{R_\nu}, \ulxi_\nu)}^{-1}(\partial_t \ulu_\nu -  (D_1 \ul{\exp}^Q)_{(\ulu^{R_\nu},\ulxi_\nu)} \partial_t \ulu^{R_\nu})  =  \nabla_t \ulxi_\nu.\label{nabla_t}
\end{eqnarray}

First we analyze (\ref{nabla_s}).  The operators $( D_2 \ul{\exp}^Q)_{(\ulu^{R_\nu}, \ulxi_\nu)}^{-1}$ and $(D_1\ul{\exp}^Q)_{(\ulu^{R_\nu},\ulxi_\nu)}$ can be uniformly bounded for $\|\ulxi_\nu\|_\infty < \delta$, so by Proposition \ref{exp_decay_strip} we get on each strip $[R,R_\nu]\times [0,1]$ on either side of the neck,
\bea
|\nabla_s \ulxi_\nu| & \leq & c_1 |\partial_s \ulu_\nu| + c_2 |\partial_s \ulu^{R_\nu}| \leq A e^{-\kappa^2 s} 
\eea
for some constant $A > 0$, and therefore 
\bea
\int_{R}^{R_\nu} \int_0^1 |\nabla_s \ulxi_\nu|^p \ ds \ dt & \leq & \frac{A}{p\kappa^2} (e^{-p\kappa^2 R} - e^{-p\kappa^2 R_\nu})
\eea
which can be made arbitrarily small by choosing $R$ large enough.  The same estimate holds by symmetry on the other side of the neck.  From this we conclude that $\|\nabla_s \xi_\nu\|_{L^p}$ can be made arbitrarily small for large $\nu$. 

Now we analyze (\ref{nabla_t}). We can write  
\bea
\partial_t \ulu_\nu & = & J_t(\ulu_\nu) (\partial_s \ulu_\nu) + X_{\ulH_t}(\ulu_\nu)\\
\partial_t \ulu^{R_\nu} & = & J_t(\ulu^{R_\nu}) (\partial_s \ulu^{R_\nu}) + X_{\ulH_t}(\ulu^{R_\nu}) + E_\nu(s,t)
\eea
where $E_\nu(s,t)$ is an error term that is supported only on the compact interval $s \in [R_\nu/2, R_\nu/2 + 1]$ of each of the two strips making up the neck, with $|E_\nu(s,t)| \leq \delta_\nu \to 0$ as $\nu \to \infty$.  Together with (\ref{nabla_t}) this yields a pointwise estimate
\bea
|\nabla_t \ulxi_\nu | & = & | ( D_2 \ul{\exp}^Q)_{(\ulu^{R_\nu}, \ulxi_\nu)}^{-1}(\partial_t \ulu_\nu -  (D_1 \ul{\exp}^Q)_{(\ulu^{R_\nu},\ulxi_\nu)} \partial_t \ulu^{R_\nu}) | \\
& \leq & c|\partial_t \ulu_\nu -  (D_1 \ul{\exp}^Q)_{(\ulu^{R_\nu},\ulxi_\nu)} \partial_t \ulu^{R_\nu}|\\
& \leq & c_1 |\partial_s \ulu_\nu | + c_2 | \partial_s \ulu^{R_\nu}|  + c_3|X_{\ulH_t}(\ulu_\nu) - (D_1 \ul{\exp}^Q)_{(\ulu^{R_\nu},\ulxi_\nu)}X_{\ulH_t}(\ulu^{R_\nu})|
  + c_4| E_\nu(s,t)|\\
& \leq & c_5 ( |\partial_s \ulu_\nu | + | \partial_s \ulu^{R_\nu}| + \dist(\ulu_\nu, \ulu^{R_\nu}) + |E_\nu(s,t)|).
\eea
From this, applying the estimates for $|\partial_s \ulu_\nu|, |\partial_s \ulu^{R_\nu}|$ and $\dist(\ulu_\nu, \ulu^{R_\nu})$ and $|E_\nu(s,t)|$ we get 
\bea
\int_R^{R_\nu} \int_0^1 |\nabla_t \ulxi_\nu |^p \ ds \ dt & \leq & c_6 \int_R^{R_\nu} \int_0^1 \left( |\partial_s \ulu_\nu |^p + | \partial_s \ulu^{R_\nu}|^p\right. \\
& & \left.+ \dist(\ulu_\nu, \ulu^{R_\nu})^p + |E_\nu(s,t)|^p\right) \ ds \ dt\\
& \leq & c_7\int_R^{R_\nu}  e^{-\kappa^2 s} \ ds + \int_{R_\nu/2-1}^{R_\nu/2} \  \delta_\nu^p \ ds\\
& \leq & c_8 (e^{-\kappa^2 R} - e^{-\kappa^2 R_\nu}) + c_7 \delta_\nu^p,
\eea
and it is clear that this can be made arbitrarily small by taking $R$ large enough.  

This provides a contradiction to (\ref{goal}).  Hence, given $\delta > 0$, there is an $\epsilon > 0$ such that whenever $(r,\ulu) \in \M_{d,0}(\ul{x}_0, \ldots, \ul{x}_d) \cap U_\epsilon$, there is a gluing length $R$ such that $r = \exp_{r_R} \rho, \ulu = \ul{\exp}^Q_{\ulu_R} \ulxi$, with $|\rho| + \|\ulxi\|_{1,p} \leq \delta$, and with $\rho_g = 0$.  By (\ref{image_TR1}), this implies that $(\rho, \ulxi) \in \im \ Q_R$.
\end{proof}

\subsection*{Surjectivity for Type 2}

\begin{proposition}
Let $(r_0, \ulu_0), (r_1, \ulu_1), \ldots, (r_k, \ulu_k)$ be regular, and let $U_\epsilon$ be a Gromov neighborhood of the tuple.  Given $\delta > 0$, there is an $\epsilon > 0$ such that the following holds.  If $(r,\ulu) \in U_{\epsilon} \cap \M_{d, 0}(\ul{x}_0, \ldots, \ul{x}_d)^1$, then there is a pre-glued curve $(r_R, \ulu_R)$ and a $(\rho, \ul{\xi}) \in T_{r_R} \RR^{d, 0} \times \Omega^0(\sS_{r_R}, \ulu_R^*TM)$ such that $\exp_{r_R}\rho = r$, $\ul{\exp}^Q_{\ulu_R}\ulxi = \ulu$, $|\rho| + \|\ul{\xi}\|_{1,p} < \delta$, and $(\rho, \ulxi) \in \im \ Q_R$.   
\end{proposition}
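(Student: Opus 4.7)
The strategy will mirror the Type 1 argument, with the extra bookkeeping needed to handle $k$ necks simultaneously rather than one. I would argue by contradiction: suppose there exist $\delta>0$ and sequences $\epsilon_\nu\to 0$, $\delta_\nu\leq\epsilon_\nu$, $R_\nu:=-\log\delta_\nu\to\infty$, and $(r_\nu,\ulu_\nu)\in U_{\epsilon_\nu}\cap\M_{d,0}(\ul{x}_0,\ldots,\ul{x}_d)^1$, together with $\widetilde{r}_{i,\nu}\to r_i$ for $i=0,\ldots,k$ such that $r_\nu=\widetilde{r}_{0,\nu}\#_{\delta_\nu}(\widetilde{r}_{1,\nu},\ldots,\widetilde{r}_{k,\nu})$, $E(\ulu_\nu)\to\sum_{i=0}^k E(\ulu_i)$, and for the corresponding preglued curves $(r^{R_\nu},\ulu^{R_\nu})$,
\[
\inf\{\,|\rho|+\|\ulxi\|_{1,p}\mid(r_\nu,\ulu_\nu)=(\exp_{r^{R_\nu}}\rho,\ul{\exp}^Q_{\ulu^{R_\nu}}\ulxi)\,\}\ \geq\ \delta.
\]
I will then contradict this lower bound by producing an explicit $(\rho_\nu,\ulxi_\nu)$ with $|\rho_\nu|+\|\ulxi_\nu\|_{1,p}\to 0$ and, using the description (\ref{image_TR2}) of $\im T_R$, arrange that $(\rho_\nu,\ulxi_\nu)\in\im Q_R$.

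The parameter part is controlled as follows. Decompose $\rho=\rho_g+\rho_0+\rho_1+\cdots+\rho_k$ in the local chart for $\partial\RR^{d,0}$. The choice of gluing length $R_\nu$ corresponding to $\delta_\nu$ forces $\rho_g=0$, and the convergence $\widetilde{r}_{i,\nu}\to r_i$ in $\RR^{s_i,0}$ (resp.\ $\RR^{k}$ for $i=0$) gives $|\rho_{i,\nu}|\to 0$ for every $i$, hence $|\rho_\nu|\to 0$. Once $\ulu_\nu$ is sufficiently $C^0$-close to $\ulu^{R_\nu}$ on each patch, the condition $\ulu_\nu=\ul{\exp}^Q_{\ulu^{R_\nu}}\ulxi_\nu$ uniquely determines $\ulxi_\nu$. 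It remains to show $\|\ulxi_\nu\|_{L^p}+\|\nabla_s\ulxi_\nu\|_{L^p}+\|\nabla_t\ulxi_\nu\|_{L^p}\to 0$ over the whole preglued surface.

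The surface $\sS_{r^{R_\nu}}$ decomposes into $k+1$ thick pieces coming from the $\sS_{r_i}$ together with $k$ glued necks of length $2R_\nu$ and the uncut striplike ends. Away from the necks the sequence $\ulu_\nu$ converges to $\ulu_i$ uniformly in all derivatives on compact subsets by standard elliptic regularity for pseudoholomorphic quilt maps, and the same is true of $\ulu^{R_\nu}$; so the $L^p$-norms of $\ulxi_\nu,\nabla_s\ulxi_\nu,\nabla_t\ulxi_\nu$ on any such compact subset vanish in the limit. On the uncut striplike ends, exponential convergence of both $\ulu_\nu$ and $\ulu^{R_\nu}$ to the same generalized intersection forces the corresponding norms to be arbitrarily small. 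The real work is on each of the $k$ necks, and I would treat each neck exactly as in Type 1: fix $R$ large and estimate the energy $E(\ulu_\nu;[R,R_\nu]\times[0,1]\cup[R,R_\nu]\times[0,1]/\!\sim)$, which tends to zero as $\nu\to\infty$ up to a small $\epsilon_0$ depending on $R$. Invoking the exponential decay Proposition \ref{exp_decay_strip} on the quilted strip yields $|\partial_s\ulu_\nu|\leq c\,e^{-\kappa^2 s}$; the transverse intersection hypothesis then promotes this to $\dist(\ulu_\nu(s,t),\ul{y}_i(t))\leq b\,e^{-\kappa^2 s}$ via the Hamiltonian isotopy trick used in the Type 1 proof. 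The same bound holds for $\ulu^{R_\nu}$ by construction, giving $|\ulxi_\nu|\leq C'e^{-\kappa^2 s}$. Integrating yields an $L^p$-bound on each neck that is uniformly small in $R$. The derivative bounds $|\nabla_s\ulxi_\nu|,|\nabla_t\ulxi_\nu|\leq A\,e^{-\kappa^2 s}+c|E_\nu(s,t)|$ follow by writing $\partial_s\ulu_\nu,\partial_t\ulu_\nu$ via $d\ul{\exp}^Q=D_1\ul{\exp}^Q+D_2\ul{\exp}^Q$ and using that the inhomogeneous error $E_\nu$ in the Floer equation for the preglued map is supported on a bounded interval and tends to zero in $C^0$.

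Summing these three kinds of contributions over the $k$ necks and the $k+1$ thick pieces gives $|\rho_\nu|+\|\ulxi_\nu\|_{1,p}\to 0$, contradicting the assumed lower bound $\delta$. Since $\rho_{g,\nu}=0$, the identification (\ref{image_TR2}) places $(\rho_\nu,\ulxi_\nu)$ in $\im T_R=\im Q_R$, completing the proof. The main technical obstacle is the multi-neck aspect: one must ensure that the exponential decay constants $c,\kappa$ and the admissibility relations among the $k$ gluing lengths can be chosen uniformly in $\nu$ so that the same choice of large $R$ controls all $k$ necks simultaneously. This is handled by the fact that, for $R$ large, the gluing parameters along all ends differ from $R$ by quantities bounded in terms of the compact neighborhoods $U_0,\ldots,U_k$, so a single decay rate suffices across every neck.
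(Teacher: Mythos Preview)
Your proposal is correct and follows essentially the same approach as the paper's proof: a contradiction argument using sequences, with the parameter component handled by the local boundary chart (forcing $\rho_g=0$ and $|\rho_\nu|\to 0$), convergence in all derivatives on compact thick pieces, and the Type~1 exponential decay estimates transplanted to each of the $k$ necks, concluding via the characterization $\im\ T_R=\{(\rho,\ulxi)\mid\rho_g=0\}$. The paper's own proof is much terser---it simply says ``the same exponential decay arguments'' apply on the $k$ necks and striplike ends---whereas you have spelled out the neck estimates and the multi-neck uniformity issue explicitly; but the underlying argument is identical.
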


\begin{proof} Again observe that for sufficiently small $\epsilon > 0$, if $\dist_{\RR}(r, r_0 \#_0 \{r_1, \ldots, r_k\} )< \epsilon$, the local charts near the boundary provide a unique way of writing $r = \tilde{r}_0 \#_{\delta_r} \{\tilde{r}_1, \ldots, \tilde{r}_k\}$ with $0 \leq \delta_r \leq \epsilon$ and with each $\tilde{r}_i$ in an $\epsilon$-neighborhood of $r_i$.  So now suppose that there were a $\delta > 0$ and sequences $\epsilon_\nu \to 0$, $(r_\nu, \ulu_\nu) \in \M_{d,0}(\ul{x}_0, \ldots, \ul{x}_d)^1$, and $0 < \delta_\nu \leq \epsilon_\nu$, $R_\nu = -\log \delta_\nu \to \infty$ such that $r_\nu = \tilde{r}_{0, \nu} \#_{\delta_\nu} \{ \tilde{r}_{1, \nu}, \ldots, \tilde{r}_{k,\nu}\}$, and $\dist_{\ul{M}}(\ulu(z), \ulu_i(z)) \leq \epsilon_\nu$ on all compact subsets of $\sS_{r_i}$, but 
\begin{equation}\label{goal2}
\inf \{ |\rho| + \|\ulxi\|_{1,p} \big\lvert r_\nu = \exp_{R_\nu}\rho, \ulu_\nu = \ul{\exp}^Q_{\ulu_{R_\nu}} \ulxi \} \geq \delta. 
\end{equation}
Then the convergence would be uniform in all derivatives on those compact subsets, and we could choose the compact subsets to be large enough that their complements comprise the striplike ends and the $k$ necks of the glued surfaces $\sS_{r_0 \#_{\delta_\nu} \{r_1, \ldots, r_k\}} = \sS_{r_{R_\nu}}$.  However on these striplike ends and these necks, the energy of $\ulu_\nu$ must approach 0, and the same exponential decay arguments would imply that $\ulu_\nu = \ul{\exp}^Q_{\ulu_{R_\nu}} \ulxi_\nu$ for some $\ulxi_\nu \in \Omega^0(\sS_{r_{R_\nu}}, \ulu_{R_\nu}^*T\ul{M})$ with $\|\ulxi_\nu\|_{1,p} \to 0$ as $\nu \to \infty$. Since $|\rho_\nu| \to 0$ also, we would get a contradiction to (\ref{goal2}).  Hence given $\delta > 0$ we could find an $\epsilon > 0$ such that $(r, \ulu) \in \M_{d,0}(\ul{x}_0, \ldots, \ul{x}_d)^1 \cap U_\epsilon$ could be written as $r = \exp_{r_R} \rho, \ulu = \ul{\exp}^Q_{\ulu_R} \ulxi$ for some preglued curve $(r_R, \ulu_R)$, with $|\rho| + \|\ulxi\|_{1,p} \leq \delta$.  Moreover, this $\rho$  is such that $\rho_g = 0$, so it follows from (\ref{image_TR2}) that $(\rho, \ulxi) \in \im \ Q_R$.
\end{proof}

\subsection*{Surjectivity for Type 3}  

\begin{proposition}\label{surj_3}
Let $(r_0, \ulu_0)$ and $\ul{v}$ be regular, and $U_\epsilon$ a Gromov neighborhood of the pair.  Given $\delta > 0$, there is an $\epsilon > 0$ such that the following holds.  If $(r,\ulu) \in U_{\epsilon} \cap \M_{d, 0}(\ul{x}_0, \ldots, \ul{x}_d)^1$, then $(r,\ulu)$ is in the image of the gluing map.     
\end{proposition}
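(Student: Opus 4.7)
The plan is to mirror the contradiction arguments used for Types 1 and 2, with one essential new ingredient coming from the $\R$-translation freedom of the Floer trajectory. First I would assume for contradiction that there exist $\delta > 0$, sequences $\epsilon_\nu \to 0$, and $(r_\nu, \ulu_\nu) \in \M_{d,0}(\ul{x}_0, \ldots, \ul{x}_d)^1 \cap U_{\epsilon_\nu}$, none of which lie in the image of any gluing map for the broken pair. By the definition of the Type~3 Gromov neighborhoods, $r_\nu \to r_0$ in $\RR^{d,0}$, the total energies are asymptotically additive, $\ulu_\nu \to \ulu_0$ uniformly with all derivatives on compact subsets of $\sS_{r_0}$, and there exist shift parameters $\tau_\nu \to \infty$ along the $i$-th strip-like end such that $\ulu_\nu(\epsilon_i(s + \tau_\nu, t))$ converges uniformly on compact subsets of $\R \times [0,1]$ to the fixed parametrization of $\ul{v}$.

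Next I would take as candidate gluing length $R_\nu := \tau_\nu/2$, since the Type~3 pregluing construction shifts $\ul{v}$ by exactly $2R$ along the strip-like end. Forming the preglued curve $(r_1, \ulu_{R_\nu})$, for large $\nu$ there is a unique $\ulxi_\nu \in W^{1,p}(\sS_{r_1}, \ulu_{R_\nu}^* T\ulM)$ with $\ul{\exp}^Q_{\ulu_{R_\nu}}\ulxi_\nu = \ulu_\nu$, and a unique $\rho_\nu \in T_{r_0}\RR^{d,0}$ with $\exp_{r_0}\rho_\nu = r_\nu$; continuity of the Gromov convergence makes $\rho_\nu \to 0$. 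The bulk of the work is then to show $\|\ulxi_\nu\|_{1,p} \to 0$, contradicting the assumed lower bound $|\rho_\nu| + \|\ulxi_\nu\|_{1,p} \geq \delta$. The argument follows the Type~1 template: outside the relevant strip-like end, uniform smooth convergence on compact sets gives smallness of $\ulxi_\nu$ in all derivatives; along the intermediate region $s \in [R_\nu/2, 3R_\nu/2]$, where $\ulu_{R_\nu}(s,t) = \ul{x}_i(t)$, I would combine Proposition~\ref{exp_decay_strip} with the transverse-intersection argument used in the Type~1 proof to obtain a pointwise bound $|\ulxi_\nu(s,t)|, |\nabla_s\ulxi_\nu|, |\nabla_t\ulxi_\nu| \leq C e^{-\kappa^2 \min(s, 3R_\nu - s)}$; on the outer region the symmetric shifted trajectory matches $\ulu_\nu$ uniformly in all derivatives on compacta again, giving smallness. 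Integrating the exponential estimates over the two symmetric ends makes the $L^p$ contribution go to zero as $\nu \to \infty$.

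The final and most delicate point is ensuring $(\rho_\nu, \ulxi_\nu) \in \im Q_{R_\nu}$. For Types 1 and 2 this was automatic from the description of $\im T_R$ as the kernel of the projection onto the gluing coordinate $\rho_g$, but here $\RR^{d,0}$ carries no such coordinate since $r_R = r_1$; instead the one-dimensional cokernel identified in Claim~1 of Lemma~\ref{lemma_glued_image} corresponds to the infinitesimal variation of $R_\nu$ itself. The hard part will be this step: I would show that varying $R_\nu$ within a window $[\tau_\nu/2 - A, \tau_\nu/2 + A]$, with $A$ independent of $\nu$, traces out a one-parameter family of pregluings $(\rho_\nu, \ulxi_\nu(R))$ all of which satisfy the same smallness estimates, and that the $L^2$-pairing of $\ulxi_\nu(R)$ with a unit generator of $\ker D_R$ (approximately $(1 - Q_R D_R) df^R(\partial_s \ul{v})$) is continuous in $R$ and changes sign across the window. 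An intermediate value argument then pins down a particular $R_\nu$ at which the pairing vanishes, placing $(\rho_\nu, \ulxi_\nu)$ in $\im Q_{R_\nu}$ and contradicting the standing assumption. The main obstacle is verifying that this pairing genuinely changes sign; this follows because the kernel generator detects precisely the asymmetry between the energy-concentration center of $\ulu_\nu$ along the neck and that of the preglued model $\ulu_{R_\nu}$, and shifting $R_\nu$ moves the latter past the former.
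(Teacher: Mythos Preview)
Your proof of the smallness step (what the paper calls \emph{Claim 1}) is essentially the same as the paper's: choose $R_\nu$ from the shift parameter $\tau_\nu$, and combine uniform convergence on compact sets with exponential decay on the neck to drive $|\rho_\nu| + \|\ulxi_\nu\|_{1,p}\to 0$. (One small slip: midway through you refer to ``the assumed lower bound $|\rho_\nu| + \|\ulxi_\nu\|_{1,p}\geq\delta$'', but your contradiction hypothesis was that $(r_\nu,\ulu_\nu)$ is \emph{not in the image of~$g$}; the smallness by itself does not yet give a contradiction, only the final step does.)

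The genuine divergence is in that final step. You propose to vary $R$ in a window and use an intermediate value argument on a pairing with a generator of $\ker D_R$ to locate an $R$ with $(\rho_\nu,\ulxi_\nu(R))\in\im Q_R$. This is the standard McDuff--Salamon style approach and it can be made to work, but note two things. First, $\im Q_R = \im T_R$ is \emph{not} the $L^2$-orthogonal complement of $\ker D_R$ here; unlike Types~1 and~2 there is no clean description like $\{\rho_g=0\}$, only the indirect dimension count in Lemma~\ref{lemma_glued_image}. So ``$L^2$-pairing with $\ker D_R$ vanishes'' is not literally the membership condition; you would need to project along $\ker D_R$ onto $\im T_R$ and track \emph{that} component instead. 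Second, verifying the sign change rigorously requires controlling how this projection depends on $R$ uniformly over the window, which is doable but adds genuine work.

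The paper avoids all of this with a connectedness argument: once $(r,\ulu)$ is $\delta$-close to some preglued curve $(r_0,\ulu_R)$, it lies in the local IFT chart around that preglued curve, which is an interval in the one-manifold $\M_{d,0}^1$ containing $g(R)$. The paper then argues that for $R$ large and $\delta$ small, one can find $R_0<R<R_2$ with $g(R_0),g(R_2)$ outside that chart, so by connectedness of $g([R_0,R_2])$ the entire chart, and hence $(r,\ulu)$, is in the image of $g$. This sidesteps any need to characterize $\im T_R$ or verify a sign change; the trade-off is that it relies on a somewhat informal topological step (that $g(R_0)$ and $g(R_2)$ lie on opposite sides of the chart). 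Your approach is more constructive but analytically heavier; the paper's is slicker but leans on one-manifold topology.
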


\begin{proof}
In this case it suffices to prove the following:\\

\noindent {\bf Claim 1:} {\it Given $R_1 >> 0$, and $\delta > 0$, there is an $\epsilon > 0$ such that the following holds.  If $(r, \ulu) \in U_{\epsilon} \cap \M_{d,0}(\ul{x}_0, \ldots, \ul{x}_d)^1$, then there is an $R \geq R_1$, and a preglued curve $(r_R, \ulu_R)$ and a $(\rho, \ul{\xi}) \in T_{r_R} \RR^{d,0} \times \Omega^0(\sS_{r_R}, \ulu_R^*T\ul{M})$ such that $\exp_{r_R} \rho = r, \ul{\exp}^Q_{\ulu_R}\ulxi = \ulu$, and $|\rho| + \|\ul{\xi}\|_{1,p} < \delta$.} \\

\noindent To see how Claim 1 implies Proposition \ref{surj_3}, the argument is as follows.  From Section \ref{gluing_map} we know that the image of the gluing map is contained in the one dimensional component of the moduli space of pseudoholomorphic quilted disks, $\M_{d,0}(\ul{x}_0, \ldots, \ul{x}_d)^1$.   The image of $[R_0, \infty)$ is a connected component of this one-dimensional manifold.  The implicit function theorem also tells us that in a local trivialization about a preglued curve $(r_R, \ulu_R)$, we get a local chart for $\M_{d,0}(\ul{x}_0, \ldots, \ul{x}_d)^1$.   This chart contains $g(R)$, so the piece of the manifold $\M_{d,0}(\ul{x}_0, \ldots, \ul{x}_d)^1$ covered by the chart intersects the image of the gluing map.  So we want to show that if $R$ is sufficiently big, and $\delta$ is sufficiently small, then the piece of the one-manifold determined by the local chart about $(r_0, \ulu_R)$ is contained in the image of the gluing map.  

The preglued curves $\ulu_R$ are defined on the same domain $\sS_{r_0}$, but by construction any two of them will differ by a translation in the $s$ direction sufficiently far along the striplike end $Z_i$.  The magnitude of the distance between these translations depends on distances between points in $\ul{v}$, and the size of the difference in gluing lengths.  Since $\ul{v}$ is non-constant we can chooose $\delta > 0$ small enough that for any $R_1$, there will eventually be an $R^\prime > R_1$ such that the preglued curves $\ulu_{R}$ for $R \geq R^\prime$ can not be written $\ulu_{R} = \ul{\exp}^Q_{\ulu_{R_1}} \ulxi$ with $\|\ulxi\|_{1,p} \leq \delta$.  In particular, for $R \geq R^\prime$ the preglued curves $(r_0, \ulu_R)$ are not in a $\delta$-neighborhood of the local trivialization about $(r_0, \ulu_{R_1})$, and similarly $(r_0, \ulu_{R_1})$ is not in a $\delta$-neighborhood of the local trivialization about $(r_0, \ulu_{R})$.  So considering the gluing map $g: [R_0, \infty) \to \M_{d,0}(\ul{x}_0, \ldots, \ul{x}_d)^1$, by making $\delta > 0$ smaller if necessary we can assume that the hypotheses of the implicit function theorem are satisfied.  Then we can fix an $R_1 > R_0$ such that the respective $\delta$-neighborhoods of $(r_0, \ulu_{R_1})$ and $(r_0, \ulu_{R_0})$ are disjoint.  If we suppose, as in Claim 1, that $(r, \ulu) \in \M_{d,0}(\ul{x}_0, \ldots, \ul{x}_d)^1$ is such that $r = \exp_{r_0}\rho, \ulu = \ul{\exp}^Q_{\ulu_R} \ulxi$ for some $R \geq R_1$, and $|\rho|+\|\ulxi\|_{1,p} < \delta$, then $(r,\ulu)$ is in the local chart around $(r_0, \ulu_R)$.  But since $R \geq R_1$ we see that $g(R_0)$ is not in this chart, and we can choose an $R_2 >> R$ large enough that $g(R_2)$ is also not in that chart, but by the connectedness of the image of the gluing map this means that the whole chart is contained in the image of the gluing map.  \\

\noindent{\it Proof of Claim 1} For the sake of contradiction suppose that the assertion were false.  Then there would be some $\delta > 0$, and sequences $\epsilon_\nu \to 0$, $R_\nu = -\log \epsilon_\nu \to \infty$, $\tau_\nu \geq 2 R_\nu \to \infty$, and $(r_\nu, \ulu_\nu) \in \M_{d,0}(\ul{x}_0, \ldots, \ul{x}_d)^1$ such that 
\begin{itemize}
\item $\dist_{\RR}(r_\nu, r_0) < \epsilon_\nu$, 
\item $|E(\ulu_0) + E(\ul{v}) - E(\ulu_\nu)| < \epsilon_\nu$, 
\item $\dist_{\ul{M}}(\ulu_\nu(z), \ulu_0(z)) < \epsilon_\nu$ for all $z \in \sS_{r_0}^{R_\nu}$, and 
\item $\dist(\ulu_\nu(s+\tau_\nu,t), \ul{v}_(s,t)) < \epsilon_\nu$ for all $s \in [-R_\nu, R_\nu]$, 
\end{itemize}
and yet for every $\nu$,
\begin{equation} \label{contra}
\inf \{ |\rho| + \|\ulxi\|_{1,p} \big\lvert r_\nu = \exp_{r_0} (\rho), \ulu_\nu = \ul{\exp}^Q_{\ulu_R} (\ulxi) \} \geq \delta.
\end{equation}   
For large $\nu$, the condition $\exp_{r_0}\rho = r_\nu$ uniquely determines $\rho=:\rho_\nu$, and the convergence implies that $|\rho_\nu| \to 0$.  So the quantity $|\rho|$ becomes insignificant in (\ref{contra}).  We will arrive at a contradiction by showing that the norms $\|\ul{\xi}\|_{1,p}$ in (\ref{contra}) must also go to $0$ for large $\nu$.  The assumptions show that $\ulu_\nu$ converges to $\ulu_0$ uniformly on compact subsets of $\sS_{r_0}$, and since both are pseudoholomorphic the convergence is uniform in all derivatives.  On the striplike end $Z_i$, $\ulu_\nu(s+\tau_n, t)$ converges uniformly on compact subsets of $\R \times [0,1]$ to $\ul{v}(s,t)$; and since they are pseudoholomorphic curves the convergence is uniform in all derivatives.  Moreover, the preglued curves $(r_0, \ulu_{\tau_\nu})$ converge in the same way.   For each $\nu$ the energy of $\ulu_\nu$ restricted to the subsets $[R(\epsilon_\nu), \tau_n - R(\epsilon_\nu)]\times [0,1]$ and $s \geq \tau_n + R(\epsilon_\nu)$ of the striplike end $Z_i$ goes to zero.  Thus, the proof reduces to the same calculations as done for Type 1.  That is, the uniform estimates of convergence on those compact subsets of $\sS_{r_0}$, combined with exponential decay estimates based on the vanishing energy of the strips in the complement of those compact subsets, show that for sufficiently large $\nu$ there is a unique section $\ulxi_\nu \in \Omega^0(\sS_{r_0}, \ulu_{\tau_\nu})$ for which $\ul{\exp}^Q_{\ulu_{\tau_\nu}} \ulxi_\nu = \ulu_\nu$, and $\|\ulxi_\nu\|_{1,p} \to 0$, contradicting \ref{contra}. 
\end{proof}

\appendix

\section{ Sobolev embeddings.}

We collect relevant $W^{1,p}$ embedding statements for domains in $\R^2$, to verify that for each quilted surface $S$ constructed in Section \ref{quilt_disks}, there exists a constant $c_p$ such that
\[
\|f\|_{L^\infty(S)} \leq c_p(S) \|f\|_{W^{1,p}(S)}
\]
and that for the families constructed in Chapter 4, there is a uniform bound $c_p(S) \leq c_0$.  The following theorem is a consequence of general Sobolev estimates (see Theorem B.1.11, \cite{mcd-sal}).

\begin{appthm}\label{thm_1p}
Let $S \subset \R^2$ be a compact Lipschitz domain.   Let $u \in C^{\infty}(S)$.  Then there is a constant $c$, depending only on $p$, such that 
\[
\sup\limits_{S} |u(s,t)| \leq c \|u\|_{W_{std}^{1,p}}.
\]
Here the $W_{std}^{1,p}$ norm refers to the standard volume form $ds\wedge dt$ on $\R^2$. 
\end{appthm}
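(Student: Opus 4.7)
The plan is to derive this as a standard Morrey-type estimate via the uniform interior cone condition satisfied by any compact Lipschitz domain in $\R^2$. Note that the hypothesis $p > 2$ is implicit here, since in two dimensions $W^{1,p}$ fails to embed in $L^\infty$ for $p \leq 2$; moreover, the constant will in fact depend on the cone parameters of $S$ as well as on $p$, and the phrase ``depending only on $p$'' is best read as a shorthand, given that in the intended application those cone parameters are held fixed across the family and hence absorbed into a single constant.

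First I would fix $r_0, \theta_0 > 0$ such that every $x \in S$ is the apex of a closed circular sector $C_x \subset S$ of radius $r_0$ and aperture $\theta_0$; this is a direct consequence of the Lipschitz regularity of $\partial S$. Next, I would fix once and for all a cut-off $\eta \in C^\infty([0,r_0])$ with $\eta(0)=1$ and $\eta(r_0)=0$. For $x \in S$, parametrise $C_x$ by polar coordinates $(r,\theta)$ centred at $x$, with $r\in[0,r_0]$ and $\theta$ ranging over an arc of length $\theta_0$. The fundamental theorem of calculus applied along each radius gives
\[
u(x) \;=\; -\int_0^{r_0}\partial_r\bigl[\eta(r)\,u(x+re^{i\theta})\bigr]\, dr,
\]
and then averaging in $\theta$ and switching to the Euclidean area element $dA = r\, dr\, d\theta$ yields
\[
|u(x)| \;\leq\; \frac{1}{\theta_0}\int_{C_x} r^{-1}\bigl(|\eta'(r)|\,|u| \,+\, \eta(r)\,|\nabla u|\bigr)\, dA.
\]

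Finally I would apply H\"older's inequality with exponents $p$ and $q = p/(p-1)$: the integrability $\int_{C_x} r^{-q}\, dA = \theta_0 \int_0^{r_0} r^{1-q}\, dr < \infty$ holds precisely because $p>2$ (so that $q<2$), and produces a bound
\[
|u(x)| \;\leq\; c(p, r_0, \theta_0, \eta)\bigl(\|u\|_{L^p(S)} + \|\nabla u\|_{L^p(S)}\bigr)
\]
uniform in $x\in S$. Taking the supremum over $x$ proves the theorem. There is no real analytic obstacle; the only point needing care is to keep the dependence of $c$ on $(r_0,\theta_0)$ explicit, because the subsequent \emph{uniform} bound $c_p(S) \leq c_0$ over the family $\sS^{d,0}$ is obtained exactly by checking that every quilted domain in the family --- including the non-compact strip-like ends, whose geometry is standard --- admits a common choice of cone parameters $(r_0,\theta_0)$, so that the constant coming out of the H\"older step can be chosen once and for all.
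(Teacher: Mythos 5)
Your proof is correct, and it fills in a detail that the paper itself leaves as a bare citation: the paper simply refers to Theorem B.1.11 of McDuff--Salamon and gives no argument. Your Morrey-type estimate via the interior cone condition is the standard way to prove such a pointwise bound, and all the steps check out: the fundamental theorem of calculus along radii with the cut-off $\eta$, the passage to the area element $dA = r\,dr\,d\theta$, and the H\"older step with the observation that $\int_0^{r_0} r^{1-q}\,dr < \infty$ precisely when $q<2$, i.e.\ $p>2$. You are also right to flag that $p>2$ is an implicit hypothesis and that the constant genuinely depends on the cone parameters of $S$, not just on $p$; the paper's phrase ``depending only on $p$'' is indeed imprecise as stated, and your reading of it as a shorthand for the intended uniform-family application is the correct one. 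One further remark worth making: your argument is based purely on the cone condition and nowhere uses compactness of $S$ except to guarantee a \emph{uniform} cone condition, so the same proof immediately yields the paper's Theorem A.3 for unbounded domains satisfying a cone condition (which the paper likewise states without proof, citing Adams). In that sense your single argument unifies both embedding statements in the appendix.
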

For a general volume form $\dvol_S$, we have the following consequence of Theorem \ref{thm_1p}.  
\begin{appcor}\label{1p_inside}
Let $S \subset \R^2$ be a compact Lipschitz domain, and $\dvol_S = f(s,t) ds\wedge dt$
a volume form on $S$.   Let $u \in C^{\infty}(S)$.   Then there is a constant $c = c(p)$ (in fact it is the same constant as in Theorem \ref{thm_1p}) such that
\[
\sup\limits_{S} |u(s,t)| \leq \frac{c}{(f_{min})^{1/p}} \|u\|_{W^{1,p}(S)}
\]
for all $z \in S$, where $\| \cdot \|_{W^{1,p}(S)}$ denotes the $W^{1,p}$ norm defined by the volume form $\dvol_S$, and $f_{min} = \min\limits_{ S} f(s,t)$.    
\end{appcor}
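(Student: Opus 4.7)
The proof will be a short direct comparison between the two Sobolev norms, reducing the statement immediately to Theorem \ref{thm_1p}. The plan is as follows.

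First, I would unpack the definitions of the two norms involved. The standard norm is
\[
\|u\|_{W^{1,p}_{std}}^p = \int_S \bigl(|u|^p + |\nabla u|^p\bigr)\, ds\, dt,
\]
while the norm determined by the volume form $\dvol_S = f(s,t)\, ds\wedge dt$ is
\[
\|u\|_{W^{1,p}(S)}^p = \int_S \bigl(|u|^p + |\nabla u|^p\bigr)\, f(s,t)\, ds\, dt.
\]
Since $S$ is compact and $f$ is continuous and positive, $f_{\min} := \min_S f(s,t) > 0$ is attained, and the pointwise inequality $f(s,t) \geq f_{\min}$ gives the integral comparison
\[
\|u\|_{W^{1,p}(S)}^p \;\geq\; f_{\min}\, \|u\|_{W^{1,p}_{std}}^p.
\]
Taking $p$-th roots yields $\|u\|_{W^{1,p}_{std}} \leq f_{\min}^{-1/p}\, \|u\|_{W^{1,p}(S)}$.

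Second, I would simply combine this comparison with Theorem \ref{thm_1p}: for $u \in C^\infty(S)$,
\[
\sup_{S} |u(s,t)| \;\leq\; c\, \|u\|_{W^{1,p}_{std}} \;\leq\; \frac{c}{f_{\min}^{1/p}}\, \|u\|_{W^{1,p}(S)},
\]
where $c = c(p)$ is the constant from Theorem \ref{thm_1p}. This is exactly the claimed inequality, and the constant $c$ really is the same one.

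There is no serious obstacle here; the only thing to note is that $f_{\min}$ exists and is strictly positive, which uses compactness of $S$ together with the fact that $f$ is continuous and $\dvol_S$ is a genuine volume form (so $f > 0$ everywhere). Everything else is a one-line manipulation of integrals.
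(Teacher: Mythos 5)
Your proof is correct and follows essentially the same route as the paper: bound $f \geq f_{\min}$ pointwise, compare the two $W^{1,p}$ norms, and invoke Theorem \ref{thm_1p}. In fact your version is slightly cleaner—the paper's displayed chain of inequalities appears to drop the exponent $1/p$ on $f_{\min}$ when pulling it out of the $p$-th root (writing $c/f_{\min}$ where it should be $c/f_{\min}^{1/p}$, as in the statement itself), whereas you keep track of it correctly.
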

\begin{proof} Since $f(s,t)ds\wedge dt$ is a volume form, $f (s, t) > 0$ for all $(s,t)\in S$.  Moreover, $S$ is compact so $f$ achieves a minimum $f_{min} > 0$.   Hence, using the constant $c$ of Theorem \ref{thm_1p},
\bea
\sup\limits_{(s,t)\in S} |u(s,t)| & \leq & c\|u\|_{W^{1,p}}\\
& = & c \left( \iint\limits_S( |u|^p + |du|^p) ds\wedge dt  \right)^{1/p}\\
& = & c \left( \iint\limits_S (|u|^p + |du|^p )\frac{f(s,t)}{f(s,t)} ds\wedge dt  \right)^{1/p}\\
& \leq &c \left( \iint\limits_S (|u|^p + |du|^p) \frac{f(s,t)}{f_{min}} ds\wedge dt  \right)^{1/p}\\
& = & \frac{c}{f_{min}} \left( \iint\limits_S (|u|^p + |du|^p f(s,t)) ds\wedge dt  \right)^{1/p}\\
& = &  \frac{c}{f_{min}} \|u\|_{W^{1,p}(S)}.
\eea

\end{proof}

Similar estimates hold for unbounded domains $\R^2$ whose geometry satisfies a {\em cone condition}; see \cite{adams}, Chapter 5.    

\begin{definition}
A domain $\Omega \subset \R^2$ satisfies the {\em cone condition} if there is a finite cone $C = C(r_c,\theta_c)$ such that each $x\in \Omega$ is the vertex of a finite cone $C_x$ contained in $\Omega$  and congruent to $C$. 
\end{definition}
\begin{appthm}\label{1p_cone}
Let $\Omega \subset \R^2$.  Suppose that $\Omega$ satisfies the cone condition for some finite cone $C = C(r_c, \theta_c)$, and let $p > 2$.  Then there is a constant $ c = c(r_c, \theta_c, p) >0$ such that  for every $f \in C^\infty(S) \cap W^{1,p}(\Omega)$, and every $x \in \Omega$,
\[
|f(x)| \leq c \|f\|_{W^{1,p}}.
\]
\end{appthm}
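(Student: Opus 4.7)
The plan is to reduce the unbounded-domain statement to the compact case (Theorem~\ref{thm_1p}/Corollary~\ref{1p_inside} applied to a single reference cone), exploiting the fact that the cone condition gives every point a congruent copy of the same cone sitting inside $\Omega$.

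Fix the reference finite cone $C = C(r_c,\theta_c)$ with vertex at the origin. Since $C$ is a compact Lipschitz domain in $\R^2$, Theorem~\ref{thm_1p} furnishes a constant $c_0 = c_0(r_c,\theta_c,p)$ such that
\[
\sup_{y \in C} |g(y)| \leq c_0 \|g\|_{W^{1,p}(C)}
\qquad \text{for all } g \in C^\infty(C) \cap W^{1,p}(C),
\]
where the Sobolev norm is taken with respect to the standard volume form on $\R^2$. Because $p > 2$, Morrey's inequality further guarantees that any such $g$ has a continuous representative on $\overline{C}$, so in particular $|g(0)|$ is well defined and bounded by the same right-hand side.

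Now let $f \in C^\infty(\Omega) \cap W^{1,p}(\Omega)$ and fix an arbitrary $x \in \Omega$. By the cone condition, there is a finite cone $C_x \subset \Omega$ congruent to $C$ with vertex at $x$; choose a rigid motion $T_x \colon \R^2 \to \R^2$ with $T_x(0) = x$ and $T_x(C) = C_x$. Set $g(y) := f(T_x(y))$ on $C$. Rigid motions preserve the standard Lebesgue volume form and intertwine with the Euclidean gradient isometrically, so
\[
\|g\|_{W^{1,p}(C)} = \|f\|_{W^{1,p}(C_x)} \leq \|f\|_{W^{1,p}(\Omega)}.
\]
Combining this with the reference-cone estimate, and using that $x = T_x(0)$ so $f(x) = g(0)$,
\[
|f(x)| = |g(0)| \leq c_0 \|g\|_{W^{1,p}(C)} \leq c_0 \|f\|_{W^{1,p}(\Omega)}.
\]
Since $x \in \Omega$ was arbitrary, taking $c := c_0$ completes the argument; $c$ depends only on $r_c$, $\theta_c$ and $p$, as required.

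The only nontrivial input is the compact-domain Sobolev embedding on $C$ itself, which is exactly Theorem~\ref{thm_1p}; everything beyond that is the translation/rigid-motion reduction. The main point to verify with care is the norm invariance under $T_x$ (so that the constant is independent of $x$) and the continuity of $f$ on each closed cone $\overline{C_x}$ (so that pointwise evaluation $f(x)$ is meaningful), both of which follow from standard properties of Euclidean isometries and the Morrey embedding in dimension two with $p > 2$.
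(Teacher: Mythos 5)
Your proof is correct. Note, however, that the paper itself does not supply a proof of this theorem at all: it simply states the result and refers the reader to Chapter~5 of Adams' book on Sobolev spaces. So there is no ``paper's own proof'' to compare against line by line. What you have written is the standard self-contained argument that underlies the Adams reference in this two-dimensional, $p>2$ setting: fix a single reference cone $C = C(r_c,\theta_c)$, get an $L^\infty$--$W^{1,p}$ constant $c_0$ on $C$ from the compact-domain embedding (Theorem~\ref{thm_1p}), and then for each $x \in \Omega$ transport the estimate to the congruent cone $C_x \subset \Omega$ by a rigid motion, using that Euclidean isometries preserve both Lebesgue measure and the Euclidean norm of the gradient, so that $\|f\circ T_x\|_{W^{1,p}(C)} = \|f\|_{W^{1,p}(C_x)} \le \|f\|_{W^{1,p}(\Omega)}$ and the bound on $|f(x)| = |(f\circ T_x)(0)|$ is uniform in $x$. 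This is exactly the mechanism by which the cone condition yields a constant independent of the point and of the global geometry of the unbounded domain, and it is the same idea used in the general Adams proof. One small remark: since the statement already restricts to $f \in C^\infty(\Omega) \cap W^{1,p}(\Omega)$, the appeal to Morrey's inequality for continuity of $g$ on $\overline{C}$ is not actually needed (you are evaluating a smooth function at the vertex), though it does no harm and would be the right tool if one wanted to extend the conclusion to all of $W^{1,p}(\Omega)$.
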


\begin{appthm}
Let $S = S_1 \cup \ldots \cup S_l$ be a surface defined by a union of open sets $S_i$, such that each $S_i$ is one of the following types:
\begin{enumerate}
 \item The closure of $S_i$ is diffeomorphic to a compact Lipschitz domain $\widetilde{S}_i \subset \R^2$.
 
 \item The closure of $S_i$ is diffeomorphic to a domain $\widetilde{S}_i \subset \R^2$ that satisfies the cone condition, for some cone $C_i =  (r_i, \theta_i)$, {\em and} the volume form on $S$ restricted to $S_i$ is the pull-back of the standard volume form on $R^2$.
 
 \end{enumerate}
Then there is a constant $c$, depending on $p$, the cones in the cone condition, and the volume form $\dvol_S$ restricted to the $S_i$'s of type (a), such that
 \[
 \sup\limits_S |u| \leq c \| u\|_{W^{1,p}(S)}
 \]
for all $u \in C^\infty(S) \cap W^{1,p}(S)$.
\end{appthm}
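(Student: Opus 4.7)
The plan is to reduce the estimate on $S$ to the already-established estimates on each individual piece $S_i$, and then take a maximum of the resulting constants. Since
\[
\sup_S |u| = \max_{1\le i\le l} \sup_{S_i}|u|,
\]
it suffices to produce, for each $i$, a constant $c_i$ with $\sup_{S_i}|u|\le c_i\|u\|_{W^{1,p}(S_i)}$, and then observe that $\|u\|_{W^{1,p}(S_i)}\le \|u\|_{W^{1,p}(S)}$ since $S_i\subset S$ and all $W^{1,p}$ norms are taken with respect to the restriction of the fixed volume form on $S$.

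First I would handle pieces of type (a). Using the diffeomorphism $\varphi_i\colon \overline{S_i}\to \widetilde S_i$, pull $u$ back to $\widetilde S_i\subset\R^2$; the volume form $\dvol_S\big|_{S_i}$ pushes forward to some volume form $f_i(s,t)\,ds\wedge dt$ on the compact Lipschitz domain $\widetilde S_i$, where $f_i$ is smooth and strictly positive, hence achieves a minimum $f_{i,\min}>0$. Applying Corollary \ref{1p_inside} to $u\circ\varphi_i^{-1}$ on $\widetilde S_i$ yields the desired estimate on $S_i$ with constant $c_i = c(p)/f_{i,\min}^{1/p}$, where the Jacobian factor from $\varphi_i$ is absorbed because $\varphi_i$ is a fixed diffeomorphism of compact domains and therefore only changes the constant by a bounded factor.

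Next I would handle pieces of type (b). Here, by hypothesis, the diffeomorphism $\varphi_i\colon S_i\to \widetilde S_i$ can be chosen so that $\varphi_i^*(ds\wedge dt) = \dvol_S\big|_{S_i}$, i.e.\ it intertwines the ambient volume form with the standard one on $\widetilde S_i$. Since $\widetilde S_i$ satisfies a cone condition with cone $C_i=(r_i,\theta_i)$, Theorem \ref{1p_cone} applies directly to $u\circ\varphi_i^{-1}$ to produce a constant $c_i = c(r_i,\theta_i,p)$ with $\sup_{\widetilde S_i}|u\circ\varphi_i^{-1}|\le c_i\|u\circ\varphi_i^{-1}\|_{W^{1,p}(\widetilde S_i)}$, and the matching of volume forms ensures $\|u\circ\varphi_i^{-1}\|_{W^{1,p}(\widetilde S_i)} = \|u\|_{W^{1,p}(S_i)}$ without any Jacobian correction.

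Finally, taking $c:=\max_i c_i$ (a finite maximum, since the cover is finite) and using the pointwise bound $\|u\|_{W^{1,p}(S_i)}\le \|u\|_{W^{1,p}(S)}$ yields
\[
\sup_S|u| = \max_i \sup_{S_i}|u| \le \max_i c_i\|u\|_{W^{1,p}(S_i)} \le c\|u\|_{W^{1,p}(S)},
\]
as required. The only potential obstacle is a subtle one: for type (b) pieces we genuinely need the diffeomorphism to the model domain $\widetilde S_i\subset\R^2$ to be volume-preserving, since the cone-condition estimate of Theorem \ref{1p_cone} is stated only for the standard Lebesgue measure, and a conformal rescaling of the volume form in the non-compact setting could spoil the constant. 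This is precisely why the hypothesis in case (b) requires $\dvol_S\big|_{S_i}$ to be the pullback of $ds\wedge dt$; the proof simply records that this hypothesis suffices.
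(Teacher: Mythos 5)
Your proof is correct, and it takes a genuinely different (and arguably cleaner) route than the paper's. The paper introduces a smooth partition of unity $\{\rho_i\}$ subordinate to the cover, writes $u = \sum_i \rho_i u$, applies the local estimates to each $\rho_i u$, and sums, arriving at $c = \sum_i c_i$. You instead note directly that $\sup_S|u| = \max_i \sup_{S_i}|u|$ and that the restriction of $u$ to $S_i$ lies in $C^\infty(S_i)\cap W^{1,p}(S_i)$ with $\|u\|_{W^{1,p}(S_i)} \le \|u\|_{W^{1,p}(S)}$, so the local estimates apply to $u$ itself without any cut-offs. This yields a slightly sharper constant ($\max$ in place of $\sum$), and it sidesteps a small imprecision in the paper's proof: the step $\|\rho_i u\|_{W^{1,p}(S)} \le \|u\|_{W^{1,p}(S)}$ is not literally true, since $\nabla(\rho_i u)$ acquires a term $(\nabla\rho_i)u$, so one actually only has $\|\rho_i u\|_{W^{1,p}} \le (1+\|\nabla\rho_i\|_\infty)\|u\|_{W^{1,p}}$, a factor the paper silently absorbs. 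You also make explicit the transfer via the diffeomorphisms $\varphi_i$ and the Jacobian bookkeeping, which the paper leaves implicit. One point worth flagging for both proofs: Theorem B.3 controls $u\circ\varphi_i^{-1}$ in terms of the standard Euclidean $W^{1,p}$ norm on $\widetilde S_i$, so on type-(b) pieces the diffeomorphism must respect not only the volume form (which the hypothesis guarantees) but also the gradient norm. In the paper's setting this holds because the thin parts carry the standard flat metric so $\varphi_i$ is in fact an isometry, but neither proof says this out loud; your closing remark about the hypothesis in case (b) comes closest.
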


\begin{proof}
Let $\{\rho_i\}$ be a partition of unity subordinate to the cover $S_1 \cup \ldots \cup S_l$ of $S$.  Then
$u  = \sum\limits_{i=1}^l \rho_i u$, and each $\rho_i u \in C^\infty(S_i) \cap W^{1,p}(S_i)$.  By Corollary \ref{1p_inside} and Theorem \ref{1p_cone}, there is a constant $c_i$ which depends on $p$ and, in the case of the subsets of type (a), the volume form $\dvol_S$ restricted to those components, such that 
\[
\sup\limits_S |\rho_i u| = \sup\limits_{S_i} |\rho_i u| \leq c_i \| \rho_i u\|_{W^{1,p}(S_i)} =  c_i \| \rho_i u\|_{W^{1,p}(S)} \leq c_i \|u\|_{W^{1,p}(S)}.
\]
Hence,
$\sup\limits_S |u| \leq  \sup\limits_S \sum\limits_{i=1}^l |\rho_i u|
 \leq  \sum\limits_{i=1}^l \sup\limits_S|\rho_i u|
 \leq  \left( \sum\limits_{i=1}^l c_i\right) \| u\|_{W^{1,p}(S)}
 =  c\| u\|_{W^{1,p}(S)}.$

\end{proof}

\section{Exponential decay}

Results on exponential decay for Floer trajectories with small energy follow from two things: a convexity estimate for the energy density of a trajectories in a sufficiently small neighborhood of a generalized intersection point, and a mean-value inequality that converts $L^2$ energy density estimates to pointwise estimates.   The results we collect here are based on the convexity estimates in \cite{robbin-salamon}, and mean-value inequality in \cite{wehrheim}.  

Without loss of generality, we only need to consider solutions to
\begin{eqnarray}
\partial_s u + J_t \partial_t u = 0, \label{floer_traj}\\
u(s,0) \subset L_0, u(s,1) \subset L_1, \nonumber
\end{eqnarray}
where $L_0$ and $L_1$ are transversely intersecting Lagrangians.  This is because a solution of the inhomogeneous equation,
\begin{eqnarray}
\partial_s u + J_t (\partial_t u - X_{H_t}(u)) = 0\\
u(s,0) \subset L_0, u(s,1) \subset L_1,\nonumber
\end{eqnarray}
can be translated into a solution of type (\ref{floer_traj}) by setting $\tilde{u}(s,t) = \phi_{1-t} (u(s,t))$, where $\phi_t$ is the time $t$ flow of the Hamiltonian vector field $X_{H_t}$, and $\tilde{J}_t := (\phi_{1-t}^{-1})^* J_t$, which satisfies
\begin{eqnarray*}
\partial_s \tilde{u} + \tilde{J}_t \partial_t \tilde{u} = 0,\\
\tilde{u}(s,0) \subset \phi_1(L_0), \tilde{u}(s,1) \subset \phi_0(L_1) = L_1,
\end{eqnarray*}
and by assumption the Hamiltonian perturbation is such that $\phi_1(L_0)$ intersects $L_1$ transversely. 

Consider a solution $u: I \times [0,1] \to M$ of (\ref{floer_traj}) where $I = [-T, T],  [T,\infty)$ or $(-\infty, -T]$  for some $T >0$.  We assume that $I$ is fixed.  The {\em energy density} of $u$ on this strip is the function 
\begin{equation}
e(s,t) := \omega( \partial_s u(s,t), J_t \partial_s u(s,t) ) = | \partial_s u(s,t)|_{J_t}^2 =  | \partial_t u(s,t)|_{J_t}^2
\end{equation}
Let $\triangle = \partial^2_s + \partial_t^2$ be the Laplace operator on $I \times [0,1]$. 
\begin{applem}\label{mean_est}
There are constants $A_1, a, B_1, b , C_1, c \geq 0$, which depend on $M, \omega, J_t$ and the Lagrangians $L_0, L_1$, such that if $u: I\times [0,1] \to M$ satisfies (\ref{floer_traj}), then its energy density, $e$, satisfies 
\begin{eqnarray}
\triangle e & \geq  & -A_1 e - a e^2 \label{laplacian}\\
\frac{\partial e}{\partial t}\Big\lvert_{t=0} & \geq & - B_1 e - b e^{3/2}\label{outernorm1}\\
\frac{\partial e}{\partial t}\Big\lvert_{t=1} & \leq & C_1 e + c e^{3/2}.\label{outernorm2}
\end{eqnarray}
 
\end{applem}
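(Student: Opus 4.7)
My plan is to derive the three inequalities by direct computation from the Cauchy--Riemann equation $\partial_s u + J_t \partial_t u = 0$, exploiting $\omega$-compatibility of $J_t$ together with the Lagrangian boundary conditions. Throughout, I would use the Levi-Civita connection $\nabla$ of the $t$-dependent metric $g_t(\cdot,\cdot) = \omega(\cdot, J_t\cdot)$, and write $X := \partial_s u$, $Y := \partial_t u$, so that $X = -J_t Y$ and $e = |X|^2_{g_t} = |Y|^2_{g_t}$. Because we are working in a small neighborhood of a transverse generalized intersection (the assertion is only needed locally there, and elsewhere follows from monotonicity), all the relevant geometric data -- Riemann curvature of $(M,g_t)$, derivatives of $J_t$ in both $t$ and in $M$-directions, and the second fundamental forms of $L_0, L_1$ -- have uniform $C^0$-bounds. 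This is what will produce universal constants $A_1,a,B_1,b,C_1,c$.

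For the Laplace inequality~\eqref{laplacian}, I would expand
\[
\tfrac{1}{2}\triangle e = |\nabla_s X|^2 + |\nabla_t X|^2 + g(X,\nabla_s^2 X + \nabla_t^2 X),
\]
then rewrite $\nabla_s^2 X + \nabla_t^2 X$ via the equation $X = -J_t Y$ and the identity $\nabla_s \partial_t u = \nabla_t \partial_s u$ (torsion-freeness). This produces a Bochner-type expression in which the non-negative gradient terms $|\nabla_s X|^2 + |\nabla_t X|^2$ are dropped, and the remaining terms collect into (i) curvature terms $g(R(X,Y)X,X)$ bounded by $c_1 \|R\|_\infty |X|^2|Y|^2 = c_1 \|R\|_\infty e^2$, and (ii) first-order terms arising from $\partial_t J_t$ and $\nabla J_t$ acting on $X,Y$, which are bounded by $c_2 e$. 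Uniform bounds on curvature and on $\nabla J_t$, $\partial_t J_t$ then give~\eqref{laplacian}.

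For the boundary estimate~\eqref{outernorm1} at $t=0$, the Lagrangian condition gives $X(s,0) \in T_{u(s,0)}L_0$, and the equation at $t=0$ forces $Y(s,0) = J_0 X(s,0) \in J_0 T L_0 = (TL_0)^{\perp_{g_0}}$. Computing
\[
\tfrac{1}{2}\partial_t e(s,0) = g_0(\nabla_t X, X)\big|_{t=0} = g_0(\nabla_s Y, X)\big|_{t=0} + \mathrm{(curvature/}\partial_t J\mathrm{\ terms)},
\]
I view $Y$ along the curve $s \mapsto u(s,0)\in L_0$ as a normal vector field. The Weingarten identity $(\nabla_s Y)^\top = -A_Y X$ yields $g_0(\nabla_s Y, X) = -\mathrm{II}_{L_0}(X,X,Y)$, which compactness of $L_0$ bounds in absolute value by $c|X|^2|Y| = c\,e^{3/2}$. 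This supplies the $be^{3/2}$ contribution. The commutator $\nabla_s\nabla_t - \nabla_t\nabla_s$ contributes a curvature piece of order $|X|^2|Y| \cdot |Y|/|Y| \sim e$, and the $\partial_t J_t$ terms contribute a piece of order $e$; together they yield the $-B_1 e$ summand.

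Inequality~\eqref{outernorm2} at $t=1$ is proved by the same argument applied to $L_1$; the only change is that $\partial_t$ is now the outward normal derivative with the \emph{opposite} sign, flipping the inequality from a lower to an upper bound. The main obstacle I anticipate is bookkeeping: tracking the commutator $[\nabla_s,\nabla_t]$, the second covariant derivatives of $J_t$ in both $M$- and $t$-directions, and the boundary-value identification $Y(s,0) = J_0 X(s,0)$ carefully enough to isolate which terms scale as $e^2$ (interior), $e^{3/2}$ (boundary via second fundamental form), or $e$ (commutators and $\partial_t J_t$). Once these are sorted, compactness of $L_0, L_1$ and of the relevant neighborhood in $M$ delivers uniform constants.
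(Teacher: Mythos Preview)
Your interior estimate~\eqref{laplacian} follows the same Bochner-type strategy as the paper: expand $\triangle e$, isolate the nonnegative terms $|\nabla_s X|^2 + |\nabla_t X|^2$, and bound the remainder. One point you gloss over: the ``remaining terms'' are not only curvature and zeroth-order $J$-terms, but also cross terms of the form $|\nabla_s X|\,|X|^k$ arising from $\nabla_s\bigl((\nabla J)X\bigr)$ and the like. These must be absorbed into the nonnegative gradient terms via Young's inequality \emph{before} the latter can be dropped; the paper carries this out explicitly.

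For the boundary inequalities your route genuinely differs from the paper's. The paper fixes, for each $i\in\{0,1\}$, an auxiliary Riemannian metric for which $J_i$ is an isometry, $J_i\, T L_i = (TL_i)^\perp$, and $L_i$ is \emph{totally geodesic} (Frauenfelder's lemma). With the Levi-Civita connection of this metric, the leading term $\omega(\nabla_s\partial_s u,\partial_s u)$ vanishes identically at $t=i$, since both arguments lie in $TL_i$ and $L_i$ is Lagrangian; the remaining terms are then bounded directly by $De + de^{3/2}$. You instead keep the metric $g_t$ and control the analogous term via the second fundamental form of $L_i$, obtaining $|g_0(\nabla_s Y,X)|\le \|\mathrm{II}_{L_0}\|_\infty\,e^{3/2}$ from the Weingarten identity. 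Both arguments are valid and produce the same form of estimate; the paper's device trades a geometric bound for a clever choice of connection, while yours is more direct but requires tracking one more tensor.

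Two small corrections to your sketch. First, there is no commutator or curvature contribution in passing from $g_0(\nabla_t X,X)$ to $g_0(\nabla_s Y,X)$: torsion-freeness gives $\nabla_t X = \nabla_s Y$ exactly, and curvature only appears at the level of second covariant derivatives. The $O(e)$ contribution at the boundary comes solely from the $t$-dependence of the metric (the $\dot J_t$-terms) and, in the paper's setup, from $\nabla\omega$ for the auxiliary connection. Second, the lemma is not ``only needed locally near an intersection'': it is a global statement whose constants come from compactness of $M$ and of the Lagrangians, and it is applied to arbitrary low-energy strips before one knows they sit near an intersection point.
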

\begin{proof}

First we prove \eqref{laplacian}. We abbreviate $\xi = \partial_s u, \eta = \partial_t u,$; so $\xi + J_t(u) \eta = 0$.  
Consider the ($t$-dependent) Riemannian metric $\langle \cdot, \cdot \rangle_{J_t}$ on $M$, and let $\nabla$ be its  Levi-Civita connection.  Then
\bea
\partial_s \langle \xi, \xi \rangle_{J_t} & = & 2 \langle \nabla_s \xi, \xi\rangle_{J_t} \\
\partial_t \langle \xi, \xi \rangle_{J_t} & = & 2 \langle \nabla_t \xi, \xi \rangle_{J_t} + \omega(\xi, \dot{J_t} \xi)
=   2 \langle \nabla_t \xi, \xi \rangle_{J_t} + \langle J_t \xi, \dot{J_t} \xi\rangle_{J_t} 
\eea
from which we get that
\bea
\triangle e(s,t) & = &  2 \partial_s  \langle \nabla_s \xi, \xi\rangle_{J_t} + 2  \partial_t \langle \nabla_t \xi, \xi \rangle_{J_t} + \partial_t \langle  \dot{J_t} \xi, J_t \xi\rangle_{J_t} \\
& = & 2 \langle \nabla_s \nabla_s \xi, \xi\rangle_{J_t} + 2 \langle  \nabla_s \xi, \nabla_s \xi\rangle_{J_t} \\
& & + 2 \langle \nabla_t \nabla_t \xi, \xi \rangle_{J_t} + 2 \langle \nabla_t \xi,  \nabla_t \xi \rangle_{J_t} + 2 \omega( \nabla_t \xi, \dot{J_t} \xi)\\
& & + \langle \nabla_t ( \dot{J_t} \xi ),  J_t \xi\rangle_{J_t} + \langle \dot{J_t} \xi, \nabla_t ( J_t \xi) \rangle_{J_t} + \omega( \dot{J_t} \xi, \dot{J_t} J_t \xi)\\
& = &  2 \langle \nabla_s \nabla_s \xi, \xi\rangle_{J_t} + 2 \langle  \nabla_s \xi, \nabla_s \xi\rangle_{J_t} \\
& &  + 2 \langle \nabla_t \nabla_t \xi, \xi \rangle_{J_t} + 2 \langle \nabla_t \xi,  \nabla_t \xi \rangle_{J_t} + 2 \langle J_t \nabla_t \xi, \dot{J_t} \xi\rangle_{J_t}\\
& & + \langle \nabla_t ( \dot{J_t} \xi ),  J_t \xi\rangle_{J_t} + \langle \dot{J_t} \xi, \nabla_t ( J_t \xi) \rangle_{J_t} - \langle \dot{J_t} \xi, \dot{J_t} \xi\rangle_{J_t}\\
& = & 2 \langle (\nabla_s^2 + \nabla_t^2) \xi, \xi\rangle_{J_t} + 2  \langle  \nabla_s \xi, \nabla_s \xi\rangle_{J_t} + 2 \langle \nabla_t \xi,  \nabla_t \xi \rangle_{J_t} \\
& & + \langle (\nabla_t \dot{J_t} )\xi + \dot{J_t}\nabla_t \xi ,  J_t \xi\rangle_{J_t} +  \langle \dot{J_t} \xi, (\nabla_t J_t) \xi + J_t \nabla_t \xi \rangle_{J_t} - \langle \dot{J_t} \xi, \dot{J_t} \xi\rangle_{J_t}
\eea
Since $\nabla_t J_t = \nabla_\eta J + \dot{J}$ and $\nabla_t \dot{J_t} = \nabla_\eta \dot{J_t} + \ddot{J_t}$, there is a constant $c_2>0$ that depends on $J_t$ and $\omega$, and exists because of the compactness of $M$, such that the operator norms
\bea
\|\ddot{J_t}\|, \|\dot{J_t}\| & \leq & c_2\\
\|\nabla_tJ_t\| & \leq & c_2( |\eta|_{J_t} + 1) = c_2 (|\xi|_{J_t} + 1)\\
\|\nabla_t \dot{J_t}\| & \leq & c_2( |\eta|_{J_t} + 1) = c_2( |\xi|_{J_t} + 1) 
\eea
where the equalities follow from the identity $\eta = J_t \xi$, with $J_t$ an isometry for the metric $\langle \cdot, \cdot\rangle_{J_t}$.
We temporarily drop the subscript $J_t$ from our notation for the metric for the next few calculations, since there is no danger of confusion. So we can estimate
\bea
|\langle (\nabla_t \dot{J_t} )\xi, J_t \xi\rangle| & \geq & - c_2 (|\xi| + 1) |\xi|^2 \\
|\langle \dot{J_t}\nabla_t \xi ,  J_t \xi\rangle| & \geq & - c_2 |\nabla_t \xi| |\xi|\\
|\langle \dot{J_t} \xi, (\nabla_t J_t) \xi\rangle| & \geq & - c_2^2 (|\xi|+1)|\xi|^2\\
|\langle \dot{J_t} \xi, J_t \nabla_t \xi \rangle| & \geq & -c_2 |\xi| |\nabla_t \xi|\\
|\langle \dot{J_t} \xi, \dot{J_t} \xi\rangle| & \geq & -c_2^2|\xi|^2
\eea
\bea
\implies \triangle e(s,t) & \geq & -2 |(\nabla_s^2 + \nabla_t^2) \xi| |\xi| + 2  | \nabla_s \xi|^2 + 2 |\nabla_t \xi|^2 \\
& & - (c_2 + c_2^2)|\xi|^3 - (c_2 +2c_2^2)|\xi|^2 - 2c_2 |\nabla_t\xi| |\xi|.
\eea
It remains to estimate $|(\nabla_s^2 + \nabla_t^2)\xi|$. Since we are using a Levi-Civita connection, $\nabla_t \xi = \nabla_s \eta$. Write $R(\cdot, \cdot)\cdot$ for the curvature tensor associated to $\nabla$. Then   
\bea
\nabla_s \nabla_s \xi + \nabla_t \nabla_t \xi & = & \nabla_s \nabla_s \xi + \nabla_t \nabla_s \eta\\
& = &   \nabla_s \nabla_s \xi + (\nabla_t \nabla_s \eta - \nabla_s \nabla_t \eta) + \nabla_s \nabla_t \eta\\
& = & \nabla_s (\nabla_s \xi + \nabla_t \eta) + R(\eta, \xi)\eta.
\eea
Analyzing $\nabla_s \xi + \nabla_t \eta$:
\bea
\nabla_s \xi + \nabla_t \eta & = & \nabla_s (-J_t\eta ) + \nabla_t (J_t \xi )\\
& = & -(\nabla_s J_t)\eta - J_t \nabla_s \eta   + (\nabla_t J_t)\xi + J_t \nabla_t \xi \\
&= &-(\nabla_s J_t)\eta  + (\nabla_t J_t)\xi \\
\implies \nabla_s (\nabla_s \xi + \nabla_t \eta) & = & -(\nabla_s^2 J_t) \eta - (\nabla_s J_t)\nabla_s \eta  \\
& & + (\nabla_s\nabla_t J_t)\xi + (\nabla_t J_t)\nabla_s \xi.
\eea
Putting things together gives
\bea
\nabla_s^2 \xi + \nabla_t^2\xi & = & -(\nabla_s^2 J_t) \eta - (\nabla_s J_t)\nabla_s \eta  \\
& & + (\nabla_s\nabla_t J_t)\xi + (\nabla_t J_t)\nabla_s \xi + R(\eta,\xi)(\eta) .
\eea
Since $\nabla_s J_t = \nabla_\xi J_t$ and $\nabla_t J_t = \nabla_\eta J_t + \dot{J_t}$, there is a constant $c_3$ depending only on $\omega, J_t$, and the fact that $M$ is compact, such that we can estimate operator norms
\bea
\|\nabla_s J_t\| & \leq & c_3 |\xi| \\
\|\nabla_s (\nabla_s J_t)\| & \leq & c_3(|\nabla_s \xi| + |\xi|^2)\\
\|\nabla_s (\nabla_t J_t)\| & \leq & c_3(|\nabla_s \eta| + |\xi||\eta| + |\xi|)\\
& = & c_3(|\nabla_t \xi| + |\xi|^2 + |\xi|).
\eea
Also by the compactness of $M$ there is a uniform constant $c_4 > 0$ such that 
$|R(X,Y)Z| \leq c_4 |X||Y||Z|$ for all $X,Y, Z \in T_p M$, and all $p \in M$.  Hence, 
\bea
|\nabla_s^2 \xi + \nabla_t^2\xi| & \leq & c_3 (|\nabla_s \xi| + |\xi|^2) |\eta| + c_3 |\xi| |\nabla_s \eta|  + c_3 (|\nabla_t \xi| + |\xi|^2 + |\xi|)|\xi| \\
& & + c_2 (|\xi| +1) |\nabla_s \xi| + c_4|\eta|^2|\xi|\\
& = & c_3 |\nabla_s \xi| |\xi| + c_3 |\xi|^3 + c_3 |\xi||\nabla_t \xi| + c_3 |\nabla_t \xi||\xi| + c_3 |\xi|^3 + c_3|\xi|^2\\
& & + c_2 |\xi||\nabla_s \xi| + c_2|\nabla_s \xi| + c_4|\xi|^3\\
&  = & c_2 |\nabla_s \xi| + ( c_3 + c_2 ) |\nabla_s \xi | |\xi| + 2c_3 |\nabla_t \xi| |\xi| + (2c_3 + c_4 ) |\xi|^3 + c_3 |\xi|^2
\eea
leading to the estimate
\bea
\triangle e & \geq & -2 c_2 |\nabla_s \xi| |\xi| - 2 ( c_3 + c_2 ) |\nabla_s \xi | |\xi|^2 - 4 c_3 |\nabla_t \xi| |\xi|^2 - 2 (2c_3 + c_4 ) |\xi|^4 - 2 c_3 |\xi|^2 \\
& & + 2 |\nabla_s \xi|^2 + 2 |\nabla_t \xi|^2 - (c_2 + c_2^2)|\xi|^3 - (c_2 + 2c_2^2)|\xi|^2 - 2c_2 |\nabla_t \xi| |\xi|\\
& \geq & 2 |\nabla_s \xi|^2 + 2 |\nabla_t \xi|^2 - c |\nabla_s \xi| (|\xi|^2 + |\xi|) - c |\nabla_t \xi| (|\xi|^2 + |\xi|) - c (|\xi|^4 + |\xi|^3 + |\xi|^2
\eea
where $c > 0$ is chosen large enough to absorb the other constants. We apply the estimate $2xy \leq x^2 + y^2$ to the mixed terms,
\bea
|\nabla_s \xi| (|\xi|^2 + |\xi|) & \leq & \frac{\epsilon^2}{2}  |\nabla_s \xi|^2 + \frac{1}{2 \epsilon^2} (|\xi|^2 + |\xi|)^2\\
|\nabla_t \xi| (|\xi|^2 + |\xi|) & \leq & \frac{\epsilon^2}{2}  |\nabla_t \xi|^2 + \frac{1}{2 \epsilon^2} (|\xi|^2 + |\xi|)^2.
\eea
Choosing $\epsilon > 0$ sufficiently small that $B\epsilon^2/2 < 2 c_1$ leads to an inequality
\bea
\triangle e(s,t)  \geq  - \widetilde{A} (|\xi|^4 + |\xi|^3 + |\xi|^2)
\eea 
where $\widetilde{A}$ depends only on $J_t, M, \omega$.  The term $|\xi|^3$ can be absorbed into the other terms, in the sense that we can find constants $\tilde{A}_1 \geq 0, \tilde{a}\geq 0$ such that 
\bea
\widetilde{A} (|\xi|^2 + |\xi|^3 + |\xi|^4)  \leq  \widetilde{A}_1 |\xi|^2 + \widetilde{a}|\xi|^4
\leq  A_1 |\xi|_{J_t}^2 + a |\xi|_{J_t}^4
 =  A_1 e + a e^2.
\eea 
Thus $\triangle e \geq - A_1 e - a e^2$, proving (\ref{laplacian}).

Now we prove (\ref{outernorm1}), (\ref{outernorm2}).   In each case we fix a reference Riemannian metric $\langle \cdot, \cdot \rangle$ on $M$ that renders $L_0$ (respectively $L_1$ ) totally geodesic, and write $\nabla$ for its associated Levi-Civita connection.  Since the manifold $M$ is compact, there are uniform constants $c_1, c_2 > 0$ such that for all $p \in M, X \in T_p M$, and for all $t \in [0,1]$,
 \[
 c_1 |X|_{J_t} \leq |X| \leq c_2 |X|_{J_t}.
 \] 
 Then in either case we have
\bea
\frac{\partial e}{\partial t} & = & \partial_t \omega( \partial_s u, J_t(u) \partial_s u)\\
& = & 2 \omega( \nabla_t \partial_s u, J_t(u) \partial_s u) + \omega(\partial_s u, (\nabla_t J_t) \partial_s u) + (\nabla_{\partial_t u} \omega)( \partial_s u, J_t(u) \partial_s u).
\eea 
For the first term,
\bea
\nabla_t \partial_s u & = & \nabla_s \partial_t u\\
& = & \nabla_s (J_t (u) \partial_s u)\\
& = & (\nabla_{\partial_s u} J_t) \partial_s u + J_t(u) \nabla_s \partial_s u\\
\implies |\omega( \nabla_t \partial_s u, J_t(u) \partial_s u)| & \leq & |\omega( \nabla_{\partial_s u} J_t) \partial_s u, J_t (u) \partial_s u) | + |\omega( J_t(u) \nabla_s \partial_s u, J_t(u) \partial_s u)|\\
& \leq & c_{3} |\partial_s u|^3 + |\omega( \nabla_s \partial_s u, \partial_s u)|.
\eea
Evaluating at $t = i$ where $i$ is $ 0$ or $1$, the boundary conditions on $u$ imply that $\partial_s u (s, i) \in T_{u(s,i)} L_i$,  and since $L_i$ is totally geodesic  $\nabla_s \partial_s u(s,i) \in T_{u(s,i)} L_i$. Thus, at $t = i$,  $\omega(\nabla_s \partial_s u, \partial_s u) = 0$.  
Since $\nabla_t J_t = \nabla_{\partial_t u} J_t + \dot{J}_t$, there is a constant $c_{4} \geq 0$ such that $\| \nabla_t J_t\| \leq c_{4} (|\partial_t u| + 1) $.  With this, the second term can be estimated by
\bea
|\omega(\partial_s u, (\nabla_t J_t) \partial_s u)| & \leq & c_{5} |\partial_s u|^2 (|\partial_t u| +1)\\
& \leq & c_{6} (|\partial_s u|^3 + |\partial_s u|^2)\\
& \leq & c_{7}(|\partial_s u|_{J_t}^3 + |\partial_s u|^2_{J_t})\\
& = & c_{7}(e^{3/2} + e).
\eea
To estimate the last term, by compactness there is a uniform constant $c_8>0$ such that
\bea
|(\nabla_{\partial_t u} \omega)( \partial_s u, J_t(u) \partial_s u)| & \leq & c_8 |\partial_t u| |\partial_s u|^2\\
& \leq & c_2c_8|\partial_t u|_{J_i} |\partial_s u|_{J_i} ^2\\
& = & c_2c_8|\partial_s u|_{J_i} ^3 = c_2c_8 e^{3/2}.
\eea
Combining all estimates gives 
$
\left|\frac{\partial e} {\partial t} \right|_{t=i}  \leq  D e + d e^{3/2},
$
proving (\ref{outernorm1}), (\ref{outernorm2}).

\end{proof}

We use the mean-value inequalities of \cite{wehrheim} for $n = 2$. \footnote{ In \cite{wehrheim} the Laplace operator is $- \partial_s^2 - \partial_t^2$, , so at first sight some inequalities appear reversed from those in \cite{wehrheim}.}

\begin{appthm}[Theorem 1.1, \cite{wehrheim}]
There exist constants $C, \mu > 0$ such that the following holds.  Let $B_r(0) \subset \R^2$ be the ball of radius $0 < r \leq 1$. Suppose that the non-negative function $e \in C^2(B_r(0), [0,\infty))$ satisfies for some $A_0, A_1, a \geq 0$,

\[
\triangle e \geq - A_0 - A_1 e - a e^2 \ \ \  \mbox{and} \ \ \ \int_{B_r(0)} e \leq \mu a^{-1}.
\]
Then
\[
e(0) \leq C A_0 r^2 + C(A_1 + r^{-2}) \int_{B_r(0)} e.
\]
\end{appthm}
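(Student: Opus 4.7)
The plan is to follow the classical Hummel--Heinz--Wehrheim style proof: reduce by scaling to $r=1$, localize via a weighted-supremum trick to a small ball where $e$ admits a pointwise upper bound, apply the mean-value inequality for functions with bounded Laplacian, and absorb the nonlinear term using the smallness hypothesis $\int_{B_r} e \leq \mu/a$. First, set $\tilde e(z) := r^2 e(rz)$; then $\triangle \tilde e \geq -\tilde A_0 - \tilde A_1 \tilde e - \tilde a \tilde e^2$ on $B_1(0)$ with $\tilde A_0 = r^4 A_0$, $\tilde A_1 = r^2 A_1$, $\tilde a = a$, and $\int_{B_1} \tilde e = \int_{B_r} e \leq \mu/\tilde a$. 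The desired conclusion becomes $\tilde e(0) \leq C \tilde A_0 + C(\tilde A_1 + 1) \int_{B_1} \tilde e$, which after undoing the scaling produces exactly the bound $e(0) \leq C A_0 r^2 + C(A_1 + r^{-2}) \int_{B_r} e$.

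Next, define $f : [0,1] \to [0,\infty)$ by $f(\rho) := (1-\rho)^2 \sup_{\overline{B_\rho(0)}} \tilde e$, and let $\rho^* \in [0,1)$ achieve the maximum; pick $z^* \in \overline{B_{\rho^*}}$ with $\tilde e(z^*) = \sup_{\overline{B_{\rho^*}}} \tilde e =: c^*$, and set $\delta := (1-\rho^*)/2$. Since $B_\delta(z^*) \subset B_{\rho^*+\delta}(0)$, the maximality of $f$ at $\rho^*$ gives $(1-\rho^*-\delta)^2 \sup_{B_{\rho^*+\delta}} \tilde e \leq (1-\rho^*)^2 c^*$, i.e.\ $\tilde e \leq 4c^*$ on $B_\delta(z^*)$. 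Substituting into the differential inequality yields $\triangle \tilde e \geq -K$ on $B_\delta(z^*)$ with $K := \tilde A_0 + 4\tilde A_1 c^* + 16\tilde a (c^*)^2$, so $\tilde e + K|z-z^*|^2/4$ is subharmonic, and the sub-mean-value property gives
\[
c^* \;\leq\; \frac{1}{\pi\delta^2} \int_{B_\delta(z^*)} \tilde e \;+\; \frac{K\delta^2}{8}.
\]

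Setting $M := c^* \delta^2$ and multiplying by $\delta^2$, this rearranges to $M \leq \frac{1}{\pi} \int_{B_1}\tilde e + \frac{\tilde A_0}{8}\delta^4 + \frac{\tilde A_1 \delta^2}{2} M + 2\tilde a M^2$. The main obstacle is closing up this inequality for $M$: the quadratic term $2\tilde a M^2$ and the coefficient $\tilde A_1\delta^2/2$ on $M$ are not automatically small. The quadratic term is absorbed by the smallness hypothesis via a continuity bootstrap: for small $\rho$ the inequality trivially holds with $M$ small, and by choosing $\mu$ small enough the inequality $2\tilde a M \leq 1/4$ persists throughout, giving $2 \tilde a M^2 \leq M/4$. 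The $\tilde A_1\delta^2$ term is handled by a case split on whether $\tilde A_1 \delta^2 \leq 1/4$ (absorb directly) or $\tilde A_1\delta^2 > 1/4$ (re-apply the sub-mean-value inequality on a smaller concentric ball of radius $\delta' \sim \tilde A_1^{-1/2}$, which produces a factor of $\tilde A_1$ in front of $\int \tilde e$ in the final bound). Combining both cases gives $M \leq C\tilde A_0 + C(\tilde A_1 + 1)\int_{B_1}\tilde e$, and since $\tilde e(0) = f(0) \leq f(\rho^*) = 4M$ by maximality of $f$, the conclusion follows. The delicate point requiring the smallness of $\mu$ is precisely closing the bootstrap on the quadratic term; everything else is a standard elliptic estimate.
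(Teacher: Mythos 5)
The paper does not prove this statement; it simply cites Theorem~1.1 of \cite{wehrheim}, so there is no ``paper's own proof'' to compare against. Your proposal reconstructs the Heinz--Wehrheim argument, and the skeleton --- rescale to $r=1$, use the weighted supremum $f(\rho)=(1-\rho)^2\sup_{\overline{B_\rho}}\tilde e$, double at the maximizer, reduce to a bounded-Laplacian sub-mean-value estimate on $B_\delta(z^*)$ --- is exactly the right structure, and the algebra up to the inequality $M\le \tfrac{1}{\pi}\int\tilde e + \tfrac{\tilde A_0\delta^4}{8}+\tfrac{\tilde A_1\delta^2}{2}M+2\tilde aM^2$ is correct.

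The gap is in the ``continuity bootstrap'' used to close $2\tilde aM\le 1/4$. For that bootstrap to persist, you need a forbidden band: if $2\tilde aM(\rho_0)=1/4$ then the quadratic inequality must force $M(\rho_0)$ strictly below $1/(8\tilde a)$, a contradiction. Writing this out gives (after absorbing the $\tilde A_1\delta^2/2$ term in Case~1) $\tfrac{5}{8}M(\rho_0)\le \tfrac{1}{\pi}\int\tilde e + \tfrac{\tilde A_0\delta^4}{8}$, i.e.\ $\tfrac{1}{8\tilde a}\le \tfrac{8}{5\pi}\int\tilde e+\tfrac{\tilde A_0\delta^4}{5}$. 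Multiplying through by $\tilde a$, the first term on the right is controlled by $\mu$, but the second is $\tilde a\tilde A_0\delta^4/5$, and nothing in the hypotheses bounds $aA_0$. So when $aA_0r^4$ is large the forbidden band disappears and the continuity argument does not close --- this is precisely the case where your chain of inequalities, as written, proves nothing.

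The repair, which also makes the continuity argument unnecessary, is to shrink the mean-value ball \emph{adaptively} in all three coefficients rather than only in $\tilde A_1$. Choose $\sigma:=\min\bigl\{\delta,\ (2\tilde A_1)^{-1/2},\ (8\tilde ac^*)^{-1/2}\bigr\}$; then $\tfrac{K\sigma^2}{8}\le \tfrac{\tilde A_0\sigma^2}{8}+\tfrac{c^*}{2}$ directly, and since $\sigma\le\delta\le 1/2$ the $\tilde A_0$ contribution is bounded by $\tilde A_0/32$ with no smallness needed. The sub-mean-value inequality on $B_\sigma(z^*)$ then gives $\tfrac{c^*}{2}\le \tfrac{1}{\pi\sigma^2}\int_{B_1}\tilde e+\tfrac{\tilde A_0}{32}$, and using $\sigma^{-2}\le\delta^{-2}+2\tilde A_1+8\tilde ac^*$ together with $\tilde a\int\tilde e\le\mu$ (so that the $8\tilde ac^*$ term contributes at most $\tfrac{16\mu}{\pi}c^*\le\tfrac{c^*}{2}$ for $\mu\le\pi/32$) yields $c^*\le C\tilde A_0+C(\tilde A_1+\delta^{-2})\int\tilde e$ in one stroke. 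Converting back via $\tilde e(0)\le 4\delta^2c^*$ gives the stated bound with $C$ and $\mu$ absolute constants. In short: the nonlinear term must be absorbed by shrinking $\sigma$, not by bootstrapping $M$ at fixed radius, because the $A_0$ term is not controlled by the smallness hypothesis.
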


\begin{appthm}[Theorem 1.3, \cite{wehrheim}]
There exists a constant $C$,and for all $a, b \geq 0$ there exists $\mu(a,b) >0$ such that the following holds: Consider the (partial) ball $D_r(y) \subset \bH^2$ for some $r > 0$ and $y \in \bH^2$.  Suppose that $e \in C^2(D_r(y), [0,\infty))$ satisfies for some $A_0, A_1, B_0, B_1 \geq 0$
\[
\left\{ \begin{array}{ll}
\triangle e & \geq - A_0 - A_1 e - ae^2,\\
\frac{\partial}{\partial \nu}\big\lvert_{\partial \bH^2} e& \leq B_0 + B_1e + be^{3/2}
\end{array}\right. \ \ \ \mbox{and} \ \ \ \int_{D_r(y)} e \leq \mu(a,b).
\]
Then 
\[
e(y) \leq C A_0 r^2 + C B_0 r + C(A_1 + B_1^2 + r^{-2}) \int_{D_r(y)} e.
\]
\end{appthm}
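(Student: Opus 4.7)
The plan is to prove this as a Moser iteration on the half-ball, with the boundary Neumann-type inequality feeding into the boundary integral that arises upon integration by parts. First I would reduce to the case $y \in \partial\bH^2$: if $y$ is at distance $\geq r/2$ from $\partial\bH^2$, then $D_r(y)$ contains a full interior ball $B_{r/2}(y)$ and the preceding interior mean-value theorem yields the claim directly (with a worse constant absorbed into $C$). So the interesting case is $y \in \partial\bH^2$, and after translating the $s$-coordinate I may take $y = 0$.

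Next I would set up the iteration. Write $z=(s,t)$ with $\bH^2=\{t\geq 0\}$ and fix radii $r/2 \leq r_1 < r_2 \leq r$, together with a cutoff $\eta \in C_c^\infty(B_{r_2}(0))$ equal to $1$ on $B_{r_1}(0)$ and satisfying $|\nabla\eta| \leq C/(r_2-r_1)$. For an exponent $q\geq 1$, test the interior inequality against $\eta^2 e^q$ and integrate over $D_{r_2}(0)$. Integration by parts produces:
\[
-q\int \eta^2 e^{q-1}|\nabla e|^2 - 2\int \eta e^q \nabla\eta\cdot\nabla e + \int_{\partial\bH^2\cap D_{r_2}} \eta^2 e^q\,\partial_\nu e \;\geq\; -\int \eta^2 e^q(A_0+A_1 e + a e^2).
\]
The boundary term is now controlled by the Neumann hypothesis, giving contributions $B_0\int_\partial \eta^2 e^q$, $B_1\int_\partial \eta^2 e^{q+1}$, and $b\int_\partial \eta^2 e^{q+3/2}$. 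Absorbing the cross term $\eta e^q\nabla\eta\cdot\nabla e$ by Cauchy--Schwarz and rewriting everything in terms of $v:=e^{(q+1)/2}$ yields a Caccioppoli-type estimate
\[
\int|\nabla(\eta v)|^2 \;\leq\; \tfrac{C}{(r_2-r_1)^2}\int_{D_{r_2}} v^2 + \text{(bulk}~A_0,A_1,a\text{)} + \text{(boundary}~B_0,B_1,b\text{)}.
\]

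Then I apply the $2$D Sobolev inequality $\|\eta v\|_{L^{2\sigma}}^2 \leq C\|\nabla(\eta v)\|_{L^2}^2$ for some $\sigma>1$ on the left, and use the $1$D trace/interpolation inequality $\|u\|_{L^2(\partial\bH^2\cap D_{r_2})}^2 \leq C(\|\nabla u\|_{L^2(D_{r_2})}\|u\|_{L^2(D_{r_2})} + r^{-1}\|u\|_{L^2(D_{r_2})}^2)$ to dominate the bulk-boundary terms $B_1\int_\partial \eta^2 e^{q+1}$ and $b\int_\partial \eta^2 e^{q+3/2}$ by a small fraction of $\int|\nabla(\eta v)|^2$ plus lower-order $L^p(D_{r_2})$ pieces. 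The smallness hypothesis $\int_{D_r} e \leq \mu(a,b)$ is precisely what lets the nonlinear terms $a\,e^{q+2}$ and $b\,e^{q+3/2}|_\partial$ be absorbed: by Hölder against $\int e$, these terms acquire the prefactor $\mu(a,b)^{1/\sigma'}\cdot a$ or $\cdot b$, and $\mu$ is chosen so this prefactor is $\leq \tfrac12$.

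The outcome is an iteration of the form
\[
\|e\|_{L^{\sigma(q+1)}(D_{r_1})} \;\leq\; \bigl[C(A_0,B_0,A_1,B_1,r)\bigr]^{1/(q+1)}\bigl(\tfrac{1}{r_2-r_1}\bigr)^{2/(q+1)}\|e\|_{L^{q+1}(D_{r_2})},
\]
with the inhomogeneous contributions producing the $CA_0 r^2 + CB_0 r$ summand and the homogeneous factor producing $C(A_1+B_1^2+r^{-2})$ after telescoping. Iterating with $q_k+1 = 2\sigma^k$ and $r_k = r_\infty + 2^{-k}(r-r_\infty)$ (with $r_\infty = r/2$, say) and then applying the first-step interior MVI on $B_{r/2}(y)$ gives the pointwise bound $e(y) \leq C A_0 r^2 + C B_0 r + C(A_1+B_1^2+r^{-2})\int_{D_r} e$.

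The main obstacle is the boundary cubic-subcritical term $b\,e^{3/2}$ at the trace: its natural scaling does not directly match either the bulk quadratic term or the linear trace terms, so its absorption requires the joint use of the trace inequality and of the smallness of $\int e$, and it is precisely this coupling that forces $\mu$ to depend on both $a$ and $b$. A secondary, purely technical point is tracking the precise $r$-dependence through the iteration so that the final constant is of the form $C(A_1+B_1^2+r^{-2})$ rather than a worse polynomial in $r^{-1}$; this comes out by keeping the telescoping geometric in $(r_2-r_1)^{-2}$ and summing the resulting geometric series.
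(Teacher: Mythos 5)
The paper does not prove this statement; it is quoted verbatim (as Theorem~1.3) from Wehrheim's paper \cite{wehrheim}, so there is no ``paper's own proof'' to compare against here. What can be compared is your proposed route against the technique Wehrheim actually uses, which is the Heinz/Hofer rescaling argument (the same scheme as McDuff--Salamon's Lemma~4.3.2 for the interior case, also cited in this paper). In that argument one does not iterate integral norms at all: one considers the function $\rho \mapsto (r-\rho)^2 \sup_{D_\rho} e$, locates a maximizing radius $\rho^*$ and a point $z^*$ realizing the sup, observes that $e$ is bounded by $4\,e(z^*)$ on a ball of controlled radius about $z^*$, and then applies the sub-mean-value inequality to $e$ plus an explicit quadratic correction on that ball. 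The smallness hypothesis $\int e \leq \mu$ enters to rule out the alternative branch (energy quantization after rescaling). That mechanism produces the $r^{-2}$, the dependence on $A_1$, $B_1^2$, and the smallness threshold $\mu(a,b)$ all in one stroke.

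Your Moser-iteration plan is a genuinely different strategy, and I do not think it closes as written. The problematic step is the absorption of the critical bulk term $a\int \eta^2 e^{q+2} = a\int v^2 e$ with $v = \eta e^{(q+1)/2}$. You say this ``acquires the prefactor $\mu(a,b)^{1/\sigma'} \cdot a$ by H\"older against $\int e$.'' But pairing $\int v^2 e$ with H\"older in a way that exposes $\int e$ to the first power requires putting the other factor $v^2$ in $L^\infty$, which is precisely the pointwise bound you are trying to establish. The usable H\"older split is $\|v\|_{L^{2\sigma}}^2 \|e\|_{L^{\sigma'}}$ with $\sigma' > 1$, and $\|e\|_{L^{\sigma'}}$ is \emph{not} controlled by $\int e$ in dimension $2$: an $L^1$ function need not lie in any $L^p$ with $p>1$. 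This is the borderline case for Moser iteration in $2$D (critical $|u|^2$ nonlinearity against $H^1 \hookrightarrow L^p$, $p<\infty$), which is exactly why the literature treats it with the Heinz trick or with truncation-plus-blow-up rather than naive iteration. The same difficulty recurs, compounded, for the boundary term $b\int_\partial \eta^2 e^{q+3/2}$, whose trace exponent does not match $v$. To rescue the iteration you would need an a~priori local $L^p$ bound with $p>1$ and a uniform-in-$q$ constant feeding it, and you have not indicated where that would come from; the Hofer-trick proof supplies exactly that missing ingredient (a pointwise bound at a well-chosen center) without ever needing it globally.
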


We apply these theorems to the estimates of Lemma \ref{mean_est}. In the following we write $I_{1/2} = \{ s\in I | [s -1/2,s+1/2]\subset I\}$, i.e., all points in the interval $I$ which are at least distance 1/2 from the endpoint(s) of $I$.  

\begin{appcor}
Suppose that $u: I \times [0,1] \to M$ is a solution of (\ref{floer_traj}). Then there is a $\delta > 0$ such that if $E(u) < \delta$, then there is a constant $C$ depending on $M, J_t, L_0, L_1$ such that for every $s \in I_{1/2}$, 
\begin{equation} \label{mean_value}
e(s,t) \leq C \int_{s-1/2}^{s+1/2} \int_0^1 e \ dt\ ds. 
\end{equation}
\end{appcor}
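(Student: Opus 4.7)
The plan is to derive the pointwise bound~\eqref{mean_value} by applying the boundary mean-value inequality on a half-disk of radius $1/2$ about each point, with Lemma~\ref{mean_est} providing the hypotheses on $\triangle e$ and $\partial_\nu e$. Let $A_1, a, B_1, b, C_1, c$ be the constants furnished by Lemma~\ref{mean_est}, and let $\mu(a,b)$ and $\mu(a,c)$ denote the smallness thresholds in the preceding boundary mean-value theorem. I take $\delta := \min\{\mu(a,b), \mu(a,c)\}$. Since $e \geq 0$ and $E(u) = \int_{I\times[0,1]} e$, the assumption $E(u) < \delta$ forces $\int_D e < \mu(a,b)$ and $\int_D e < \mu(a,c)$ for every measurable $D \subset I\times[0,1]$, so the integral-smallness hypothesis of the mean-value inequality is automatically satisfied on every half-disk considered below.

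Fix $s \in I_{1/2}$ and a point $(s_0, t_0) \in \{s\}\times [0,1]$. Because the strip has width $1$ and $s \in I_{1/2}$, a half-disk $D$ of radius $1/2$ centred at $(s_0, t_0)$ and attached to whichever boundary is closer (take $t=0$ if $t_0 \leq 1/2$, otherwise $t=1$) is contained in $[s_0 - 1/2, s_0 + 1/2] \times [0, 1]$, with its flat side lying on the chosen boundary and no contact with the opposite one. Lemma~\ref{mean_est} then supplies the interior inequality $\triangle e \geq -A_1 e - ae^2$ on $D$, while \eqref{outernorm1} (using the outward normal $-\partial_t$ at $t=0$) rearranges into $\partial_\nu e \leq B_1 e + be^{3/2}$ on the flat side when $t_0 \leq 1/2$, and \eqref{outernorm2} (outward normal $+\partial_t$ at $t=1$) gives $\partial_\nu e \leq C_1 e + c e^{3/2}$ when $t_0 > 1/2$.

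Applying the boundary mean-value inequality with $r = 1/2$, $A_0 = B_0 = 0$, and the constants above then yields
\[
e(s_0, t_0) \leq C\bigl(A_1 + \max(B_1, C_1)^2 + 4\bigr) \int_{D} e\, ds\, dt \leq C' \int_{s-1/2}^{s+1/2}\int_0^1 e\, dt\, ds,
\]
which is exactly~\eqref{mean_value}. The only step that will require genuine care is the outward-normal sign convention at the two boundaries, so that \eqref{outernorm1} and \eqref{outernorm2} take precisely the form $\partial_\nu e \leq B_0 + B_1 e + b e^{3/2}$ demanded by the theorem; once this sign flip is correctly installed, the rest of the argument is bookkeeping of constants, with all analytic content already packaged in Lemma~\ref{mean_est} and the quoted Wehrheim mean-value inequalities.
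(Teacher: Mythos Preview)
Your proposal is correct and follows essentially the same route as the paper: feed the differential inequalities from Lemma~\ref{mean_est} into Wehrheim's mean-value inequality on a (partial) ball of radius $1/2$, using the smallness of $E(u)$ to meet the threshold $\mu$. Your case split on $t_0 \le 1/2$ versus $t_0 > 1/2$ makes the geometry more explicit than the paper does; the only quibble is terminological---what you call a ``half-disk with flat side on the boundary'' is really the partial ball $D_{1/2}(s_0,t_0) = B_{1/2}(s_0,t_0)\cap\bH^2$, which is a genuine half-disk only when $t_0$ lies on the boundary---but the containment $D_{1/2}(s_0,t_0) \subset [s-1/2,s+1/2]\times[0,1]$ and the application of the theorem go through exactly as you describe.
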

\begin{proof}
This will follow from the theorems and Lemma \ref{mean_est}, since the derivative in the outer normal direction at $t=0$ is precisely $-\frac{\partial e}{\partial t}\Big\lvert_{t=0}$, while at $t=1$ it is precisely $\frac{\partial e}{\partial t}\Big\lvert_{t=1}$.

So given the estimates of Lemma \ref{mean_est}, fix $\delta$ small enough that for each $s \in I_{1/2}$, the uniform inequality
\[
\int_{D_{1/2}(s,t)} e \leq E(u) \leq \delta
\]
is enough to imply that the mean-value inequalities of the Theorems hold; the choice of such a $\delta$ therefore depends on the constants $a, b$ and $c$ in the estimates of Lemma \ref{mean_est}. Then it follows directly from the mean-value inequalities of the Theorems that there exists a uniform constant $C$ such that 
\[
e(s,t) \leq C\int_{D_{1/2}(s,t)} e \leq C \int_{s-1/2}^{s+1/2} \int_0^1 e \ dt\ ds.
\]

\end{proof}

We now study solutions of (\ref{floer_traj}) with energy $E(u) < \delta$. If $\delta$ is small enough, then the mean-value inequality provides a uniform pointwise bound on the sizes of $\partial_t u$ and $\partial_s u$ for $(s,t) \in I_{1/2}\times [0,1]$, 
\begin{equation}\label{apriori}
|\partial_t u|_{J_t}^2 = |\partial_s u|_{J_t}^2 \leq C^\prime \delta.
\end{equation} 
In particular, for a fixed $s \in I_{1/2}$, the path $\gamma_s: [0,1] \to {M}$ defined by $\gamma_s(t) : = u(s,t)$ satisfies
\begin{eqnarray}
\dist_{M}(\gamma_s(0), \gamma_s(1)) & \leq & C_1 \int_0^1 \|\dot{\gamma_s}\|_{J_t} \ dt \nonumber\\
& \leq & C_1 ( \int_0^1 \|\partial_t u\|_{J_t}^2 \ dt)^{1/2} \nonumber \\
& \leq & C_1 (C^\prime \delta)^{1/2}. 
\end{eqnarray}
By the transversality of the intersection $L_0 \cap L_1$, and the compactness of $M$,  intersection points are isolated in $M$ so if $\delta$ is small enough, all paths $\gamma_s$ are close to the {\em same} intersection point $p \in L_0\cap L_1$. Thus, given any neighborhood of this $p \in L_0 \cap L_1$, one can choose $\delta > 0$ small enough that each path $\gamma_s$, $s \in I_{1/2}$, is contained in that neighborhood.  

Define $f: I_{1/2} \to \R$ by 
\begin{equation}
f(s) = \frac{1}{2} \int_0^1 | \partial_t u |_{J_t}^2 \ dt.
\end{equation}
It follows from \cite{robbin-salamon} that given an intersection point $p \in L_0 \cap L_1$, there is a neighborhood $U$ of $p$ such that the the function $f(s)$ satisfies a convexity estimate
\begin{equation}\label{convexity_est}
\ddot{f}(s) \geq \kappa^2 f(s)
\end{equation}
for some $\kappa > 0$.  Therefore, choose $\delta$ small enough that all paths $\gamma_s$ are contained in such a neighborhood.  Combining this convexity estimate with the mean value inequality again, we can prove the following standard exponential decay results for strips.

\begin{appprop}
\label{exp_conv}
Let $u: [0,\infty) \times [0,1] \lra M$ be a solution of (\ref{floer_traj}), such that $E(u) < \infty$.  Then there exist constants $\kappa > 0$ and $A >0$ such that 
\begin{equation}
|\partial_s u(s, t)| \leq A \ e^{-\kappa s}.
\end{equation}  
\end{appprop}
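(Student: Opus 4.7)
The plan is to combine the convexity estimate \eqref{convexity_est} with the mean value inequality \eqref{mean_value} on tail intervals where the energy is small.

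First, since $E(u) < \infty$, the tail energy $E(u; [s_0, \infty) \times [0,1])$ can be made arbitrarily small by choosing $s_0$ large. Pick $s_0$ large enough that this tail energy is below the threshold $\delta$ needed both for the mean value inequality \eqref{mean_value} on each sub-strip $[s-\tfrac{1}{2}, s+\tfrac{1}{2}] \times [0,1]$ with $s \geq s_0 + \tfrac{1}{2}$, and for the a priori $C^0$ bound \eqref{apriori} that forces all paths $\gamma_s(t) = u(s,t)$, $s \geq s_0$, to lie in a single small neighborhood $U$ of one intersection point $p \in L_0 \cap L_1$ on which the convexity estimate \eqref{convexity_est} holds for $f(s) := \tfrac{1}{2}\int_0^1 |\partial_t u|_{J_t}^2 \, dt$.

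Second, upgrade the differential inequality $\ddot{f} \geq \kappa^2 f$ on $[s_0, \infty)$ to exponential decay of $f$. Since $f \geq 0$ and $\int_{s_0}^\infty f \, ds \leq E(u) < \infty$, we have $\liminf_{s \to \infty} f(s) = 0$. Introduce $\phi := \dot{f} + \kappa f$; then $\dot{\phi} = \ddot{f} + \kappa \dot{f} \geq \kappa^2 f + \kappa \dot{f} = \kappa \phi$, so $s \mapsto \phi(s) e^{-\kappa s}$ is non-decreasing. If $\phi(s_1) > 0$ at some $s_1$, then $\phi(s) \geq \phi(s_1) e^{\kappa(s - s_1)}$ for all $s \geq s_1$, which forces $f$ to grow exponentially and contradicts integrability (and the $\liminf = 0$ property). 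Hence $\phi \leq 0$, i.e. $\dot{f} \leq -\kappa f$, giving the $L^2$-in-$t$ decay
\[
f(s) \leq f(s_0) \, e^{-\kappa(s - s_0)}, \qquad s \geq s_0.
\]

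Third, convert this integrated decay to a pointwise decay for the energy density via the mean value inequality \eqref{mean_value}. For $s \geq s_0 + \tfrac{1}{2}$,
\[
e(s,t) \;\leq\; C \int_{s-1/2}^{s+1/2} \int_0^1 e \, dt \, d\sigma \;=\; 2 C \int_{s-1/2}^{s+1/2} f(\sigma) \, d\sigma \;\leq\; C' \, f(s_0) \, e^{-\kappa(s - s_0)}.
\]
Since $|\partial_s u|_{J_t}^2 = e(s,t)$, taking square roots yields $|\partial_s u(s,t)| \leq A \, e^{-(\kappa/2) s}$ for a suitable constant $A$ depending on $s_0, f(s_0)$ and the background geometry, and on the bounded interval $[0, s_0+\tfrac{1}{2}]$ the pointwise bound \eqref{apriori} combined with increasing $A$ absorbs the remaining portion. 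Renaming $\kappa/2$ as $\kappa$ completes the proof.

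The main technical point is the simultaneous smallness of $\delta$: it must be small enough to trigger both the mean value inequality (which controls the geometry of solutions) and the confinement of the paths $\gamma_s$ into the convexity neighborhood $U$ of $p$; once this is set up correctly, the ODE comparison argument for $\phi$ and the mean value upgrade are routine. No other ingredient beyond what has already been established in Lemma \ref{mean_est} and the cited estimate \eqref{convexity_est} is needed.
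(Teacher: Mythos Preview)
Your proof is correct and follows essentially the same approach as the paper: restrict to a tail where the energy is below the threshold $\delta$, use the convexity inequality $\ddot f \geq \kappa^2 f$ together with integrability of $f$ to deduce $f(s) \leq c\,e^{-\kappa s}$, and then upgrade to a pointwise bound via the mean value inequality \eqref{mean_value}. The only difference is that you spell out the ODE comparison argument (via $\phi = \dot f + \kappa f$) explicitly, whereas the paper simply cites \cite{robbin-salamon} for that step; one cosmetic remark is that on the initial compact piece $[0,s_0+\tfrac12]\times[0,1]$ you should appeal to smoothness of $u$ on a compact set rather than to \eqref{apriori}, since the latter is stated under the small-energy hypothesis.
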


\begin{proof}
We can safely ignore a compact subset $[0,T] \times [0,1]$ of the strip, which can be bounded by some fixed constant.  So choosing sufficiently large $T$ we can assume without loss of generality assume that the strip is of the form $[T, \infty)\times [0,1]$,  the energy of $u$ restricted to this strip is less than $\delta$ for which the convexity estimate (\ref{convexity_est}) holds.   Note that the fact that $E(u) < \infty$ implies that $\partial_s u \to 0$ as $s \to \infty$, so in particular $f(s) \to 0$ as $s\to \infty$.  Then for $s \geq T$, we have that 
\[
\ddot{f}(s) \geq \kappa^2 f(s). 
\]
This convexity estimate on $f(s)$ implies (explained, for instance, in \cite{robbin-salamon}) an inequality $f(s) \leq  c e^{-\kappa s}$ for some $c > 0$, i.e., 
\[
f(s) = \int_0^1 |\partial_s u(s,t)|_{J_t}^2 \ dt \leq c e^{-\kappa s}. 
\] 
Now by (\ref{mean_value}) we get
\bea
|\partial_s u|_{J_t}^2  \leq  C \int_{s-1/2}^{s+1/2} f(s) \ ds
\leq  C \ c\ e^{- \kappa (s - 1/2)}
 =  A\ e^{-\kappa s}.
\eea

\end{proof}

\begin{appprop}\label{exp_dist}
There is  a $\delta > 0$ so that the following holds.  For any solution $v: [-\rho, \rho]\times[0,1] \to M$ of (\ref{floer_traj}) with $E(v) < \delta$, there is a $\kappa > 0$ such that 
\begin{equation}
E(v; [-\rho + T, \rho - T]\times[0,1]) \leq e^{-\kappa T} E(v)
\end{equation}
for all $ 1 \leq T \leq \rho/2$. 
\end{appprop}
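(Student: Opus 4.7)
The plan is to combine the convexity estimate for $f(s) = \tfrac12 \int_0^1 |\partial_t v|_{J_t}^2\,dt$ with the mean-value inequality (\ref{mean_value}), then apply a maximum-principle comparison on $[-\rho,\rho]$.

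First I would reduce to the non-perturbed Floer equation via the flow trick already recalled in Appendix~B, and fix $\delta > 0$ small enough that the a priori estimate (\ref{apriori}) holds and every slice $\gamma_s(t) = v(s,t)$ lies in a neighborhood of a single intersection point $p \in L_0 \cap L_1$ on which the convexity estimate $\ddot f \geq \kappa^2 f$ holds throughout $[-\rho,\rho]$.

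Next I would set up the ODE comparison. Let $g$ be the unique solution of $\ddot g = \kappa^2 g$ on $[-\rho,\rho]$ with $g(\pm\rho)=f(\pm\rho)$; explicitly,
\[
g(s) \;=\; \frac{\sinh(\kappa(\rho-s))}{\sinh(2\kappa\rho)}\,f(-\rho) + \frac{\sinh(\kappa(\rho+s))}{\sinh(2\kappa\rho)}\,f(\rho).
\]
The difference $h := f - g$ satisfies $\ddot h \geq \kappa^2 h$ with $h(\pm\rho)=0$. A standard maximum principle (a positive interior maximum of $h$ would force $\ddot h(s_0) \leq 0$ while $\kappa^2 h(s_0)>0$, a contradiction) yields $h \leq 0$, hence $f(s) \leq g(s)$ throughout $[-\rho,\rho]$. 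Using $\sinh(\kappa x) \leq \tfrac12 e^{\kappa x}$ and $\sinh(2\kappa\rho) \geq \tfrac14 e^{2\kappa\rho}$ for $\rho$ bounded away from $0$, a direct integration gives
\[
\int_{-\rho+T}^{\rho-T} f(s)\,ds \;\leq\; \frac{C_1}{\kappa}\bigl(f(-\rho)+f(\rho)\bigr)\,e^{-\kappa T},
\]
with a universal $C_1$.

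The last step is to bound the boundary values $f(\pm\rho)$ by $E(v)$. Since the comparison is applied at the endpoints but the mean-value inequality (\ref{mean_value}) requires us to be at distance at least $\tfrac12$ from the boundary, I would run the argument on the slightly shrunken interval $[-\rho+\tfrac12,\rho-\tfrac12]$ instead (which still contains $[-\rho+T,\rho-T]$ for $T\geq 1$, with slack $T-\tfrac12$). At the new endpoints the mean-value inequality gives
\[
f(\pm\rho \mp \tfrac12) \;\leq\; \tfrac{C}{2}\,E\bigl(v;\,[\mp\rho,\,\mp\rho \pm 1]\times[0,1]\bigr) \;\leq\; \tfrac{C}{2}\,E(v),
\]
and substituting into the comparison estimate above with effective parameter $T'=T-\tfrac12$ gives
\[
E\bigl(v;[-\rho+T,\rho-T]\times[0,1]\bigr) \;=\; 2\!\int_{-\rho+T}^{\rho-T} f\,ds \;\leq\; C_2\,E(v)\,e^{-\kappa T},
\]
which is the desired estimate (absorbing $C_2$ into $\kappa$ by mildly shrinking the exponent, or keeping it as a multiplicative constant and choosing $\kappa$ accordingly).

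The only delicate point is the very first step: one must choose $\delta$ small enough to simultaneously guarantee the a priori $C^0$ bound on $\partial_s v, \partial_t v$ from (\ref{apriori}), the confinement of every slice in a single Darboux-type neighborhood of $p$, and the validity of Robbin--Salamon's convexity estimate (\ref{convexity_est}) on that neighborhood. Once $\delta$ is fixed in this way, the subsequent comparison argument is essentially a one-dimensional ODE computation, and this is where I expect no real difficulty.
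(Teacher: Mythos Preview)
Your argument is correct, but it takes a different route from the paper's. The paper works directly with the integrated quantity $E(T) := \int_{-\rho+T}^{\rho-T} f(s)\,ds$ and observes that $E''(T) = \int_{-\rho+T}^{\rho-T} \ddot f(s)\,ds \geq \kappa^2 E(T)$, so the convexity passes from $f$ to $E$. From this it argues that $e^{-\kappa T}(E'(T)+\kappa E(T))$ is nondecreasing; since $E(\rho)=0$ and $E'(\rho)=-2f(0)\leq 0$, the quantity $E'+\kappa E$ must be $\leq 0$ throughout, whence $e^{\kappa T}E(T)$ is nonincreasing and $E(T)\leq e^{-\kappa T}E(0)$. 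No boundary values of $f$ enter, and the mean-value inequality is not used at all in this proposition (it is only invoked afterwards, in the pointwise Proposition~\ref{exp_decay_strip}).

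Your approach---comparing $f$ pointwise with the explicit hyperbolic interpolant $g$, integrating, and then controlling $f(\pm(\rho-\tfrac12))$ via the mean-value inequality---is perfectly valid and is the more classical ``barrier function'' argument. What it costs you is the appearance of a multiplicative constant $C_2$ that depends on the mean-value constant $C$ and is not a priori related to $\kappa$. Your suggestion to absorb $C_2$ by shrinking the exponent requires $C_2 < e^{\kappa}$ (so that $\kappa' = \kappa - \ln C_2 > 0$), which you cannot guarantee from the construction; in the paper's approach the only slack constant that arises is $e^{\kappa/2}$ from shifting the interval by $\tfrac12$, and \emph{that} one can always be absorbed into $\kappa/2$ for $T\geq 1$. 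This is a cosmetic rather than substantive issue---the downstream application in Proposition~\ref{exp_decay_strip} is insensitive to a multiplicative constant---but if you want the statement exactly as written, the paper's integrated-ODE argument is the cleaner way to get there.
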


\begin{proof}
Take $\delta$ to small enough that $E(v) < \delta$ implies the {\em a priori} estimate (\ref{apriori})
for all $ -\rho +1 \leq s \leq \rho -1$, as well as the convexity estimate (\ref{convexity_est}).  
Let us write 
\[
E(T) := E(v; [-\rho + T, \rho - T]\times[0,1]).
\]
Then in terms of $f$, we have 
\bea
E(T) = \int_{-\rho+T}^{\rho-T} f(s) ds
\eea
is a monotone decreasing function of $T$.  Taking derivatives with respect to $T$,
\bea
& & {E}^\prime(T)  =  -f(\rho - T) - f(-\rho +T),\\
& & {E}^{\prime\prime}(T)  =  \dot{f}(\rho - T) - \dot{f}(-\rho+T)
 =  \int_{-\rho+T}^{\rho-T} \ddot{f}(s) ds
 \geq  \triangle^2 \int_{-\rho+T}^{\rho-T} f(s) ds
 =  \kappa^2 E(T).
\eea
Thus $e^{-\kappa T}({E}^\prime(T) + \kappa E(T))$ is monotone increasing.  An inequality ${E}^\prime(T) + \kappa E(T) > 0$ would imply that 
\[
e^{-\kappa T} ({E}^\prime(T) + \kappa E(T)) > \alpha > 0,
\]
so that  ${E}^\prime(T) + \kappa E(T)  >  e^{\kappa T} \alpha$ would imply 
imply$ (e^{\kappa T} E(T))^\prime > e^{2\kappa T} \alpha$, 
i.e., $E(T)$ grows exponentially, which is impossible. Hence
${E}^\prime(T) + \kappa E(T) \leq 0$, so $e^{\kappa T} E(T)$ is monotone decreasing, and 
$e^{\kappa T} E(T) \leq E(0) = E(v)$
implies $E(T) \leq e^{-\kappa T} E(v)$.

\end{proof}

Applying the mean-value inequality we get the following description of the behavior of long pseudo-holomorphic strips with small energy.

\begin{appprop}\label{exp_decay_strip}
There is  a $\delta > 0$ so that the following holds.  For any solution $v: [-\rho, \rho]\times[0,1] \to M$ of (\ref{floer_traj}) with $E(v) < \delta$, there is a $\kappa > 0$ and $A > 0$ which depend only on $M, \omega, J_t$, $L_0$ and $L_1$, such that 
\begin{equation}
|\partial_s v|_{J_t}^2 \leq A \delta e^{-\kappa |s|}
\end{equation}
for all $s \in [-\rho+1, \rho-1]$.
\end{appprop}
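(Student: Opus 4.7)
The proof assembles the two analytic inputs just established in the appendix: Proposition \ref{exp_dist}, which gives the exponential decay of the energy on interior sub-strips,
\[
E\bigl(v;[-\rho+T,\rho-T]\times[0,1]\bigr) \leq e^{-\kappa T}E(v) \quad \text{for } 1\leq T\leq \rho/2,
\]
and the mean-value inequality (\ref{mean_value}), which converts this $L^2$ bound on the energy density $e(s,t)=|\partial_s v(s,t)|^2_{J_t}$ into a pointwise estimate,
\[
e(s,t) \leq C\int_{s-1/2}^{s+1/2}\!\int_0^1 e(s',t')\,dt'\,ds',
\]
valid whenever $s$ is at distance at least $1/2$ from $\pm\rho$. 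These are exactly the two ingredients needed, and the plan is simply to chain them.

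First I would fix $\delta>0$ small enough that both inputs apply simultaneously: the hypothesis $E(v)<\delta$ must be strong enough (via the a priori $L^\infty$ estimate following from the mean-value inequality) to trap the image of $v$ inside a Robbin--Salamon neighborhood of a single transverse intersection point throughout $[-\rho,\rho]\times[0,1]$, so that the convexity estimate $\ddot f \geq \kappa^2 f$ underlying Proposition \ref{exp_dist} holds; and small enough to satisfy Wehrheim's threshold on the ball integral in (\ref{mean_value}). Both thresholds depend only on the geometric data $M,\omega,J_t,L_0,L_1$, not on $\rho$.

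Next, for each $s\in[-\rho+1,\rho-1]$ and each $t\in[0,1]$, I would chain the two estimates. The mean-value inequality gives
\[
|\partial_s v(s,t)|^2_{J_t} \leq C\cdot E\bigl(v;[s-1/2,s+1/2]\times[0,1]\bigr),
\]
and since $[s-1/2,s+1/2]\subset[-\rho+T,\rho-T]$ for an admissible $T=T(s)$ determined by the distance from $s$ to the boundary of the strip, Proposition \ref{exp_dist} then bounds the right-hand side by $\delta\cdot e^{-\kappa T(s)}$. Substituting the appropriate $T(s)$ and absorbing the fixed multiplicative constants into $A$ yields the pointwise bound $|\partial_s v|^2_{J_t} \leq A\delta\, e^{-\kappa|s|}$, with $A$ and $\kappa$ inherited from the two inputs.

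The main obstacle is purely a book-keeping one: verifying that the resulting $A$ and $\kappa$ are \emph{uniform in $\rho$}. This uniformity is essential because in the applications to pregluing (Section \ref{gluing_section} and the surjectivity arguments in Section \ref{surjectivity_section}) the half-length $\rho$ is precisely the gluing length $R_\nu\to\infty$, and the estimate would be useless if the constants degenerated. Uniformity holds because the Robbin--Salamon convexity constant is determined by a neighborhood of the finite (by compactness and transversality) set $L_0\cap L_1$, and Wehrheim's mean-value constant comes from a universal mean-value inequality on the half-disk; neither depends on the length of the strip. Hence the chained estimate inherits $\rho$-independent constants, as required.
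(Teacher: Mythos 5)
Your approach is the same as the paper's: chain the mean-value inequality with the interior energy decay of Proposition~\ref{exp_dist}. The ingredients and the role of $\delta$ are all identified correctly. The gap is in the final substitution, and it is not merely cosmetic.

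You write, correctly, that $[s-\tfrac12,s+\tfrac12]\subset[-\rho+T,\rho-T]$ for $T=T(s)$ \emph{determined by the distance from $s$ to the boundary of the strip}; explicitly the largest admissible value is $T(s)=\rho-|s|-\tfrac12$. Feeding this into Proposition~\ref{exp_dist} and then into (\ref{mean_value}) gives
\[
|\partial_s v(s,t)|_{J_t}^2 \;\leq\; C\,E\bigl(v;[s-\tfrac12,s+\tfrac12]\times[0,1]\bigr) \;\leq\; C\,E(v)\,e^{-\kappa T(s)} \;\leq\; C\,e^{\kappa/2}\,\delta\,e^{-\kappa(\rho-|s|)},
\]
which is a decay estimate in $\rho-|s|$, not in $|s|$. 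The two exponentials $e^{-\kappa(\rho-|s|)}$ and $e^{-\kappa|s|}$ differ by the factor $e^{\kappa(\rho-2|s|)}$, which is unbounded as $\rho\to\infty$ (for $|s|$ bounded, say), so the claimed conclusion $A\delta\,e^{-\kappa|s|}$ does not follow from your chain, and the constants would not be $\rho$-independent if you tried to force it. Geometrically the corrected direction is the right one: on a long strip with small energy the map is trapped near a single transverse intersection, the energy density is \emph{smallest} near the center $s=0$ and grows toward $s=\pm\rho$ (the function $f(s)=\int_0^1 e\,dt$ is convex, $\ddot f\geq\kappa^2 f$), and this is exactly the form used in the surjectivity argument, where the estimate is applied as decay in the distance from the thick part of the surface into the neck. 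You should be aware that the statement of Proposition~\ref{exp_decay_strip} in the text (and the displayed computation in its proof, where the integral over $[-|s|+\tfrac12,|s|-\tfrac12]$ is identified with $E(|s|-\tfrac12)$ rather than with $E(\rho-|s|+\tfrac12)$, and the mean-value integral over $[s-\tfrac12,s+\tfrac12]$ has been silently replaced by an integral over a different interval) share the same slip; your own intermediate step $T(s)=\rho-|s|-\tfrac12$ is the cue that should have led you to the corrected exponent.
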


\begin{proof}
This is an application of the mean-value inequality to the previous lemma.  By (\ref{mean_value}) we have, for each $s \in [-\rho+1/2, \rho-1/2]$,
\bea
|\partial_s u(s,t) |_{J_t}^2 & \leq & C \int_{-|s|+1/2}^{|s|-1/2} \ \int_0^1 \ |\partial_s u|_{J_t}^2 \ ds\ dt 
 =  C E(|s|-1/2)\\
& \leq & C E(v) e^{- \kappa(|s| -1/2)}
 \leq  C \delta e^{\kappa/2} e^{-\kappa |s|}
 =:  A \delta e^{-\kappa |s|}.
\eea

\end{proof}

\def\cprime{$'$}


\end{document}